\documentclass[10pt,reqno]{amsart}
\usepackage{amssymb,mathrsfs,graphicx}
\usepackage{ifthen}
\usepackage{caption}
\usepackage{rotating,dsfont}
\usepackage{enumitem}
\DeclareMathOperator*{\esssup}{ess\,sup}

\usepackage[normalem]{ulem}
\usepackage{soul}
\usepackage{cancel}

\usepackage[top=1in, bottom=1in, left=1in, right=1in]{geometry}

\usepackage[dvipsnames]{xcolor} 

\usepackage{tikz}

\usepackage[colorlinks=true, pdfstartview=FitV, linkcolor=RoyalBlue, citecolor=ForestGreen, urlcolor=blue]{hyperref}

\definecolor{labelkey}{rgb}{0,0,1}

\title[ Finite-energy weak solutions]{Finite-energy weak solutions and relaxation for a compressible kinetic--fluid system with locally averaged Brinkman force}

\author[Choi]{Young-Pil Choi}
\address{Department of Mathematics, Yonsei University, Seoul 03722, Republic of Korea}
\email{ypchoi@yonsei.ac.kr}

\author[Shvydkoy]{Roman Shvydkoy}
\address{851 S Morgan St, M/C 249, Department of Mathematics, Statistics and Computer Science, University of Illinois at Chicago, Chicago, IL 60607}
\email{shvydkoy@uic.edu}

\numberwithin{equation}{section}

\newtheorem{theorem}{Theorem}[section]
\newtheorem{lemma}{Lemma}[section]

\newtheorem{proposition}{Proposition}[section]
\newtheorem{remark}{Remark}[section]
\newtheorem{definition}{Definition}[section]

\newcommand{\R}{\mathbb R}

\newcommand{\N}{\mathbb N}

\newcommand{\T}{\mathbb T}

\newcommand{\bq}{\begin{equation}}
\newcommand{\eq}{\end{equation}}
\newcommand{\e}{\varepsilon}
\newcommand{\lt}{\left}
\newcommand{\rt}{\right}
\newcommand{\lal}{\langle}
\newcommand{\ral}{\rangle}
\newcommand{\pa}{\partial}

\newcommand{\intr}{\int_{\R^d}}
\newcommand{\intt}{\int_{\T^d}}
\newcommand{\inttr}{\iint_{\T^d \times \R^d}}

\newcommand{\calA}{\mathcal A}

\newcommand{\calD}{\mathcal D}
\newcommand{\calE}{\mathcal E}

\newcommand{\calG}{\mathcal G}
\newcommand{\calH}{\mathcal H}
\newcommand{\calI}{\mathcal I}

\newcommand{\calM}{\mathcal M}

\newcommand{\calP}{\mathcal P}

\newcommand{\calR}{\mathcal R}
\newcommand{\calS}{\mathcal S}

\newcommand{\calX}{\mathcal X}

\newcommand{\rd}{\textnormal{d}}
\newcommand{\dd}{\textnormal{d}}
\newcommand{\dx}{\textnormal{d}x}

\newcommand{\dt}{\textnormal{d}t}
\newcommand{\dv}{\textnormal{d}v}
\newcommand{\dy}{\textnormal{d}y}

\newcommand{\dz}{\textnormal{d}z}

\newcommand{\ds}{\textnormal{d}s}
\newcommand{\dmu}{\textnormal{d}\mu}

\newcommand{\jap}[1]{\left\langle #1 \right\rangle}
\newcommand{\I}{\ensuremath{\mathbb{I}}}   

\makeatletter
\def\moverlay{\mathpalette\mov@rlay}
\def\mov@rlay#1#2{\leavevmode\vtop{%
   \baselineskip\z@skip \lineskiplimit-\maxdimen
   \ialign{\hfil$\m@th#1##$\hfil\cr#2\crcr}}}
\newcommand{\charfusion}[3][\mathord]{
    #1{\ifx#1\mathop\vphantom{#2}\fi
        \mathpalette\mov@rlay{#2\cr#3}
      }
    \ifx#1\mathop\expandafter\displaylimits\fi}
\makeatother

\def \ddt {\frac{\dd}{\dt}}
\def \free { {\rm free}}
\def \BD { {\rm BD}}
\def \aux { {\rm aux}}
\def \vis { {\rm vis}}
\def \drag {{\rm drag}}

\def \cI {\mathcal{I}}
\def \cD {\mathcal{D}}

\def \st {\mathrm{s}}

\def \n {\nabla}

\def \rmb {\mathrm{b}}

\begin{document}
\allowdisplaybreaks

\date{\today}

\keywords{Kinetic--fluid equations, averaged Brinkman force, compressible Navier--Stokes equations, Bresch--Desjardins entropy, synchronization.}

\begin{abstract}
We study a kinetic--fluid system in which a Vlasov or Vlasov--Fokker--Planck equation is coupled to the compressible Navier--Stokes equations with density-dependent viscosities through a locally averaged Brinkman force. The averaging is chosen in a conservative form so that the coupled system preserves the total momentum and satisfies a natural energy-dissipation balance, while avoiding the pointwise evaluation of a possibly rough fluid velocity. The first main result is a conditional exponential relaxation estimate for sufficiently regular solutions. The estimate is proved under positive and negative density moment bounds and a Muckenhoupt $\calA_2$ condition on a power of the density, which replace uniform pointwise upper and lower bounds. The proof combines a modulated energy and hypocoercivity analysis with a compensating functional for the density fluctuation.  A key ingredient is a weighted
Calder\'on--Zygmund estimate for the associated elliptic corrector, which allows us to control the viscous contribution under the $\calA_2$ condition. The second main result concerns global finite-energy Bresch--Desjardins entropy weak solutions in admissible low-dimensional regimes. Using the additional entropy structure associated with the density-dependent viscosities, we verify the density assumptions required by the conditional relaxation theorem and obtain exponential relaxation for the weak solutions. In the diffusionless case, the particle distribution aligns toward a mono-kinetic state, while in the diffusive case the relaxation is towards a Maxwellian equilibrium. 
\end{abstract}

 \maketitle  \centerline{\date}

\tableofcontents

%
%
%
%

\section{Introduction}

Sprays and particle-laden flows arise in many physical and engineering situations, including combustion, aerosols, sedimentation, cooling processes, and dispersed multiphase flows. They consist of a large number of droplets, dust particles, or other small inclusions suspended in a surrounding gas or fluid. Depending on the concentration of the dispersed phase and on the interaction mechanisms, several levels of description are possible, ranging from microscopic particle systems to mesoscopic kinetic equations and macroscopic multiphase flow models; see, for instance, \cite{Wil58,ORo81,Des10}.

At the mesoscopic level, the dispersed phase is often described by a distribution function in phase space, while the carrier fluid is described by macroscopic balance laws. A central mechanism in such models is the momentum exchange between the particles and the fluid through a drag force. This force tends to align the particle velocity with the local fluid velocity and leads to coupled Vlasov--Navier--Stokes or Vlasov--Fokker--Planck--Navier--Stokes type systems. In this paper, we study a kinetic--fluid coupling in a compressible viscous regime, with emphasis on finite-energy weak solutions and their long-time relaxation.

%
%
%
%
%

\subsection{A kinetic--fluid model with a locally averaged Brinkman force}

We consider a kinetic--fluid model in which a Vlasov or Vlasov--Fokker--Planck particle phase is coupled to a compressible viscous fluid with density-dependent viscosities. The spatial domain is the flat torus $\T^d$, normalized by $|\T^d|=1$. The particle distribution is denoted by $f=f(t,x,v)\ge0$, where $(x,v)\in\T^d\times\R^d$, while $(\rho,u)=(\rho(t,x),u(t,x))$ denotes the density and velocity of the compressible fluid. 

The coupling is through a locally averaged Brinkman force. Let $\theta_\e\in C^\infty(\T^d)$ be a fixed symmetric mollifier satisfying
\[ 
\theta_\e(x)>0 \quad \text{for every }x\in\T^d, \quad \intt \theta_\e\,\dx=1. 
\] 
For a function $g=g(x)$, we write 
\[ 
g_\e:=\theta_\e*g.
\]

We also introduce the kinetic density and momentum
\[
\rho_f(t,x):= \intr f(t,x,v)\,\dv, \quad m_f(t,x):= \intr vf(t,x,v)\,\dv.
\]
The coupled Vlasov--compressible Navier--Stokes system is
\begin{align}\label{main_sys}
\begin{aligned}
&\pa_t f + v \cdot \nabla f = \rho_\e \nabla_v \cdot (\sigma \nabla_v f +  (v-u_\e^\e)f), \quad (t,x,v)\in \R_+\times\T^d\times\R^d,\cr
 &\pa_t \rho  + \nabla \cdot (\rho  u) = 0,\cr
 &\pa_t (\rho  u) + \nabla  \cdot (\rho  u \otimes u) + \nabla p(\rho ) = \nabla \cdot \mathbb S(\rho,u) +\rho F_\e,
 \end{aligned}
\end{align}
where 
\[
\mathbb S(\rho,u) := 2\nu(\rho)D u + \lambda(\rho)(\nabla\cdot u)\mathbb I, \quad D u:=\frac12\lt(\nabla u+\nabla u^{\mathsf T}\rt),
\]
and
\[
F :=  \intr (v - u_\e^\e)f\,\dv, \quad u_\e^\e:=\frac{(\rho u)_\e}{\rho_\e}.
\]
The system is supplemented with initial data
\[
f|_{t=0}=f_0,\quad \rho|_{t=0}=\rho_0,\quad (\rho u)|_{t=0}=\rho_0u_0.
\]
We impose mass normalizations
\[
\inttr f_0\,\dx\dv=1, \quad \intt \rho_0\,\dx=1.
\]
By the conservation of the kinetic and fluid masses, see Section \ref{ssec:cons} below, these normalizations are preserved along the flow.  Since $\theta_\e$ is strictly positive, we have
\[ 
\rho_\e(t,x)\ge\min_{\T^d}\theta_\e>0. 
\] 
Hence $u_\e^\e$ is well defined and represents the locally averaged fluid velocity seen by the particles.

In this paper, the viscosity coefficients and pressure law are given by 
\[ 
\nu(\rho)=\rho^\alpha, \quad \lambda(\rho)=2(\alpha-1)\rho^\alpha, \quad p(\rho)=\rho^\gamma, 
\] 
where $\alpha>0$ and $\gamma>1$. These viscosity coefficients satisfy the Bresch--Desjardins relation
\bq\label{eq:bd-rel}
\lambda(\rho) = 2\lt(\rho\nu'(\rho)-\nu(\rho)\rt).
\eq
This structural relation will be used to derive an additional entropy estimate for the density.

The inclusion of the Brinkman force in an averaged form in \eqref{main_sys} has both modeling and analytical motivations. The classical Brinkman coupling is based on the relative velocity $v-u$ and corresponds, at least formally, to a pointwise interaction between particles and the fluid velocity at the same spatial point. This is a useful idealization, but it is not the only natural one from the viewpoint of modeling or approximation. In particle descriptions, the particle phase is often represented by empirical measures, and singular particle--fluid interactions are commonly replaced by smooth blobs or localized averages. From this perspective, the velocity $u_\e^\e$ can be interpreted as the effective fluid velocity seen by particles after averaging over a small spatial scale.
The averaged coupling is also consistent with particle approximation and scaling-limit viewpoints for Vlasov--Navier--Stokes type systems. In such problems, suitable modifications or regularizations of the microscopic drag interaction naturally appear in order to pass from a particle system to a kinetic--fluid equation; see, for instance, \cite{FLR19,FLR21}. Related derivations of Vlasov--Navier--Stokes type models for sprays or aerosol flows from kinetic theory can also be found in \cite{BDGR17, BDGR18}. Thus a regularized or averaged interaction may be viewed as a natural mesoscopic approximation of the particle--fluid coupling, rather than merely as a technical device.

Let us also mention that locally averaged Brinkman forces have recently been used in the analysis of incompressible kinetic--fluid systems. In \cite{ST25}, a Fokker--Planck--Navier--Stokes system with locally averaged Brinkman drag was analyzed, and global weak solutions together with unconditional alignment were obtained. In that setting, the averaging plays an important role in stabilizing the hypocoercivity estimates and in proving relaxation toward a global Maxwellian equilibrium. The present paper follows this averaged-interaction viewpoint in a compressible viscous setting.

From the analytical point of view, the averaged Brinkman force is particularly useful in the compressible finite-energy setting. In the presence of density-dependent viscosities and possible vacuum, the natural energy controls $\sqrt\rho u$ rather than $u$ itself. Thus the pointwise velocity field may not be sufficiently regular to be inserted directly into the kinetic equation at the level of weak solutions. By contrast, since $\rho_\e$ is bounded from below by a positive constant depending only on $\e$, the averaged velocity
\[
u_\e^\e=\frac{(\rho u)_\e}{\rho_\e}
\]
is well defined even when the density has vacuum regions.

Moreover, the regularization is chosen in a conservative form. The factor $\rho_\e$ in the kinetic equation and the mollified forcing term $\rho F_\e$ in the fluid equation are compatible with each other. Consequently, the drag interaction preserves the total momentum and produces a nonnegative drag dissipation in the energy balance. Thus, the averaged coupling retains the basic physics of the system while avoiding the singular pointwise evaluation of a possibly rough fluid velocity.

%
%
%
%
%
 
\subsection{Main results}

We now state the main results of the paper.

%
%
%
%
%
\subsubsection{Conditional exponential relaxation}
The first result gives a conditional exponential relaxation estimate for sufficiently regular solutions. We begin by fixing the notation used in its formulation.

For $\sigma>0$, let
\[
\mu_{a,b,\sigma}(v) := \frac{a}{(2\pi\sigma)^{\frac d2}} \exp\lt(-\frac{|v-b|^2}{2\sigma}\rt)
\]
denote the Maxwellian with mass $a$, center $b$, and temperature $\sigma$. For $\sigma \ge 0$, we use the following convention for the kinetic part of the free energy:
\[
H
:=
\begin{cases}
\displaystyle \sigma\inttr f\log\frac{f}{\mu_{1,0,\sigma}}\,\dx\dv, & \sigma>0,\\[3mm]
\displaystyle \frac12\inttr |v|^2 f\,\dx\dv, & \sigma=0.
\end{cases}
\]
Thus, when $\sigma=0$, the relative entropy part is understood as the kinetic energy. We define the kinetic and fluid momenta by
\[ 
\bar{v}(t):=\inttr vf(t,x,v)\,\dx\dv, \quad \bar{m}(t):=\intt \rho(t,x)u(t,x)\,\dx. 
\]
We also recall the definition of the Muckenhoupt class $\calA_2(\T^d)$. A nonnegative locally integrable function $\omega$ belongs to $\calA_2(\T^d)$ if
\[
[\omega]_{\calA_2} := \sup_{Q\subset\T^d} \lt(\frac1{|Q|}\int_Q\omega\,\dx\rt) \lt(\frac1{|Q|}\int_Q\omega^{-1}\,\dx\rt) <\infty,
\]
where the supremum is taken over all cubes $Q\subset\T^d$. We refer to $[\omega]_{\calA_2}$ as the $\calA_2$-characteristic of $\omega$.

\begin{theorem}[Conditional relaxation in any dimension]\label{thm:main1}
Let $(f,\rho,u)$ be a sufficiently regular solution to \eqref{main_sys} on $[0,\infty)$ satisfying
\[
H(0) +\frac12\intt \rho_0|u_0|^2\,\dx+\frac1{\gamma-1}\intt \rho_0^\gamma\,\dx<\infty.
\]
Assume that
\[
1-\frac1d\le\alpha\le\gamma-1.
\]
Let $r\in(1,2)$ satisfy
\[
r>\frac{2d}{d+2}  \text{ when $d\ge2$, and define }  
r^\star:=
\begin{cases}
\dfrac{dr}{d-r}, & d\ge2,\\[2mm]
\infty, & d=1.
\end{cases}
\]
Assume that there exist exponents $p,s>0$ such that
\bq\label{eq:mom-exp}
\frac2p+\frac2{r^\star}\le1, \quad s\ge\frac{\alpha r}{2-r},
\eq
and, when $d\ge2$ and $\gamma<d$,
\bq\label{eq:pot-exp-c}
2p'\le\frac{d\gamma}{d-\gamma}.
\eq
Suppose that there exists a constant $M>0$ such that
\bq\label{eq:pos-nm-c}
\sup_{t\ge0}\intt \lt(\rho(t,x)^p+\rho(t,x)^{-s}\rt)\dx\le M
\eq
and
\bq\label{eq:A2-dc}
\sup_{t\ge0}[\rho(t)^\alpha]_{\calA_2}\le M.
\eq
Then there exist constants $C>0$ and $\lambda>0$ such that in the diffusionless case $\sigma=0$ we have
\bq\label{eq:cond-rel-zero}
\inttr|v-\bar{v}|^2f\,\dx\dv+\intt\rho|u-\bar{m}|^2\,\dx+\|\rho-1\|_{L^1(\T^d)}^2+|\bar{v}-\bar{m}|^2\le Ce^{-\lambda t},
\eq
while for $\sigma>0$ we have relaxation to the momentum-centered Maxwellian equilibrium
\bq\label{eq:cond-rel-pos}
\inttr \lt| f - \mu_{1,\bar{v},\sigma} \rt| \dx\dv+\intt\rho|u-\bar{m}|^2\,\dx+\|\rho-1\|_{L^1(\T^d)}^2+|\bar{v}-\bar{m}|^2\le Ce^{-\lambda t}.
\eq
In the case $\sigma > 0$, in particular, the particle density $\rho_f$ tends to a uniform distribution.
\end{theorem}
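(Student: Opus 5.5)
The plan is to build a Lyapunov functional $\mathcal L=\mathcal E+\delta\,\mathcal K$, where $\mathcal E$ is a modulated energy and $\mathcal K$ is a small corrector. We will show that $\frac{\dd}{\dt}\mathcal L\le -\lambda\mathcal L$, and that $\mathcal L$ is comparable to the left-hand sides of \eqref{eq:cond-rel-zero}–\eqref{eq:cond-rel-pos}.

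\textbf{Step 1: modulated energy.} Total mass and momentum $\bar v+\bar m$ are conserved (Section \ref{ssec:cons}). So the total energy minus the energy of the conserved mean state,
\[
\mathcal E:=H+\tfrac12\intt\rho|u|^2\dx+\tfrac1{\gamma-1}\intt(\rho^\gamma-1-\gamma(\rho-1))\dx-\tfrac14|\bar v+\bar m|^2-\text{const},
\]
is equivalent to the following quantities:
\[
\inttr|v-\bar v|^2f \ (\text{or the relative entropy w.r.t. } \mu_{1,\bar v,\sigma}),\quad \intt\rho|u-\bar m|^2,\quad |\bar v-\bar m|^2,\quad \intt(\rho^\gamma-1-\gamma(\rho-1)).
\]
The last one controls $\|\rho-1\|_{L^1}^2$, and Csiszár–Kullback gives the $L^1$ statement for $f$.

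The energy identity gives $\frac{\dd}{\dt}\mathcal E=-\mathcal D$, where $\mathcal D$ consists of two parts. The viscous part is $\intt(2\rho^\alpha|Du|^2+\lambda(\rho)|\nabla\cdot u|^2)$; since $\alpha\ge1-\frac1d$ it is $\gtrsim\intt\rho^\alpha|Du|^2$. The drag part is $\inttr\rho_\e|\sigma\nabla_v\log f+v-u^\e_\e|^2f$. Because $\rho_\e\ge\min\theta_\e>0$, the drag part controls both $\inttr|v-u_\e^\e|^2f$ and, when $\sigma>0$, the Fokker–Planck dissipation.

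\textbf{Step 2: hypocoercivity.} The dissipation $\mathcal D$ misses three things: the spatial inhomogeneity of $\rho_f$ in the Vlasov–Fokker–Planck part, $\intt\rho|u-\bar m|^2$, and the density fluctuation $\rho-1$.

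For the velocity, we use Korn–Poincaré with weight. Since $\rho^\alpha\in\calA_2$, combining the weighted Korn inequality with Hölder and the bounds \eqref{eq:pos-nm-c} should give
\[
\intt\rho|u-\bar m|^2\lesssim \|\rho\|_{L^{p/2}}\|u-\langle u\rangle\|_{L^{r^\star}}^2\lesssim\|\nabla u\|_{L^r}^2
\]
(using $\frac2p+\frac2{r^\star}\le1$). Then
\[
\|\nabla u\|_{L^r}^2\le\Big(\intt\rho^\alpha|\nabla u|^2\Big)\,\|\rho^{-\alpha}\|_{L^{r/(2-r)}}
\]
by Hölder, which is where $s\ge\alpha r/(2-r)$ enters. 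Drag dissipation together with $\rho_\e\ge c_\e$ then couples $\bar v$ to $\bar m$.

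For the kinetic part with $\sigma>0$, we use a standard hypocoercive corrector, for example $\intt m_f\cdot\nabla\Delta^{-1}(\rho_f-1)$, which produces $-\|\rho_f-1\|^2$.

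\textbf{Step 3: the density fluctuation (main obstacle).} Take $\mathcal K=\intt\rho u\cdot\nabla\Delta^{-1}(\rho-1)$ or a Bogovskii-type variant, i.e. an elliptic corrector $w$ solving $\nabla\cdot w=\rho^\gamma-\overline{\rho^\gamma}$. Differentiating $\mathcal K$ produces $-\intt(\rho^\gamma-\overline{\rho^\gamma})(\rho-1)$, which controls the pressure potential. The error terms are of three kinds:
\begin{itemize}
\item Convective terms $\rho u\otimes u:\nabla w$, handled with the moment bounds; \eqref{eq:pot-exp-c} gives the Sobolev integrability of $\rho$.
\item Drag terms, bounded by $\mathcal D$.
\item The viscous term $\intt\rho^\alpha Du:\nabla w$. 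Here the key tool is weighted Calderón–Zygmund: for $\omega=\rho^\alpha\in\calA_2$ with characteristic $\le M$,
\[
\intt\omega^{-1}\cdot\omega^2|\nabla w|^2\lesssim_M\intt\omega^{-1}|\cdot|^2
\]
for the source, i.e. $\|\nabla w\|_{L^2(\omega)}$ is controlled via boundedness of Riesz transforms on $L^2(\omega)$. Cauchy–Schwarz then gives
\[
\Big|\intt\rho^\alpha Du:\nabla w\Big|\le\Big(\intt\rho^\alpha|Du|^2\Big)^{1/2}\Big(\intt\rho^\alpha|\nabla w|^2\Big)^{1/2}.
\]
The second factor is bounded by a weighted norm of the source, controlled by the pressure potential using $\alpha\le\gamma-1$ and the density bounds.
\end{itemize}
The $\partial_t w$ term requires the continuity equation and again weighted estimates. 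I expect this viscous/time-derivative error, made uniform in time via $M$, to be the hardest step.

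\textbf{Step 4: closing.} Choose the small weights hierarchically (hypocoercive corrector, then density corrector) so that $\mathcal L\simeq\mathcal E$ and $\frac{\dd}{\dt}\mathcal L\le-\lambda\mathcal L$. Grönwall then gives exponential decay. For $\sigma=0$ the same argument applies without the entropy terms, yielding monokinetic alignment.
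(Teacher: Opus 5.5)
\textbf{The gap is in the case $\sigma>0$.} The term $\intt m_f\cdot\nabla\Delta^{-1}(\rho_f-1)\,\dx$ is the macroscopic corrector of $L^2$-hypocoercivity. It is designed to be paired with $\|f-\mu_{1,\bar v,\sigma}\|_{L^2(\mu_{1,\bar v,\sigma}^{-1})}^2$, which is neither part of the energy law of the coupled system nor assumed finite. It does not fit an energy built on the relative entropy, for two reasons.

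First, it is not bounded by $\calE$. The relative entropy controls $\rho_f-1$ only in $L^1$ (and $L\log L$). For $d\ge2$, take a local Maxwellian whose density has a spike of fixed small mass at scale $\delta\to0$, with local mean velocity aligned with $\nabla\Delta^{-1}(\rho_f-1)$. Then $|\intt m_f\cdot\nabla\Delta^{-1}(\rho_f-1)\,\dx|/\calH^{\bar v}$ is unbounded. So $\mathcal L\simeq\mathcal E$ fails, and $\mathcal L$ need not even be positive.

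Second, the time derivative of the corrector produces, besides $-\sigma\|\rho_f-1\|_{L^2}^2$, two further terms:
\begin{itemize}
\item the $L^2_x$ norm of (the gradient part of) the momentum fluctuation;
\item the traceless second moment $\intr\bigl((v-\bar v)\otimes(v-\bar v)-\sigma\mathbb I\bigr)f\,\dv$ tested against $\nabla^2\Delta^{-1}(\rho_f-1)$.
\end{itemize}
The available dissipation controls these only with the weight $\rho_f^{-1}$, e.g.\ $\intt|m_f-\rho_f\bar v|^2\rho_f^{-1}\,\dx\le\calI_{vv}^{\bar v}$. Closing would therefore require uniform-in-time $L^\infty_x$ bounds on $\rho_f$ and on $\intr|v|^2f\,\dv$, which are not among the hypotheses. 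So your Step 4 does not close when $\sigma>0$; in $d=1$ the first obstruction disappears, but the second remains.

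\textbf{What the paper does instead.} It runs Villani's $A^*A+B$ scheme at the level of Fisher information. After centering at $\bar v$, $h=f/\mu$ solves $h_t+\rho_\e A^*Ah+Bh+A^*(\mathrm b h)=0$ with $\mathrm b=\rho_\e(\bar v-u_\e^\e)+\bar v_t$. The paper derives differential inequalities for $\calI_{vv}$, $\calI_{xv}$, $\calI_{xx}$. The non-constant coefficient $\rho_\e$ is handled through $\|\nabla\rho_\e\|_{L^\infty}$. An exact cancellation reduces the drift contribution to $\frac{\dd}{\dt}\calI_{vv}$ to $-2(\mathrm b\cdot u_f)_{\rho_f}$. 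Besides terms absorbed by $\calI_{vv}$, only $\|\mathrm b-\bar{\mathrm b}\|_{L^2(\rho_f)}^2$ and $\|\nabla_x\mathrm b\|_{L^2(\rho_f)}^2$ survive, and both are bounded by $C_{\e,M}\calD$ thanks to the mollification.

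Then $Y=\calI_{vv}+\frac12\calI_{xv}+\calI_{xx}+C\calE^\delta$ satisfies $\frac{\dd}{\dt}Y\lesssim-Y$, because $\calI_{vv}+\calI_{xx}$ dominates $\calH^{\bar v}$ by the log-Sobolev inequality. Note that $Y$ is not comparable to $\calE$: one pays with regularity instead of pointwise bounds on $\rho_f$. Csisz\'ar--Kullback then gives the $L^1$ statement.

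\textbf{The rest of your plan matches the paper and settles $\sigma=0$.} This covers the modulated energy, H\"older against $\rho^{-s}$, the corrector $\intt\rho(u-\bar m)\cdot\nabla\Phi\,\dx$, and weighted Calder\'on--Zygmund for $\rho^\alpha\in\calA_2$ together with $\alpha\le\gamma-1$. Two details need fixing:
\begin{itemize}
\item At the endpoint $\alpha=1-\frac1d$, the viscous dissipation controls only $\intt\rho^\alpha|D^0u|^2\,\dx$, so you need a trace-free Korn inequality. The paper uses the unweighted one in $L^r$ before H\"older.
\item The role of \eqref{eq:pot-exp-c} is to yield $|\calX|\lesssim\calE$ through $\|\nabla\Phi\|_{L^{2p'}}\lesssim\|\rho-1\|_{L^\gamma}$; it is not used for the convective terms.
\end{itemize}
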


\begin{remark} 
The conclusion of Theorem \ref{thm:main1} can be strengthened by using the power bounds assumption in \eqref{eq:pos-nm-c}. Indeed, for every $1\le q<p$, interpolation between $L^1(\T^d)$ and $L^p(\T^d)$ yields
\[
\|\rho(t)-1\|_{L^q(\T^d)}\le\|\rho(t)-1\|_{L^1(\T^d)}^{\tau}\|\rho(t)-1\|_{L^p(\T^d)}^{1-\tau},
\quad
\tau:=\frac{p-q}{q(p-1)}.
\]
Consequently, for every $1\le q<p$, there exist constants $C_q>0$ and $\lambda_q>0$ such that
\[
\|\rho(t)-1\|_{L^q(\T^d)}^2\le C_qe^{-\lambda_qt}.
\]
In particular, if $p>2$, then
\[
\|\rho(t)-1\|_{L^2(\T^d)}^2\le C_2e^{-\lambda_2t}.
\]
\end{remark}

\begin{remark}[Asymptotic velocity and mono-kinetic relaxation] For $\sigma>0$, Theorem \ref{thm:main1} identifies the limiting kinetic profile as the Maxwellian equilibrium. In the diffusionless case $\sigma=0$, one instead expects mono-kinetic relaxation in velocity, although not in the strong $L^1$ topology. Let us elaborate on this point.
   
By conservation of the total momentum (see Section \ref{ssec:cons} below), we have
\[
\bar{v}(t)+\bar{m}(t) = \inttr vf_0\,\dx\dv+\intt \rho_0u_0\,\dx =:P_0.
\]
Since Theorem \ref{thm:main1} gives $|\bar{v}(t)-\bar{m}(t)|\leq Ce^{-\lambda t}$, it follows that
\[
\bar{v}(t)\to v_\infty, \quad \bar{m}(t)\to v_\infty, \quad v_\infty:=\frac12P_0,
\]
and the convergence is exponential.

Furthermore, the kinetic distribution relaxes to a mono-kinetic state in velocity. Indeed, for any bounded Lipschitz test function $\varphi=\varphi(x,v)$ with $\|\varphi\|_{L^\infty}\leq1$ and $\|\varphi\|_{\mathrm{Lip}}\leq1$, we have
\begin{align*}
\lt| \inttr \varphi(x,v)f(t,x,v)\,\dx\dv - \intt \varphi(x,\bar{v}(t))\rho_f(t,x)\,\dx \rt| & \leq \inttr |v-\bar{v}(t)|f(t,x,v)\,\dx\dv \\
& \leq \lt( \inttr |v-\bar{v}(t)|^2f(t,x,v)\,\dx\dv \rt)^{\frac12}.
\end{align*}
Thus
\[
\dd_{\rm BL}\lt(f(t,x,v)\,\dx\dv,\, \rho_f(t,x)\delta_{\bar{v}(t)}(\dv)\dx\rt) \leq Ce^{-\lambda t}.
\]
Using $\bar{v}(t)\to v_\infty$, we also obtain
\[
\dd_{\rm BL}\lt(f(t,x,v)\,\dx\dv,\, \rho_f(t,x)\delta_{v_\infty}(\dv)\dx\rt) \leq Ce^{-\lambda t}.
\]
\end{remark}

\begin{remark}[Asymptotic traveling profile of $\rho_f$] Again, for $\sigma>0$, the only possible limiting distribution of the particle density $\rho_f$ is constant. For $\sigma = 0$, such profile is not the only possibility.  In this case exponential relaxation implies that the particle density converges weakly to a traveling wave. Indeed, integrating the kinetic equation with respect to $v$ gives
\bq\label{eq:conti}
\pa_t\rho_f+\nabla\cdot m_f=0.  
\eq
Since $\bar{v}(t)\to v_\infty$ and
\bq\label{eq:kin_dec}
\intt |m_f-\rho_fv_\infty|\,\dx \leq \inttr |v-\bar{v}|f\,\dx\dv + |\bar{v}-v_\infty| \leq Ce^{-\lambda t},
\eq
we have
\[
\pa_t\rho_f+v_\infty\cdot\nabla\rho_f
=
-\nabla\cdot(m_f-\rho_fv_\infty),
\]
with an exponentially decaying right-hand side in a negative norm.

Equivalently, we may formulate this convergence in a moving frame. Set
\[
\widetilde\rho_f(t,x):=\rho_f(t,x+v_\infty t).
\]
Then, for any $\varphi\in W^{1,\infty}(\T^d)$, by the change of variables $y=x+v_\infty t$, we have
\[
\intt \widetilde\rho_f(t,x)\varphi(x)\,\dx = \intt \rho_f(t,y)\varphi(y-v_\infty t)\,\dy .
\]
Differentiating in time and using \eqref{eq:conti}, we obtain
\begin{align*}
\frac{\dd}{\dt}
\intt \widetilde\rho_f(t,x)\varphi(x)\,\dx &= \frac{\dd}{\dt} \intt \rho_f(t,y)\varphi(y-v_\infty t)\,\dy \\
&= \intt \pa_t\rho_f(t,y)\varphi(y-v_\infty t)\,\dy -\intt \rho_f(t,y)v_\infty\cdot\nabla\varphi(y-v_\infty t)\,\dy \\
&= \intt m_f(t,y)\cdot\nabla\varphi(y-v_\infty t)\,\dy -\intt \rho_f(t,y)v_\infty\cdot\nabla\varphi(y-v_\infty t)\,\dy \\
&= \intt \lt(m_f-\rho_f v_\infty\rt)(t,y) \cdot\nabla\varphi(y-v_\infty t)\,\dy .
\end{align*}
Thus, by \eqref{eq:kin_dec}, 
\bq\label{eq:rho-f-mov}
\lt| \frac{\dd}{\dt} \intt \widetilde\rho_f(t,x)\varphi(x)\,\dx \rt| \leq \|m_f-\rho_f v_\infty\|_{L^1(\T^d)} \|\nabla\varphi\|_{L^\infty(\T^d)} \leq
Ce^{-\lambda t}\|\nabla\varphi\|_{L^\infty(\T^d)}.
\eq
This estimate implies that $\widetilde\rho_f(t)$ is Cauchy in the dual space $(W^{1,\infty}(\T^d))'$. Indeed, for $0<s<t$, integrating \eqref{eq:rho-f-mov} over $[ s,t ]$ gives
\[
\lt| \intt \lt(\widetilde\rho_f(t,x)-\widetilde\rho_f(s,x)\rt) \varphi(x)\,\dx \rt| \leq C\lt(\int_s^t e^{-\lambda \tau}\,\dd\tau \rt) \|\nabla\varphi\|_{L^\infty(\T^d)} \leq Ce^{-\lambda s}\|\varphi\|_{W^{1,\infty}(\T^d)}.
\]
Taking the supremum over $\|\varphi\|_{W^{1,\infty}(\T^d)}\leq1$, we obtain
\[
\|\widetilde\rho_f(t)-\widetilde\rho_f(s)\|_{(W^{1,\infty})'} \leq Ce^{-\lambda s}.
\]
Hence there exists a limit $\rho_f^\infty\in (W^{1,\infty}(\T^d))'$ such that
\[
\widetilde\rho_f(t)\to \rho_f^\infty \quad \text{in }(W^{1,\infty}(\T^d))' \quad\text{as }t\to\infty.
\]
Since each $\widetilde\rho_f(t)$ is a nonnegative measure with total mass one, the limit $\rho_f^\infty$ is also a probability measure on $\T^d$.

Equivalently, for every $\varphi\in W^{1,\infty}(\T^d)$,
\[
\intt \rho_f(t,x)\varphi(x-v_\infty t)\,\dx \to \intt \varphi(x)\,\rho_f^\infty(\dx) \quad\text{as }t\to\infty.
\]
In this sense,
\[
\rho_f(t,x)\rightharpoonup \rho_f^\infty(x-v_\infty t)
\]
in the moving frame. Thus, in the absence of spatial diffusion, the particle density need not become spatially homogeneous; rather, it converges weakly to a profile travelling with the asymptotic velocity $v_\infty$.
\end{remark}

%
%
%
%
%
\subsubsection{BD entropy weak solutions and relaxation}
Theorem \ref{thm:main1} is conditional in the sense that it assumes uniform positive and negative density moment bounds, together with the weighted
$\calA_2$ condition. These assumptions replace uniform pointwise upper and lower bounds on $\rho$ by integral and weighted density conditions, but they
do not follow from the free energy estimate alone. In the admissible low-dimensional regimes considered below, they can be verified by using the additional entropy structure associated with the Bresch--Desjardins relation \eqref{eq:bd-rel}. This leads to our second result, which concerns global finite-energy BD entropy weak solutions and their relaxation.

We introduce the energy and dissipation functionals used in the weak formulation. We suppress their time dependence in the notation whenever
no confusion can arise. 

The fluid part of the free energy is
\[
E := \intt \lt[ \frac12\rho|u|^2+\pi(\rho) \rt]\dx,
\]
where the pressure potential $\pi$ is given by
\[
\pi(r) = r\int_1^r\frac{s^\gamma-1}{s^2}\,\ds = \frac{r^\gamma-\gamma r+\gamma-1}{\gamma-1}, \quad r\ge0.
\]
We define
\[
\calE_{\rm free}:=H +E.
\]

The drag dissipation is
\[
\calD_{\rm drag} := \inttr \frac{\rho_\e}{f} |\sigma\nabla_v f+(v-u_\e^\e)f |^2 \,\dx\dv.
\]
When $\sigma=0$, this reduces to
\[
\calD_{\rm drag}= \inttr \rho_\e |v-u_\e^\e|^2 f\,\dx\dv.
\]
The viscous dissipation is
\[
\calD_{\rm vis}:= \intt \lt( 2\nu(\rho)|D u|^2 + \lambda(\rho)|\nabla\cdot u|^2 \rt)\dx.
\]

We also introduce the BD entropy quantities. Let $\phi$ be determined by
\[
\phi'(\rho)=\frac{2\nu'(\rho)}{\rho} = 2\alpha\rho^{\alpha-2}.
\]
Equivalently,
\bq\label{eq:def-phi}
\sqrt{\rho}\nabla\phi(\rho)
=
\begin{cases}
\dfrac{2\alpha}{\alpha-\frac12}\nabla\rho^{\alpha-\frac12}, & \alpha\neq\frac12,\\[2mm]
\nabla\log\rho, & \alpha=\frac12.
\end{cases}
\eq
The BD entropy is defined by
\[
\calE_{\rm BD} := H + \intt \lt[ \frac12\rho|u+\nabla\phi(\rho)|^2+\pi(\rho) \rt]\dx.
\]
The corresponding BD and rotational dissipations are
\[
\calD_{\rm BD}:= \intt \nabla p(\rho)\cdot\nabla\phi(\rho)\,\dx, \quad \calD_{\rm rot} := 2\intt \nu(\rho)|A u|^2\,\dx,
\]
where
\[
A u := \frac12\lt(\nabla u-\nabla u^{\mathsf T}\rt).
\]
Finally, for $\eta>0$, we set
\[
\calE_\eta:= \calE_{\rm free} +\eta\calE_{\rm BD}.
\]

We now state the existence of weak solutions result. The definition is formulated for both the diffusionless case $\sigma=0$ and the diffusive case $\sigma>0$. When $\sigma=0$, the kinetic component is naturally allowed to be measure-valued, while for $\sigma>0$ we work in the corresponding
finite-entropy class. The diffusive case is further discussed in Remark
\ref{rem:diffusive-case} below and in Section \ref{ssec:diff-ext}.

We denote by $\calP_2(\T^d\times\R^d)$ the space of probability measures on $\T^d\times\R^d$ with finite second velocity moment:
\[
\calP_2(\T^d\times\R^d) := \lt\{ f\in\calP(\T^d\times\R^d): \inttr |v|^2 f\,\dx\dv<\infty \rt\}.
\]
Since $\T^d$ is compact, the boundedness of the second moment with respect to any product distance on $\T^d\times\R^d$ is equivalent to the boundedness of the second velocity moment above. We write $W_2$ for the corresponding quadratic Wasserstein distance. When the kinetic component is only measure-valued, expressions such as $\inttr \varphi f\,\dx\dv$ are understood as duality pairings.  
For $1\le p<\infty$ and $q\ge0$, we use the convention
\[
\|g\|_{L^p_q}^p := \inttr \langle v\rangle^q |g(x,v)|^p\,\dx\dv, \quad \langle v\rangle=(1+|v|^2)^{\frac12}.
\]

\begin{definition}\label{def:weak-sol}
Fix $\e>0$, $\sigma\ge0$, and $T>0$. A triple $(f,\rho,u)$ is called a finite-energy BD entropy weak solution to \eqref{main_sys}  on $[0,T]$ if the following properties hold.

\medskip
\noindent
\textup{(i) Regularity.}
We have
\[
f\in
\begin{cases}
L^\infty(0,T;\calP_2(\T^d\times\R^d)),
& \sigma=0,\\[1mm]
L^\infty(0,T;L^1_2\cap L\log L(\T^d\times\R^d)),
& \sigma>0,
\end{cases}
\quad \rho\in L^\infty(0,T;L^\gamma_+(\T^d)),\quad \sqrt{\rho}\,u\in L^\infty(0,T;L^2(\T^d)),
\]  
\[
\nabla\rho^\alpha\in L^2(0,T;L^2(\T^d)),\quad \rho^{\frac{\alpha}{2}}\nabla u\in L^2(0,T;L^2(\T^d)), \quad \text{and} \quad \sqrt{\rho}\lt(u+\nabla\phi(\rho)\rt)\in L^\infty(0,T;L^2(\T^d)).
\]
 
\medskip
\noindent
\textup{(ii) Kinetic equation.}
For every $\varphi\in C_c^\infty([0,T)\times\T^d\times\R^d)$, we have
\begin{align*}
&\int_0^T\inttr f\lt(\pa_t\varphi+v\cdot\nabla_x\varphi\rt)\,\dx\dv\dt +\inttr f_0\varphi(0)\,\dx\dv   +\sigma\int_0^T\inttr
\rho_\e f\Delta_v\varphi\,\dx\dv\dt \\ 
&\quad = \int_0^T\inttr \rho_\e(v-u_\e^\e)f\cdot\nabla_v\varphi\,\dx\dv\dt.
\end{align*}
When $\sigma=0$, all phase-space integrals involving $f$ and $f_0$ are understood in the measure sense.

\medskip
\noindent
\textup{(iii) Continuity equation.}
For every $\psi\in C_c^\infty([0,T)\times\T^d)$, we have
\[
\int_0^T\intt \lt(\rho\pa_t\psi+\rho u\cdot\nabla\psi\rt)\,\dx\dt +\intt \rho_0\psi(0)\,\dx=0.
\]

\medskip

\noindent
\textup{(iv) Momentum equation.}
For every $\Psi\in C_c^\infty([0,T)\times\T^d;\R^d)$, we have
\begin{align*}
&\int_0^T\intt \lt(\rho u\cdot\pa_t\Psi+\rho u\otimes u:\nabla\Psi+\rho^\gamma\nabla\cdot\Psi\rt)\,\dx\dt -\int_0^T\intt \mathbb S(\rho,u):\nabla\Psi\,\dx\dt \\
&\quad +\int_0^T\intt \rho F_\e\cdot\Psi\,\dx\dt +\intt \rho_0u_0\cdot\Psi(0)\,\dx=0.
\end{align*}

\medskip

\noindent
\textup{(v) Free energy inequality.}
For almost every $s\in[0,T]$ and every $t\in[s,T]$,
\[
\calE_{\rm free}(t) +\int_s^t\lt(\calD_{\rm vis}(\tau)+\calD_{\rm drag}(\tau)\rt)\,\dd\tau \le \calE_{\rm free}(s).
\]

\medskip

\noindent
\textup{(vi) Combined BD entropy inequality.}
There exists $\eta>0$ such that, for almost every $t\in[0,T]$,
\[
\calE_\eta(t) + \int_{0}^t \lt[ \calD_{\rm vis}(\tau) + \frac12\calD_{\rm drag}(\tau) + \frac{\eta}{2}\calD_{\rm BD}(\tau) + \eta\calD_{\rm rot}(\tau) \rt]\dd\tau \le \calE_\eta( 0 ).
\]
\end{definition}
 
\begin{theorem}\label{thm:weak-exist}
Fix $\e>0$  and $\sigma\ge0$. Suppose that one of the following conditions holds:
\bq\label{eq:admi-case}
\begin{cases}
d=1, \quad 0<\alpha\le\frac12, \quad \gamma=\alpha+1, \\[1mm]
d=2, \quad \alpha=\frac12, \quad \gamma=\frac32.
\end{cases}
\eq
Let the initial data satisfy
\[
f_0\in
\begin{cases}
\calP_2(\T^d\times\R^d),
& \sigma=0,\\[1mm]
L^1_2\cap L\log L(\T^d\times\R^d),
& \sigma>0,
\end{cases}
\quad \rho_0>0\quad\text{a.e. in }\T^d, \quad \rho_0\in L^\gamma(\T^d), \quad \sqrt{\rho_0}u_0\in L^2(\T^d),
\]
and assume the initial BD-compatibility condition 
\[ 
\sqrt{\rho_0}\nabla\phi(\rho_0) \in L^2(\T^d). 
\]  
Then there exists a global finite-energy BD entropy weak solution $(f,\rho,u)$ to \eqref{main_sys}  in the sense of Definition \ref{def:weak-sol}.  Moreover, any such solution  satisfies the density bounds \eqref{eq:pos-nm-c} and \eqref{eq:A2-dc} required in the relaxation argument. Consequently, in the diffusionless case $\sigma=0$ any such solution flocks and synchronizes in the sense of \eqref{eq:cond-rel-zero}. In the diffusive case $\sigma>0$,  any such solution flocks and relaxes in the sense of \eqref{eq:cond-rel-pos} provided the initial condition has finite higher moment:
\[
f_0\in L^2_q(\T^d\times\R^d) \quad\text{for some }q\geq d+4,
\]
in which case the kinetic component regularizes instantaneously: for every $0<t_0<T<\infty$ and every $m\in\N$,
\[
f\in C([t_0,T];H_q^m(\T^d\times\R^d)), \quad \pa_t f\in C([t_0,T];H_{q-3}^{m-1}(\T^d\times\R^d)).
\]
\end{theorem}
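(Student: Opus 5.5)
The proof has three parts: existence by approximation, verification of the density hypotheses of Theorem \ref{thm:main1}, and passage of the relaxation estimate to weak solutions.

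\textbf{Existence.} I would build solutions through a regularized system and then pass to the limit.

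\emph{Regularization.} I follow the Vasseur--Yu / Li--Xin scheme for degenerate viscosities. I add drag terms $r_0 u + r_1\rho|u|^2u$ and a quantum-type capillarity term to the fluid equation, and velocity truncation or a cutoff to the kinetic equation when $\sigma>0$. The averaged coupling helps here: $\rho_\e\ge\min\theta_\e>0$, and $u^\e_\e$ is controlled by $\|\rho u\|_{L^1}$, smoothly in $x$. So for each regularized fluid field the kinetic equation is linear with smooth coefficients. I would solve the coupled system by a fixed point (Schauder) on a short interval and extend it globally using the a priori estimates.

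\emph{Uniform estimates.} The two a priori identities are the free energy equality and the BD entropy. For the BD entropy I test the momentum equation with $\nabla\phi(\rho)$, using \eqref{eq:bd-rel}. The new coupling term $\eta\intt \rho F_\e\cdot\nabla\phi(\rho)\,\dx$ is the only non-standard piece. I would bound it by $\frac12\calD_{\rm drag}$ plus $C\eta^2\|\rho\nabla\phi(\rho)\|^2$, exploiting that $\rho_\e$ is bounded below and that $\|F_\e\|_{L^\infty}\lesssim_\e \calD_{\rm drag}^{1/2}$. The remainder $\eta^2\intt\rho|\nabla\phi|^2$ is part of $\calE_{\rm BD}$, so with $\eta$ small, Gronwall together with the free energy bound closes estimate (vi).

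\emph{Compactness.} The BD bounds give $\nabla\rho^{\alpha-1/2}\in L^\infty L^2$, hence strong compactness of $\rho$. Mellet--Vasseur-type estimates, or renormalized formulations in $d\le2$, give strong convergence of $\sqrt\rho u$. The mollified coupling terms converge because convolution gains regularity. The kinetic part converges weakly, narrowly as measures when $\sigma=0$, and the coupling $\rho_\e(v-u_\e^\e)f$ passes because $u^\e_\e$ converges strongly and uniformly in $x$. The inequalities (v) and (vi) follow by lower semicontinuity.

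\textbf{Density hypotheses.} Next I verify \eqref{eq:pos-nm-c} and \eqref{eq:A2-dc} from $\calE_\eta(t)\le\calE_\eta(0)$, which gives $\sup_t\|\sqrt\rho\nabla\phi(\rho)\|_{L^2}<\infty$.

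\emph{Case $d=1$, $\alpha\le\frac12$.} Here $w=\rho^{\alpha-1/2}$ satisfies $\sup_t\|\partial_x w\|_{L^2}<\infty$, with $w\ge0$ and exponent $\alpha-\frac12\le0$. For $\alpha<\frac12$, Sobolev embedding in one dimension gives $w\in L^\infty$ once I control its mean. The mean is controlled because $\intt\rho=1$ forces $\rho\ge1$ somewhere, i.e. $w\le1$ somewhere, and then $\sup w\le 1+\|\partial_x w\|_{L^1}$. This yields a uniform pointwise lower bound $\rho\ge c>0$, so all negative moments are finite. For the upper bound I use $\rho=w^{-1/(1/2-\alpha)}$ with $\partial_x w\in L^2$: since $\rho\le C$ somewhere, I need a lower bound on $w$, and $\rho^{1/2-\alpha}$ should be handled symmetrically. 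Equivalently, I use $\nabla\rho^\alpha=\sqrt\rho\cdot\rho^{\alpha-1/2}\nabla\rho\cdot$const, which with $\rho\in L^\infty_tL^\gamma$ bounds $\rho^\alpha\in L^\infty_t W^{1,1}\subset L^\infty$. For $\alpha=\frac12$ I use $\log\rho\in L^\infty_t H^1$ directly. The outcome is $c\le\rho\le C$ uniformly, so both \eqref{eq:pos-nm-c} and \eqref{eq:A2-dc} are immediate.

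\emph{Case $d=2$, $\alpha=\frac12$.} I have $\nabla\log\rho$ and $\nabla\sqrt\rho$ in $L^\infty_tL^2$. Thus $\log\rho\in H^1\subset\mathrm{BMO}$, controlling the mean of $\log\rho$ via Jensen and mass one. John--Nirenberg then gives $\rho^{\pm\delta}\in L^1$ for some small $\delta$ proportional to $1/\|\log\rho\|_{\rm BMO}$. It also gives $\rho^{\beta}\in\calA_2$ whenever $\beta\|\log\rho\|_{\rm BMO}$ is small. More precisely, Trudinger--Moser gives $e^{c|\log\rho-\overline{\log\rho}|^2}\in L^1$, so all moments $\rho^{\pm k}$ are finite, and I would take that route to get arbitrary $p,s$. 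For the $\calA_2$ property of $\rho^{1/2}$ I apply Trudinger--Moser on each cube, with the BMO seminorm scale-invariant in $d=2$, to bound $\fint_Q e^{\pm\frac12(\log\rho-(\log\rho)_Q)}$.

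I expect this step to be the main obstacle. $H^1$ control gives smallness of oscillation only on small cubes, through absolute continuity of $|\nabla\log\rho|^2$, but that is not uniform in $t$. I would instead use the quantitative Trudinger estimate $\fint_Q e^{\beta|g-g_Q|}\le C e^{C\beta^2\|\nabla g\|_{L^2}^2}$, valid for every $\beta$ in two dimensions. This gives $[\rho^{1/2}]_{\calA_2}\le Ce^{C\|\nabla\log\rho\|_2^2}$ uniformly. Finally I check that the exponent constraints \eqref{eq:mom-exp} and \eqref{eq:pot-exp-c} are satisfiable: choose $r$ close to $2$ so that $r^\star$ is large, with $\alpha\in[1-\frac1d,\gamma-1]$ holding since $\frac12=\frac12$.

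\textbf{Relaxation for weak solutions.} Theorem \ref{thm:main1} is stated for regular solutions, so I would rerun its modulated-energy/hypocoercivity argument at the approximate level, where the constants depend only on $M$. The bound $M$ comes from the approximation-uniform BD estimates, so the decay passes to the limit by lower semicontinuity. Since the problem asserts the decay for \emph{any} such weak solution, I would rather show that the needed functional inequalities follow from (v), (vi) and the weak formulations in integrated (Gronwall-in-integral-form) versions. For $\sigma>0$, the higher-moment assumption $f_0\in L^2_q$ feeds a hypoelliptic regularity argument for the linear kinetic equation with coefficients $\rho_\e\ge c_\e$ and smooth $u^\e_\e$ (bounded by $\calE$). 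This gives the stated $H^m_q$ smoothing, which justifies the entropy computations on $[t_0,\infty)$.
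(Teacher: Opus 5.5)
\textbf{The BD entropy step has a genuine gap.} You bound $\eta\intt\rho F_\e\cdot\nabla\phi(\rho)\,\dx$ by $\frac12\calD_{\rm drag}+C_\e\eta^2\intt\rho|\nabla\phi(\rho)|^2\,\dx$ and absorb the last term into the energy. Via Gronwall this only gives $\calE_\eta(t)\le e^{C_\e\eta t}\calE_\eta(0)$.

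That is not inequality (vi) of Definition \ref{def:weak-sol}, which is non-increasing with no growth factor. So your construction does not produce a BD entropy weak solution.

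It also breaks your later steps. Your density verification starts from $\calE_\eta(t)\le\calE_\eta(0)$. The $\sup_{t\ge0}$ in \eqref{eq:pos-nm-c}--\eqref{eq:A2-dc}, and hence the relaxation of the constructed solution, needs a time-uniform bound on $\|\sqrt\rho\nabla\phi(\rho)\|_{L^2}$. With your estimate the constant $M$ grows like $e^{Ct}$. Taking $\eta$ small does not help, because the remainder is proportional to an energy, not to a dissipation.

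\textbf{Missing idea.} Absorb the coupling term into the BD dissipation $\calD_{\rm BD}=\intt\nabla p(\rho)\cdot\nabla\phi(\rho)\,\dx$, not into $\calE_{\rm BD}$. Young's inequality with a pointwise weight gives
\[
\Bigl|\intt\rho\nabla\phi(\rho)\cdot F_\e\,\dx\Bigr|\le\frac12\calD_{\rm BD}+\frac12\intt\frac{\rho^2\phi'(\rho)}{p'(\rho)}|F_\e|^2\,\dx,\qquad \frac{\rho^2\phi'(\rho)}{p'(\rho)}=\frac{2\alpha}{\gamma}\rho^{\alpha-\gamma+1}=\frac{2\alpha}{\gamma}.
\]
The last identity is exactly where $\gamma=\alpha+1$ is used. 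Your existence argument never uses this relation, which is a warning sign.

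Combine this with $\|F_\e\|_{L^\infty}^2\lesssim_\e\calD_{\rm drag}$. The remainder is then at most $\frac12\calD_{\rm BD}+C_\e\calD_{\rm drag}$. Choose $\eta C_\e\le\frac12$, so the drag dissipation from the free energy absorbs $\eta C_\e\calD_{\rm drag}$. This yields the dissipative, time-uniform inequality (vi).

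At the regularized level you must also check that the regularization does not spoil this weight, uniformly in the approximation parameters. In the paper's scheme the regularized viscosity adds a $\zeta\rho^{2-\gamma}$ term, which the $L^\gamma$ bound controls. Your Vasseur--Yu-type drag and capillarity terms would need the same check.

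\textbf{The rest of your plan is sound once (vi) is available.} The Morrey and Moser--Trudinger density bounds, the integrated compensating-functional argument, and hypoelliptic regularity for $\sigma>0$ essentially follow the paper's route. Your $W^{1,1}$ bound on $\rho^\alpha$ for the one-dimensional upper bound is a neat alternative to the paper's mass argument.
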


\begin{remark}
The assumptions in Theorem \ref{thm:weak-exist} imply 
\[
\calE_{\rm free}(0)+\calE_{\rm BD}(0)<\infty. 
\]
The singular part of the BD energy requires
\[
\nabla\rho_0^{\alpha-\frac12}\in L^2(\T^d) \quad\text{if }\alpha\neq\frac12, \quad \nabla\log\rho_0\in L^2(\T^d) \quad\text{if }\alpha=\frac12.
\]
This condition also rules out vacuum in the appropriate sense. In the one-dimensional cases $0<\alpha\le\frac12$, the singular BD control, together with the mass constraint, implies that the initial density is bounded away from both zero and infinity. In the two-dimensional borderline case $d=2$, $\alpha=\frac12$, the condition on $\nabla\log\rho_0$ excludes vacuum in the above sense, although no pointwise positive lower bound is asserted.
\end{remark}

\begin{remark}[Diffusive case]\label{rem:diffusive-case}
Although Theorem \ref{thm:weak-exist} is stated for every $\sigma\geq0$, we give the detailed proof in the more difficult diffusionless case $\sigma=0$, where the kinetic component is naturally allowed to be measure-valued. In the diffusive case $\sigma>0$, the same construction extends to the finite-entropy class associated with $H$. More precisely, if
\[
f_0\in L^1_2\cap L\log L(\T^d\times\R^d),
\]
then the kinetic entropy estimate gives
\[
f\in L^\infty_{\rm loc} \lt(0,\infty;L^1_2\cap L\log L(\T^d\times\R^d)\rt).
\]
The fluid compactness argument and the BD entropy estimates are unchanged, while the velocity diffusion contributes the nonnegative kinetic entropy dissipation contained in $\calD_{\rm drag}$.
The hypoellipticity of the kinetic equation is addressed  Section \ref{ssec:diff-ext}.
\end{remark}

%
%
%
%
%

\subsection{Related works}

Kinetic--fluid systems with Brinkman-type drag forces have been studied in a variety of settings. We begin by reviewing results concerning existence and well-posedness. In the incompressible case, global weak solutions for Vlasov--Stokes, Vlasov--Navier--Stokes, and Vlasov--Fokker--Planck--Navier--Stokes systems were constructed in \cite{BDGM09,CKL11,Ham98,MV07}. Further developments include weak solutions in time-dependent domains \cite{BGM17}, inhomogeneous fluid models \cite{WY15}, and uniqueness issues in two dimensions \cite{HMM20uni}. These works provide a substantial theory for incompressible kinetic--fluid systems, whereas the compressible case presents additional difficulties due to the coupling with the fluid density and the possible occurrence of vacuum.

For compressible kinetic--fluid systems, several results are available under different structural assumptions. The compressible Vlasov--Fokker--Planck--Navier--Stokes system with constant viscosities was studied in \cite{MV08}. Global classical or strong solutions near equilibrium were obtained in \cite{CKL13,LMW17}. For the diffusionless Vlasov--compressible Navier--Stokes system, finite-time blow-up phenomena and singularity formation for classical solutions were studied in \cite{Cho17,CJ22}. There are also results for compressible kinetic--fluid systems with density-dependent drag or viscosity coefficients. In one space dimension, global well-posedness for the compressible Navier--Stokes--Vlasov system with the drag force $\rho(u-v)$ and viscosity $\mu(\rho)=\mu_0+\mu_1\rho^\beta$ was established in \cite{LS21}. For the corresponding Vlasov--Fokker--Planck coupling with density-dependent viscosity and density-dependent drag coefficient, global weak solutions, uniqueness, and regularity were obtained in \cite{LS22}. The present work differs from these results in that we construct finite-energy weak solutions for a multidimensional compressible fluid with degenerate density-dependent viscosities and an averaged Brinkman force.

We next turn to large-time behavior and relaxation. In the diffusionless case, the relevant relaxation mechanism is velocity alignment rather than convergence to a Maxwellian. For Vlasov--Navier--Stokes type systems, conditional alignment estimates and large-time behavior were obtained in \cite{CCK16,CK15}. Stability and relaxation for incompressible Vlasov--Navier--Stokes systems in bounded or periodic domains were studied in \cite{GHKM18,HMM20,HK22,EHKM21}. In small-data or strong-solution frameworks, exponential relaxation toward mono-kinetic states was established on the torus and in the whole space \cite{HMM20,HK22,Dan26}. More recently, large-time asymptotics of finite-energy weak solutions on the two-dimensional torus were obtained in \cite{DS26}, with algebraic decay for general finite-energy data and exponential decay under a smallness assumption. For compressible Vlasov--Navier--Stokes systems, exponential alignment estimates were proved in an a priori framework in \cite{Cho16}; related one-dimensional asymptotic and large-time results can be found in \cite{LS21,LS22,LSZ25}.

When velocity diffusion is present, the kinetic equation has a Fokker--Planck structure and relaxation toward Maxwellian equilibria can be studied through the hypocoercivity analysis \cite{TV2000,Villani}, adopted to Fokker-Planck models with drift and non-homogeneous diffusion as in \cite{Shv-CPAM,Shv-EA}. Classical solutions near equilibrium and decay estimates for Navier--Stokes--Vlasov--Fokker--Planck type systems were studied in \cite{GHMZ10,CDM11,LMW17}. In one-dimensional compressible settings, large-time convergence for Vlasov--Fokker--Planck couplings was obtained in \cite{LS22}. In contrast, in the diffusionless regime the kinetic distribution is not expected to converge strongly to a Maxwellian; instead, one seeks velocity alignment and, in general, mono-kinetic behavior. The present paper treats the diffusionless and diffusive regimes within the same averaged kinetic--fluid framework and proves quantitative relaxation at the level of finite-energy entropy solutions.

%
%
%
%
%

\subsection{Difficulties and strategy of the proof}

We explain the main difficulties and the strategy of the proof. The analysis has two closely related parts. The first part concerns the quantitative relaxation of sufficiently regular solutions. The second part concerns the construction of finite-energy entropy weak solutions and the verification that they satisfy the density conditions required in the relaxation argument.

%
%
%
%
%

\subsubsection{Relaxation and density coercivity}
The starting point of the relaxation analysis is the energy-dissipation structure of the coupled system. The drag dissipation controls the relaxation of the particle velocity toward the averaged fluid velocity, while the viscous dissipation controls the fluid velocity gradient. However, these physical dissipation terms do not directly give a coercive control of the density fluctuation $\rho-1$. This is a standard difficulty in the study of large-time behavior for compressible fluid models. Following the classical strategy for compressible Navier--Stokes type systems (see, for instance \cite{CJ21AML, CKKT25}), we introduce a compensating
functional associated with the elliptic problem
\[
-\Delta\Phi=\rho-1,\quad \intt \Phi\,\dx=0.
\]
The time derivative of this functional produces the missing density coercivity, and this coercive contribution is then combined with the physical energy dissipation.

This compensating argument itself is classical, and the main issue is not the introduction of the elliptic corrector. The essential point is to make the
argument work in a kinetic--fluid system with degenerate density-dependent viscosities and only finite-energy information on the fluid velocity. In many
large-time analyses, one assumes uniform pointwise bounds on the density, namely upper and lower bounds of the form
\[
0<c\le \rho(t,x)\le C.
\]
Such assumptions are very strong and are not naturally available at the level of entropy weak solutions considered in this paper. We thus formulate the
conditional relaxation estimate under weaker quantitative density assumptions: uniform positive and negative moment bounds \eqref{eq:pos-nm-c}
\[
\sup_{t\ge0}\intt \lt(\rho(t,x)^p+\rho(t,x)^{-s}\rt)\dx\le M
\]
and the weighted Muckenhoupt condition \eqref{eq:A2-dc}
\[
\sup_{t\ge0}[\rho(t)^\alpha]_{\calA_2}\le M.
\]
These assumptions encode enough information on large-density regions and near-vacuum regions to close the relaxation argument, while avoiding uniform
pointwise bounds on $\rho$.

The introduction of the $\calA_2$ framework is particularly important in two space dimensions. In one dimension, the BD entropy gives strong enough control
on the density to recover suitable upper and lower bounds. In two dimensions, however, such pointwise control is in general unavailable from the BD entropy.
Instead, the weaker $\calA_2$ condition allows us to use weighted Calder\'on--Zygmund estimates to the elliptic corrector and thereby control the viscous contribution in the compensating-functional estimate.  This is one of the main points of the long-time analysis in the paper.

The final step of the relaxation argument depends essentially on whether kinetic diffusion is present. In the diffusionless case $\sigma=0$, the kinetic part of the modulated energy is directly controlled by the drag dissipation, and the density assumptions above are sufficient to recover the full coercivity of the energy law. When $\sigma>0$, however, the kinetic free energy is a relative entropy, whereas the physical dissipation provides only Fisher information in the velocity variable. Since there is no direct diffusion in the spatial variable, this partial Fisher information does not by itself control the global relative entropy. The missing spatial coercivity must therefore be recovered through the hypocoercive interaction between transport and velocity diffusion.

To achieve this, we adapt Villani's $(A^*A+B)$-hypocoercivity framework \cite{Villani}. After shifting the velocity variable by the kinetic momentum, the kinetic equation takes the variable-coefficient Fokker--Planck form
\begin{equation}\label{e:fpgenericintro}
 h_t  + \st(t,x) A^* A h + Bh + A^*( \rmb(t,x) h) = 0,
\end{equation}
where both the thermalization coefficient $\st = \st(t,x)>0$ and the drift $\rmb = \rmb(t,x)\in \R^d$ are determined by the evolving fluid variables. This is not a direct application of the classical hypocoercivity theory: the coefficient $\st$ is spatially nonhomogeneous, while the drift contains the locally averaged kinetic--fluid interaction. At the Fisher-information level, these features generate additional commutator and drift contributions that must be controlled in terms of the physical dissipation.

We overcome this difficulty by deriving coupled differential estimates for the velocity, mixed, and spatial Fisher informations
\[
\calI_{vv},\quad \calI_{xv},\quad \calI_{xx},
\]
and combining them with the modulated energy and the density compensator. A key ingredient is a cancellation in the drift contribution, manifested in \eqref{e:J234=0}, which prevents the appearance of uncontrolled higher-order terms. Although a related cancellation was used for particular alignment models in \cite{Shv-EA}, we show that it extends to the much broader class of nonhomogeneous Fokker--Planck equations \eqref{e:fpgenericintro}. This allows us to close the full Fisher-information estimate and, together with the weighted density coercivity developed above, yields the exponential relaxation of the coupled diffusive system. The detailed hypocoercivity estimates are presented in Section \ref{s:hypo}.

%
%
%
%
%

\subsubsection{Bresch--Desjardins entropy and weighted density estimates}
The next challenge is to show that the density assumptions used in the conditional relaxation theorem are not artificial. The free energy estimate controls $\sqrt\rho u$, the pressure potential, and the physical dissipation, but it does not provide enough compactness or quantitative control of the density near vacuum. Here, the special algebraic relation between the shear and bulk viscosities becomes crucial. It yields the Bresch--Desjardins entropy estimate, which provides additional control on suitable powers of the density; see, for instance, \cite{BD07,BDZ15,BVY22,VY16}.

In the present kinetic--fluid system, this entropy estimate has to be combined with the kinetic energy or entropy inequality and with the averaged drag dissipation. This coupling creates extra terms that are absent in the pure compressible Navier--Stokes theory. The conservative form of the averaged Brinkman force is essential here: it is compatible with the total momentum balance and with the drag dissipation appearing in the free energy inequality. After combining the free energy and BD entropy estimates, we obtain uniform bounds that imply the density conditions required in the conditional relaxation theorem in the admissible low-dimensional regimes. Thus the BD entropy plays a dual role: it provides compactness for the construction of weak solutions and, at the same time, verifies the quantitative density framework needed for large-time relaxation.

%
%
%
%
%

\subsubsection{Entropy weak solutions and relaxation}
The weak-solution theory requires a careful approximation scheme that is compatible with the degenerate viscosity structure. Since the viscosity coefficients are density-dependent and may vanish near vacuum, the standard compactness theory for compressible Navier--Stokes equations with constant viscosity is not directly applicable. Moreover, the approximation scheme must preserve enough of the BD structure to obtain estimates that are uniform in the approximation parameters. For this reason, we use a BD-compatible regularization in the spirit of the approximation schemes developed for degenerate compressible Navier--Stokes equations, in particular \cite{BDZ15,BVY22}.

There is an additional difficulty caused by the kinetic coupling. In the regularized system, only the Brinkman force is averaged. The convective term in the fluid equation and the density-dependent viscosity terms are not simply replaced by fully mollified expressions. Thus one has to keep track of several nonlinear terms simultaneously: the compressible transport structure, the degenerate viscous stress, the kinetic drag force, and the approximate BD entropy. Thus, the compactness argument requires a careful combination of the effective velocity mechanism, compactness lemmas for the density, and stability properties of the averaged force. At the approximate level, we derive both the free energy inequality and a combined BD entropy inequality. These estimates yield uniform bounds for the fluid density, the effective velocity, and the kinetic second moment. The averaged form of the drag force is crucial in passing to the limit in the kinetic--fluid coupling. In particular, the lower bound on $\rho_\e$ ensures that the averaged velocity $u_\e^\e$ remains well defined even in the presence of vacuum, while the conservative structure of the force allows the energy inequality to pass to the limit.

Finally, the relaxation estimate must be justified at the level of entropy weak solutions. For smooth solutions, the compensating functional can be differentiated directly in time, but this pointwise computation is no longer available for weak solutions. We hence use an integrated version of the argument and pass to the limit by relying on the free energy inequality, the BD entropy inequality, and the density bounds obtained from the compactness argument. This yields exponential relaxation for entropy weak solutions in the diffusionless case. When $\sigma>0$, the weak construction extends to the finite-entropy
setting, and the additional hypoelliptic regularity is used to complete
the corresponding hypocoercive relaxation argument in Section
\ref{ssec:diff-ext}.

%
%
%
%
%

\subsection{Organization of the paper}

The rest of the paper is organized as follows. Section \ref{sec:energy} proves the conditional relaxation theorem, Theorem \ref{thm:main1}. Section
\ref{sec:bd-estimate} derives the BD entropy estimates and verifies the density assumptions \eqref{eq:pos-nm-c}--\eqref{eq:A2-dc} in the admissible low-dimensional regimes. Section \ref{sec:weak-sol} proves the existence assertion of Theorem \ref{thm:weak-exist} in the diffusionless case $\sigma=0$. Section \ref{sec:weak-relax-diff} proves the corresponding relaxation estimate and completes the existence and relaxation assertions in the diffusive case $\sigma>0$.  The appendices contain the construction of approximate initial data and the auxiliary augmented entropy estimate.

%
%
%
%
%

\section{Conditional relaxation}\label{sec:energy}

In this section we prove the conditional relaxation estimate stated in Theorem \ref{thm:main1}. We work at the level of sufficiently regular solutions and use a modulated energy method. The physical dissipation provides direct control of the kinetic and fluid velocity fluctuations and of the relative momentum between the two phases. To control the density fluctuation, we introduce a compensating functional, which recovers the coercivity not directly supplied by the physical dissipation. The density assumptions in Theorem \ref{thm:main1} enter exactly at this stage.

For a center $b\in\R^d$, we define the centered kinetic functional by
\[
\calH^b(t)
:=
\begin{cases}
\displaystyle \sigma\inttr f\log\frac{f}{\mu_{1,b,\sigma}}\,\dx\dv, & \sigma>0,\\[3mm]
\displaystyle \frac12\inttr |v-b|^2 f\,\dx\dv, & \sigma=0.
\end{cases}
\]
In particular, $\calH^{\bar v}$ is the relative entropy with respect to the Maxwellian centered at the kinetic momentum $\bar v$ when $\sigma>0$, while it is the kinetic fluctuation energy when $\sigma=0$. The modulated energy is defined by
\bq\label{e:moden}
\calE(t) := \calH^{\bar v}(t) + \frac12\intt \rho |u-\bar{m}|^2\,\dx + \frac14|\bar{v}-\bar{m}|^2 + \intt \pi(\rho)\,\dx .
\eq
We will prove the following energy identity:
\bq\label{e:slaw}
\frac{\dd}{\dt}\calE+\calD=0,
\eq
where
\[
\calD:=\calD_{\rm drag}+\calD_{\rm vis}.
\]

%
%
%
%
%
\subsection{Conservation laws}\label{ssec:cons}

Let us start by observing a few conservation laws that will be essential to the energy analysis. First, the traditional mass conservation, 
\[
\frac{\dd}{\dt}\inttr f\,\dx\dv = 0 , \quad \frac{\dd}{\dt} \intt \rho\,\dx = 0.
\]
Next, let us denote the contaminant and fluid momenta by
\[
\bar{v}(t) =\inttr vf\,\dx\dv, \quad \bar{m}(t) =\intt \rho u\,\dx .
\]
Directly from the system \eqref{main_sys} we obtain
\begin{equation*}\label{}
\begin{split}
\frac{\dd}{\dt}\bar{v}(t) & = -\inttr \rho_\e (v-u_\e^\e)f\,\dx\dv, \\
\frac{\dd}{\dt}\bar{m}(t) &  = \inttr \rho_\e(v-u_\e^\e)f\,\dx\dv.
\end{split}
\end{equation*}
Hence, the total momentum is conserved:
\[
\frac{\dd}{\dt}\lt(\bar{v}(t)+\bar{m}(t)\rt)=0.
\]

Next, we have conservation of the residual momentum energy
\begin{equation}\label{e:resid}
\frac{\dd}{\dt}\lt(\frac14|\bar{v}-\bar{m}|^2  -\frac12\lt(|\bar{v}|^2+|\bar{m}|^2\rt) \rt)  =\frac12(\bar{v}-\bar{m})\cdot(\bar{v}'-\bar{m}')  -\lt(\bar{v}\cdot \bar{v}'+\bar{m}\cdot \bar{m}'\rt) =0.
\end{equation}

%
%
%
%
%

 \subsection{Energy identity}

We start with an observation that the energy $\calE$ differs from the free energy 
$\calE_{\rm free}=H+E$ by the residual momentum contribution. Indeed, by the definitions of $H$ and $\calH^{\bar v}$, and by the mass normalization, we have
\[
\calH^{\bar v}=H-\frac12|\bar v|^2.
\]
Similarly,
\[
\frac12\intt \rho|u-\bar m|^2\,\dx = \frac12\intt \rho|u|^2\,\dx - \frac12|\bar m|^2.
\]
Thus,
\[
\calE = \calE_{\rm free} + \frac14|\bar v-\bar m|^2 - \frac12\lt(|\bar v|^2+|\bar m|^2\rt).
\]
The last two momentum terms form a conserved quantity by \eqref{e:resid}. Thus, it is enough to derive the free energy identity.

Recall that
\[
\calE_{\rm free}=H+E,
\]
where $H$ denotes the kinetic entropy when $\sigma>0$ and the kinetic energy when $\sigma=0$, and
\[
E = \intt \lt[ \frac12\rho|u|^2+\pi(\rho) \rt]\dx .
\]

Let us start with the entropy.   When $\sigma>0$, using
\[
\nabla_v\log\frac{f}{\mu_{1,0,\sigma}} = \frac{\nabla_v f}{f}+\frac{v}{\sigma},
\]
the kinetic equation in \eqref{main_sys} gives
\begin{equation*}\label{}
\begin{split}
\frac{\dd}{\dt}H &= -\inttr \frac{\rho_\e}{f} \lt( \sigma \nabla_v f + v f \rt) \cdot \lt(\sigma\nabla_v f+(v-u_\e^\e)f\rt) \,\dx\dv \\
& = -\inttr \frac{\rho_\e}{f} | \sigma\nabla_v f+(v-u_\e^\e)f |^2 \,\dx\dv - \inttr \rho_\e u_\e^\e \cdot \lt(\sigma\nabla_v f+(v-u_\e^\e)f\rt) \,\dx\dv\\
& = -\inttr \frac{\rho_\e}{f} | \sigma\nabla_v f+(v-u_\e^\e)f |^2 \,\dx\dv - \inttr (\rho u)_\e \cdot (v-u_\e^\e)f \,\dx\dv.
\end{split}
\end{equation*}
When $\sigma=0$, the same identity follows by testing the kinetic equation
with $\frac{|v|^2}2$. Hence, in both cases,
\bq\label{eq:kin-free-sig}
\frac{\dd}{\dt}H  + \calD_{\rm drag}=  - \inttr (\rho u)_\e \cdot (v-u_\e^\e)f \,\dx\dv.
\eq

We next derive the fluid energy balance. First, with regard to the pressure potential we observe the following properties:
\[
\pi(1)=\pi'(1)=0, \quad \pi''(r)=\frac{p'(r)}{r}=\gamma r^{\gamma-2}.
\]
In particular, $\pi$ is nonnegative and strictly convex on $(0,\infty)$. Moreover,
\[
r\pi'(r)-\pi(r)=p(r)-p(1)=r^\gamma-1.
\]
Thus, along smooth solutions of the continuity equation, we have
\bq\label{eq:h-ide}
\frac{\dd}{\dt}\intt \pi(\rho)\,\dx=-\intt p(\rho)\nabla\cdot u\,\dx.
\eq

Multiplying the fluid momentum equation in \eqref{main_sys} by $u$ and using \eqref{eq:h-ide}, we get
\[
\frac{\dd}{\dt} E + \intt\lt(2\nu(\rho)|D u|^2+\lambda(\rho)|\nabla \cdot u|^2\rt)\dx  = \intt \rho u\cdot \lt(\intr (v-u_\e^\e)f\,\dv\rt)_\e \dx .
\]
By the symmetry of the mollifier and the identity $(\rho u)_\e=\rho_\e u_\e^\e$, the right-hand side becomes
\[
\intt \rho u\cdot \lt(\intr (v-u_\e^\e)f\,\dv\rt)_\e \,\dx = \intt (\rho u)_\e\cdot \lt(\intr (v-u_\e^\e)f\,\dv\rt)\dx  .
\]
Adding this identity to \eqref{eq:kin-free-sig}, we obtain 
\[
\ddt \calE_\free + \calD_{\rm drag} + \calD_{\rm vis} = 0.
\]
Since $\calD=\calD_{\rm drag}+\calD_{\rm vis}$ and the residual momentum contribution is conserved by \eqref{e:resid}, this proves \eqref{e:slaw}.

%
%
%
%
%

 \subsection[Partial coercivity of the dissipation]{Partial coercivity of the dissipation
}\label{ssec:coer-ex-den}

We first show that the dissipation controls all components of the modulated energy except for the density fluctuation. Throughout this subsection, we use the exponents $r,p,s$ introduced in Theorem \ref{thm:main1}. In particular,
\[
r>\frac{2d}{d+2} \quad \text{if } d\ge2, \quad \frac2p+\frac2{r^\star}\le1, \quad s\ge\frac{\alpha r}{2-r}.
\]

Let us note that since $c_\e < \rho_\e <C_\e$ pointwise, we can remove $ \rho_\e$ altogether from $\calD$. As a result the total dissipation dominates two distinct components -- the viscous component coming from the fluid and the Fisher information (which degenerates into the modulated kinetic energy when $\sigma = 0$):
\begin{equation*}\label{}
\begin{split}
\calD &\gtrsim \calI^{u_\e^\e}_{vv} + \calD_\vis, \\
\calI^{u_\e^\e}_{vv}  & = \inttr \frac{\lt|\sigma\nabla_v f+(v-u_\e^\e)f\rt|^2}{f}
\,\dx\dv, \\
\calD_\vis & = \intt \lt(2\nu(\rho)|D u|^2+\lambda(\rho)|\nabla\cdot u|^2\rt)\dx.
\end{split}
\end{equation*}

We begin with a weighted viscous coercivity estimate. When $d\ge2$, we have
\[
2\nu(\rho)|D u|^2+\lambda(\rho)|\nabla\cdot u|^2=2\rho^\alpha|D^0u|^2+2\lt(\alpha-1+\frac1d\rt)\rho^\alpha|\nabla\cdot u|^2,
\]
where
\[
D^0u := D u-\frac1d(\nabla\cdot u)\mathbb I.
\]
Since $\alpha\ge1 - \frac1d$, it follows that
\[
\calD_{\rm vis}\ge2\intt \rho^\alpha|D^0u|^2\,\dx.
\]
Using the trace-free Korn inequality on $\T^d$ and H\"older's inequality, we obtain
\[
\|\nabla u\|_{L^r(\T^d)}^2 \le C\|D^0u\|_{L^r(\T^d)}^2  \le C\lt(\intt \rho^\alpha|D^0u|^2\,\dx\rt)\lt(\intt \rho^{-\frac{\alpha r}{2-r}}\,\dx\rt)^{\frac{2-r}{r}}  \le C_M\calD_{\rm vis}.
\]
Here, the last inequality follows from \eqref{eq:pos-nm-c} and the condition \eqref{eq:mom-exp}.

When $d=1$, we instead use
\[
\calD_{\rm vis}=2\alpha\int_{\T}\rho^\alpha|\pa_xu|^2\,\dx.
\]
The same H\"older estimate gives
\[
\|\pa_xu\|_{L^r(\T)}^2\le C\lt(\int_{\T}\rho^\alpha|\pa_xu|^2\,\dx\rt)\lt(\int_{\T}\rho^{-\frac{\alpha r}{2-r}}\,\dx\rt)^{\frac{2-r}{r}}\le C_M\calD_{\rm vis}(t).
\]
Thus, in every dimension $d\ge1$, we have
\bq
\|\nabla u\|_{L^r(\T^d)}^2\le C_M\calD_{\rm vis}.
\label{eq:wv-coe-all}
\eq

\begin{lemma} 
Under the assumptions of Theorem \ref{thm:main1}, we have
\bq\label{eq:coer-ex-den}
\calI_{vv}^{\bar{v}} + \intt \rho|u-\bar{m}|^2\,\dx + |\bar{v}-\bar{m}|^2 \le C_{\e,M}\calD.
\eq
\end{lemma}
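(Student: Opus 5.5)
The plan is to bound each of the three quantities on the left of \eqref{eq:coer-ex-den} by $\calI^{u_\e^\e}_{vv}+\calD_\vis$, using $\calD\gtrsim \calI^{u_\e^\e}_{vv}+\calD_\vis$ and \eqref{eq:wv-coe-all}. Everything reduces to controlling the fluid velocity fluctuation $u-\bar m$, its mollified average $u_\e^\e-\bar m$, and the momentum mismatch $\bar v-\bar m$.

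\emph{Step 1 (fluid fluctuation).} Let $\langle u\rangle:=\intt u\,\dx$. Poincar\'e--Sobolev on $\T^d$ gives $\|u-\langle u\rangle\|_{L^{r^\star}}\le C\|\nabla u\|_{L^r}$; for $d=1$ this holds with $L^\infty$. By H\"older with $\frac2p+\frac2{r^\star}\le1$ and $|\T^d|=1$,
\[
\intt\rho|u-\langle u\rangle|^2\,\dx\le\|\rho\|_{L^{p}}\|u-\langle u\rangle\|_{L^{r^\star}}^2\le C_M\calD_\vis .
\]
Since $\bar m$ is the $\rho$-weighted mean and $\intt\rho\,\dx=1$, the weighted variance is minimized at $\bar m$. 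Hence $\intt\rho|u-\bar m|^2\,\dx\le C_M\calD_\vis$ as well. We also record $\|(\rho u)_\e-\rho_\e\bar m\|_{L^\infty}=\|\theta_\e*(\rho(u-\bar m))\|_{L^\infty}\le C_\e\|\sqrt\rho\|_{L^2}\|\sqrt\rho(u-\bar m)\|_{L^2}$. Because $\rho_\e\ge c_\e$, this gives $\|u_\e^\e-\bar m\|_{L^\infty}^2\le C_{\e,M}\calD_\vis$.

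\emph{Step 2 (kinetic part and momentum mismatch).} Expand the square in $\calI^{u_\e^\e}_{vv}$ around the constant vector $\bar v$:
\[
\sigma\nabla_vf+(v-u^\e_\e)f=\big(\sigma\nabla_vf+(v-\bar v)f\big)+(\bar v-u_\e^\e)f .
\]
The cross term is $2\inttr(\bar v-u_\e^\e)\cdot(\sigma\nabla_vf+(v-\bar v)f)\,\dx\dv$. Integrating by parts in $v$, and using that $\bar v$ is the mean of $f$, it becomes $2\intt(\bar v-u_\e^\e)\cdot(m_f-\rho_f\bar v)\,\dx$. So the cross term does not vanish in general, and I would not try to expand exactly. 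Instead I would use the triangle inequality in the weighted space $L^2(f^{-1})$:
\[
\calI^{\bar v}_{vv}\le 2\calI^{u_\e^\e}_{vv}+2\inttr|u_\e^\e-\bar v|^2f\,\dx\dv
\le 2\calI^{u_\e^\e}_{vv}+4\|u_\e^\e-\bar m\|_{L^\infty}^2+4|\bar m-\bar v|^2 .
\]
This reduces everything to bounding $|\bar v-\bar m|^2$.

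For the mismatch, note that $\sigma\nabla_vf$ integrates to zero in $v$, so
\[
\bar v-\bar m=\inttr (v-u_\e^\e)f\,\dx\dv+\intt\rho_f(u_\e^\e-\bar m)\,\dx
=\inttr\big(\sigma\nabla_vf+(v-u_\e^\e)f\big)\,\dx\dv+\intt\rho_f(u_\e^\e-\bar m)\,\dx .
\]
Cauchy--Schwarz with $\inttr f=1$ bounds the first term by $(\calI^{u_\e^\e}_{vv})^{1/2}$. The second term is at most $\|u_\e^\e-\bar m\|_{L^\infty}$. Thus $|\bar v-\bar m|^2\le 2\calI^{u_\e^\e}_{vv}+C_{\e,M}\calD_\vis$. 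Combining with the triangle bound above and Step 1 yields \eqref{eq:coer-ex-den}.

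\emph{Main obstacle.} The delicate point is Step 1, the passage from $\|\nabla u\|_{L^r}$ to the $\rho$-weighted $L^2$ norm. This step uses the positive moment $p$ together with the Sobolev exponent $r^\star$, which is exactly where $\frac2p+\frac2{r^\star}\le1$ enters. The negative moment $s$ was already used in \eqref{eq:wv-coe-all}. I must also handle the fact that $u$ itself may lack integrability. Here $\rho^{-s}\in L^1$ and $\nabla u\in L^r$ give $u\in W^{1,r}$ for regular solutions, so the mean $\langle u\rangle$ is well defined. The $\e$-dependence comes only from the $L^\infty$ bound on the mollified velocity and from the two-sided bounds on $\rho_\e$.
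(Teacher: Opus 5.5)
Your proof is correct and follows essentially the same route as the paper. You use Sobolev--Poincar\'e plus H\"older with $\|\rho\|_{L^p}$, the weighted-variance identity to pass from $\int u\,\dx$ to $\bar m$, the decomposition of $\bar v-\bar m$ into a Fisher-information term plus $\int\rho_f(u_\e^\e-\bar m)\,\dx$, and the triangle inequality for $\calI^{\bar v}_{vv}$. The only cosmetic difference is that you bound $u_\e^\e-\bar m$ in $L^\infty$ directly via $\|\theta_\e\|_{L^\infty}$ and Cauchy--Schwarz, whereas the paper uses a pointwise Jensen inequality and then moves the mollifier onto $\rho_f$.
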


\begin{proof} Define
\[
\bar{u}(t):=\intt u(t,x)\,\dx.
\]

When $d\ge2$, the Sobolev--Poincar\'e inequality and \eqref{eq:wv-coe-all} yield
\[
\|u-\bar{u}\|_{L^{r^\star}(\T^d)}^2\le C\|\nabla u\|_{L^r(\T^d)}^2\le C_M\calD_{\rm vis}.
\]
When $d=1$, we use the one-dimensional Sobolev--Poincar\'e inequality
\[
\|u-\bar{u}\|_{L^\infty(\T)}^2\le C\|\pa_xu\|_{L^r(\T)}^2\le C_M\calD_{\rm vis}.
\]
Thus, using the convention $r^\star=\infty$ when $d=1$, we obtain
\bq\label{eq:w-sob-velo}
\|u-\bar{u}\|_{L^{r^\star}(\T^d)}^2\le C_M\calD_{\rm vis}.
\eq

The condition $\frac2p+\frac2{r^\star}\le1$ implies that
\[
\frac1p+\frac2{r^\star}\le1, \text{ or equivalently } 2p'\le r^\star.
\]
Since $|\T^d|=1$, it follows from \eqref{eq:w-sob-velo} that
\[
\intt \rho|u-\bar{u}|^2\,\dx \le\|\rho\|_{L^p(\T^d)}\|u-\bar{u}\|_{L^{2p'}(\T^d)}^2  \le C_M\calD_{\rm vis}.
\]
Consequently,
\[
\intt \rho|u-\bar{m}|^2\,\dx+ |\bar{m}-\bar{u}|^2=\intt \rho|u-\bar{u}|^2\,\dx \le C_M\calD_{\rm vis}.
\]

We now estimate the difference between the averaged kinetic and fluid momenta.  We have
\[
\bar{v}-\bar{m}=\inttr (\sigma \nabla_v f + (v-u_\e^\e)f )\,\dx\dv+\intt \rho_f(u_\e^\e-\bar{m})\,\dx.
\]
Hence,
\begin{equation*}\label{}
\begin{split}
|\bar{v}-\bar{m}|^2 &  \leq \lt| \inttr \frac{\sigma \nabla_v f + (v-u_\e^\e)f }{\sqrt{f}} \sqrt{f}\, \dx\dv \rt|^2 + \lt|\intt \rho_f(u_\e^\e-\bar{m})\,\dx \rt|^2  \leq \calI_{vv}^{u_\e^\e} + \intt \rho_f |u_\e^\e-\bar{m}|^2\, \dx.
\end{split}
\end{equation*}
Let us estimate the last term,
\[
u_\e^\e-\bar{m}=\frac{(\rho u)_\e- \rho_\e \bar{m}}{\rho_\e}=\frac{\lt(\rho(u- \bar{m})\rt)_\e}{\rho_\e}.
\]
Jensen's inequality gives
\[
|u_\e^\e-\bar{m}|^2\le\frac1{\rho_\e}\lt(\rho|u-\bar{m}|^2\rt)_\e.
\]
Since $\rho_\e$ is bounded from below, 
\[
|u_\e^\e-\bar{m}|^2\lesssim (\rho|u-\bar{m}|^2)_\e.
\]
Thus,
\begin{equation} \label{eq:w-ue-uc}
\intt \rho_f|u_\e^\e-\bar{m}|^2\,\dx \lesssim \intt \rho_f\lt(\rho|u-\bar{m}|^2\rt)_\e\,\dx  =\intt (\rho_f)_\e\rho|u-\bar{m}|^2\,\dx \le C_{\e,M}\calD_{\rm vis}.
\end{equation}

Finally, from all of the above,
\[
\calI_{vv}^{\bar{v}}  \leq \calI_{vv}^{u_\e^\e} + \intt \rho_f|u_\e^\e-\bar{m}|^2\,\dx+C|\bar{m}-\bar{v}|^2 \le C_{\e,M}\calD.
\]
This completes the proof.
\end{proof}

The estimate \eqref{eq:coer-ex-den} shows that the dissipation $\calD$ controls two terms of the modulated energy directly, namely $\intt \rho|u-\bar{m}|^2\,\dx + |\bar{v}-\bar{m}|^2$, and at least for diffusionless case $\sigma = 0$, it also controls the kinetic part of the energy $\inttr |v-\bar{v}|^2f\,\dx\dv$. It does not yet control the potential part $\intt \pi(\rho)\, \dx$, and it lacks Fisher information in $x$-variable to control the relative entropy $H$ if $\sigma>0$.  Both deficiencies will be filled with the use of hypocoercivity technique by introducing cross-product terms that involve kinetic and potential components.  First we discuss a few properties of the pressure potential and an auxiliary potential that will be used in the hypocoercivity analysis.

%
%
%

\subsection{Pressure potential}
Let us define an auxiliary potential
\[
\pi_\aux(r):=(r^\gamma-1)(r-1), \ r\ge 0 .
\]
The following elementary comparison estimates will be used repeatedly.

\begin{lemma}\label{lem:pres-pot-est}
Let $\gamma>1$. Then, for every $r\ge0$,
\bq\label{eq:pressure-functional-comparison}
\pi_\aux(r)\geq   |r-1|^2, \quad \pi_\aux(r) \geq \pi(r).
\eq
With regard to the potential $\pi$, we have for all $\gamma>1$,
\bq\label{eq:pressure-potential-L1}
\|\rho-1\|_{L^1(\T^d)}^2\le C_\gamma\intt \pi(\rho)\,\dx.
\eq
Moreover, if $1<\gamma\le2$, then
\begin{equation}\label{e:pressure-potential-Lg}
\|\rho-1\|_{L^\gamma(\T^d)}^2\le C_{\gamma}\lt( \intt \pi(\rho)\,\dx + \lt[\intt \pi(\rho)\,\dx\rt]^{2/\gamma} \rt).
\end{equation}
If $\gamma\ge2$, then
\bq\label{eq:pressure-potential-L2}
\|\rho-1\|_{L^2(\T^d)}^2\le\intt \pi(\rho)\,\dx.
\eq
\end{lemma}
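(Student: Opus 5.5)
The plan is to prove each inequality by elementary one-variable calculus, and then integrate over $\T^d$ using the mass normalization $\intt\rho\,\dx=1$ (so $\intt(\rho-1)\,\dx=0$) and $|\T^d|=1$. Throughout, write $\Pi:=\intt\pi(\rho)\,\dx$.

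\textbf{Step 1: pointwise bounds \eqref{eq:pressure-functional-comparison}.}
\begin{itemize}[leftmargin=*]
\item For $\pi_\aux(r)\ge|r-1|^2$, note that $r^\gamma-1$ and $r-1$ have the same sign, and $|r^\gamma-1|\ge|r-1|$. Indeed, $r^\gamma\ge r$ for $r\ge1$ and $r^\gamma\le r$ for $r\le1$.
\item For $\pi_\aux\ge\pi$, use the integral formula $\pi(r)=r\int_1^r\frac{s^\gamma-1}{s^2}\,\ds$.
\item If $r\ge1$, bound the numerator by $r^\gamma-1$. This gives $\pi(r)\le r(r^\gamma-1)(1-\frac1r)=(r^\gamma-1)(r-1)$.
\item If $r<1$, write $\pi(r)=r\int_r^1\frac{1-s^\gamma}{s^2}\,\ds$ and use $1-s^\gamma\le 1-r^\gamma$ for $s\ge r$. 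This gives $\pi(r)\le r(1-r^\gamma)(\frac1r-1)=(1-r^\gamma)(1-r)$.
\end{itemize}

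\textbf{Step 2: the $L^1$ bound \eqref{eq:pressure-potential-L1}.}
\begin{itemize}[leftmargin=*]
\item A naive splitting into $\{\rho\le2\}$ and $\{\rho>2\}$ only gives $\|\rho-1\|_{L^1}\lesssim \Pi^{1/2}+\Pi$. Squared, this produces an unwanted $\Pi^2$.
\item Instead, use mass conservation. Since $\intt(\rho-1)\,\dx=0$, we have $\|\rho-1\|_{L^1}=2\intt(1-\rho)_+\,\dx$.
\item On $[0,1]$ we have $\pi''(r)=\gamma r^{\gamma-2}\ge\gamma$ when $\gamma\le2$. When $\gamma>2$, $\pi$ is still uniformly convex enough on the compact interval, by comparison with $\pi(r)\ge c_\gamma(1-r)^2$, which follows from $\pi(1)=\pi'(1)=0$ and $\pi>0$ on $[0,1)$. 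Either way, $\pi(r)\ge c_\gamma(1-r)^2$ for $r\in[0,1]$.
\item Cauchy--Schwarz then gives $\|\rho-1\|_{L^1}^2\le 4\intt(1-\rho)_+^2\,\dx\le C_\gamma\Pi$.
\end{itemize}

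\textbf{Step 3: the $L^\gamma$ bound \eqref{e:pressure-potential-Lg} for $1<\gamma\le2$.}
\begin{itemize}[leftmargin=*]
\item On $\{\rho\le2\}$, $\pi(r)\ge c(r-1)^2$ by the same compactness and convexity argument. By Hölder (with $\gamma\le2$ and $|\T^d|=1$), $\int_{\{\rho\le2\}}|\rho-1|^\gamma\,\dx\le\big(\int_{\{\rho\le 2\}}|\rho-1|^2\,\dx\big)^{\gamma/2}\le C\Pi^{\gamma/2}$.
\item On $\{\rho>2\}$, $\pi(r)\ge c_\gamma r^\gamma\ge c_\gamma|r-1|^\gamma$, since $r^\gamma$ dominates the linear terms in $\pi$ for large $r$.
\item Hence $\|\rho-1\|_{L^\gamma}^\gamma\le C(\Pi^{\gamma/2}+\Pi)$. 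Raising to the power $2/\gamma$ yields $\|\rho-1\|_{L^\gamma}^2\le C(\Pi+\Pi^{2/\gamma})$.
\end{itemize}

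\textbf{Step 4: the $L^2$ bound \eqref{eq:pressure-potential-L2} for $\gamma\ge2$.}
This step needs the sharp constant $1$, and I expect it to be the only delicate point.
\begin{itemize}[leftmargin=*]
\item Set $g(r)=\pi(r)-(r-1)^2$. Then $g(0)=1-1=0$, $g(1)=g'(1)=0$, and $g''(r)=\gamma r^{\gamma-2}-2$.
\item For $\gamma=2$, $g\equiv0$. Assume $\gamma>2$ from now on.
\item $g''$ is increasing and vanishes only at $r_0=(2/\gamma)^{1/(\gamma-2)}\in(0,1)$. So $g$ is concave on $[0,r_0]$ and convex on $[r_0,\infty)$.
\item On $[r_0,\infty)$, convexity with $g(1)=g'(1)=0$ gives $g\ge0$. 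In particular $g(r_0)\ge0$.
\item On $[0,r_0]$, concavity with $g(0)=0$ and $g(r_0)\ge0$ gives $g\ge0$.
\item Integrating $g(\rho)\ge0$ over $\T^d$ proves \eqref{eq:pressure-potential-L2}.
\end{itemize}
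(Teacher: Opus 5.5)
Your proof is correct. It departs from the paper's argument in three places.

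\begin{itemize}
\item \textbf{The bound $\pi_\aux\ge\pi$.} The paper writes $(\gamma-1)(\pi_\aux-\pi)=r\,G(r)$ with $G(r)=(\gamma-1)r^\gamma-\gamma r^{\gamma-1}+1$, and shows $G\ge G(1)=0$ by monotonicity. You instead bound the integrand in $\pi(r)=r\int_1^r s^{-2}(s^\gamma-1)\,\ds$ by its endpoint value. This is equally short and avoids the algebra. The case $r=0$ follows by continuity, with equality $\pi(0)=\pi_\aux(0)=1$.

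\item \textbf{The $L^1$ bound.} The paper uses the Hellinger-type comparison $\pi(r)\ge c_\gamma|\sqrt r-1|^2$ on all of $[0,\infty)$. It then applies Cauchy--Schwarz with the factor $\sqrt\rho+1$, whose $L^2$ norm is controlled by the mass. That route needs only a mass \emph{bound}, but it must check the growth of $\pi$ at infinity. You instead use the exact zero mean of $\rho-1$ to reduce to the negative part $(1-\rho)_+$. Then only the behavior of $\pi$ on $[0,1]$ matters. One small repair is needed here. For $\gamma>2$, $\pi$ is not uniformly convex on $[0,1]$, since $\pi''(0)=0$. Also, the facts $\pi(1)=\pi'(1)=0$ and $\pi>0$ on $[0,1)$ alone do not yield a quadratic lower bound; $(1-r)^4$ satisfies all three. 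You also need $\pi''(1)=\gamma>0$. More simply, your Step 4 already gives $\pi(r)\ge(r-1)^2$ when $\gamma\ge2$.

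\item \textbf{The $L^\gamma$ bound.} Your Step 3 is the paper's argument in different packaging: set splitting plus H\"older, instead of a pointwise bound $|r-1|^\gamma\le C(\pi^{\gamma/2}+\pi)$ plus Jensen.

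\item \textbf{The $L^2$ bound.} For $\gamma\ge2$ the paper simply asserts $\pi(r)\ge|r-1|^2$. Your concave--convex argument for $g=\pi-(r-1)^2$ supplies the missing justification. It uses $g(0)=g(1)=g'(1)=0$ and the single inflection point $r_0\in(0,1)$. This care is genuinely needed, because $g''<0$ near $0$ and equality holds at $r=0$, so plain convexity does not suffice.
\end{itemize}
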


\begin{proof}
For $r\neq1$, we have
\[
\frac{\pi_\aux(r)}{|r-1|^2}=\frac{r^\gamma-1}{r-1}\geq 1,
\]
which proves the first inequality in \eqref{eq:pressure-functional-comparison}. To prove the second one, observe that
\[
(\gamma-1)\lt(\pi_\aux(r)-\pi(r)\rt)=r\lt[(\gamma-1)r^\gamma-\gamma r^{\gamma-1}+1\rt].
\]
Let
\[
G(r):=(\gamma-1)r^\gamma-\gamma r^{\gamma-1}+1.
\]
Then
\[
G'(r)=\gamma(\gamma-1)r^{\gamma-2}(r-1), \quad G(1)=0.
\]
Thus, $G$ is decreasing on $(0,1)$ and increasing on $(1,\infty)$, and hence $G(r)\ge G(1)=0$ for every $r\ge0$. This proves the second inequality in \eqref{eq:pressure-functional-comparison}. 

We next prove \eqref{eq:pressure-potential-L1}. Since $\frac{\pi(r)}{|\sqrt r-1|^2}$ extends to a strictly positive continuous function on $[0,\infty)$ and tends to infinity as $r\to\infty$, there exists a constant $c_\gamma>0$ such that $\pi(r)\ge c_\gamma|\sqrt r-1|^2$ for every $r\ge0$. Hence, using the mass constraint and the Cauchy--Schwarz inequality, we obtain
\[
\|\rho-1\|_{L^1(\T^d)}^2 \le\lt(\intt |\sqrt\rho-1|^2\,\dx\rt)\lt(\intt |\sqrt\rho+1|^2\,\dx\rt) \le C\intt |\sqrt\rho-1|^2\,\dx \le C_\gamma\intt \pi(\rho)\,\dx.
\]

Suppose now that $1<\gamma\le2$. The local quadratic behavior of $h$ near $r=1$ and its growth of order $r^\gamma$ as $r\to\infty$ imply that
\[
|r-1|^\gamma\le C_\gamma\lt(\pi(r)^{\frac{\gamma}{2}}+\pi(r)\rt)
\]
for every $r\ge0$. Since $|\T^d|=1$ and $\gamma/2\le1$, Jensen's inequality gives
\[
\|\rho-1\|_{L^\gamma(\T^d)}^\gamma\le C_\gamma\lt[\lt(\intt \pi(\rho)\,\dx\rt)^{\frac{\gamma}{2}}+\intt \pi(\rho)\,\dx\rt],
\]
which implies \eqref{e:pressure-potential-Lg}.

Finally, if $\gamma\ge2$, then $\pi(r)\ge|r-1|^2$ for every $r\ge0$, and \eqref{eq:pressure-potential-L2} follows by integration.
\end{proof}

\begin{remark}
If $0\le r\le\overline\rho$, the pressure potential is quantitatively comparable to the quadratic fluctuation. More precisely, if $1<\gamma\le2$, then
\[
\frac{\gamma}{2}\max\{1,\overline\rho\}^{\gamma-2}|r-1|^2\le \pi(r)\le|r-1|^2.
\]
If $\gamma\ge2$, then
\[
|r-1|^2\le \pi(r)\le\frac{\gamma}{2}\max\{1,\overline\rho\}^{\gamma-2}|r-1|^2.
\]
\end{remark}

%
%
%

\subsection{A compensating functional}\label{ssec:comp-func}

Let $\Phi=\Phi(t,x)$ be the unique zero-average solution of
\[
-\Delta\Phi=\rho-1, \quad \intt \Phi\,\dx=0,
\]
and define
\[
\calX:=\intt \rho(u-\bar{m})\cdot\nabla\Phi\,\dx.
\]
We first show that $\calX$ is controlled by the modulated energy:
\bq\label{eq:w-comp-equi}
|\calX|\le C_M\calE.
\eq
Indeed, by the Cauchy--Schwarz inequality,
\[
|\calX|\le\lt(\intt \rho|u-\bar{m}|^2\,\dx\rt)^{\frac12}\lt(\intt \rho|\nabla\Phi|^2\,\dx\rt)^{\frac12}.
\]
Moreover,
\[
\intt \rho|\nabla\Phi|^2\,\dx\le\|\rho\|_{L^p(\T^d)}\|\nabla\Phi\|_{L^{2p'}(\T^d)}^2.
\]
By \eqref{eq:pos-nm-c}, $\|\rho\|_{L^p(\T^d)} \leq M$ uniformly in time. It remains to estimate $\|\nabla\Phi\|_{L^{2p'}(\T^d)}$. 

Suppose first that $1<\gamma\le2$. By elliptic regularity,
\[
\|\nabla\Phi\|_{W^{1,\gamma}(\T^d)} \le C\|\rho-1\|_{L^\gamma(\T^d)}.
\]
If $\gamma<d$, the Sobolev embedding $W^{1,\gamma}(\T^d) \hookrightarrow L^{\frac{d\gamma}{d-\gamma}}(\T^d)$ and the condition \eqref{eq:pot-exp-c} yield
\[
\|\nabla\Phi\|_{L^{2p'}(\T^d)}^2 \le C\|\rho-1\|_{L^\gamma(\T^d)}^2 .
\]
When $\gamma\ge d$, the same conclusion follows from the corresponding
Sobolev embedding without any additional restriction on $p$.
Since
\[
\intt \pi(\rho(t))\,\dx\le\calE(t)\le\calE(0),
\]
we may apply Lemma \ref{lem:pres-pot-est} to conclude that 
\[
\|\rho-1\|_{L^\gamma(\T^d)}^2 \leq C \intt \pi(\rho)\,\dx \lesssim \calE.
\]

When $\gamma\ge2$, we instead use
\[
\|\nabla\Phi\|_{W^{1,2}(\T^d)} \le C\|\rho-1\|_{L^2(\T^d)}.
\]
If $d=1,2$, the embedding into $L^{2p'}(\T^d)$ holds for every finite exponent. If $d\ge3$, we use $W^{1,2}(\T^d) \hookrightarrow L^{\frac{2d}{d-2}}(\T^d)$. Since \eqref{eq:mom-exp} implies $2p'\le\frac{2d}{d-2}$, we conclude that
\[
\|\nabla\Phi\|_{L^{2p'}(\T^d)}^2 \le C\|\rho-1\|_{L^2(\T^d)}^2 \le C\intt \pi(\rho)\,\dx.
\]
Consequently,
\[
\intt \rho|\nabla\Phi|^2\,\dx\le C_M\intt \pi(\rho)\,\dx.
\]

We now compute the time derivative of the compensating functional. Set $z:=u-\bar{m}$. Then
\[
-\frac{\dd}{\dt}\calX(t)=I+II,
\]
where
\[
I:=-\intt \pa_t(\rho z)\cdot\nabla\Phi\,\dx, \quad II:=-\intt \rho z\cdot\pa_t\nabla\Phi\,\dx.
\]
Using the continuity and momentum equations in \eqref{main_sys}, we obtain
\[
I=\intt \lt[\nabla\cdot(\rho u\otimes z)+\nabla p(\rho)-\nabla\cdot\mathbb S(\rho,u)-\rho F_\e+\rho \bar{m}'\rt]\cdot\nabla\Phi\,\dx=:\sum_{i=1}^5 I_i.
\]

The pressure term yields the desired density coercivity. Since $\intt (\rho-1)\,\dx=0$, we have
\[
I_2 =\intt \nabla p(\rho)\cdot\nabla\Phi\,\dx =\intt \lt(\rho^\gamma-1\rt)(\rho-1)\,\dx =\intt \pi_\aux(\rho) \,\dx.
\]

We next estimate the convective term. Since $u=z+\bar{m}$, integration by parts gives
\[
I_1=-\intt \rho z\otimes z:\nabla^2\Phi\,\dx-\intt \rho \bar{m}\otimes z:\nabla^2\Phi\,\dx.
\]
By \eqref{eq:w-sob-velo} and the estimate for $|\bar{m}-\bar{u}|$, we have
\bq\label{eq:weighted-sobolev-centered-velocity}
\|z\|_{L^{r^\star}(\T^d)}^2\le C_M\calD_{\rm vis}.
\eq
For the first term, the condition \eqref{eq:mom-exp} and the elliptic estimate imply
\[
\lt|\intt \rho z\otimes z:\nabla^2\Phi\,\dx\rt| \le\|\rho\|_{L^p(\T^d)}\|z\|_{L^{r^\star}(\T^d)}^2\|\nabla^2\Phi\|_{L^p(\T^d)} \le C_M\calD_{\rm vis}.
\]
Here, we used
\[
\|\nabla^2\Phi\|_{L^p(\T^d)}\le C\|\rho-1\|_{L^p(\T^d)}\le C_M.
\]
For the second term, the same exponent condition gives
\[
\frac1p+\frac1{r^\star}+\frac12\le1.
\]
Hence,
\begin{align*}
\lt|\intt \rho \bar{m}\otimes z:\nabla^2\Phi\,\dx\rt|
&\le |\bar{m}|\|\rho\|_{L^p(\T^d)}\|z\|_{L^{r^\star}(\T^d)}\|\nabla^2\Phi\|_{L^2(\T^d)}\\
&\le C_M\calD_{\rm vis}^{\frac12}\|\rho-1\|_{L^2(\T^d)}\\
&\le C_M\calD_{\rm vis}+\frac18\intt \pi_\aux(\rho) \,\dx,
\end{align*}
where we used the boundedness of $\bar{m}$, the elliptic estimate, and \eqref{eq:pressure-functional-comparison}. Consequently,
\[
|I_1|\le C_M\calD_{\rm vis}+\frac18\intt \pi_\aux(\rho) \,\dx.
\]

We now turn to the viscous term. Integration by parts gives
\[
I_3=\intt \mathbb S(\rho,u):\nabla^2\Phi\,\dx.
\]
Since $\rho^\alpha\in\calA_2(\T^d)$, we apply the weighted Calder\'on--Zygmund estimate for periodic second-order Riesz transforms; see for instance \cite[Theorem 1.1]{AV12}. We also refer to \cite{Hyt12} for the sharp Euclidean estimate. Indeed, since $-\Delta\Phi=\rho-1$, we have $\partial_i\partial_j\Phi = -\partial_i\partial_j(-\Delta)^{-1}(\rho-1)$. Thus,
\[
\intt \rho^\alpha \lt|\partial_i\partial_j\Phi\rt|^2 \dx \le C_{[\rho^\alpha]_{\calA_2}} \intt \rho^\alpha|\rho-1|^2\,\dx.
\]
Summing over $i,j=1,\dots,d$ and using \eqref{eq:A2-dc}, we obtain
\[
\intt \rho^\alpha|\nabla^2\Phi|^2\,\dx \le C_M\intt \rho^\alpha|\rho-1|^2\,\dx.
\]
Since $\alpha\le\gamma-1$, we have
\[
\rho^\alpha|\rho-1|^2 \le C(\rho^\gamma-1)(\rho-1)
\]
for every $\rho\ge0$.  Hence,
\bq\label{eq:w-cz-g}
\intt \rho^\alpha|\nabla^2\Phi|^2\,\dx\le C_M\intt \pi_\aux(\rho) \,\dx.
\eq
Using the decomposition
\[
\mathbb S(\rho,u) = 2\rho^\alpha D^0u + 2\lt(\alpha-1+\frac1d\rt) \rho^\alpha(\nabla\cdot u)\mathbb I
\]
when $d\ge2$, we obtain
\begin{align*}
|I_3|
&\le 2\lt(\intt \rho^\alpha|D^0u|^2\,\dx\rt)^{\frac12} \lt(\intt \rho^\alpha|\nabla^2\Phi|^2\,\dx\rt)^{\frac12}\\
&\quad + 2\lt(\alpha-1+\frac1d\rt) \lt(\intt \rho^\alpha|\nabla\cdot u|^2\,\dx\rt)^{\frac12} \lt(\intt \rho^\alpha|\Delta\Phi|^2\,\dx\rt)^{\frac12}.
\end{align*}
Since $|\Delta\Phi|^2\le d|\nabla^2\Phi|^2$ and
\[
\calD_{\rm vis}  = 2\intt \rho^\alpha|D^0u|^2\,\dx + 2\lt(\alpha-1+\frac1d\rt) \intt \rho^\alpha|\nabla\cdot u|^2\,\dx,
\]
we find
\[
|I_3| \le C_{\alpha,d}\calD_{\rm vis}^{\frac12} \lt( \intt \rho^\alpha|\nabla^2\Phi|^2\,\dx \rt)^{\frac12}.
\]
The same estimate follows directly in one dimension from
\[
\mathbb S(\rho,u)=2\alpha\rho^\alpha\partial_xu,
\quad
\calD_{\rm vis}  = 2\alpha\int_{\T}\rho^\alpha|\partial_xu|^2\,\dx.
\]
Using \eqref{eq:w-cz-g}, we conclude that
\[
|I_3| \le C_M\calD_{\rm vis}^{\frac12}\lt(\intt \pi_\aux(\rho)\, \dx\rt)^{\frac12} \le C_M\calD_{\rm vis} + \frac18\intt \pi_\aux(\rho) \,\dx.
\]
 
For the coupling term,  
\[
I_4=-\intt \rho F_\e\cdot\nabla\Phi\,\dx=-\intt F\cdot(\rho\nabla\Phi)_\e\,\dx \leq \|F\|_1 \|(\rho\nabla\Phi)_\e\|_\infty.
\]
As to the drag force we can write 
\begin{equation}\label{e:Fsigma}
F = \int_{\R^d} (\sigma \nabla_v f + (v-u_\e^\e)f )\,\dv.
\end{equation}
So,
\begin{equation}\label{e:FL1}
 \|F\|_1\leq \inttr \frac{|\sigma \nabla_v f + (v-u_\e^\e)f |}{\sqrt{f}} \sqrt{f}\, \dx\dv  \leq (\calI_{vv}^{u_\e^\e})^{\frac12}.
\end{equation}
As to the compensator, since $p\ge2$,
\[
\|(\rho\nabla\Phi)_\e\|_{L^\infty(\T^d)}\le\|\theta_\e\|_{L^\infty(\T^d)}\|\rho\nabla\Phi\|_{L^1(\T^d)}\le C_{\e,M}\|\rho-1\|_{L^2(\T^d)}.
\]
Thus,
\[
|I_4|\leq C_{\e,M}\calI_{vv}^{u_\e^\e}+ \frac18\intt \pi_\aux(\rho) \,\dx.
\]
 
The remaining term $I_5$ vanishes identically. Indeed,
\[
I_5=\bar{m}'\cdot\intt \rho\nabla\Phi\,\dx.
\]
Since $\rho-1=-\Delta\Phi$, periodicity gives
\[
\intt \rho\nabla\Phi\,\dx=\intt (\rho-1)\nabla\Phi\,\dx=-\intt \Delta\Phi\nabla\Phi\,\dx=0.
\]
Hence,
\[
I_5=0.
\]

It remains to estimate $II$. Differentiating
\[
-\Delta\Phi=\rho-1
\]
in time and using the continuity equation, we obtain
\[
-\Delta\pa_t\Phi=-\nabla\cdot(\rho u)=-\nabla\cdot(\rho z)-\nabla\cdot\lt((\rho-1)\bar{m}\rt).
\]
The exponent condition in \eqref{eq:mom-exp} implies
\[
\frac1p+\frac1{r^\star}\le\frac12.
\]
Hence, by \eqref{eq:weighted-sobolev-centered-velocity},
\[
\|\rho z\|_{L^2(\T^d)}\le\|\rho\|_{L^p(\T^d)}\|z\|_{L^{r^\star}(\T^d)}\le C_M\calD_{\rm vis}^{\frac12}.
\]
Using the elliptic estimate and the boundedness of $\bar{m}$, we find
\[
\|\nabla\pa_t\Phi\|_{L^2(\T^d)}\le C_M\calD_{\rm vis}^{\frac12}+C_M\|\rho-1\|_{L^2(\T^d)}.
\]
This together with applying Young's inequality yields
\[
|II|\le\|\rho z\|_{L^2(\T^d)}\|\nabla\pa_t\Phi\|_{L^2(\T^d)}\le C_M\calD_{\rm vis} + C_M\calD_{\rm vis}^{\frac12}\lt(\intt \pi_\aux(\rho) \,\dx\rt)^{\frac12} \le C_M\calD_{\rm vis} +\frac18\intt \pi_\aux(\rho) \,\dx.
\]

Combining the above estimates, we obtain
\bq\label{eq:w-comp-est}
\frac{\dd}{\dt}\calX\le - c \intt \pi_\aux(\rho)\, \dx + C\calD.
\eq

Incorporating a small proportion  of $\calX$ into the modulated energy, we define for $\delta>0$, 
\[
\calE^\delta:=\calE+\delta\calX.
\]
By \eqref{eq:w-comp-equi}, we get $\calE^\delta \sim\calE $ provided that $\delta>0$ is sufficiently small. Yet, the energy law reads
\begin{equation}\label{e:enX}
\ddt \calE^\delta \lesssim - \intt \pi(\rho)\, \dx - \calD.
\end{equation}

%
%
%
%
%


 \subsection[Completion of the proof for $\sigma = 0$]{
    Completion of the proof for $\sigma = 0$}

By virtue of the coercivity bound \eqref{eq:coer-ex-den}
\[
\intt \pi(\rho)\, \dx + \calD \gtrsim \calE.
\]
Thus, \eqref{e:enX} implies 
\[
\calE^\delta(t)\le\calE^\delta(0)e^{-\lambda t}.
\]
Since $\calE^\delta\sim\calE$, this proves the exponential decay of the kinetic fluctuation, the fluid velocity fluctuation, the relative momentum, and the pressure potential.

Finally, Lemma \ref{lem:pres-pot-est} implies
\[
\|\rho(t)-1\|_{L^1(\T^d)}^2\le C\intt \pi(\rho(t))\,\dx\le Ce^{-\lambda t}.
\]
This completes the proof of Theorem \ref{thm:main1}  in the case $\sigma=0$.

%
%
%
%
%


 \subsection{Hypocoercivity and completion of the proof for $\sigma > 0$}\label{s:hypo}

Let us set $\sigma = 1$  for notational simplicity. The same argument applies to every fixed $\sigma>0$, with constants depending additionally on $\sigma$.
In this case the partial Fisher information $\calI_{vv}^{\bar{v}}$ is insufficient to control the global relative entropy 
\[
\calH^{\bar{v}} = \inttr f\log\frac{f}{\mu_{1,\bar{v},1}}\,\dx\dv.
\]
The full Fisher information must include the $x$-component
\begin{equation}\label{e:Fisherxx}
\calI_{xx}^{\bar{v}} = \inttr \frac{| \nabla_x f|^2}{f} \,\dv \dx.
\end{equation}
So, defining
\[
\calI^{\bar{v}} = \calI_{vv}^{\bar{v}}+\calI_{xx}^{\bar{v}},
\]
by the log-Sobolev inequality we have
\[
\calH^{\bar{v}} \leq \lambda \calI^{\bar{v}}.
\]

By the classical hypocoercivity methodology, see \cite{Villani}, we restore $\calI_{xx}^{\bar{v}}$ with the use of the cross-product information
\[
\calI_{xv}^{\bar{v}} = \inttr \frac{(\nabla_v f + (v - \bar{v})f)\cdot \nabla_x f}{f} \,\dv \dx,
\]
and upgrading all the energy estimates to the Fisher level by writing out evolution equations for all three informations, $\calI_{vv}^{\bar{v}}$, $\calI_{xv}^{\bar{v}}$, and $\calI_{xx}^{\bar{v}}$.

It is most convenient to work it out in the $(A^*A+B)$-framework described in \cite{Villani} (we note however that the presence of the drag force precludes direct application of \cite{Villani}). 
To introduce it we first shift the kinetic velocity variable by the  kinetic momentum
\begin{equation}\label{e:tildas}
\tilde{f}(x,v,t) = f(x,v+\bar{v},t), \quad \tilde{u}_f = u_f - \bar{v}, \quad \tilde{\rho} = \rho.
\end{equation}
Dropping tildes, the kinetic equation in the new variables reads
\[
 \partial_t f + (v + \bar{v}) \cdot \nabla_x f  = \bar{v}_t \cdot \nabla_v f +\rho_\e \nabla_v \cdot ( \nabla_v f +  (v + \bar{v} -u_\e^\e)f) .
\]

Note that in the new variables the total momentum of $f$ vanishes, and the Fisher information becomes centered at $0$. We will therefore drop superscripts from all $\calI$'s. Let us recall from  \eqref{e:enX} that we obtain the following energy inequality and the partial coercivity \eqref{eq:coer-ex-den}
\begin{equation}\label{e:eehypo}
	\ddt \calE^\delta  
		\lesssim - \calD -  \calI_{vv}   - \intt \rho|u-\bar{m}|^2\,\dx  -  |\bar{m} - \bar{v}|^2 - \intt \pi(\rho)\, \dx .
\end{equation}

Let us write the equation for the new distribution $h = \frac{f}{\mu}$, where $\mu = \mu_{1,0,1}$. For this purpose it is simpler to view the equation as a Fokker-Planck equation with a drift,
\begin{equation}\label{ }
\begin{split}
\partial_t f + (v + \bar{v}) \cdot \nabla_x f  &= \bar{v}_t \cdot \nabla_v f + \rmb^\e \cdot \nabla_v  f  + \rho_\e \nabla_v \cdot (  \nabla_v f + vf ),\\
\rmb^\e &=  \rho_\e(\bar{v} -  u^\e_\e).
\end{split}
\end{equation}
Then the $h$-equation reads
\begin{equation}\label{e:FPAh}
h_t + (v + \bar{v}) \cdot \nabla_x h = \bar{v}_t \cdot \nabla_v h - v \cdot \bar{v}_t h+  \rho_\e (  \Delta_v h - v \cdot \nabla_v h) + \rmb^\e \cdot \nabla_v h - v \cdot \rmb^\e  h.
\end{equation}
 Denoting
\[
B = (v + \bar{v}) \cdot \nabla_x, \quad A =  \nabla_v, \quad A^* = (v  -  \nabla_v) \cdot,
\]
where $A^*$ is understood relative to the inner product of the weighted space $L^2(\mu)$,
we arrive at 
\begin{equation}\label{e:FPALN}
h_t =  - \rho_\e A^* A h- Bh - A^*( \rmb^\e h) - A^*( \bar{v}_t h).
\end{equation}

We will now adopt the hypocoercivity estimates presented in \cite{Shv-EA} to the model \eqref{e:FPALN}. Similar adaptations to alignment models have been claimed  in various other contexts, see \cite{ST25,Vinh2023}. To make this suitable for future reference let us state and provide details for the most inclusive result.

Consider a Fokker-Planck model of the following type:
\begin{equation}\label{e:fpgeneric}
h_t  + \st(t,x) A^* A h + Bh + A^*( \rmb(t,x) h) = 0,
\end{equation}
where $\rmb$ is a vector field that depends only on $(t,x)$,  and $\st$ is a scalar function of  $(t,x)$ satisfying
\begin{equation}\label{e:sbounds}
c_0 \leq \st \leq c_1, \text{ and } \|\n_x \st\|_\infty \leq c_2,
\end{equation}
for some $c_0,c_1,c_2>0$. The drift $\rmb$ will also be assumed to be at least in $H^1(\rho_f)$, where $\rho_f = \intr h\,\rd\mu$. However, we will not label its norm by another generic constant. Instead, we make it explicit in the estimates below.

Let us define the full set of Fisher informations:
\[
\calI_{vv} =  \inttr \frac{|\nabla_v h|^2}{h} \,\dmu,
\quad  \calI_{xv} =  \inttr \frac{\nabla_x h \cdot \nabla_v h}{h} \,\dmu, \quad  \calI_{xx}=  \inttr \frac{|\nabla_x h|^2}{h} \,\dmu,
 \]
 and the corresponding dissipation functionals:
 \[
\cD_{vv} =  \inttr \st h |\n_v^2 \log h|^2 \,\dmu, \quad 
\cD_{xv} =  \inttr \st h |\n_v \n_x \log h|^2 \,\dmu.
\]
The remaining $\cD_{xx}$ will not appear in the estimates just as $\cI_{xx}$ does not appear in the entropy equation.

\begin{proposition}\label{p:hypo} For any mass-one classical solution to \eqref{e:fpgeneric} with zero kinetic momentum, we have the following estimate
\begin{equation}\label{e:I1}
\ddt (\calI_{vv}+ \frac12 \calI_{xv} +\calI_{xx} ) \leq c_3 \calI_{vv}  - \frac18 \calI_{xx}  + c_4  \| \nabla_x \rmb \|^2_{L^2(\rho_f)}  + c_5 \| \rmb  - \bar{\rmb} \|^2_{L^2(\rho_f)} + c_6 \| u_f \|^2_{L^2(\rho_f)} ,
\end{equation}
where $c_3,c_4,c_5,c_6>0$ depend only on $c_0,c_1,c_2$ from \eqref{e:sbounds}, and $\bar{\rmb} = \int_{\T^d} \rmb \rho_f \,\dx$.
\end{proposition}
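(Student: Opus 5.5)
We will follow Villani's $(A^*A+B)$ computation, working with $h$ and the Gaussian measure $\mu$. We use the commutator relations
\[
[A,A^*]=\mathrm{Id},\qquad [A,B]=\nabla_x,\qquad [\nabla_x,B]=0,\qquad [\nabla_x,A^*]=0 .
\]
The new ingredient is that $\st$ and $\rmb$ depend on $x$, so $\nabla_x$ no longer commutes with $\st A^*A$ or with $A^*(\rmb\,\cdot)$.

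\textbf{Step 1: expand the three time derivatives.} For a generic pair of first-order operators $P,Q\in\{A,\nabla_x\}$, write $\calI_{PQ}=\int \frac{Ph\cdot Qh}{h}\,\dmu$. We compute $\ddt\calI_{PQ}$ along \eqref{e:fpgeneric} and split the result into three groups.

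The first group is the constant-coefficient $\st A^*A+B$ contribution. As in \cite{Villani,Shv-EA}, it produces:
\begin{itemize}
\item the good terms $-2\cD_{vv}$ and $-2\cD_{xv}$ (weighted by $\st\ge c_0$);
\item the mixed term $-\calI_{vv}$ coming from $[A,A^*]$;
\item from $[A,B]=\nabla_x$, the cross terms $-2\calI_{xv}$ in $\ddt\calI_{vv}$ and $-\calI_{xx}$ in $\ddt\calI_{xv}$.
\end{itemize}
The $-\calI_{xx}$ in $\ddt\calI_{xv}$ is the source of the coercive $-\frac18\calI_{xx}$ in \eqref{e:I1}.

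The second group collects commutator terms involving $\nabla_x\st$. They appear only in $\ddt\calI_{xv}$ and $\ddt\calI_{xx}$ and have the form $\int \nabla_x\st\, h\,\nabla_v\log h\cdot(\cdots)\,\dmu$. We bound them by Cauchy--Schwarz using $\|\nabla_x\st\|_\infty\le c_2$, which gives bounds of the type $\varepsilon\,\cD_{xv}+C\,\calI_{vv}$ and $\varepsilon\,\calI_{xx}+C\,\cD_{vv}$.

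The third group collects the drift terms from $A^*(\rmb h)=\rmb\cdot(v h-\nabla_v h)$. Since $\rmb$ does not depend on $v$, $\nabla_v$ passes through it.

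\textbf{Step 2: the drift cancellation.} This is the step we expect to be the main obstacle. Expanded naively, the drift terms contain contributions such as $\int (\rmb\cdot v)\,\frac{|\nabla_x h|^2}{h}\,\dmu$, which grow in $v$ and are not controlled by any of the informations.

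We write $\rmb=\bar\rmb+(\rmb-\bar\rmb)$. The constant part $\bar\rmb$ is a Galilean-type shift, and its contribution to each $\ddt\calI_{PQ}$ should reduce to lower order terms. The worst pieces cancel after integration by parts in $v$ against $\mu$ (these are the analogues of the terms $J_2+J_3+J_4$ that vanish, in the spirit of \cite{Shv-EA}).

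For the remaining terms we use two facts:
\begin{itemize}
\item $\int v\,h\,\dmu=u_f\rho_f$, which has zero mean by the zero-momentum assumption;
\item $\int \nabla_v h\,\dmu = u_f\rho_f$.
\end{itemize}
With these, every leftover term is either a term where $\nabla_x$ falls on $\rmb$, bounded by $\|\nabla_x\rmb\|_{L^2(\rho_f)}\,\calI_{xv}^{1/2}$ or $\|\nabla_x\rmb\|_{L^2(\rho_f)}\,\calI_{xx}^{1/2}$, or a term of the form $\int(\rmb-\bar\rmb)\cdot(\dots)$, bounded by $\|\rmb-\bar\rmb\|_{L^2(\rho_f)}$ times one of $\cD_{vv}^{1/2}$, $\cD_{xv}^{1/2}$, $\calI_{xx}^{1/2}$. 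The term $\bar\rmb\cdot u_f$ produces the $c_6\|u_f\|^2_{L^2(\rho_f)}$ contribution. The first thing we would try is to verify the cancellation by writing $\rmb\cdot\nabla_v$ as the generator of $v$-translations and using the invariance of $\calI_{PQ}$ under the corresponding conjugation.

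\textbf{Step 3: combine with weights $1,\frac12,1$.} The dissipations $\cD_{vv}$ and $\cD_{xv}$ absorb all $\varepsilon$-terms. The cross term $\calI_{xv}$ is bounded by $\calI_{vv}^{1/2}\calI_{xx}^{1/2}$ and then absorbed into $\frac14\calI_{xx}+C\calI_{vv}$. The coercive $-\frac12\calI_{xx}$ coming from $\frac12\ddt\calI_{xv}$ survives as $-\frac18\calI_{xx}$ after paying for all Young inequalities. Every remaining error is one of $\calI_{vv}$, $\|\nabla_x\rmb\|^2_{L^2(\rho_f)}$, $\|\rmb-\bar\rmb\|^2_{L^2(\rho_f)}$ or $\|u_f\|^2_{L^2(\rho_f)}$, with constants depending only on $c_0,c_1,c_2$, which gives \eqref{e:I1}.
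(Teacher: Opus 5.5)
Your architecture matches the paper's. You split each of $\ddt\cI_{vv}$, $\ddt\cI_{xv}$, $\ddt\cI_{xx}$ into $\st A^*A$, $B$ and drift parts, absorb the $\n_x\st$ commutators and cross terms into $\cD_{vv},\cD_{xv}$, and combine with weights $1,\frac12,1$.

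\textbf{The gap is Step~2.} It is the heart of the proposition, and you only announce it (``should reduce'', ``the first thing we would try''). What must be proved is that the drift contributes exactly
\[
-2\int_{\T^d}\rmb\cdot u_f\,\rho_f\,\dx,\qquad -(h\,\pa_{x_i}\rmb\cdot\n_v(\log h)_{v_i})_\mu-\int_{\T^d}\rmb\cdot\n_x\rho_f\,\dx,\qquad -2(h\,\pa_{x_i}\rmb\cdot\n_v(\log h)_{x_i})_\mu
\]
to the three derivatives respectively. In the paper this is the identity $J_\rmb^2+J_\rmb^3+J_\rmb^4=0$ (via $h_{v_iv_j}=h(\log h)_{v_iv_j}+h_{v_i}h_{v_j}/h$ and moving $A^*$), plus analogous cancellations for $\cI_{xv}$ and $\cI_{xx}$. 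Splitting off $\bar\rmb$ does not address this: the $v$-growing terms you worry about are present for $\rmb-\bar\rmb$ exactly as for $\rmb$, so the cancellation has to be structural.

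Your translation idea is in fact the right mechanism, and carried out it would be cleaner than the paper's term-by-term expansion. With $f=h\mu$ the drift is $\pa_t f=\rmb(x)\cdot\n_v f$, a $v$-translation on each fiber. Then:
\begin{itemize}
\item $\cI_{xx}=\iint|\n_xf|^2/f\,\dv\dx$ is invariant under $x$-independent translations.
\item $\cI_{xv}=\iint\n_xf\cdot\n_vf/f\,\dv\dx$ is likewise invariant; the extra term $\iint v\cdot\n_xf$ vanishes on the torus.
\item Using mass one, $\cI_{vv}=\iint|\n_vf|^2/f\,\dv\dx-2d+\iint|v|^2f\,\dv\dx$ changes only through the second moment.
\end{itemize}
So only $-2\int\rmb\cdot u_f\rho_f$ and the commutator $[\n_{x_i},\rmb\cdot\n_v]=\pa_{x_i}\rmb\cdot\n_v$ survive. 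Returning to $h$ gives the three expressions above.

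\textbf{The bookkeeping of the surviving terms is also off}, in ways Step~3 depends on.
\begin{itemize}
\item The $\bar\rmb$ part contributes exactly zero. For $\ddt\cI_{vv}$ this is because $\int u_f\rho_f\,\dx=0$, which is where zero momentum is used; for the others, because $\n_x\bar\rmb=0$ and $\int\n_x\rho_f\,\dx=0$.
\item The term $c_6\|u_f\|^2_{L^2(\rho_f)}$ comes from Young's inequality applied to $-2\int(\rmb-\bar\rmb)\cdot u_f\rho_f\,\dx$. A surviving $\bar\rmb\cdot u_f$ term, as you describe it, could not be bounded by the right side of \eqref{e:I1}, since $|\bar\rmb|$ is not controlled there.
\item The $\n_x\rmb$ terms contain second derivatives of $\log h$ and must be paired with $\cD_{vv}^{1/2}$ and $\cD_{xv}^{1/2}$, not with $\cI_{xx}^{1/2}$ (and $\cI_{xv}$ is not even signed). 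Moving the $v$-derivative off $\log h$ produces a factor $vh$ that no Fisher information controls when $\n_x\rmb$ is only in $L^2(\rho_f)$.
\item The $\n_x\st$ term in $\ddt\cI_{xv}$ is $-(\pa_{x_i}\st\,\n_vh\cdot\n_v(\log h)_{v_i})_\mu\lesssim\sqrt{\cI_{vv}\cD_{vv}}$, not of type $\varepsilon\cI_{xx}+C\cD_{vv}$. With your fixed weights the only source of $-\cD_{vv}$ is the $-2\cD_{vv}$ in $\ddt\cI_{vv}$, and $\ddt\cI_{xv}$ already spends part of it on the cross term $2\sqrt{\cD_{xv}\cD_{vv}}$. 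A term $C\cD_{vv}$ with $C\sim c_2^2/c_0$ could therefore not be absorbed.
\end{itemize}
With the terms identified as above, your Step~3 closes exactly as in the paper.
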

\begin{proof}
The proof of the proposition will be split into several lemmas where we develop differential inequalities for all the Fisher information functionals.

\begin{lemma}\label{}
 The velocity Fisher information satisfies
\[ 
\ddt \cI_{vv}  \leq  -2  \cD_{vv}-  2 c_0 \cI_{vv} -2  \cI_{xv} +  \| \rmb  - \bar{\rmb} \|^2_{L^2(\rho_f)} + \| u_f \|^2_{L^2(\rho_f)} .
\]
\end{lemma}
\begin{proof} 
Let us write $\cI_{vv} = ( \n_v h \cdot \n_v \log h )_\mu$, and compute the derivative
\begin{equation*}\label{}
 \ddt \cI_{vv} = 2 ( \n_v h_t \cdot \n_v \log h )_\mu - ( |\n_v \log h |^2 h_t )_\mu   =  J_A + J_B + J_\rmb,
 \end{equation*}
 where
 \begin{equation*}\label{}
\begin{split}
J_A & =   -2  ( \st \n_v A^*A h \cdot  \n_v \log h )_\mu +  (\st  |\n_v \log h |^2 A^*A h )_\mu, \\
J_B &= -2 ( \n_v B h\cdot  \n_v \log h )_\mu + ( |\n_v \log h |^2 B h )_\mu, \\
J_\rmb & = {\color{blue} - }  2( \n_v A^*(\rmb h)\cdot \n_v \log h )_\mu +  (  |\n_v \log h |^2 A^*(\rmb h) )_\mu.
\end{split}
\end{equation*}

Let us start with the dissipation term $J_A$.  Observe the identity 
\[
 (A^*A h)_{v_i} = A^*Ah_{v_i} + h_{v_i}.
\]
We have
\begin{equation*}\label{}
J_A  = -2  (\st  A^*Ah_{v_i} (\log h)_{v_i} )_\mu - 2 (\st \n_v h \cdot \n_v \log h )_\mu +  (\st |\n_v \log h |^2 A^*A h )_\mu.
\end{equation*}
Note that the term in the middle is bounded from above by $- 2 c_0\cI_{vv} $ in view of the lower bound on $\st$ stated in \eqref{e:sbounds}. In the remaining two terms we move $A^*$ to obtain
\begin{equation*}\label{}
\begin{split}
J_A  & \leq  -2  (\st  Ah_{v_i} \cdot A (\log h)_{v_i} )_\mu - 2 c_0 \cI_{vv} +  (\st  A |\n_v \log h |^2\cdot  A h  )_\mu \\
&= -2 (\st  h A(\log h)_{v_i}\cdot  A (\log h)_{v_i} )_\mu -2 (\st (\log h)_{v_i} Ah\cdot  A (\log h)_{v_i}  )_\mu - 2 c_0 \cI_{vv} + 2 (\st  (\log h)_{v_i}A (\log h)_{v_i} \cdot  A h  )_\mu.
\end{split}
\end{equation*}
The second and fourth terms cancel, while the first term is exactly equal to $-2  \cD_{vv}$:
\[
J_A  \leq -2 \cD_{vv}  -  2 c_0 \cI_{vv}.
\]

Next, we treat the transport term $J_B$. We have
\begin{align*}
J_B &= -2 ( \n_x h \cdot  \n_v \log h )_\mu - 2 ( ((v+\bar{v}) \cdot \n_x h_{v_i}) (\log h)_{v_i} )_\mu  + ( |\n_v \log h |^2 (v+\bar{v}) \cdot \n_x h )_\mu  \\
&=: J_B^1 + J_B^2 + J_B^3.
\end{align*}
Note  that 
\begin{equation*}\label{}
\begin{split}
J_B^1 & =  -2  \cI_{xv}, \\
J_B^2 &= - 2 ((v+\bar{v}) \cdot \n_x h_{v_i} {h}_{v_i}{h}^{-1} )_\mu = - ( (v+\bar{v}) \cdot \n_x |h_{v_i}|^2 h^{-1} )_\mu \\
& = -( |h_{v_i}|^2 (v+\bar{v}) \cdot \n_x h h^{-2} )_\mu =- ( |(\log h)_{v_i}|^2 (v+\bar{v}) \cdot \n_x h )_\mu \\
&= - J_B^3.
\end{split}
\end{equation*}
Thus, $J_B^2  + J_B^3 = 0$. We obtain
\[
J_B = -2 \cI_{xv}.
\]

Next, for the drift term we prove the following the exact identity 
\begin{equation}\label{e:Juexact}
J_\rmb = -  2 ( \rmb \cdot u_f )_{\rho_f}.
\end{equation}
Indeed,
\begin{equation*}\label{}
\begin{split}
J_\rmb & = -  2(\n_v A^*(\rmb h)\cdot  \n_v \log h )_\mu + ( |\n_v \log h |^2 A^*(\rmb h))_\mu \\
& = {\color{blue} - } 2(  \n_v ( v \cdot \rmb h - \rmb \cdot \n_v h)\cdot  \n_v \log h )_\mu + (  \n_v |\n_v \log h |^2 \cdot  \rmb h )_\mu \\
& = {\color{blue} - } 2 ( \rmb h  \cdot  \n_v \log h )_\mu - 2 ( (v \cdot \rmb) \n_v h \cdot  \n_v \log h )_\mu + 2( \n^2_v h \rmb  \cdot  \n_v \log h )_\mu + 2(  \n_v^2 \log h(\n_v \log h) \cdot \rmb h )_\mu\\
& = : J_\rmb^1 + J_\rmb^2+J_\rmb^3+J_\rmb^4.
\end{split}
\end{equation*} 
The first term is exactly the inner product that appears  in formula \eqref{e:Juexact}:
 \[
J_\rmb^1 =  {\color{blue} - } 2 (\rmb \cdot  \n_v h )_\mu = {\color{blue} - } 2 (\rmb \cdot v h )_\mu = {\color{blue} - } 2 ( \rmb \cdot u_f )_{\rho_f}.
 \] 
The next crucial observation is that the remaining terms cancel:
\begin{equation}\label{e:J234=0}
J_\rmb^2+J_\rmb^3+J_\rmb^4=0.
\end{equation}
 Indeed, using the identity 
 \begin{equation}\label{e:hbarh}
  h_{v_i v_j} = h (\log h)_{v_i v_j} + \frac{1}{h} h_{v_i}  h_{v_j},
\end{equation}
we obtain
\begin{align*}
J_\rmb^3 &=   2  (  h_{v_i v_j} \rmb_j (\log h)_{v_i} )_\mu \\
& =  2 (  h (\log h)_{v_i v_j}  \rmb_j (\log h)_{v_i} )_\mu + 2 ( \frac{1}{h} h_{v_i}  h_{v_j} \rmb_j (\log h)_{v_i} )_\mu \\
&= J_\rmb^4 + 2 ( \n_v h \cdot \rmb | \n_v \log h |^2   )_\mu.
\end{align*}
Moreover,
\[
J_\rmb^2 = {\color{blue} - } 2 ( v \cdot \rmb h |\n_v \log h|^2 )_\mu.
\]
Then we have
\[
J_\rmb^2 + J_\rmb^3 = J_\rmb^4 - 2( v \cdot \rmb h  |\n_v \log h|^2 )_\mu + 2 ( \n_v h \cdot \rmb | \n_v \log h |^2   )_\mu = J_\rmb^4 - 2 (  A^*( \rmb h)  | \n_v \log h |^2 )_\mu .
\]
Switching $A^*$ in the last term we obtain
\[
{\color{blue} - } 2 (  A^*( \rmb h)  | \n_v \log h |^2 )_\mu  = {\color{blue} - } 2 (  h  \rmb \cdot \n_v  | \n_v \log h |^2 )_\mu = -2 J_\rmb^4.
\]
Thus, $J_\rmb^2 + J_\rmb^3 = - J_\rmb^4$, which proves \eqref{e:J234=0}.

Now, recalling that $u_f$ has zero $\rho_f$-weighted average, we obtain
\[
J_\rmb \leq  \| \rmb  - \bar{\rmb} \|^2_{L^2(\rho_f)} + \| u_f \|^2_{L^2(\rho_f)}.
\]
This completes the proof.
\end{proof}

\begin{lemma}\label{}
 The mixed Fisher information satisfies
\[
\ddt \cI_{xv}  \leq  - \frac14  \cI_{xx}  + a_1 \cI_{vv}   +  2 \cD_{vv} +\cD_{xv} + a_2 \| \n_x \rmb \|_{L^2(\rho_f)}^2 + a_3  \| \rmb -  \bar{\rmb}\|_{L^2(\rho_f)}^2,
\]
where $a_1,a_2,a_3>0$ depend only on $c_0,c_1,c_2$.
\end{lemma}
\begin{proof}
Let us write
\[
 \ddt \cI_{xv} = (\n_x h_t \cdot \n_v \log h )_\mu + ( \n_x \log h \cdot \n_v {h}_t )_\mu - ( h_t \n_v \log h\cdot \n_x \log h )_\mu  = J_A + J_B + J_\rmb,
\]
where as before $J_A,J_B,J_\rmb$ collect contributions from $A$, $B$, and the drift, respectively:
\begin{equation*}\label{}
\begin{split}
J_A & =    - (\n_x (\st A^*Ah) \cdot \n_v \log h )_\mu - (\st  \n_x \log h \cdot \n_v  A^*Ah )_\mu + (\st  A^*Ah \n_v \log h\cdot \n_x \log h )_\mu \\
&= J_A^1 + J_A^2 + J_A^3,\\
J_B & = - ( \n_x ((v + \bar{v}) \cdot \n_x h) \cdot \n_v \log h )_\mu - ( \n_x \log h \cdot \n_v ((v + \bar{v}) \cdot \n_x h)  )_\mu  \\
&\quad + ( ((v + \bar{v}) \cdot \n_x h) \n_v \log h\cdot \n_x \log h  )_\mu \\
&= J_B^1 + J_B^2 + J_B^3,\\
J_\rmb & = {\color{blue} - }  ( \n_x ( A^*(\rmb h)) \cdot \n_v \log h)_\mu -( \n_x \log h \cdot \n_v (  A^*(\rmb h))  )_\mu \\
&\quad  +(  A^*(\rmb h) \n_v \log h\cdot \n_x \log h )_\mu,
\end{split}
\end{equation*}

We start with the $J_A$-term. For $J_A^1$ we obtain
\[
\begin{split}
J_A^1 & = - ( \st A^* A h_{x_i} (\log h)_{v_i} )_\mu - ( \st_{x_i} A^* A h (\log h)_{v_i} )_\mu \\
& = - ( \st \n_v h_{x_i} \cdot \n_v  (\log h)_{v_i} )_\mu - ( \st_{x_i} \n_v h \n_v (\log h)_{v_i} )_\mu \\
& = -  ( \st h \n_v (\log h)_{x_i} \cdot \n_v  (\log h)_{v_i} )_\mu- ( \st  (\log h)_{x_i} \n_v h \cdot \n_v  (\log h)_{v_i} )_\mu \\
&\quad - \left( \frac{\st_{x_i}}{\st^{1/2}} \frac{\n_v h}{h^{1/2}} \cdot  \st^{1/2} h^{1/2} \n_v (\log h)_{v_i} \right)_\mu .
\end{split}
\]
In view of the assumption \eqref{e:sbounds}, 
\[
J_A^1 \leq  -  (\st  h \n_v (\log h)_{x_i} \cdot \n_v  (\log h)_{v_i} )_\mu- (\st  (\log h)_{x_i} \n_v h \cdot \n_v  (\log h)_{v_i} )_\mu + c_3 \sqrt{\cI_{vv} \cD_{vv}},
\]
for some $c_3=c_3(c_0,c_1,c_2)>0$. For $J_A^2$ we obtain
\begin{equation*}\label{}
\begin{split}
J_A^2 & = - (\st \n_x \log h \cdot \n_v h )_\mu - ( \st (\log h)_{x_i} A^*A {h}_{v_i} )_\mu\\
&    = - (\st \n_x \log h \cdot \n_v h )_\mu -  ( \st h \n_v (\log h)_{x_i} \cdot \n_v  (\log h)_{v_i} )_\mu  -  ( \st  (\log h)_{v_i} \n_v (\log h)_{x_i} \cdot \n_v  h )_\mu  .
\end{split}
\end{equation*}
The two add up to 
\[
\begin{split}
J_A^1+J_A^2 & \leq  - \left(\st \frac{\n_x {h}}{h^{1/2}} \cdot \frac{\n_v h}{h^{1/2}} \right)_\mu  - 2  (\st h \n_v (\log h)_{x_i} \cdot \n_v  (\log h)_{v_i} )_\mu - (\st  Ah \cdot A(\n_v \log h\cdot \n_x \log h ) )_\mu \\
&\quad  + c_3 \sqrt{\cI_{vv} \cD_{vv}} \\
& \leq  c_1 \sqrt{\cI_{xx} \cI_{vv}} + 2 \sqrt{ \cD_{xv} \cD_{vv}} - J_A^3+c_3 \sqrt{\cI_{vv} \cD_{vv}} .
\end{split}
\]
Thus,
\[
J_A \leq c_1 \sqrt{\cI_{xx} \cI_{vv}}  + 2 \sqrt{ \cD_{xv} \cD_{vv}} +   c_3 \sqrt{\cI_{vv} \cD_{vv}}.
\]

Moving on to the $B$-term we expand $J_B^2$ as follows
\[
J_B^2 =  - ( \n_x \log h \cdot \n_x h )_\mu - ( (\log h)_{x_i} (v_j +\bar{v}_j) h_{x_j v_i})_\mu.
 \]
The first term is exactly $- \cI_{xx} $, while in the second integrating by parts in $x_j$, we obtain
 \[
 = - \cI_{xx} + ( (\log h)_{x_i x_j} (v_j +\bar{v}_j) h_{v_i} )_\mu.
\]
Using that $(\log h)_{x_i x_j}= h^{-1} {h}_{x_i x_j} - (\log h)_{x_i} (\log h)_{x_j}$,
\[
= -  \cI_{xx} +( {h}_{x_i x_j} (v_j +\bar{v}_j) (\log h)_{v_i} )_\mu - ( (\log h)_{x_i} (\log h)_{x_j} (v_j +\bar{v}_j) h_{v_i} )_\mu = -  \cI_{xx} - J_B^1 - J_B^3.
\]
Hence,
\begin{equation}\label{e:JBxv}
J_B = -  \cI_{xx} .
\end{equation}

As to the drift term,
\begin{equation*}\label{}
\begin{split}
J_\rmb & = {\color{blue} -} ( \n_x (  A^*(\rmb h)) \cdot \n_v \log h)_\mu - ( \n_x \log h \cdot \n_v ( A^*(\rmb h))  )_\mu + (   A^*(\rmb h) \n_v \log h\cdot \n_x \log h )_\mu \\
& =  {\color{blue} -} (  A^*(\rmb_{x_i} h) (\log h)_{v_i})_\mu - ( A^*(\rmb h_{x_i})  (\log h)_{v_i})_\mu \\
&\quad -  (  (\log h)_{x_i}   A^*(\rmb h_{v_i} ) )_\mu -  ( h \n_x \log h \cdot \rmb)_\mu  + (  h \rmb \cdot \n_v( \n_v \log h\cdot \n_x \log h) )_\mu \\
& = - ( h \rmb_{x_i}  \cdot \n_v (\log h)_{v_i})_\mu - (  h \rmb (\log h)_{x_i} \cdot \n_v (\log h)_{v_i})_\mu \\
&\quad - ( h \n_v (\log h)_{x_i} \cdot   \rmb (\log h)_{v_i}  )_\mu -  (  h \n_x \log h \cdot \rmb)_\mu  + (  h \rmb  \cdot \n_v( \n_v \log h\cdot \n_x \log h) )_\mu
\end{split} \end{equation*}
The second, third, and fifth terms cancel. Hence,
\[
J_\rmb = {\color{blue} -} ( h \rmb_{x_i}  \cdot \n_v (\log h)_{v_i})_\mu -( h \n_x \log h \cdot \rmb)_\mu =: J_\rmb^1+J_\rmb^2.
\]
The first term is estimated by the H\"older inequality,
\[
J_\rmb^1  \leq c_4 \| \n_x \rmb \|_{L^2(\rho_f)} \sqrt{\cD_{vv}}.
\]
In the second we notice that one can subtract any constant vector from $\rmb$ without changing the result. So, by subtracting $\bar{\rmb}$ we obtain
\[
J_\rmb^2  \leq \| \rmb -  \bar{\rmb}\|_{L^2(\rho_f)} \sqrt{\cI_{xx}} .
\]

Collecting the obtained estimates and using the generalized Young inequality, we obtain
\begin{equation*}\label{}
\begin{split}
\ddt \cI_{xv} & \leq c_1 \sqrt{\cI_{xx} \cI_{vv}}  + 2 \sqrt{ \cD_{xv} \cD_{vv}} +   c_3 \sqrt{\cI_{vv} \cD_{vv}}-  \cI_{xx} +c_4 \| \n_x \rmb \|_{L^2(\rho_f)} \sqrt{\cD_{vv}}+  \| \rmb -  \bar{\rmb}\|_{L^2(\rho_f)} \sqrt{\cI_{xx}}  \\
&\leq - \frac14  \cI_{xx}  + c_5\cI_{vv}   +  2 \cD_{vv} +\cD_{xv} + c_6\| \n_x \rmb \|_{L^2(\rho_f)}^2 + c_7  \| \rmb -  \bar{\rmb}\|_{L^2(\rho_f)}^2.
\end{split}
\end{equation*}

\end{proof}

\begin{lemma}\label{}
The spatial Fisher information satisfies
\[
\ddt \cI_{xx}  \leq  a_4 \cI_{vv} - \frac12 \cD_{xv} + a_5 \| \n_x \rmb \|^2_{L^2(\rho_f)},
\]
where $a_4,a_5>0$ depend only on $c_0,c_1,c_2$.
\end{lemma}
 \begin{proof}
 We have
 \[
 \ddt \cI_{xx} =  2( \n_x h_t \cdot \n_x \log h )_\mu - ( |\n_x \log h |^2 h_t )_\mu = J_A+ J_B + J_\rmb.
\]

The $A$-term is given by
\[
\begin{split}
 J_A & = -2 ( \n_x (\st A^*A h) \cdot \n_x \log h )_\mu + (\st  |\n_x \log h |^2 A^*Ah )_\mu\\
& =-2 ( \st_{x_i} A h \cdot A (\log h)_{x_i} )_\mu -2 (\st  A h_{x_i} \cdot A (\log h)_{x_i} )_\mu+ (\st A |\n_x \log h |^2 \cdot Ah)_\mu \\
& \leq c_3 \sqrt{\cD_{xv} \cI_{vv}} -2 (\st \n_v (h (\log h)_{x_i}) \cdot \n_v (\log h)_{x_i} )_\mu + (\st \n_v |\n_x \log h |^2 \cdot \n_vh )_\mu\\
& = c_3 \sqrt{\cD_{xv} \cI_{vv}} -2 (\st h \n_v (\log h)_{x_i} \cdot \n_v (\log h)_{x_i} )_\mu -2  (\st  (\log h)_{x_i} \n_v h \cdot \n_v (\log h)_{x_i} )_\mu \\
&\quad + (\st \n_v |\n_x \log h |^2 \cdot \n_vh )_\mu\\
& =c_3 \sqrt{\cD_{xv} \cI_{vv}} -2 \cD_{xv} -  (\st \n_v |\n_x \log h |^2 \cdot \n_vh )_\mu+ (\st \n_v |\n_x \log h |^2 \cdot \n_vh )_\mu \\
& = c_3  \sqrt{\cD_{xv} \cI_{vv}}-2 \cD_{xv}.
\end{split}
\]
Thus,
\[
J_A \leq c_4 \cI_{vv} - \cD_{xv}.
\]

The $B$-term cancels altogether, 
\[
\begin{split}
J_B & = -2 ( \n_x((v+\bar{v}) \cdot \n_x h) \cdot \n_x \log h )_\mu + ( |\n_x \log h |^2 (v+\bar{v}) \cdot \n_x h )_\mu \\
& = -2 ( ((v+\bar{v}) \cdot \n_x h_{x_i}) h_{x_i} h^{-1} )_\mu + ( |\n_x \log h |^2 (v+\bar{v}) \cdot \n_x h )_\mu \\
& = - ( ((v+\bar{v}) \cdot \n_x |\n_x h|^2 h^{-1} )_\mu + ( |\n_x \log h |^2 (v+\bar{v}) \cdot \n_x h )_\mu \\
& =  - ( (v+\bar{v}) \cdot \n_x h  |\n_x h|^2 h^{-2} )_\mu + ( |\n_x \log h |^2 (v+\bar{v}) \cdot \n_x h )_\mu \\
&= 0.
\end{split}
\]

Finally, the drift term is given by
\[
J_\rmb = {\color{blue} -} 2( \n_x  A^*(\rmb h)  \cdot \n_x \log h )_\mu + (  |\n_x \log h |^2 A^*(\rmb h) )_\mu .
\]
In the second term we switch the operator $A^*$: 
\begin{equation}\label{e:auxxx1}
 ( |\n_x \log h |^2 A^*(\rmb h))_\mu =  ( \n_v |\n_x \log h |^2 \rmb h)_\mu.
\end{equation}
For the first term we obtain
\begin{equation*}\label{}
\begin{split}
 {\color{blue} -} 2( \n_x  A^*(\rmb h)  \cdot \n_x \log h )_\mu & = {\color{blue} -} 2(h \rmb_{x_i}   \cdot \n_v (\log h)_{x_i} )_\mu  - 2(   h  (\log h)_{x_i} \rmb \cdot \n_v (\log h)_{x_i} )_\mu.
  \end{split}
\end{equation*}
Since
\[
\n_v|\n_x\log h|^2 = 2(\log h)_{x_i}\n_v(\log h)_{x_i},
\]
the second term on the right-hand side cancels with \eqref{e:auxxx1}. Thus,
\[
J_\rmb =   {\color{blue} -}  2(h \rmb_{x_i}   \cdot \n_v (\log h)_{x_i} )_\mu  \leq c_5 \| \n_x \rmb \|^2_{L^2(\rho_f)} + \frac12 \cD_{xv},
\]
and subsequently, 
\[
\ddt \cI_{xx} \leq  c_4 \cI_{vv} - \frac12 \cD_{xv} + c_5 \| \n_x \rmb \|^2_{L^2(\rho_f)} .
\]
\end{proof}

 Let us summarize the three obtained inequalities
 \begin{equation*}\label{}
\begin{split}
\ddt \cI_{vv} & \leq  -2  \cD_{vv}-  2 c_0 \cI_{vv} -2  \cI_{xv} +  \| \rmb  - \bar{\rmb} \|^2_{L^2(\rho_f)} + \| u_f \|^2_{L^2(\rho_f)}, \\
\ddt \cI_{xv} &  \leq  - \frac14  \cI_{xx}  + a_1 \cI_{vv}   +  2 \cD_{vv} +\cD_{xv} + a_2 \| \n_x \rmb \|_{L^2(\rho_f)}^2 + a_3  \| \rmb -  \bar{\rmb}\|_{L^2(\rho_f)}^2,\\
\ddt \cI_{xx}  &\leq  a_4 \cI_{vv} - \frac12 \cD_{xv} + a_5 \| \n_x \rmb \|^2_{L^2(\rho_f)}.
\end{split}
\end{equation*}
Furthermore, in the first equation we can use the generalized Young inequality, $|\calI_{xv}| \leq \frac{1}{32} \calI_{xx} + 32 \calI_{vv}$. 
Thus, adding up the obtained equations we absorb all the dissipation terms $\calD$'s and obtain
\eqref{e:I1}. 
\end{proof}

We now apply Proposition~\ref{p:hypo} in our situation, where 
\[
\rmb = \rmb^\e + \bar{v}_t.
\]
Since all the terms involving $\rmb$ involve either the gradient in $x$ or the mean-zero part of $\rmb$, we can see that $\bar{v}_t$ plays no role in any of them.

Let us now estimate the last three terms that appear in \eqref{e:I1}. Note that 
\[
\calI_{vv} = \calI_{vv}^{u_f} + \| u_{f} \|^2_{L^2(\rho_f)} \geq  \| u_{f} \|^2_{L^2(\rho_f)}.
\]
So, the last energy term $\| u_{f} \|^2_{L^2(\rho_f)}$ just estimates by another Fisher information.  Next, 
\[
\| \rmb^\e \|^2_{L^2(\rho_f)} =\intt | \rho_\e(\bar{v} -  u^\e_\e)|^2 \rho_f \,\dx \lesssim \intt |\bar{m} -  u^\e_\e|^2 \rho_f \, \dx + | \bar{m} - \bar{v}|^2,
\]
which by \eqref{eq:w-ue-uc} and \eqref{eq:coer-ex-den} are bounded by $C_{\e,M} \calD$,
\[
\|  \rmb^\e  \|^2_{L^2(\rho_f)} \leq C_{\e,M} \calD.
\]
Finally,
\[
 \nabla_x  \rmb^\e  = \nabla_x \rho_\e (\bar{v} -  u^\e_\e) - \rho_\e \nabla_x (\bar{v} -  u)^\e_\e.
\] 
Since $\nabla_x \rho_\e\in L^\infty_{t,x}$,  the first term is estimated exactly as $b$ itself. Lastly, 
\[
\nabla_x (\bar{v} -  u)^\e_\e =-  ((\bar{v} -  u)\rho)_\e \frac{ \nabla_x  \rho_\e}{ \rho_\e^2} + \frac{1}{\rho_\e}  ( (\bar{v} -  u)\rho)_{\nabla_x \theta_\e}.
\]
Again, using the boundedness of $\rho_\e$ from above and below, and its smoothness, we obtain
\[
\| \nabla_x  \rmb^\e  \|^2_{L^2(\rho_f)} \lesssim \intt |\bar{v} - u|^2 \rho \,\dx \leq \intt |\bar{m} - u|^2 \rho \,\dx + |\bar{v} - \bar{m}|^2 \lesssim \calD.
\]

Coming back to \eqref{e:I1} we obtain
\begin{equation}\label{e:I2}
\ddt (\calI_{vv}+ \frac12 \calI_{xv} +\calI_{xx} ) \leq c_3 \calI_{vv}  - \frac12 \calI_{xx}  + c_7 \calD.
\end{equation}

Adding this to a large constant multiple of \eqref{e:eehypo} we obtain
\[
\ddt \lt( \calI_{vv}+ \frac12 \calI_{xv} +\calI_{xx} + C \calE^\delta\rt) \lesssim -  (\calI_{vv}  + \calI_{xx})   - \intt \rho|u-\bar{m}|^2\,\dx  -  |\bar{m} - \bar{v}|^2 - \intt \pi(\rho) \,\dx .
\]
Since $\calI_{vv}  + \calI_{xx}$ dominates the relative entropy component of $\calE$, we obtain the Gr\"onwall's inequality for  $Y = \calI_{vv}+\frac12 \calI_{xv} +\calI_{xx} + C \calE^\delta$. 
So, from the above we obtain 
\[
\ddt Y \lesssim - Y.
\]
It remains to observe that 
\begin{align*}
Y &\sim \calI_{vv}+\calI_{xx} + \intt \rho|u-\bar{m}|^2\,\dx  +  |\bar{m} - \bar{v}|^2 + \intt \pi(\rho) \,\dx \\
&\gtrsim \|f - \mu\|_1^2 +\intt \rho|u-\bar{m}|^2\,\dx  +  |\bar{m} - \bar{v}|^2+\|\rho - 1\|_{L^1}^2 .
\end{align*}
This proves the relaxation statement.

%
%
%
%
%
%
%
%
%
%
%

\section{Unconditional relaxation in low dimensions}\label{sec:bd-estimate}
 
The conditional relaxation theorem proved in Section \ref{sec:energy} requires uniform bounds on positive and negative powers of the fluid density, together with a uniform $\calA_2$ bound on $\rho^\alpha$. These assumptions are weaker than uniform pointwise upper and lower bounds on $\rho$, but they do not follow from the free-energy inequality alone. The purpose of this section is to show that, in the low-dimensional regimes relevant to the weak solution theory developed in Section \ref{sec:weak-sol}, these weighted density assumptions are consequences of the Bresch--Desjardins entropy structure.

More precisely, we first derive a BD entropy estimate for sufficiently regular positive solutions. We then use the resulting density-gradient bounds to verify the weighted density conditions in Theorem \ref{thm:main1}. This gives an unconditional relaxation result for smooth solutions in the admissible low-dimensional cases, and the same estimates will later be used,
in their time-integrated weak form, to prove the long-time relaxation of the finite-energy BD entropy weak solutions constructed in Section \ref{sec:weak-sol}.

\begin{proposition}\label{prop:main}
Fix $\e>0$, and let $(f,\rho,u)$ be a sufficiently regular positive solution to \eqref{main_sys} on $[0,\infty)$  satisfying
\[
\calE_{\rm free}(0)+\calE_{\rm BD}(0)<\infty .
\]
Then the conditions of Theorem \ref{thm:main1} are verified in the following two cases:
\[
\begin{cases}
d=1, \quad 0<\alpha\le\frac12, \quad \gamma=\alpha+1,\\[1mm]
d=2, \quad \alpha=\frac12, \quad \gamma=\frac32.
\end{cases}
\]
\end{proposition}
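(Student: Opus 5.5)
Our plan is to first derive the BD entropy inequality for smooth positive solutions and then read off the density conditions of Theorem \ref{thm:main1} from the resulting uniform-in-time bound on $\sqrt\rho\nabla\phi(\rho)$, together with the energy bound $\intt\pi(\rho)\,\dx\le C$.

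\textbf{Step 1: BD entropy.} We test the momentum equation with $\nabla\phi(\rho)$ and use the continuity equation. The Bresch--Desjardins relation \eqref{eq:bd-rel} then produces the identity
\[
\frac{\dd}{\dt}\intt \tfrac12\rho|u+\nabla\phi(\rho)|^2\,\dx+\calD_{\rm BD}+\calD_{\rm rot}=\text{(viscous/kinetic cross terms)}+\intt\rho F_\e\cdot\nabla\phi(\rho)\,\dx .
\]
The new term compared with pure Navier--Stokes is the drag term. We write $\rho\nabla\phi(\rho)=2\nabla\rho^\alpha$ and integrate by parts onto the mollifier, so that it is bounded by $C_\e\|F\|_{L^1}\|\rho^\alpha\|_{L^1}$. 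Using \eqref{e:FL1}, this is at most $C_\e\calD_{\rm drag}^{1/2}\|\rho^\alpha\|_{L^1}$, and $\|\rho^\alpha\|_{L^1}\le 1$ because $\alpha\le1$ and the mass is one. Adding a large multiple of the free energy identity, together with \eqref{eq:kin-free-sig}, gives $\sup_t\calE_\eta(t)+\int_0^\infty(\cdots)\le C$. The $H$ part needs no change.

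\textbf{Step 2: $d=1$.} Since $\sqrt\rho\,\partial_x\phi$ equals $c\,\partial_x\rho^{\alpha-1/2}$, and we also control $\sqrt\rho u$, we obtain $\sup_t\|\partial_x\rho^{\alpha-1/2}\|_{L^2}\le C$ when $\alpha<\frac12$, or $\sup_t\|\partial_x\log\rho\|_{L^2}\le C$ when $\alpha=\frac12$.
\begin{itemize}
\item If $\alpha=\frac12$: since $\log\rho$ has bounded $H^1$-oscillation, and mass one forces $\log\rho$ to vanish somewhere (or to have bounded mean), we get $c\le\rho\le C$ uniformly.
\item If $\alpha<\frac12$: the quantity $w=\rho^{\alpha-1/2}$ has $\|\partial_xw\|_{L^2}\le C$, and $\min w\le1$ by mass one. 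Hence $w\le C$, which gives a lower bound on $\rho$. An upper bound on $\rho$ is equivalent to a lower bound on $w$. For this we use $\int\rho=1$, which forces $w\ge 1$ somewhere, and the H\"older-$\frac12$ continuity of $w$. If $w$ were small somewhere, then $\rho=w^{-2/(1-2\alpha)}$ would be huge on an interval whose length is controlled; this contradicts either $\int\rho^\gamma\le C$ or mass one, once the interval length is quantified.
\end{itemize}
With two-sided bounds, all moments $p$, $s$ hold and $[\rho^\alpha]_{\calA_2}\le (C/c)^\alpha$. Finally we check the exponents of Theorem \ref{thm:main1}: $r^\star=\infty$, any $p\ge2$ works, and the condition $1-\frac1d=0\le\alpha\le\gamma-1=\alpha$ holds.

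\textbf{Step 3: $d=2$, $\alpha=\frac12$, $\gamma=\frac32$.} This is the main obstacle. Here we only get $\sup_t\|\nabla\log\rho\|_{L^2(\T^2)}\le C$, together with the bound on $\int\rho^{3/2}$. We will exploit that $H^1(\T^2)$ embeds in BMO, and that $\|\log\rho\|_{\rm BMO}\le C\|\nabla\log\rho\|_{L^2}$ on cubes by Poincar\'e. Trudinger--Moser applied to $g=\log\rho-\overline{\log\rho}$ gives $\intt e^{\kappa|g|}\,\dx\le C_\kappa$ for every $\kappa$, with $C_\kappa$ uniform in time. Next we bound the mean $\overline{\log\rho}$ uniformly. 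Jensen gives $\overline{\log\rho}\le\log\intt\rho=0$. For the other direction, $1=\intt\rho=e^{\overline{\log\rho}}\intt e^{g}\le e^{\overline{\log\rho}}C$. Consequently $\intt(\rho^p+\rho^{-s})\,\dx\le M$ for all $p,s$. For the $\calA_2$ condition on $\rho^{1/2}=e^{g/2}e^{\overline{\log\rho}/2}$, we use the John--Nirenberg characterization: if $\|\tfrac12\log\rho\|_{\rm BMO}$ is small, then $e^{\frac12\log\rho}\in\calA_2$ with a controlled constant. For a general BMO bound, the exponent $e^{\beta b}$ lies in $\calA_2$ only when $\beta\|b\|_{\rm BMO}$ is small. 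We therefore use the stronger fact that $\nabla\log\rho\in L^2$ in two dimensions gives vanishing-type local control: on cubes of side $\ell$, $\ell^{-2}\int_Q|g-g_Q|\lesssim\|\nabla\log\rho\|_{L^2(Q)}$. We then apply the local Trudinger inequality on each cube, with $\ell^{-2}\int_Q e^{\pm(g-g_Q)/2}\le C(\|\nabla g\|_{L^2(Q)})$, which is uniform since Moser--Trudinger on $Q$ is scale-invariant in 2D for exponential integrability of $e^{\kappa|g-g_Q|}$. Multiplying the two averages yields $[\rho^{1/2}]_{\calA_2}\le M$. If the scale-invariance estimate on small cubes fails to be uniform, the fallback is to use the dissipated quantity to get higher regularity. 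The exponents then close: with $r\in(1,2)$ close to $2$ we get $r^\star$ large, arbitrary $p,s$, and \eqref{eq:pot-exp-c} reads $2p'\le6$, i.e.\ $p\ge\frac32$.
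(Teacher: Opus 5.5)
\textbf{Gap in Step 1.} Your Steps 2 and 3 follow the paper: Morrey plus the mass constraint in one dimension, global Moser--Trudinger plus Jensen for the moments, and the scale-invariant local Moser--Trudinger inequality on cubes for the $\calA_2$ bound. Step 1, however, does not deliver the uniform-in-time bound those steps rely on.

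After integrating by parts onto the mollifier, you bound the drag remainder by $C_\e\|F\|_{L^1}\|\rho^\alpha\|_{L^1}\le C_\e\calD_{\rm drag}^{1/2}$. This is only the square root of a dissipation, and no multiple of the free energy absorbs it. Indeed, $C_\e\calD_{\rm drag}^{1/2}\le K\calD_{\rm drag}+C_\e^2/(4K)$ leaves a constant source term. The most you can extract from $\int_0^\infty\calD_{\rm drag}\,\dt<\infty$ is $\calE_\eta(t)\le\calE_\eta(0)+C\sqrt t$. The hypotheses \eqref{eq:pos-nm-c}--\eqref{eq:A2-dc} are uniform in $t\ge0$. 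Your Morrey and Moser--Trudinger constants depend on $\sup_t\|\nabla\rho^{\alpha-\frac12}\|_{L^2}$ or $\sup_t\|\nabla\log\rho\|_{L^2}$, so they degenerate as $t\to\infty$. You also cannot bootstrap with the decay of Theorem \ref{thm:main1}, because that theorem presupposes exactly these bounds.

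\textbf{What is missing.} The remainder must be bounded linearly by $\calD_{\rm drag}$. For that, the factor $\nabla\rho$ in $\rho\nabla\phi(\rho)=2\alpha\rho^{\alpha-1}\nabla\rho$ has to be paid for by a dissipation, not by the merely bounded quantity $\|\rho^\alpha\|_{L^1}$.

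The paper does this with Young's inequality against the BD pressure dissipation:
$|\rho\nabla\phi(\rho)\cdot F_\e|\le\frac12\nabla p(\rho)\cdot\nabla\phi(\rho)+C\rho^{\alpha-\gamma+1}|F_\e|^2$.
The relation $\gamma=\alpha+1$ makes the weight $\rho^0=1$, so $\intt|F_\e|^2\,\dx\le C_\e\|F\|_{L^1}^2\le C_\e\calD_{\rm drag}$. This gives $\frac{\dd}{\dt}\calE_{\rm BD}+\frac12\calD_{\rm BD}+\calD_{\rm rot}\le C_\e\calD_{\rm drag}$, which closes with $\calE_{\rm free}+\eta\calE_{\rm BD}$ uniformly in time.

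Your integration-by-parts route can also be repaired. Since $\intt\nabla\cdot F_\e\,\dx=0$, you may replace $\rho^\alpha$ by $\rho^\alpha-\intt\rho^\alpha\,\dx$. Poincar\'e then gives $\|\rho^\alpha-\intt\rho^\alpha\,\dx\|_{L^1}\le C\|\nabla\rho^\alpha\|_{L^2}\le C\calD_{\rm BD}^{1/2}$, because $\calD_{\rm BD}=\frac{2(\alpha+1)}{\alpha}\|\nabla\rho^\alpha\|_{L^2}^2$ when $\gamma=\alpha+1$. The remainder is then at most $C_\e\calD_{\rm drag}^{1/2}\calD_{\rm BD}^{1/2}\le\frac12\calD_{\rm BD}+C_\e\calD_{\rm drag}$, which closes.
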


\begin{remark}
All the bounds in Proposition \ref{prop:main} may depend on $\e$ and on the initial energies. In particular, the estimates are not asserted to be uniform with respect to the regularization parameter $\e$.
\end{remark}

Under either set of assumptions in Proposition \ref{prop:main}, we have
\[
1-\frac1d\le\alpha=\gamma-1.
\]
In particular, the viscous dissipation is nonnegative. Moreover,
\[
q:=\alpha-\gamma+1=0,
\]
so the drag remainder can be controlled without imposing any additional
positive or negative power bound on the density. For completeness, we
derive below a slightly more general BD estimate under the condition
\[
\gamma-1\le\alpha\le2\gamma-1.
\]

%
%
%
%
%
%
%
%
%
%
%

\subsection{Effective velocity and Bresch--Desjardins entropy}
We recall the BD potential $\phi$ introduced in the Introduction. Equivalently, up to an irrelevant additive constant, it is given by
\[
\phi(\rho)=
\begin{cases}
\dfrac{2\alpha}{\alpha-1}\rho^{\alpha-1}, & \alpha\neq1,\\[2mm]
2\log\rho, & \alpha=1.
\end{cases}
\]
 In particular,
\bq\label{eq:bd-pote}
\phi'(\rho)=\frac{2\nu'(\rho)}{\rho}=2\alpha\rho^{\alpha-2}.
\eq
We introduce the effective velocity
\[
w:=u+\nabla\phi(\rho).
\]
With the notation fixed in the Introduction, the BD entropy reads
\[
\calE_{\rm BD}= H+ \intt \lt[\frac12\rho|w|^2+\pi(\rho)\rt]\dx,
\]
and the combined energy is
\[
\calE_\eta=\calE_\free+\eta\calE_\BD.
\]
We will also use the dissipations $\calD_{\rm vis}$, $\calD_{\rm BD}$, $\calD_{\rm rot}$, and $\calD_{\rm drag}$ introduced above. In particular,
\[
\calD_{\rm drag} \sim \calI_{vv}^{u_\e^\e}, \quad \calI_{vv}^{u_\e^\e} := \inttr \frac{\lt|\sigma\nabla_v f+(v-u_\e^\e)f\rt|^2}{f} \,\dx\dv,
\]
where the equivalence constants depend on $\e$, since $\rho_\e$ is bounded above and below by positive constants.

We first derive the equation satisfied by the effective velocity. Since the continuity equation gives
\[
(\pa_t+u\cdot\nabla)\rho=-\rho\nabla\cdot u,
\]
we obtain from \eqref{eq:bd-pote} that
\[
(\pa_t+u\cdot\nabla)\phi(\rho)=-2\nu'(\rho)\nabla\cdot u.
\]
Taking the gradient, we find
\bq\label{eq:grad-bd-pot}
(\pa_t+u\cdot\nabla)\nabla\phi(\rho)=-\nabla\lt(2\nu'(\rho)\nabla\cdot u\rt)-(\nabla u)^{\mathsf T}\nabla\phi(\rho).
\eq
Moreover, the definition of $\phi$ implies $\rho\nabla\phi(\rho)=2\nabla\nu(\rho)$. Multiplying \eqref{eq:grad-bd-pot} by $\rho$ and using the continuity equation again, we obtain
\bq\label{eq:BD-grad-cons}
\pa_t\lt(\rho\nabla\phi(\rho)\rt)+\nabla\cdot\lt(\rho u\otimes\nabla\phi(\rho)\rt) =-\rho\nabla\lt(2\nu'(\rho)\nabla\cdot u\rt)-2(\nabla u)^{\mathsf T}\nabla\nu(\rho).
\eq

The BD relation \eqref{eq:bd-rel} yields the algebraic identity
\bq\label{eq:BD-visc-canc}
\nabla\cdot\lt(2\nu(\rho)D u\rt)+\nabla\lt(\lambda(\rho)\nabla\cdot u\rt)-\rho\nabla\lt(2\nu'(\rho)\nabla\cdot u\rt)-2(\nabla u)^{\mathsf T}\nabla\nu(\rho)  =\nabla\cdot\lt(2\nu(\rho)A u\rt).
\eq
Adding \eqref{eq:BD-grad-cons} to the fluid momentum equation and using \eqref{eq:BD-visc-canc}, we obtain the effective-velocity equation
\bq\label{eq:effe-vel-eq}
\pa_t(\rho w)+\nabla\cdot(\rho u\otimes w)+\nabla p(\rho)-\nabla\cdot\lt(2\nu(\rho)A u\rt)=\rho F_\e.
\eq

We next derive the corresponding entropy identity. Recall that the pressure potential satisfies
\[
\rho \pi'(\rho)-\pi(\rho)=p(\rho)-p(1).
\]
Multiplying \eqref{eq:effe-vel-eq} by $w$, integrating over $\T^d$, and using the continuity equation, we obtain
\[
\intt \lt[\pa_t(\rho w)+\nabla\cdot(\rho u\otimes w)\rt]\cdot w\,\dx=\frac{\dd}{\dt}\frac12\intt \rho|w|^2\,\dx.
\]
For the pressure term, we use $w=u+\nabla\phi(\rho)$ and \eqref{eq:h-ide} to write
\[
\intt \nabla p(\rho)\cdot w\,\dx=-\intt p(\rho)\nabla\cdot u\,\dx+\intt \nabla p(\rho)\cdot\nabla\phi(\rho)\,\dx=\frac{\dd}{\dt}\intt \pi(\rho)\,\dx+\intt \nabla p(\rho)\cdot\nabla\phi(\rho)\,\dx.
\]
Moreover, since $A u$ is antisymmetric and $\nabla^2\phi(\rho)$ is symmetric, we have
\[
\intt 2\nu(\rho)A u:\nabla w\,\dx=2\intt \nu(\rho)|A u|^2\,\dx.
\]
Consequently,
\begin{align*}
&\frac{\dd}{\dt}\intt \lt[\frac12\rho|w|^2+\pi(\rho)\rt]\dx+\intt \nabla p(\rho)\cdot\nabla\phi(\rho)\,\dx+2\intt \nu(\rho)|A u|^2\,\dx =\intt \rho w\cdot F_\e\,\dx.
\end{align*}
Recall from \eqref{eq:kin-free-sig} that the $H$-equation reads
\bq\label{eq:kin-free-sig2}
\frac{\dd}{\dt}H + \calD_\drag  =  - \intt \rho u \cdot F_\e\,\dx.
\eq
Adding together we obtain
\begin{equation}\label{eq:BD-e-iden}
\frac{\dd}{\dt}\calE_\BD +\calD_\BD+ \calD_{\rm rot}+ \calD_\drag =\intt \rho\nabla\phi(\rho)\cdot F_\e\,\dx.
\end{equation}
The right-hand side is the only term that does not have a definite sign.

%
%
%
%
%
%
%
%
%
%
%

\subsection{Density dissipation and estimate of the drag remainder}

Under our choice  $\nu(\rho)=\rho^\alpha$, $p(\rho)=\rho^\gamma$, we have $\lambda(\rho)=2(\alpha-1)\rho^\alpha$, $\phi'(\rho)=2\alpha\rho^{\alpha-2}$. The density dissipation appearing in \eqref{eq:BD-e-iden} can be written explicitly as
\bq\label{eq:BD-den-diss}
\nabla p(\rho)\cdot\nabla\phi(\rho)=2\alpha\gamma\rho^{\alpha+\gamma-3}|\nabla\rho|^2 =\frac{8\alpha\gamma}{(\alpha+\gamma-1)^2}\lt|\nabla\rho^{\frac{\alpha+\gamma-1}{2}}\rt|^2.
\eq
Once the BD estimate is closed below, this identity yields
\[
\nabla\rho^{\frac{\alpha+\gamma-1}{2}}\in L^2\lt(0,T;L^2(\T^d)\rt).
\]

The combined energy also controls a density-gradient quantity uniformly in time. Indeed, $|\nabla\phi(\rho)|^2\le2|u+\nabla\phi(\rho)|^2+2|u|^2$. Consequently, for every fixed $\eta>0$,
\[
\intt \rho|\nabla\phi(\rho)|^2\,\dx \le C_\eta\calE_\eta(t).
\]
If $\alpha\neq\frac12$, then
\[
\rho|\nabla\phi(\rho)|^2 = 4\alpha^2\rho^{2\alpha-3}|\nabla\rho|^2 = \frac{4\alpha^2}{\lt(\alpha-\frac12\rt)^2} \lt|\nabla\rho^{\alpha-\frac12}\rt|^2.
\]
Hence, the combined estimate derived below yields
\bq\label{eq:BD-den-grad}
\nabla\rho^{\alpha-\frac12}\in L^\infty\lt(0,T;L^2(\T^d)\rt).
\eq
In the borderline case $\alpha=\frac12$, we instead have $\rho|\nabla\phi(\rho)|^2=|\nabla\log\rho|^2$, and hence
\bq\label{eq:BD-log-den-grad}
\nabla\log\rho\in L^\infty\lt(0,T;L^2(\T^d)\rt).
\eq

It remains to control the right-hand side of \eqref{eq:BD-e-iden}. Recall the free energy is non-increasing
\[
\calE_\free(t)\le\calE_\free(0).
\]
So it remains uniformly bounded.  Since $|\T^d|=1$ and $\intt \rho\,\dx=1$, the definition of $\pi$ also yields
\[
\intt \rho^\gamma\,\dx=1+(\gamma-1)\intt \pi(\rho)\,\dx\le1+(\gamma-1)\calE_{\rm free}(0).
\]
So, $\rho \in L^\infty_t L^\gamma_x$.
Using $\rho\nabla\phi(\rho)=2\alpha\rho^{\alpha-1}\nabla\rho$ and \eqref{eq:BD-den-diss}, we obtain from Young's inequality that 
\begin{equation}\label{e:Fe}
\lt|\intt \rho\nabla\phi(\rho)\cdot F_\e\,\dx\rt| \le \frac12 \calD_\BD +C \intt \rho^q|F_\e|^2\,\dx.
\end{equation}

We next estimate the last term using the Fisher information estimate \eqref{e:FL1}, and the fact that 
$\calD_\drag \sim \calI_{vv}^{u_\e^\e}$,
\[
\intt \rho^q|F_\e|^2\,\dx \leq C_\e  \|F\|_1^2 \intt \rho^q dx \lesssim \calD_\drag\intt \rho^q dx.
\]
 Assuming that
\bq\label{eq:BD-exp-cond}
0\le q\le\gamma, \text{ or equivalently, } \gamma-1\le\alpha\le2\gamma-1,
\eq
we bound 
\[
\intt \rho^q\,\dx\le C\lt(1+\intt \rho^\gamma\,\dx\rt)\le C\lt(1+\calE_{\rm free}(0)\rt).
\]
Incorporating this into \eqref{e:Fe} we obtain
\bq\label{eq:BD-d-rema}
\lt|\intt \rho\nabla\phi(\rho)\cdot F_\e\,\dx\rt| \leq \frac12 \calD_\BD +C_\e \calD_{\drag}.
\eq

Consequently, we can see that the $BD$-dissipation is absorbed by cost of leaking the drag dissipation out:
\begin{equation}\label{eq:BD-e-iden2}
\frac{\dd}{\dt}\calE_\BD +\frac12 \calD_\BD+ \calD_{\rm rot}\leq C \calD_{\drag}.
\end{equation}

Combining this estimate with the free energy identity and choosing $\eta>0$ sufficiently small, we recover the drag and viscous dissipations and obtain the following combined estimate.

\begin{proposition} \label{prop:BD-est}
Fix $\e>0$, and let $(f,\rho,u)$ be a sufficiently smooth positive solution to \eqref{main_sys}  with $\sigma\ge0$ on $[0,\infty)$. Assume that
\[
\alpha\ge1-\frac1d, \quad \gamma-1\le\alpha\le2\gamma-1.
\]
Then the following energy inequality holds
 \begin{equation}\label{eq:BD-e-iden3}
\frac{\dd}{\dt}\calE_\eta + c_1 \calD_\BD+ c_2 \calD_{\rm rot} + c_3\calD_{\rm vis} +c_4  \calD_{\drag} \leq 0.
\end{equation}
\end{proposition}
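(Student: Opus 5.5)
The plan is to take a linear combination of the free energy identity and the BD entropy inequality \eqref{eq:BD-e-iden2}. By \eqref{e:slaw} (more precisely, the free-energy form derived before it), smooth solutions satisfy
\[
\ddt \calE_\free + \calD_\vis + \calD_\drag = 0 .
\]
Here $\calD_\vis\ge0$ because $\alpha\ge 1-\frac1d$, via the trace-free decomposition $2\rho^\alpha|D^0u|^2+2(\alpha-1+\frac1d)\rho^\alpha|\nabla\cdot u|^2$ used in Section \ref{ssec:coer-ex-den}. For the BD part, the hypothesis $\gamma-1\le\alpha\le2\gamma-1$ is exactly \eqref{eq:BD-exp-cond}, i.e. $0\le q=\alpha-\gamma+1\le\gamma$. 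This is what gives \eqref{eq:BD-d-rema}, and hence \eqref{eq:BD-e-iden2}:
\[
\ddt\calE_\BD+\frac12\calD_\BD+\calD_{\rm rot}\le C_\e\calD_\drag .
\]
I will first check that the constant $C_\e$ depends only on $\e$, $\alpha$, $\gamma$ and $\calE_\free(0)$. The step $\intt\rho^q\,\dx\le C(1+\intt\rho^\gamma\,\dx)$ uses only the monotonicity of the free energy, $\intt\rho^\gamma\,\dx\le 1+(\gamma-1)\calE_\free(0)$, together with $\calD_\drag\sim\calI^{u_\e^\e}_{vv}$ and the $L^1$ bound \eqref{e:FL1} on $F$ (with $\|F_\e\|_\infty\le\|\theta_\e\|_\infty\|F\|_1$).

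Next I multiply the BD inequality by $\eta>0$ and add it to the free energy identity:
\[
\ddt\calE_\eta+\calD_\vis+(1-\eta C_\e)\calD_\drag+\frac\eta2\calD_\BD+\eta\calD_{\rm rot}\le0 .
\]
Choosing $\eta\le 1/(2C_\e)$ gives \eqref{eq:BD-e-iden3} with $c_1=\eta/2$, $c_2=\eta$, $c_3=1$ and $c_4=1/2$, which matches item (vi) of Definition \ref{def:weak-sol}.

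The only delicate point is justifying \eqref{eq:BD-d-rema} uniformly in time, in particular when $q$ is not an integer or $q=0$. When $q=0$ the integral $\intt\rho^q\,\dx$ is simply $1$. For $0<q<\gamma$ I use Jensen/Hölder, and for $q=\gamma$ the bound is direct. I also need to note that the Young splitting in \eqref{e:Fe} uses the pointwise identity
\[
\rho|\nabla\phi|=2\alpha\rho^{\alpha-1}|\nabla\rho|=C\rho^{q/2}\bigl(\rho^{\alpha+\gamma-3}|\nabla\rho|^2\bigr)^{1/2},
\]
with exponent check $2(\alpha-1)-(\alpha+\gamma-3)=\alpha-\gamma+1=q$, which is consistent with \eqref{eq:BD-den-diss}. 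Positivity and smoothness of $\rho$ make all integrations by parts in \eqref{eq:BD-e-iden} legitimate.
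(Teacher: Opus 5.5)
Your proposal is correct and follows the paper's own argument. You use the Young splitting \eqref{e:Fe} with $q=\alpha-\gamma+1\in[0,\gamma]$ and the bound $\intt\rho^q\,\dx\le C(1+\calE_{\rm free}(0))$ together with \eqref{e:FL1} to get \eqref{eq:BD-e-iden2}, then add $\eta$ times it to the free energy identity with $\eta C_\e\le\frac12$. Your constants $c_1=\eta/2$, $c_2=\eta$, $c_3=1$, $c_4=1/2$ are exactly those in item (vi) of Definition \ref{def:weak-sol}.
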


\begin{remark}
In the low-dimensional regimes considered in Proposition \ref{prop:main}, we get $\alpha=\gamma-1$. Thus, $q=\alpha-\gamma+1=0$. In particular, the estimate of the drag remainder does not use any additional positive or negative power bound on the density.
\end{remark}

%
%
%
%
%
%
%
%
%
%
%

 Since $\calE_\eta$ controls both $\calE_{\rm free}$ and $\calE_{\rm BD}$, the preceding pointwise inequality yields   $\nabla\rho^{\frac{\alpha+\gamma-1}{2}}\in L^2\lt(0,T;L^2(\T^d)\rt)$. It also gives
\begin{equation}\label{e:unif1}
\nabla\rho^{\alpha-\frac12}\in L^\infty\lt(0,T;L^2(\T^d)\rt) \quad \text{when }\alpha\neq\frac12
\end{equation}
and
\begin{equation}\label{e:unif2}
\nabla\log\rho\in L^\infty\lt(0,T;L^2(\T^d)\rt)  \quad \text{when }\alpha = \frac12.
\end{equation}

%
%
%
%
%
%
%
%
%
%
%

\subsection{Verification of the weighted density conditions}
It remains to verify the weighted density assumptions required in Theorem \ref{thm:main1}. This is the step in which the restrictions on the dimension and the exponents in Proposition \ref{prop:main} enter. In one dimension, the BD energy yields uniform pointwise upper and lower bounds on the density. In the two-dimensional borderline case, it provides uniform exponential integrability of $\log\rho$ and the required $\calA_2$ estimate.

\begin{proposition} 
\label{prop:veri-w-den-c}
Under the assumptions of Proposition \ref{prop:main}, Proposition \ref{prop:BD-est} applies. Moreover, for every finite $p,s>0$,
\bq\label{eq:all-pn-mom}
\sup_{t\ge0}\intt \lt(\rho(t,x)^p+\rho(t,x)^{-s}\rt)\dx<\infty.
\eq
In addition,
\[
\sup_{t\ge0}[\rho(t)^\alpha]_{\calA_2}<\infty.
\]
\end{proposition}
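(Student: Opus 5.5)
First I check that Proposition \ref{prop:BD-est} applies. In the case $d=1$, $0<\alpha\le\frac12$, $\gamma=\alpha+1$, the condition $\alpha\ge 1-\frac1d=0$ is trivial, and $\gamma-1=\alpha\le 2\gamma-1=2\alpha+1$ holds. In the case $d=2$, $\alpha=\frac12=1-\frac12$ and $\gamma=\frac32$, so $\gamma-1=\alpha\le2$. Integrating \eqref{eq:BD-e-iden3} in time then gives $\calE_\eta(t)\le\calE_\eta(0)$ for all $t\ge0$. This yields the uniform-in-time bounds $\intt\rho^\gamma\,\dx\le C$ together with \eqref{e:unif1} or \eqref{e:unif2}. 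The constants depend only on $\e$ and on the initial energies.

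\emph{One dimension.} Set $\beta:=\alpha-\frac12\in(-\frac12,0]$. If $\beta<0$, the function $g:=\rho^{\beta}$ satisfies $\|\pa_x g\|_{L^2(\T)}\le C$. Since $\intt\rho\,\dx=1$, there is a point $x_0$ with $\rho(x_0)\le1$, i.e.\ $g(x_0)\ge1$. There is also a point $x_1$ with $\rho(x_1)\ge1$, i.e.\ $g(x_1)\le1$. The embedding $H^1(\T)\hookrightarrow C^{1/2}(\T)$ then gives $\sup g\le 1+C$, so $\rho\ge c>0$ uniformly.

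For the upper bound I would not use $g$, since $\inf g$ could approach $0$. Instead I use $\pa_x\rho=\beta^{-1}\rho^{1-\beta}\pa_x\rho^{\beta}$. This gives $|\pa_x\rho|\le C\rho^{1-\beta}|\pa_x\rho^\beta|$. Consider $\rho^{\kappa}$ with $\kappa=\frac12+\beta=\alpha>0$, for which $\pa_x\rho^{\kappa}=\frac{\kappa}{\beta}\rho^{1/2}\pa_x\rho^\beta$. Hence $\|\pa_x\rho^\kappa\|_{L^1}\le C\|\rho\|_{L^1}^{1/2}\|\pa_x\rho^\beta\|_{L^2}\le C$. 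Combined with $\min\rho\le1$, this gives $\sup\rho^\kappa\le 1+C$.

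The case $\alpha=\frac12$ works the same way with $\log\rho$: $\log\rho$ is uniformly $C^{1/2}$, and it takes a value $\le0$ and a value $\ge0$, so $|\log\rho|\le C$. In all cases $c\le\rho\le C$ uniformly in time. Then \eqref{eq:all-pn-mom} is immediate, and $[\rho^\alpha]_{\calA_2}\le (C/c)^{\alpha}$.

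\emph{Two dimensions.} Here $\alpha=\frac12$, so $\psi:=\log\rho$ satisfies $\|\nabla\psi\|_{L^2(\T^2)}\le K$ uniformly in time. By the Trudinger--Moser inequality on $\T^2$,
\[
\intt e^{\lambda|\psi-\bar\psi|}\,\dx\le C\exp\lt(C\lambda^2K^2\rt)
\]
for every $\lambda>0$, where $\bar\psi=\intt\psi\,\dx$. I bound $\bar\psi$ from both sides. Jensen gives $\bar\psi\le\log\intt\rho\,\dx=0$. For the other side, $1=\intt e^{\psi}\,\dx\le e^{\bar\psi}\intt e^{|\psi-\bar\psi|}\,\dx\le e^{\bar\psi}C_K$, so $\bar\psi\ge-\log C_K$. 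Therefore $\intt \rho^{\pm\lambda}\,\dx\le C_\lambda$ for every $\lambda>0$, which is \eqref{eq:all-pn-mom}.

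\emph{The $\calA_2$ bound in two dimensions.} This is the main obstacle, because cube averages must be controlled uniformly over all scales. The plan is to use the scale invariance of the Dirichlet energy in two dimensions. On a cube $Q$, the local Trudinger--Moser (or John--Nirenberg) inequality for $\psi\in W^{1,2}(Q)$ gives
\[
\frac1{|Q|}\int_Q e^{\lambda|\psi-\psi_Q|}\,\dx\le C\exp\lt(C\lambda^2\|\nabla\psi\|_{L^2(Q)}^2\rt)\le C\exp(C\lambda^2K^2),
\]
with constants independent of the size of $Q$. The underlying reason is that $W^{1,2}\subset \mathrm{BMO}$ in two dimensions with $\|\psi\|_{\mathrm{BMO}}\le C\|\nabla\psi\|_{L^2}$, by Poincar\'e on each cube. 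Taking $\lambda=\frac12$,
\[
\lt(\frac1{|Q|}\int_Q\rho^{1/2}\,\dx\rt)\lt(\frac1{|Q|}\int_Q\rho^{-1/2}\,\dx\rt) =\frac1{|Q|}\int_Q e^{\frac12(\psi-\psi_Q)}\,\dx\cdot\frac1{|Q|}\int_Q e^{-\frac12(\psi-\psi_Q)}\,\dx\le C\exp(CK^2).
\]
This holds uniformly in $Q$ and in $t$, giving $\sup_t[\rho^{1/2}]_{\calA_2}<\infty$.

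The delicate point is to invoke the local exponential integrability with a constant that is genuinely scale-free. I would handle it through the BMO route: Poincar\'e gives the BMO bound, and John--Nirenberg then gives $e^{\lambda\psi}\in\calA_2$ for $\lambda\|\psi\|_{\mathrm{BMO}}$ small. For a large $K$, this would only cover small $\lambda$, so I would instead use the sharper fact that the $W^{1,2}$ seminorm yields exponential integrability for all $\lambda$ with Gaussian dependence. That is what Trudinger--Moser on cubes, after rescaling to the unit cube, provides.
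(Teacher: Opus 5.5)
Your proof is correct. Apart from one step it follows the paper:
\begin{itemize}
\item the check that Proposition~\ref{prop:BD-est} applies;
\item the one-dimensional lower bound from the $C^{1/2}$ bound on $\rho^{\alpha-\frac12}$ at a point where $\rho\ge1$;
\item the $\log\rho$ argument for $\alpha=\frac12$ in one dimension;
\item the global Moser--Trudinger plus Jensen bound on the mean of $\log\rho$ in two dimensions;
\item the $\mathcal A_2$ bound from the rescaled local Moser--Trudinger inequality on cubes. As you note, its Gaussian dependence on $\|\nabla\log\rho\|_{L^2(Q)}$ is what makes it work for all $\lambda$, where John--Nirenberg alone would not when $K$ is large.
\end{itemize}

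The genuine difference is the one-dimensional upper bound for $0<\alpha<\frac12$.

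The paper takes a minimum point $x_t^-$ of $\rho^{\alpha-\frac12}$ and writes $\rho^{\alpha-\frac12}(x)\le a_t+C|x-x_t^-|^{1/2}$. It then uses the mass constraint together with the divergence of $\int_0^{1/2}(a+Cy^{1/2})^{-1/\beta}\,\textnormal{d}y$ as $a\to0^+$, where $\beta=\frac12-\alpha$ and $\frac1{2\beta}>1$, to keep $a_t$ away from zero. This reuses the same Morrey estimate as the lower bound, but the resulting bound is only implicit.

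You instead use $\partial_x\rho^\alpha=\frac{\alpha}{\alpha-1/2}\,\rho^{1/2}\partial_x\rho^{\alpha-\frac12}$, which is the identity $\rho\nabla\phi(\rho)=2\nabla\rho^\alpha$. Cauchy--Schwarz with $\|\sqrt\rho\|_{L^2}=1$ then gives a uniform $W^{1,1}$ bound on $\rho^\alpha$, and $\min\rho\le1$ finishes the argument. This route is more elementary and fully explicit: $\sup_x\rho^\alpha\le1+\frac12\|\sqrt\rho\,\partial_x\phi(\rho)\|_{L^2}$. It does not need the lower bound or a minimum point. Its constant is uniform in $\alpha\in(0,\frac12]$, and the same upper-bound argument also covers $\alpha=\frac12$ directly.
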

 
 \begin{proof}
We treat the one- and two-dimensional cases separately.
 
\medskip
\noindent\textit{Case 1: $d=1$, $0<\alpha\le\frac12$, and $\gamma=\alpha+1$.} Suppose first that $0<\alpha<\frac12$, and set
\[
\beta:=\frac12-\alpha>0.
\]
By \eqref{eq:BD-e-iden3} and \eqref{eq:BD-den-grad}, we have
\bq\label{eq:1d-np-grad}
\sup_{t\ge0}\|\pa_x\rho^{-\beta}(t)\|_{L^2(\T)}\le C.
\eq
We claim that this estimate, together with the mass constraint $\int_{\T}\rho(t,x)\,\dx=1$, implies uniform pointwise upper and lower bounds for $\rho$.

To obtain the lower bound, observe that, for each $t\ge0$, there exists a point $x_t^{+}\in\T$ such that $\rho(t,x_t^{+})\ge1$. Thus, $\rho(t,x_t^{+})^{-\beta}\le1$. The one-dimensional Morrey estimate and \eqref{eq:1d-np-grad} give
\[
\rho(t,x)^{-\beta}\le\rho(t,x_t^{+})^{-\beta}+C|x-x_t^{+}|^{\frac12}\le C
\]
for every $x\in\T$. Hence,
\[
\rho(t,x)\ge\underline\rho>0
\]
uniformly in $(t,x)$.

We next prove the upper bound. Let $x_t^{-}\in\T$ be a minimum point of $\rho(t,\cdot)^{-\beta}$ and set
\[
a_t:=\rho(t,x_t^{-})^{-\beta}.
\]
By the Morrey estimate,
\[
\rho(t,x)^{-\beta}\le a_t+C|x-x_t^{-}|^{\frac12}.
\]
 Consequently,
\[
1=\int_{\T}\rho(t,x)\,\dx\ge\int_{|x-x_t^{-}|\le\frac12}\lt(a_t+C|x-x_t^{-}|^{\frac12}\rt)^{-\frac1\beta}\dx.
\]
Since
\[
\lt( a+Cy^{\frac12} \rt)^{-\frac1\beta} \to C^{-\frac1\beta}y^{-\frac1{2\beta}} \quad \text{as } a\to0^+ 
\]
and $\frac1{2\beta}>1$, the monotone convergence theorem gives
\[
\lim_{a\to0^+} \int_0^{\frac12} \lt( a+Cy^{\frac12} \rt)^{-\frac1\beta}\,\dy = \infty.
\]
It follows that $a_t$ is bounded away from zero uniformly in time. Since
\[
a_t = \min_{x\in\T}\rho(t,x)^{-\beta} = \lt( \max_{x\in\T}\rho(t,x) \rt)^{-\beta},
\]
we conclude that
\[
\rho(t,x)\le\overline\rho<\infty \quad \text{uniformly in $(t,x)$.}
\]

Suppose now that $\alpha=\frac12$. By \eqref{e:unif2} we have
\bq\label{eq:1d-log-grad}
\sup_{t\ge0}\|\pa_x\log\rho(t)\|_{L^2(\T)}\le C.
\eq
For each $t\ge0$, the mass constraint implies the existence of points $x_t^{-},x_t^{+}\in\T$ such that
\[
\rho(t,x_t^{-})\le1\le\rho(t,x_t^{+}).
\]
Using \eqref{eq:1d-log-grad} and the one-dimensional Morrey estimate, we obtain $|\log\rho(t,x)|\le C$ for every $(t,x)\in[0,\infty)\times\T$. Thus, the same uniform pointwise bounds hold:
\bq\label{eq:1d-pt-den-b}
0<\underline\rho\le\rho(t,x)\le\overline\rho<\infty.
\eq

In either case, \eqref{eq:1d-pt-den-b} immediately gives
\[
\sup_{t\ge0}\int_{\T}\lt(\rho(t,x)^p+\rho(t,x)^{-s}\rt)\dx<\infty
\]
for every finite $p,s>0$. Moreover,
\[
[\rho(t)^\alpha]_{\calA_2}\le\lt(\frac{\overline\rho}{\underline\rho}\rt)^\alpha,
\]
and hence
\[
\sup_{t\ge0}[\rho(t)^\alpha]_{\calA_2}<\infty.
\]

\medskip
\noindent\textit{Case 2: $d=2$, $\alpha=\frac12$, and $\gamma=\frac32$.} By \eqref{e:unif2} we have
\bq\label{eq:log-den-L2-grad}
\sup_{t\ge0}\|\nabla\log\rho(t)\|_{L^2(\T^2)}\le C.
\eq
We first prove uniform positive-power and inverse-density bounds.  Set 
\[
\zeta:=\log\rho, \quad g:=\zeta-\zeta_{\T^2}, \quad \text{with } \zeta_{\T^2}:=\int_{\T^2}\zeta\,\dx.
\]
Then
\[
\int_{\T^2}g\,\dx=0, \quad \nabla g=\nabla\zeta.
\]
By the global Moser--Trudinger inequality,
\[
\int_{\T^2} \exp\lt( c_0\frac{|g|^2}{\|\nabla g\|_{L^2(\T^2)}^2} \rt)\dx \le C.
\]
Since \eqref{eq:log-den-L2-grad} gives $\|\nabla g\|_{L^2(\T^2)}\le K$ uniformly in time, we obtain
\[
\sup_{t\ge0} \int_{\T^2} \exp\lt( \frac{c_0}{K^2}|g|^2 \rt)\dx \le C.
\]
For every $a>0$, Young's inequality gives
\[
a|g| \le \frac{c_0}{2K^2}|g|^2 + \frac{a^2K^2}{2c_0}.
\]
Consequently,
\bq\label{eq:c-log-d-exp-m}
\sup_{t\ge0}\int_{\T^2}e^{a|\zeta-\zeta_{\T^2}|}\,\dx\le C_a.
\eq
Since
\[
\int_{\T^2}e^\zeta\,\dx=\int_{\T^2}\rho\,\dx=1,
\]
Jensen's inequality gives
\[
\zeta_{\T^2}\le\log\int_{\T^2}e^\zeta\,\dx=0.
\]
On the other hand, using \eqref{eq:c-log-d-exp-m} with $a=1$, we obtain
\[
1=e^{\zeta_{\T^2}}\int_{\T^2}e^{\zeta-\zeta_{\T^2}}\,\dx\le C e^{\zeta_{\T^2}}.
\]
This gives $-C\le\zeta_{\T^2}\le0$. Combining this estimate with \eqref{eq:c-log-d-exp-m}, we conclude that, for every finite $p,s>0$,
\[
\int_{\T^2}\rho^p\,\dx+\int_{\T^2}\rho^{-s}\,\dx =e^{p\zeta_{\T^2}}\int_{\T^2}e^{p(\zeta-\zeta_{\T^2})}\,\dx+e^{-s\zeta_{\T^2}}\int_{\T^2}e^{-s(\zeta-\zeta_{\T^2})}\,\dx  \le C_{p,s}.
\]
This proves \eqref{eq:all-pn-mom} in the two-dimensional case.

It remains to verify the $\calA_2$ condition. Let $Q\subset\T^2$ be a cube, and define
\[
\zeta_Q:=\frac1{|Q|}\int_Q\zeta\,\dx.
\]
The scaled local Moser--Trudinger inequality gives
\[
\frac1{|Q|}\int_Q\exp\lt(c\frac{|\zeta-\zeta_Q|^2}{\|\nabla\zeta\|_{L^2(Q)}^2}\rt)\dx\le C.
\]
If $\|\nabla\zeta\|_{L^2(Q)}=0$, the desired estimate is immediate. Otherwise, Young's inequality gives
\[
\frac12|\zeta-\zeta_Q| \le \frac{c}{2} \frac{|\zeta-\zeta_Q|^2}{\|\nabla\zeta\|_{L^2(Q)}^2} + \frac{\|\nabla\zeta\|_{L^2(Q)}^2}{8c}.
\]
Since $\|\nabla\zeta\|_{L^2(Q)}\le\|\nabla\zeta\|_{L^2(\T^2)}\le C$, the scaled local Moser--Trudinger inequality yields
\[
\frac1{|Q|} \int_Q e^{\frac12|\zeta-\zeta_Q|} \,\dx \le C.
\]
Hence,
\[
\lt(\frac1{|Q|}\int_Q\sqrt\rho\,\dx\rt)\lt(\frac1{|Q|}\int_Q\rho^{-\frac12}\,\dx\rt) =\lt(\frac1{|Q|}\int_Qe^{\frac12(\zeta-\zeta_Q)}\,\dx\rt)\lt(\frac1{|Q|}\int_Qe^{-\frac12(\zeta-\zeta_Q)}\,\dx\rt) \le C.
\]
Taking the supremum over all cubes $Q\subset\T^2$, we obtain
\[
\sup_{t\ge0}[\sqrt{\rho(t)}]_{\calA_2}<\infty.
\]
This completes the proof.
\end{proof}

%
%
%
%
%
%
%
%
%
%
%
 

 \subsection[Completion of the proof of Proposition \ref{prop:main}]{
    Completion of the proof of Proposition \ref{prop:main}
}

Under either set of assumptions in Proposition \ref{prop:main}, we have
\[
1-\frac1d\le\alpha=\gamma-1\le2\gamma-1.
\]
Therefore, Proposition \ref{prop:BD-est} applies and yields
\[
\sup_{t\ge0}\calE_\eta(t)+\int_0^\infty\lt[\calD_{\rm vis}(t)+\calD_{\rm BD}(t)+\calD_{\rm rot}(t)+\calD_{\rm drag}(t)\rt]\dt<\infty.
\]

By Proposition \ref{prop:veri-w-den-c}, the positive-power and inverse-density bounds and the $\calA_2$ condition required in Theorem \ref{thm:main1} are satisfied. When $d=1$, we may choose
\[
r=\frac32, \quad p=3, \quad s=3\alpha.
\]
When $d=2$, we choose
\[
r=\frac32, \quad r^\star=6, \quad p=3, \quad s=\frac32.
\]
Thus, all the assumptions of Theorem \ref{thm:main1} are satisfied.  This completes the proof of Proposition \ref{prop:main}.

%
%
%
%
%
%
%
%
%
%
%

\section{Construction of finite-energy BD entropy weak solutions}\label{sec:weak-sol}
In this section, we construct global finite-energy BD entropy weak solutions to \eqref{main_sys} in the diffusionless case $\sigma=0$ and in the low-dimensional regimes stated in Theorem \ref{thm:weak-exist}. The proof is based on a BD-compatible approximation scheme with artificial pressure, regularized viscosity, capillarity, and damping terms. The locally averaged Brinkman force is treated as a lower-order perturbation, since the spatial regularization parameter $\e>0$ is fixed throughout the construction.

The purpose of this section is to prove the existence assertion of Theorem \ref{thm:weak-exist} in the diffusionless case $\sigma=0$. The exponential relaxation of the weak solutions constructed here will be proved in Section \ref{sec:weak-relax-diff}.

%
%
%
%
%
%
%
%
%
%
%

\subsection{Approximation scheme and a BD-compatible regularized system}

To prove the diffusionless existence assertion of Theorem \ref{thm:weak-exist}, we introduce the approximation parameters
\[
\mathbf a:=(\zeta,\tau),\quad \zeta,\tau>0.
\]
The parameter $\zeta$ regularizes the viscosity coefficient, while the parameter $\tau$ is used for the artificial pressure, the capillarity term, and the linear damping term. We also use an auxiliary augmented Galerkin approximation indexed by the Galerkin dimension $N$. The limiting procedure is performed in the order
\[
N\to\infty,\quad \tau\to0,\quad \zeta\to0.
\]
The approximation scheme is based on the augmented BD-compatible construction developed in \cite{BDZ15,BVY22}. The additional kinetic equation is incorporated through a decoupling and fixed-point argument.

For $\zeta>0$, define
\[
\nu_\zeta(\rho):=\rho^\alpha+\zeta\rho, \quad \lambda_\zeta(\rho):=2(\rho\nu_\zeta'(\rho)-\nu_\zeta(\rho))=2(\alpha-1)\rho^\alpha.
\]
Thus, the regularized viscosity coefficients preserve the BD relation. Moreover,
\[
\nu_\zeta'(\rho)=\alpha\rho^{\alpha-1}+\zeta\ge\zeta.
\]
We denote the regularized stress tensor by
\[
\mathbb S_\zeta(\rho,u):=2\nu_\zeta(\rho)D u+\lambda_\zeta(\rho)(\nabla\cdot u)\mathbb I.
\]

We also define the regularized BD potential by
\[
s_\zeta'(\rho):=\frac{\nu_\zeta'(\rho)}{\rho},\quad \phi_\zeta:=2s_\zeta,\quad \phi_\zeta'(\rho)=\frac{2\nu_\zeta'(\rho)}{\rho}.
\]
Then
\[
\rho\nabla\phi_\zeta(\rho)=2\nabla\nu_\zeta(\rho).
\]

The artificial pressure is
\[
p_\tau(\rho):=p(\rho)+\tau\rho^{10}=\rho^\gamma+\tau\rho^{10}.
\]
We denote by $\pi_{10}$ the corresponding pressure potential:
\[
\pi_{10}(\rho):=\rho\int_1^\rho\frac{s^{10}-1}{s^2}\, \ds =\frac{\rho^{10}-10\rho+9}{9}.
\]

To define the capillarity regularization, set
\[
K_\zeta(\rho):=\frac{4|\nu_\zeta'(\rho)|^2}{\rho},\quad 
Y_\zeta(\rho):=\int_1^\rho\sqrt{K_\zeta(s)}\, \ds,
\quad \text{and} \quad 
\mathfrak K_\zeta(\rho):=\rho\nabla\lt(\sqrt{K_\zeta(\rho)}\,\Delta Y_\zeta(\rho)\rt).
\]
This choice is compatible with the BD structure; see \cite{BVY22}. We also introduce
\[
j_\zeta(\rho):=\int_1^\rho\frac{\nu_\zeta'(z)}{z^2}\,\dz,\quad J_\zeta(\rho):=\int_1^\rho j_\zeta(s)\, \ds.
\]
Since $s_\zeta'(\rho)=\nu_\zeta'(\rho)/\rho$, we get $\rho\nabla j_\zeta(\rho)=\nabla s_\zeta(\rho)$. This identity will be used to absorb the damping contribution in the BD entropy balance.

For the regularized kinetic--fluid coupling, we define
\[
u_{\mathbf a,\e}^\e:=\frac{(\rho_{\mathbf a}u_{\mathbf a})_\e}{(\rho_{\mathbf a})_\e},
\quad
F_{\mathbf a}:=\intr (v-u_{\mathbf a,\e}^\e)f_{\mathbf a}\,\dv.
\]
Since $\intt\rho_{\mathbf a}(t,x)\,\dx=1$, the strict positivity of $\theta_\e$ gives
\[
(\rho_{\mathbf a})_\e(t,x)\ge c_\e:=\min_{\T^d}\theta_\e>0.
\]
Hence $u_{\mathbf a,\e}^\e$ is well defined independently of the possible presence of vacuum.

We consider the regularized system
\begin{align}\label{eq:reg-sys}
\begin{aligned}
&\pa_t f_{\mathbf a}+v\cdot\nabla_x f_{\mathbf a} =(\rho_{\mathbf a})_\e\nabla_v\cdot\lt[(v-u_{\mathbf a,\e}^\e)f_{\mathbf a}\rt], \\
&\pa_t\rho_{\mathbf a}+\nabla\cdot(\rho_{\mathbf a}u_{\mathbf a})=0, \\
&\pa_t(\rho_{\mathbf a}u_{\mathbf a}) +\nabla\cdot(\rho_{\mathbf a}u_{\mathbf a}\otimes u_{\mathbf a}) +\nabla p_\tau(\rho_{\mathbf a})  
 = \nabla\cdot\mathbb S_\zeta(\rho_{\mathbf a},u_{\mathbf a}) + \rho_{\mathbf a}(F_{\mathbf a})_\e + \tau\mathfrak K_\zeta(\rho_{\mathbf a}) - \tau u_{\mathbf a}.
\end{aligned}
\end{align}
The three terms multiplied by $\tau$ play different roles. The artificial pressure controls large values of the density, the capillarity term yields higher-order compactness while preserving the BD structure, and the linear damping term is incorporated into the approximate BD entropy through an exact time derivative. Since these terms will be removed simultaneously, we use the same parameter $\tau$.

We now provide the approximation of the initial data.  
\begin{lemma}\label{lem:init-app}
There exist smooth approximate initial data $f_{0,n}\ge0$, $\rho_{0,n}>0$, and $u_{0,n}$ such that
\[
f_{0,n},\ \rho_{0,n},\ u_{0,n}\in C^\infty,\quad
\inttr f_{0,n}\,\dx\dv=1,\quad
\intt\rho_{0,n}\,\dx=1.
\]
Moreover,
\[
f_{0,n}\to f_0 \quad\text{in }W_2(\T^d\times\R^d), \quad \rho_{0,n}\to\rho_0 \quad\text{in }L^\gamma(\T^d),\quad
\sqrt{\rho_{0,n}}u_{0,n}\to\sqrt{\rho_0}u_0
\quad\text{in }L^2(\T^d),
\]
and
\[
\sqrt{\rho_{0,n}}\lt(u_{0,n}+\nabla\phi(\rho_{0,n})\rt)
\to
\sqrt{\rho_0}\lt(u_0+\nabla\phi(\rho_0)\rt)
\quad\text{in }L^2(\T^d).
\]
\end{lemma}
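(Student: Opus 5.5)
The plan is to treat the three components separately and mollify each in a way that preserves the needed structure. We use a time-independent smooth positive mollifier on $\T^d$ and a cutoff in velocity.

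\emph{Kinetic component.} Let $\chi_n$ be a smooth cutoff in $v$, equal to $1$ on $\{|v|\le n\}$. Set
\[
f_{0,n} := c_n\lt(\kappa_n*(\chi_n f_0)+ n^{-1}M\rt),
\]
where $\kappa_n$ is a smooth positive approximate identity on $\T^d\times\R^d$, $M$ is a fixed normalized Gaussian, and $c_n$ restores unit mass. The velocity truncation converges in $W_2$ because the second moment outside $\{|v|\le n\}$ tends to zero. Convolution with $\kappa_n$ moves mass by at most the support width, so it is close in $W_2$. The Gaussian perturbation contributes $O(n^{-1})$ mass with finite second moment, so the normalization constant satisfies $c_n\to1$. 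Hence $f_{0,n}\to f_0$ in $W_2$.

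\emph{Fluid components.} The difficulty is to approximate $\rho_0$ in a way that makes $\sqrt{\rho_{0,n}}\nabla\phi(\rho_{0,n})$, a multiple of $\nabla\rho_{0,n}^{\alpha-\frac12}$ (or $\nabla\log\rho_{0,n}$), converge. Direct mollification of $\rho_0$ does not obviously achieve this, so we mollify the variable $\psi_0:=\rho_0^{\alpha-\frac12}$ (or $\log\rho_0$ when $\alpha=\frac12$), which lies in $H^1$.

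In the admissible regimes, the remark after Theorem \ref{thm:weak-exist} gives the following. In $d=1$ the function $\rho_0$ is bounded above and below, so $\psi_0$ is too. In $d=2$ with $\alpha=\frac12$, Moser--Trudinger gives exponential integrability of $\log\rho_0$.

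Set
\[
\psi_n:=\eta_n*\psi_0, \qquad \tilde\rho_n:=G(\psi_n),
\]
where $G$ inverts $\rho\mapsto\rho^{\alpha-\frac12}$ (or $G=\exp$). Then define $\rho_{0,n}:=\tilde\rho_n/\intt\tilde\rho_n\,\dx$. We have $\psi_n\to\psi_0$ in $H^1$ and a.e. The pointwise bounds in $d=1$, or uniform exponential integrability of $\psi_n$ in $d=2$ (by Jensen for convolutions), give $\tilde\rho_n\to\rho_0$ in $L^\gamma$ and the mass normalization $\to1$, by Vitali. Consequently
\[
\nabla\rho_{0,n}^{\alpha-\frac12}=(\intt\tilde\rho_n)^{\frac12-\alpha}\nabla\psi_n\to\nabla\psi_0 \quad \text{in } L^2,
\]
and in the log case the mass normalization simply disappears under the gradient.

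For the velocity, set $m_0:=\sqrt{\rho_0}u_0\in L^2$ and choose smooth $m_n\to m_0$ in $L^2$. Define $u_{0,n}:=m_n/\sqrt{\rho_{0,n}}$, which is smooth since $\rho_{0,n}>0$ is smooth. Then $\sqrt{\rho_{0,n}}u_{0,n}=m_n\to m_0$ by construction. The BD quantity is
\[
\sqrt{\rho_{0,n}}u_{0,n}+\sqrt{\rho_{0,n}}\nabla\phi(\rho_{0,n}),
\]
a sum of two $L^2$-convergent terms by \eqref{eq:def-phi}, so it converges to $\sqrt{\rho_0}(u_0+\nabla\phi(\rho_0))$.

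\emph{Main obstacle.} The main step needing care is the $L^\gamma$ convergence of $G(\psi_n)$ in two dimensions. Here $G=\exp$ and we need uniform integrability of $e^{\gamma\psi_n}$. This follows from Jensen, $e^{a\,\eta_n*\psi_0}\le\eta_n*e^{a\psi_0}$, combined with $e^{a\psi_0}\in L^1$ for all $a$ by Moser--Trudinger. This gives convergence of $e^{a\psi_n}$ in $L^1$, and hence in $L^\gamma$ after applying the same bound with exponent $2\gamma$.
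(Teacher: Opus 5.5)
Your proposal is correct, but the density step follows a different route from the paper's.

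\textbf{How the paper does it.} The paper does not use the admissible-regime structure at all. It first truncates $\rho_0$ to $[\frac1n,n]$. Then $\mathfrak z(\rho_0^{[n]})$ takes values in a compact interval $I_n$, and its gradient ${\bf 1}_{\{1/n<\rho_0<n\}}\nabla\mathfrak z(\rho_0)$ converges to $\nabla\mathfrak z(\rho_0)$ in $L^2$. It then mollifies at a scale $\delta_n$ chosen after $n$, small enough that both errors are below $1/n$: the gradient error, and the $L^\gamma$ error (using that $\mathfrak z^{-1}$ is Lipschitz on $I_n$).

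\textbf{How you do it.} You mollify $\psi_0$ directly with no truncation; up to an affine change, $\psi_0$ is $\mathfrak z(\rho_0)$. You control the nonlinear inversion $G$ with regime-specific information:
\begin{itemize}
\item in $d=1$, $L^\infty$ bounds on $\psi_0$, whose range the nonnegative mollifier preserves, followed by dominated convergence;
\item in $d=2$, Moser--Trudinger, Jensen's inequality for convolutions, and Vitali.
\end{itemize}

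\textbf{Comparison.} The paper's argument is general. It works for every $\alpha$ and $d$ and needs nothing beyond $\nabla\mathfrak z(\rho_0)\in L^2$ and $\rho_0\in L^\gamma$; the price is a diagonal choice of $\delta_n$. Yours uses a single mollification scale and only the structure already available in the admissible cases. Both constructions give, for each fixed $n$, a smooth strictly positive $\rho_{0,n}$ on the compact torus, hence bounded above and below. That is what Lemma \ref{lem:init-energy-diag} later needs.

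\textbf{What you need to justify.}
\begin{itemize}
\item In $d=1$, the pointwise bounds require the Morrey/mass-constraint argument of Proposition \ref{prop:veri-w-den-c} applied at $t=0$. The remark you cite only asserts them.
\item In $d=2$, state that $\log\rho_0\in L^1$, which is implicit in the meaning of $\nabla\log\rho_0\in L^2$. Together with Poincar\'e this gives $\log\rho_0\in H^1(\T^2)$, which is what Moser--Trudinger requires.
\end{itemize}

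The Gaussian you add in the kinetic part is unnecessary, since only $f_{0,n}\ge0$ is required, but it is harmless. Your kinetic and velocity steps otherwise coincide with the paper's.
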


The proof of Lemma \ref{lem:init-app} is postponed to Appendix \ref{app:init}. There, we also establish the diagonal compatibility of the smooth initial data with the $\zeta$- and $\tau$-dependent regularized energies; see Lemma \ref{lem:init-energy-diag}. This compatibility will be used in the final approximation to choose sequences $\zeta_n\to0$ and $\tau_n\to0$ along the smooth initial data. For each fixed pair $(\zeta,\tau)$ and each smooth initial datum constructed in Lemma \ref{lem:init-app}, the regularized system \eqref{eq:reg-sys} is first solved by an auxiliary augmented Galerkin approximation with a vanishing
hyperregularization term, described in Appendix \ref{app:aug}.

%
%
%
%
%
%
%
%
%
%
%

\subsection{Approximate energy and BD entropy estimates} 

We derive the free energy estimate and the BD entropy estimate for the regularized system \eqref{eq:reg-sys}. The estimates are first obtained at the level of smooth positive solutions. After the auxiliary construction in Section \ref{ssec:aux-aug-app}, the same estimates pass to the regularized weak solutions by lower semicontinuity.

We recall the artificial pressure potential
\[
\pi_{10}(\rho)=\frac{\rho^{10}-10\rho+9}{9}.
\]
Then, $\rho \pi_{10}'(\rho)-\pi_{10}(\rho)=\rho^{10}-1$. Since the total fluid mass is conserved, the affine terms in $\pi_{10}$ do not affect the energy identity.

For the regularized system, we use the following approximate analogues of the free energy and the dissipations introduced in the Introduction:
\[
\calE_{{\rm free},\mathbf a}(t) := \frac12\inttr |v|^2 f_{\mathbf a}\,\dx\dv +\intt\lt[ \frac12\rho_{\mathbf a}|u_{\mathbf a}|^2+\pi(\rho_{\mathbf a}) +\tau \pi_{10}(\rho_{\mathbf a}) +\frac{\tau}{2}|\nabla Y_\zeta(\rho_{\mathbf a})|^2 \rt]\dx .
\]
We also define
\[
\calD_{{\rm vis},\mathbf a}(t) := \intt\lt[ 2\nu_\zeta(\rho_{\mathbf a})|D u_{\mathbf a}|^2 +\lambda_\zeta(\rho_{\mathbf a})|\nabla\cdot u_{\mathbf a}|^2 \rt]\dx,
\]
\[
\calD_{{\rm drag},\mathbf a}(t) := \inttr(\rho_{\mathbf a})_\e |v-u_{\mathbf a,\e}^\e|^2 f_{\mathbf a}\,\dx\dv,
\quad
\calD_{{\rm damp},\mathbf a}(t):=\tau\intt |u_{\mathbf a}|^2\,\dx.
\]

\begin{proposition}\label{prop:reg-energy}
Let $(f_{\mathbf a},\rho_{\mathbf a},u_{\mathbf a})$ be a sufficiently smooth positive solution to \eqref{eq:reg-sys}. Then
\bq\label{eq:reg-energy}
\frac{\dd}{\dt}\calE_{{\rm free},\mathbf a}(t) +\calD_{{\rm vis},\mathbf a}(t) +\calD_{{\rm drag},\mathbf a}(t) +\calD_{{\rm damp},\mathbf a}(t)=0.
\eq
Consequently,
\[
\sup_{0\le t\le T}\calE_{{\rm free},\mathbf a}(t) +\int_0^T\lt[ \calD_{{\rm vis},\mathbf a}(t) +\calD_{{\rm drag},\mathbf a}(t) +\calD_{{\rm damp},\mathbf a}(t) \rt]\dt \le \calE_{{\rm free},\mathbf a}(0).
\]
\end{proposition}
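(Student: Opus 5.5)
The plan is to repeat the free-energy computation of Section \ref{sec:energy} for \eqref{eq:reg-sys}, handling each of the three new $\tau$-terms and the regularized viscosity separately. The kinetic part is unchanged. Testing the kinetic equation of \eqref{eq:reg-sys} with $\frac12|v|^2$ gives, exactly as in \eqref{eq:kin-free-sig} with $\sigma=0$,
\[
\frac{\dd}{\dt}\frac12\inttr |v|^2 f_{\mathbf a}\,\dx\dv+\calD_{{\rm drag},\mathbf a}=-\inttr (\rho_{\mathbf a}u_{\mathbf a})_\e\cdot(v-u_{\mathbf a,\e}^\e)f_{\mathbf a}\,\dx\dv .
\]
Next I will multiply the momentum equation by $u_{\mathbf a}$ and integrate over $\T^d$, using the continuity equation to turn the transport terms into $\frac{\dd}{\dt}\frac12\intt\rho_{\mathbf a}|u_{\mathbf a}|^2\,\dx$.

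\textbf{Pressure, viscosity, coupling, damping.} For the pressure, the identities $r\pi'(r)-\pi(r)=r^\gamma-1$ and $r\pi_{10}'(r)-\pi_{10}(r)=r^{10}-1$, together with \eqref{eq:h-ide} and mass conservation, give
\[
\intt\nabla p_\tau(\rho_{\mathbf a})\cdot u_{\mathbf a}\,\dx=\frac{\dd}{\dt}\intt\lt[\pi(\rho_{\mathbf a})+\tau\pi_{10}(\rho_{\mathbf a})\rt]\dx .
\]
For the viscous term, integration by parts gives $\intt \mathbb S_\zeta:\nabla u_{\mathbf a}\,\dx=\calD_{{\rm vis},\mathbf a}$, since $D u$ is the symmetric part of $\nabla u$. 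The coupling term is treated by symmetry of $\theta_\e$ and $(\rho u)_\e=\rho_\e u^\e_\e$, as in Section \ref{sec:energy}. It equals $\intt(\rho_{\mathbf a}u_{\mathbf a})_\e\cdot F_{\mathbf a}\,\dx$ and so cancels the right-hand side of the kinetic identity. The damping term gives $-\tau\intt|u_{\mathbf a}|^2\,\dx=-\calD_{{\rm damp},\mathbf a}$ directly.

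\textbf{Capillarity (the main obstacle).} The one genuinely new computation is
\[
\tau\intt\mathfrak K_\zeta(\rho_{\mathbf a})\cdot u_{\mathbf a}\,\dx=-\frac{\dd}{\dt}\frac\tau2\intt|\nabla Y_\zeta(\rho_{\mathbf a})|^2\,\dx .
\]
Writing $\sqrt{K_\zeta}=Y_\zeta'$, I integrate by parts to get
\[
\intt\rho\nabla(Y_\zeta'\Delta Y_\zeta)\cdot u\,\dx=-\intt Y_\zeta'(\rho)\,\Delta Y_\zeta\,\nabla\cdot(\rho u)\,\dx=\intt \Delta Y_\zeta\,\pa_t Y_\zeta(\rho)\,\dx .
\]
The last equality uses the continuity equation through $\pa_tY_\zeta(\rho)=Y_\zeta'(\rho)\pa_t\rho=-Y_\zeta'(\rho)\nabla\cdot(\rho u)$. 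One more integration by parts turns this into $-\frac{\dd}{\dt}\frac12\|\nabla Y_\zeta\|_{L^2}^2$. Positivity and smoothness of $\rho_{\mathbf a}$ make all manipulations legitimate, and $K_\zeta$ is smooth on $(0,\infty)$.

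Summing these identities gives \eqref{eq:reg-energy}. Integrating in time over $[0,t]$ and using that every dissipation term is nonnegative yields the stated uniform bound. Nonnegativity of $\calD_{{\rm vis},\mathbf a}$ holds because $\alpha\ge1-\frac1d$, via the decomposition into the trace-free part $D^0u$ and the divergence as before; the $\zeta\rho$ part of $\nu_\zeta$ only adds the nonnegative term $2\zeta\rho|Du|^2$, since $\lambda_\zeta=\lambda$.
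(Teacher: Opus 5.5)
Your proposal is correct and follows the paper's proof essentially step for step. You test the kinetic equation with $\frac12|v|^2$ and the momentum equation with $u_{\mathbf a}$, cancel the coupling terms using the symmetry of $\theta_\e$ and $(\rho_{\mathbf a}u_{\mathbf a})_\e=(\rho_{\mathbf a})_\e u_{\mathbf a,\e}^\e$, and turn the capillarity term into $-\frac{\dd}{\dt}\frac\tau2\intt|\nabla Y_\zeta(\rho_{\mathbf a})|^2\,\dx$ by the same integrations by parts and the continuity equation.

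One small imprecision, which is already present in the statement: integrating the identity gives $\calE_{{\rm free},\mathbf a}(t)+\int_0^t(\calD_{{\rm vis},\mathbf a}+\calD_{{\rm drag},\mathbf a}+\calD_{{\rm damp},\mathbf a})\,\ds=\calE_{{\rm free},\mathbf a}(0)$. The displayed ``$\sup$ plus integral'' bound therefore holds literally only with the supremum taken over this sum, or with $2\calE_{{\rm free},\mathbf a}(0)$ on the right-hand side.
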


\begin{proof}
Multiplying the kinetic equation in \eqref{eq:reg-sys} by $\frac{|v|^2}2$ and integrating over $\T^d\times\R^d$, we obtain
\[
\frac{\dd}{\dt}\frac12\inttr |v|^2 f_{\mathbf a}\,\dx\dv = -\inttr(\rho_{\mathbf a})_\e v\cdot(v-u_{\mathbf a,\e}^\e)f_{\mathbf a}\,\dx\dv.
\]
We next multiply the momentum equation by $u_{\mathbf a}$. Using the continuity equation, we find
\[
\frac{\dd}{\dt}\intt\lt[ \frac12\rho_{\mathbf a}|u_{\mathbf a}|^2+\pi(\rho_{\mathbf a}) +\tau \pi_{10}(\rho_{\mathbf a}) \rt]\dx
+\calD_{{\rm vis},\mathbf a} +\calD_{{\rm damp},\mathbf a}  
= \intt\rho_{\mathbf a}u_{\mathbf a}\cdot(F_{\mathbf a})_\e\,\dx + \tau\intt\mathfrak K_\zeta(\rho_{\mathbf a})\cdot u_{\mathbf a}\,\dx.
\]
By the symmetry of the mollifier,
\[
\intt\rho_{\mathbf a}u_{\mathbf a}\cdot(F_{\mathbf a})_\e\,\dx = \intt(\rho_{\mathbf a}u_{\mathbf a})_\e\cdot F_{\mathbf a}\,\dx
= \inttr(\rho_{\mathbf a})_\e u_{\mathbf a,\e}^\e\cdot(v-u_{\mathbf a,\e}^\e)f_{\mathbf a}\,\dx\dv.
\]
Moreover, using the continuity equation and $Y_\zeta'(\rho)=\sqrt{K_\zeta(\rho)}$, we obtain
\begin{align*}
\intt\mathfrak K_\zeta(\rho_{\mathbf a})\cdot u_{\mathbf a}\,\dx &= \intt\rho_{\mathbf a}u_{\mathbf a}\cdot\nabla\lt[ \sqrt{K_\zeta(\rho_{\mathbf a})}\Delta Y_\zeta(\rho_{\mathbf a}) \rt]\dx \\
&= - \intt\nabla\cdot(\rho_{\mathbf a}u_{\mathbf a}) \sqrt{K_\zeta(\rho_{\mathbf a})}\Delta Y_\zeta(\rho_{\mathbf a})\,\dx \\
& = \intt\pa_tY_\zeta(\rho_{\mathbf a})\Delta Y_\zeta(\rho_{\mathbf a})\,\dx \\
&= -\frac{\dd}{\dt}\frac12\intt|\nabla Y_\zeta(\rho_{\mathbf a})|^2\,\dx.
\end{align*}
Adding the kinetic and fluid identities, the coupling terms combine as
\begin{align*}
&-\inttr(\rho_{\mathbf a})_\e v\cdot(v-u_{\mathbf a,\e}^\e)f_{\mathbf a}\,\dx\dv +\inttr(\rho_{\mathbf a})_\e u_{\mathbf a,\e}^\e\cdot(v-u_{\mathbf a,\e}^\e)f_{\mathbf a}\,\dx\dv \\
&\quad = -\inttr(\rho_{\mathbf a})_\e |v-u_{\mathbf a,\e}^\e|^2 f_{\mathbf a}\,\dx\dv.
\end{align*}
This proves \eqref{eq:reg-energy}.
\end{proof}

In the admissible regimes \eqref{eq:admi-case}, the viscous dissipation is nonnegative; indeed, in one dimension $2\nu_\zeta+\lambda_\zeta=2\rho\nu_\zeta'\ge0$, while in two dimensions with $\alpha=\frac12$ one has
\[
2\nu_\zeta(\rho)|D u|^2+\lambda_\zeta(\rho)|\nabla\cdot u|^2 = 2\nu_\zeta(\rho)|D^0u|^2 +\zeta\rho|\nabla\cdot u|^2.
\]

We next derive the BD entropy estimate. Define the regularized effective velocity
\[
w_{\mathbf a}:=u_{\mathbf a}+\nabla\phi_\zeta(\rho_{\mathbf a}) =u_{\mathbf a}+2\nabla s_\zeta(\rho_{\mathbf a}).
\]
The pressure dissipation is
\[
\calD_{{\rm BD},\mathbf a}(t) := \intt\nabla p_\tau(\rho_{\mathbf a})\cdot\nabla\phi_\zeta(\rho_{\mathbf a})\,\dx.
\]
Since
\[
p_\tau'(\rho)=\gamma\rho^{\gamma-1}+10\tau\rho^9, \quad \phi_\zeta'(\rho)=2\alpha\rho^{\alpha-2}+\frac{2\zeta}{\rho},
\]
we have
\[
\calD_{{\rm BD},\mathbf a}(t) = 2\intt \lt(\gamma\rho_{\mathbf a}^{\gamma-1}+10\tau\rho_{\mathbf a}^9\rt) \lt(\alpha\rho_{\mathbf a}^{\alpha-2}+\frac{\zeta}{\rho_{\mathbf a}}\rt) |\nabla\rho_{\mathbf a}|^2\,\dx.
\]
In particular,
\[
\calD_{{\rm BD},\mathbf a}(t) \ge 2\alpha\gamma\intt\rho_{\mathbf a}^{\alpha+\gamma-3}|\nabla\rho_{\mathbf a}|^2\,\dx = \frac{8\alpha\gamma}{(\alpha+\gamma-1)^2} \intt\lt|\nabla\rho_{\mathbf a}^{\frac{\alpha+\gamma-1}{2}}\rt|^2\,\dx.
\]
Under the relation $\gamma=\alpha+1$, this becomes
\[
\calD_{{\rm BD},\mathbf a}(t) \ge \frac{2(\alpha+1)}{\alpha}\intt|\nabla\rho_{\mathbf a}^\alpha|^2\,\dx.
\]

We recall the generalized Bohm identity \cite{BCNV16,BVY22}:
\bq\label{eq:Bohm}
\mathfrak K_\zeta(\rho) = 2\nabla\cdot\lt[\nu_\zeta(\rho)\nabla^2(2s_\zeta(\rho))\rt] +\nabla\lt[\lambda_\zeta(\rho)\Delta(2s_\zeta(\rho))\rt].
\eq
Consequently,
\[
-2\tau\intt\mathfrak K_\zeta(\rho_{\mathbf a})\cdot\nabla s_\zeta(\rho_{\mathbf a})\,\dx = \calD_{{\rm cap},\mathbf a}(t),
\]
where
\[
\calD_{{\rm cap},\mathbf a}(t) := 4\tau\intt\lt[ 2\nu_\zeta(\rho_{\mathbf a})|\nabla^2s_\zeta(\rho_{\mathbf a})|^2 +\lambda_\zeta(\rho_{\mathbf a})|\Delta s_\zeta(\rho_{\mathbf a})|^2 \rt]\dx.
\]
The capillarity dissipation is nonnegative in the regimes considered here. Indeed, when $d=1$, this follows from $2\nu_\zeta(\rho)+\lambda_\zeta(\rho)=2\rho\nu_\zeta'(\rho)\ge0$. When $d\ge2$, writing
\[
2\nu_\zeta(\rho)|\nabla^2 g|^2+\lambda_\zeta(\rho)|\Delta g|^2 = 2\nu_\zeta(\rho)|(\nabla^2g)^0|^2 +\lt(\frac2d\nu_\zeta(\rho)+\lambda_\zeta(\rho)\rt) |\Delta g|^2,
\]
the same condition $\alpha\ge1-\frac1d$ gives nonnegativity.

We also define the rotational dissipation
\[
\calD_{{\rm rot},\mathbf a}(t) := 2\intt\nu_\zeta(\rho_{\mathbf a})|A u_{\mathbf a}|^2\,\dx, \quad A u:=\frac12\lt(\nabla u-(\nabla u)^{\mathsf T}\rt).
\]

The contribution of the linear damping term can be incorporated into the BD energy through an exact time derivative. Recall
\[
j_\zeta(\rho)=\int_1^\rho\frac{\nu_\zeta'(z)}{z^2}\,\dz, \quad J_\zeta(\rho)=\int_1^\rho j_\zeta(s)\, \ds.
\]
Then
\[
J_\zeta''(\rho)=\frac{\nu_\zeta'(\rho)}{\rho^2}>0,
\quad
J_\zeta(1)=J_\zeta'(1)=0,
\]
and thus $J_\zeta(\rho)\ge0$ for every $\rho>0$. Moreover, since $\rho\nabla j_\zeta(\rho)=\nabla s_\zeta(\rho)$, the continuity equation gives
\bq
\label{eq:linear-damp}
\intt u_{\mathbf a}\cdot\nabla s_\zeta(\rho_{\mathbf a})\,\dx = \intt\rho_{\mathbf a}u_{\mathbf a}\cdot\nabla j_\zeta(\rho_{\mathbf a})\,\dx  = -\intt\nabla\cdot(\rho_{\mathbf a}u_{\mathbf a})j_\zeta(\rho_{\mathbf a})\,\dx = \frac{\dd}{\dt}\intt J_\zeta(\rho_{\mathbf a})\,\dx.
\eq

We introduce a modified BD energy
\[
\widetilde{\calE}_{{\rm BD},\mathbf a}(t) := \frac12\inttr |v|^2 f_{\mathbf a}\,\dx\dv +\intt\lt[ \frac12\rho_{\mathbf a}|w_{\mathbf a}|^2 +\pi(\rho_{\mathbf a}) +\tau \pi_{10}(\rho_{\mathbf a}) +\frac{\tau}{2}|\nabla Y_\zeta(\rho_{\mathbf a})|^2 +2\tau J_\zeta(\rho_{\mathbf a}) \rt]\dx.
\]
Finally, define the kinetic remainder by
\[
\calR_{{\rm kin},\mathbf a}(t) := \intt\rho_{\mathbf a}\nabla\phi_\zeta(\rho_{\mathbf a})\cdot(F_{\mathbf a})_\e\,\dx.
\]

\begin{proposition}\label{prop:reg-BD}
Let $(f_{\mathbf a},\rho_{\mathbf a},u_{\mathbf a})$ be a sufficiently smooth positive solution to \eqref{eq:reg-sys}. Then, for every $0\le s\le t\le T$,
\begin{align}\label{eq:reg-BD-int}
\begin{aligned}
&\widetilde{\calE}_{{\rm BD},\mathbf a}(t) +\int_s^t\lt[ \calD_{{\rm BD},\mathbf a}(\tau) +\calD_{{\rm rot},\mathbf a}(\tau) +\calD_{{\rm drag},\mathbf a}(\tau) +\calD_{{\rm cap},\mathbf a}(\tau) +\calD_{{\rm damp},\mathbf a}(\tau) \rt]\dd\tau \\
&\quad = \widetilde{\calE}_{{\rm BD},\mathbf a}(s) +\int_s^t\calR_{{\rm kin},\mathbf a}(\tau)\,\dd\tau.
\end{aligned}
\end{align}
\end{proposition}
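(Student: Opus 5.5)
We will prove the differential identity
\[
\frac{\dd}{\dt}\widetilde{\calE}_{{\rm BD},\mathbf a}+\calD_{{\rm BD},\mathbf a}+\calD_{{\rm rot},\mathbf a}+\calD_{{\rm drag},\mathbf a}+\calD_{{\rm cap},\mathbf a}+\calD_{{\rm damp},\mathbf a}=\calR_{{\rm kin},\mathbf a},
\]
and then integrate it over $[s,t]$. The structure repeats the smooth computation leading to \eqref{eq:BD-e-iden}, with three extra $\tau$-terms. Because $\nu_\zeta,\lambda_\zeta$ still satisfy the BD relation \eqref{eq:bd-rel} and $\rho\nabla\phi_\zeta(\rho)=2\nabla\nu_\zeta(\rho)$, the derivation of \eqref{eq:grad-bd-pot}--\eqref{eq:BD-visc-canc} goes through verbatim with $\nu,\lambda,\phi$ replaced by $\nu_\zeta,\lambda_\zeta,\phi_\zeta$. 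Adding the transported equation for $\rho\nabla\phi_\zeta(\rho_{\mathbf a})$ to the regularized momentum equation gives
\[
\pa_t(\rho w)+\nabla\cdot(\rho u\otimes w)+\nabla p_\tau(\rho)-\nabla\cdot(2\nu_\zeta(\rho)Au)=\rho F_\e+\tau\mathfrak K_\zeta(\rho)-\tau u,
\]
where we drop the subscript $\mathbf a$.

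We test this equation with $w=u+2\nabla s_\zeta(\rho)$ and treat the left-hand side exactly as before. The transport part yields $\frac{\dd}{\dt}\frac12\intt\rho|w|^2$. The pressure yields $\frac{\dd}{\dt}\intt(\pi+\tau\pi_{10})+\calD_{{\rm BD},\mathbf a}$; here we use that $\rho\pi_{10}'-\pi_{10}=\rho^{10}-1$, so \eqref{eq:h-ide} holds for $\pi_{10}$ as well. The antisymmetric stress yields $\calD_{{\rm rot},\mathbf a}$, by the symmetry of $\nabla^2 s_\zeta$.

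On the right-hand side we handle each term separately.
\begin{enumerate}
\item The force term gives $\intt\rho F_\e\cdot u+\calR_{{\rm kin},\mathbf a}$.
\item The capillarity term splits into two parts. Tested with $u$, it gives $-\frac{\dd}{\dt}\frac\tau2\intt|\nabla Y_\zeta|^2$, as in the proof of Proposition \ref{prop:reg-energy}. Tested with $2\nabla s_\zeta$, it gives $-\calD_{{\rm cap},\mathbf a}$, via the Bohm identity \eqref{eq:Bohm} and integration by parts. Concretely, $2\tau\intt \mathfrak K_\zeta\cdot\nabla s_\zeta=-4\tau\intt[2\nu_\zeta|\nabla^2 s_\zeta|^2+\lambda_\zeta|\Delta s_\zeta|^2]$.
\item The damping term also splits. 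Tested with $u$, it gives $-\calD_{{\rm damp},\mathbf a}$. Tested with $2\nabla s_\zeta$, it gives $-2\tau\frac{\dd}{\dt}\intt J_\zeta$, by \eqref{eq:linear-damp}.
\end{enumerate}

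Finally we add the kinetic energy identity from the proof of Proposition \ref{prop:reg-energy}. By the symmetry of the mollifier, the coupling term $\intt\rho u\cdot F_\e$ combines with the kinetic energy derivative into $-\calD_{{\rm drag},\mathbf a}$, exactly as before. Collecting all terms gives the differential identity, and integrating in time gives \eqref{eq:reg-BD-int}.

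The main obstacle is the capillarity bookkeeping. We must check the Bohm identity \eqref{eq:Bohm} for our specific $K_\zeta$; we cite it from \cite{BCNV16,BVY22} rather than re-deriving it. We must also fix the sign and factor conventions so that the $\nabla s_\zeta$-pairing produces exactly $\calD_{{\rm cap},\mathbf a}$ with the stated factor $4\tau$. The remaining steps are the already-verified computations with $\zeta$-subscripts attached.
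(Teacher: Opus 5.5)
Your argument is correct, but it is organized differently from the paper's proof.

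The paper never forms the regularized effective-velocity equation. It adds the free energy identity \eqref{eq:BD-proof-energy} to a separate correction identity \eqref{eq:BD-proof-corr}, obtained by testing the original regularized momentum equation with $\nabla\phi_\zeta(\rho)$. In that route $\calD_{\rm vis}$ enters with both signs and cancels. All $u$-tested contributions (pressure, the capillary energy $\frac{\tau}{2}\intt|\nabla Y_\zeta(\rho)|^2\,\dx$, damping, drag) are inherited from Proposition \ref{prop:reg-energy}.

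You instead do the BD cancellation at the level of the equation, via \eqref{eq:BD-visc-canc} for $(\nu_\zeta,\lambda_\zeta,\phi_\zeta)$, as in the derivation of \eqref{eq:BD-e-iden}. You then perform a single energy estimate with $w$, splitting only the capillarity and damping terms along $w=u+2\nabla s_\zeta(\rho)$.

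The paper's route avoids recomputing the $u$-tested terms. Yours has the advantage that every tested term is an exact identity on its own. In the paper's split this fails term by term. Set $X:=\intt\rho w\cdot\nabla(2\nu_\zeta'(\rho)\nabla\cdot u)\,\dx+2\intt w\cdot(\nabla u)^{\mathsf T}\nabla\nu_\zeta(\rho)\,\dx$, which comes from the material derivative of $\nabla\phi_\zeta$. Then the inertial term equals $\frac{\dd}{\dt}\intt[\rho u\cdot\nabla\phi_\zeta+\frac12\rho|\nabla\phi_\zeta|^2]\,\dx+X$, and $\intt\mathbb S_\zeta:\nabla^2\phi_\zeta\,\dx=\calD_{\rm rot}-\calD_{\rm vis}-X$. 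Neither equals what the paper states individually: at an instant where $\rho\equiv1$, one has $X=-\phi_\zeta'(1)\intt|\nabla\cdot u|^2\,\dx$, which is nonzero in general. The two discrepancies cancel, so \eqref{eq:BD-proof-corr} and the proposition are correct, but your organization makes this cancellation automatic.
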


\begin{proof}
For simplicity, we omit the subscript $\mathbf a$ throughout the proof; all energies and dissipations in this proof refer to the corresponding regularized quantities. Recall that
\[
w=u+\nabla\phi_\zeta(\rho)=u+2\nabla s_\zeta(\rho).
\]
We use two identities. The first one is the free energy identity from Proposition \ref{prop:reg-energy}:
\bq\label{eq:BD-proof-energy}
\frac{\dd}{\dt}\calE_{\rm free} +\calD_{\rm vis} +\calD_{\rm drag} +\calD_{\rm damp}=0.
\eq
The second one is the BD correction identity obtained by testing the momentum equation with $\nabla\phi_\zeta(\rho)$.

Testing the momentum equation by $\nabla\phi_\zeta(\rho)$, using the continuity equation, and differentiating
\[
\intt\lt[
\rho u\cdot\nabla\phi_\zeta(\rho)
+\frac12\rho|\nabla\phi_\zeta(\rho)|^2
\rt]\dx,
\]
the inertial part gives
\[
\intt\lt[ \pa_t(\rho u)+\nabla\cdot(\rho u\otimes u) \rt]\cdot\nabla\phi_\zeta(\rho)\,\dx = \frac{\dd}{\dt}\intt\lt[ \rho u\cdot\nabla\phi_\zeta(\rho) +\frac12\rho|\nabla\phi_\zeta(\rho)|^2 \rt]\dx.
\]
This is the usual BD cancellation: the terms containing the material derivative of $\nabla\phi_\zeta(\rho)$ are compensated by the time derivative of $\frac12\intt\rho|\nabla\phi_\zeta(\rho)|^2\,\dx$.

The pressure term yields
\[
\intt\nabla p_\tau(\rho)\cdot\nabla\phi_\zeta(\rho)\,\dx=\calD_{\rm BD}.
\]
For the viscous term, integration by parts and the BD relation give
\[
\intt\mathbb S_\zeta(\rho,u):\nabla^2\phi_\zeta(\rho)\,\dx = \calD_{\rm rot}-\calD_{\rm vis}.
\]
The capillarity term is treated by the generalized Bohm identity \eqref{eq:Bohm}. Since $\nabla\phi_\zeta(\rho)=2\nabla s_\zeta(\rho)$, we have
\[
\tau\intt\mathfrak K_\zeta(\rho)\cdot\nabla\phi_\zeta(\rho)\,\dx = 2\tau\intt\mathfrak K_\zeta(\rho)\cdot\nabla s_\zeta(\rho)\,\dx = -\calD_{\rm cap}.
\]
The force term gives
\[
\intt\rho(F)_\e\cdot\nabla\phi_\zeta(\rho)\,\dx=\calR_{\rm kin}.
\]
Finally, the linear damping contribution is
\[
-\tau\intt u\cdot\nabla\phi_\zeta(\rho)\,\dx = -2\tau\intt u\cdot\nabla s_\zeta(\rho)\,\dx = -\frac{\dd}{\dt}\lt(2\tau\intt J_\zeta(\rho)\,\dx\rt),
\]
where we used \eqref{eq:linear-damp}. Combining these identities gives the BD correction identity
\bq\label{eq:BD-proof-corr}
\frac{\dd}{\dt}\intt\lt[ \rho u\cdot\nabla\phi_\zeta(\rho) +\frac12\rho|\nabla\phi_\zeta(\rho)|^2 +2\tau J_\zeta(\rho) \rt]\dx   +\calD_{\rm BD} +\calD_{\rm rot} -\calD_{\rm vis} +\calD_{\rm cap} = \calR_{\rm kin}.
\eq
Adding \eqref{eq:BD-proof-corr} to the free energy identity \eqref{eq:BD-proof-energy}, we obtain 
\[
\frac{\dd}{\dt}\widetilde{\calE}_{{\rm BD},\mathbf a}(t) +\calD_{{\rm BD},\mathbf a}(t) +\calD_{{\rm rot},\mathbf a}(t) +\calD_{{\rm drag},\mathbf a}(t) +\calD_{{\rm cap},\mathbf a}(t) +\calD_{{\rm damp},\mathbf a}(t)
= \calR_{{\rm kin},\mathbf a}(t).
\]
 Integrating in time gives \eqref{eq:reg-BD-int}.
\end{proof}

We now estimate the remainder generated by the kinetic--fluid coupling. This is the only additional term compared with the standard BD-compatible approximation scheme.

\begin{lemma}\label{lem:kin-rem}
Suppose that $\gamma=\alpha+1$, $0<\alpha\le\frac12$. Then
\[
\lt|\calR_{{\rm kin},\mathbf a}(t)\rt| \le \frac12\calD_{{\rm BD},\mathbf a}(t) + C_\e\lt(1+\calE_{{\rm free},\mathbf a}(0)\rt) \calD_{{\rm drag},\mathbf a}(t)
\]
for almost every $t\ge0$. The constant is independent of $\zeta$ and $\tau$.
\end{lemma}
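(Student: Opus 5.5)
The plan is to follow the smooth-case argument leading to \eqref{eq:BD-d-rema}, now for the regularized potential $\phi_\zeta$, and to make sure no constant depends on $\zeta$ or $\tau$.

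\textbf{Step 1: write the remainder explicitly.} Since $\rho\nabla\phi_\zeta(\rho)=2\nabla\nu_\zeta(\rho)=2(\alpha\rho^{\alpha-1}+\zeta)\nabla\rho$, we have
\[
\calR_{{\rm kin},\mathbf a}=2\intt \lt(\alpha\rho_{\mathbf a}^{\alpha-1}+\zeta\rt)\nabla\rho_{\mathbf a}\cdot(F_{\mathbf a})_\e\,\dx .
\]

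\textbf{Step 2: split by Cauchy--Schwarz and Young.} Use the weight that matches $\calD_{{\rm BD},\mathbf a}$. From the explicit formula for $\calD_{{\rm BD},\mathbf a}$, dropping the nonnegative $\tau$-contributions, we get
\[
\calD_{{\rm BD},\mathbf a}\ge 2\gamma\intt \rho^{\gamma-1}\lt(\alpha\rho^{\alpha-2}+\zeta\rho^{-1}\rt)|\nabla\rho|^2\,\dx=\frac{2\gamma}{\rho}\text{-weighted}\ \intt \rho^{\gamma}\lt(\alpha\rho^{\alpha-1}+\zeta\rt)\frac{|\nabla\rho|^2}{\rho}\,\dx\cdot 2\gamma .
\]
Write the integrand of $\calR_{{\rm kin},\mathbf a}$ as
\[
(\alpha\rho^{\alpha-1}+\zeta)^{\frac12}\rho^{\frac{\gamma-2}{2}}|\nabla\rho|\cdot(\alpha\rho^{\alpha-1}+\zeta)^{\frac12}\rho^{\frac{2-\gamma}{2}}|F_\e| .
\]
Young's inequality then gives
\[
|\calR_{{\rm kin},\mathbf a}|\le \tfrac12\calD_{{\rm BD},\mathbf a}+C\intt(\alpha\rho^{\alpha-1}+\zeta)\rho^{2-\gamma}|(F_{\mathbf a})_\e|^2\,\dx ,
\]
with $C$ depending only on $\gamma$. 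Since $\gamma=\alpha+1$, the weight equals
\[
(\alpha\rho^{\alpha-1}+\zeta)\rho^{1-\alpha}=\alpha+\zeta\rho^{1-\alpha}.
\]
This is the point where $q=0$ is used.

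\textbf{Step 3: bound $F_\e$ in $L^\infty$.} We have $\|(F_{\mathbf a})_\e\|_{L^\infty}\le\|\theta_\e\|_\infty\|F_{\mathbf a}\|_{L^1}$. With $\sigma=0$, the bound \eqref{e:FL1}, Cauchy--Schwarz, the mass normalization of $f_{\mathbf a}$, and $(\rho_{\mathbf a})_\e\ge c_\e$ give
\[
\|F_{\mathbf a}\|_{L^1}^2\le\inttr|v-u^\e_{\mathbf a,\e}|^2f_{\mathbf a}\le c_\e^{-1}\calD_{{\rm drag},\mathbf a}.
\]
Therefore
\[
|\calR_{{\rm kin},\mathbf a}|\le\tfrac12\calD_{{\rm BD},\mathbf a}+C_\e\,\calD_{{\rm drag},\mathbf a}\intt(\alpha+\zeta\rho_{\mathbf a}^{1-\alpha})\,\dx .
\]

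\textbf{Step 4: the $\zeta$-dependent weight (main obstacle).} This is the only step where the smooth-case argument has to be adapted, because the regularization adds the weight $\zeta\rho^{1-\alpha}$. It is harmless:
\begin{itemize}
\item $1-\alpha\in[\tfrac12,1)$;
\item Jensen's inequality and $\intt\rho_{\mathbf a}=1$ give $\intt\rho_{\mathbf a}^{1-\alpha}\le1$;
\item we may take $\zeta\le1$.
\end{itemize}
So the integral in Step 3 is at most $\alpha+1$, which yields the claim even without the factor $\calE_{{\rm free},\mathbf a}(0)$.

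If one prefers to treat general $q$ as in \eqref{eq:BD-exp-cond}, bound instead $\intt\rho^{q}\le 1+\intt\rho^\gamma\le C(1+\calE_{{\rm free},\mathbf a}(0))$ using Proposition \ref{prop:reg-energy}. Either way, the constants depend only on $\e,\alpha,\gamma$ and not on $\zeta,\tau$. The inequality holds for almost every $t$ because all quantities are integrable in time.
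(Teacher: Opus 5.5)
Your argument is correct and follows the paper's proof. You apply Young's inequality against the $\tau$-free part of the BD pressure dissipation, use $\gamma=\alpha+1$ to reduce the weight $\rho^2\phi_\zeta'(\rho)/p'(\rho)$ to a multiple of $\alpha+\zeta\rho^{1-\alpha}$, and bound the mollified force by $\calD_{{\rm drag},\mathbf a}$ using $(\rho_{\mathbf a})_\e\ge c_\e$. The differences from the paper are minor. You bound $\|(F_{\mathbf a})_\e\|_{L^\infty}$ through $\|F_{\mathbf a}\|_{L^1}$, as in the smooth-case argument after \eqref{e:Fe}, rather than moving the mollifier onto the weight. Your Jensen bound $\intt\rho_{\mathbf a}^{1-\alpha}\,\dx\le1$ also shows that the factor $1+\calE_{{\rm free},\mathbf a}(0)$ is not needed. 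The garbled middle expression in your Step 2 display should simply read $2\gamma\intt\rho^{\gamma-2}(\alpha\rho^{\alpha-1}+\zeta)|\nabla\rho|^2\,\dx$.
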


\begin{proof}
For notational simplicity, we omit the subscript $\mathbf a$. By Young's inequality,
\bq\label{eq:kin-rem-young}
\lt| \intt\rho\nabla\phi_\zeta(\rho)\cdot F_\e\,\dx \rt|
\le \frac12\intt\nabla p_\tau(\rho)\cdot\nabla\phi_\zeta(\rho)\,\dx +\frac12\intt Q_{\zeta,\tau}(\rho)|F_\e|^2\,\dx,
\eq
where
\[
Q_{\zeta,\tau}(\rho):=\frac{\rho^2\phi_\zeta'(\rho)}{p_\tau'(\rho)}.
\]
Since
\[
\phi_\zeta'(\rho)=2\alpha\rho^{\alpha-2}+\frac{2\zeta}{\rho}, \quad p_\tau'(\rho)=\gamma\rho^{\gamma-1}+10\tau\rho^9,
\]
we obtain
\[
Q_{\zeta,\tau}(\rho) \le \frac{2\alpha}{\gamma}\rho^{\alpha-\gamma+1} + \frac{2\zeta}{\gamma}\rho^{2-\gamma}.
\]
Under the relation $\gamma=\alpha+1$, this becomes $Q_{\zeta,\tau}(\rho)\le C(1+\zeta\rho^{1-\alpha})$. Assuming without loss of generality that $0<\zeta\le1$, we infer that $Q_{\zeta,\tau}(\rho)\le C(1+\rho^{1-\alpha})$. Since $1-\alpha\le\gamma$, the energy estimate yields
\[
\intt Q_{\zeta,\tau}(\rho)\,\dx \le C\lt(1+\intt\rho^\gamma\,\dx\rt) \le C\lt(1+\calE_{{\rm free},\mathbf a}(0)\rt).
\]

We next define
\[
\rho_f:=\intr f\,\dv,\quad E_f:=\intr |v-u_\e^\e|^2f\,\dv.
\]
By the Cauchy--Schwarz inequality,
\[
|F|^2 = \lt|\intr (v-u_\e^\e)f\,\dv\rt|^2 \le \rho_f E_f.
\]
Using the nonnegativity of $\theta_\e$ and applying the Cauchy--Schwarz inequality once more, we obtain $|F_\e|^2\le(\rho_f)_\e (E_f)_\e$. By the symmetry of the mollifier,
\[
\intt Q_{\zeta,\tau}(\rho)|F_\e|^2\,\dx \le  \|(\rho_f)_\e\|_{L^\infty} \intt\lt(Q_{\zeta,\tau}(\rho)\rt)_\e E_f\,\dx \le
\|(\rho_f)_\e\|_{L^\infty} \lt\| \frac{(Q_{\zeta,\tau}(\rho))_\e}{\rho_\e} \rt\|_{L^\infty} \calD_{\rm drag}.
\]
Since $\rho_\e\ge c_\e>0$, we obtain
\[
\lt\| \frac{(Q_{\zeta,\tau}(\rho))_\e}{\rho_\e} \rt\|_{L^\infty} \le \frac{\|\theta_\e\|_{L^\infty}}{c_\e} \intt Q_{\zeta,\tau}(\rho)\,\dx \le C_\e\lt(1+\calE_{{\rm free},\mathbf a}(0)\rt).
\]
Combining the preceding estimates with \eqref{eq:kin-rem-young}, we conclude the desired result.  
\end{proof}

We now combine the free energy estimate with a sufficiently small multiple of the modified BD entropy estimate. Define
\[
\calE_{\eta,\mathbf a}(t):= \calE_{{\rm free},\mathbf a}(t) +\eta\widetilde{\calE}_{{\rm BD},\mathbf a}(t).
\]

\begin{proposition}\label{prop:comb-est}
Let $(f_{\mathbf a},\rho_{\mathbf a},u_{\mathbf a})$ be a sufficiently smooth positive solution to \eqref{eq:reg-sys}. Assume that
\[
N_0:= \calE_{{\rm free},\mathbf a}(0) + \widetilde{\calE}_{{\rm BD},\mathbf a}(0) <\infty.
\]
Then there exists $\eta>0$, depending only on $\e$ and $N_0$, such that, for every $0\le s\le t\le T$,
\begin{align}\label{eq:comb-est}
\begin{aligned}
&\calE_{\eta,\mathbf a}(t) +\int_s^t\lt[ \calD_{{\rm vis},\mathbf a}(\tau) +\frac12\calD_{{\rm drag},\mathbf a}(\tau) +\frac{\eta}{2}\calD_{{\rm BD},\mathbf a}(\tau) +\eta\calD_{{\rm rot},\mathbf a}(\tau) +\eta\calD_{{\rm cap},\mathbf a}(\tau) +\calD_{{\rm damp},\mathbf a}(\tau) \rt]\dd\tau \\
&\quad \le \calE_{\eta,\mathbf a}(s).
\end{aligned}
\end{align}
In particular, if the initial energies are bounded uniformly along a family of regularized solutions, then $\eta>0$ can be chosen uniformly and the corresponding estimates are uniform on every finite time interval.
\end{proposition}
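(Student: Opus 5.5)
The plan is to add the free energy identity of Proposition \ref{prop:reg-energy} to $\eta$ times the modified BD identity of Proposition \ref{prop:reg-BD}, and then use Lemma \ref{lem:kin-rem} to absorb the kinetic remainder. Concretely, I will integrate \eqref{eq:reg-energy} over $[s,t]$ and add $\eta$ times \eqref{eq:reg-BD-int}. The left-hand side then contains $\calE_{\eta,\mathbf a}(t)$ together with the time integrals of
\[
\calD_{{\rm vis},\mathbf a}+(1+\eta)\calD_{{\rm drag},\mathbf a}+\eta\calD_{{\rm BD},\mathbf a}+\eta\calD_{{\rm rot},\mathbf a}+\eta\calD_{{\rm cap},\mathbf a}+(1+\eta)\calD_{{\rm damp},\mathbf a}.
\]
The right-hand side is $\calE_{\eta,\mathbf a}(s)+\eta\int_s^t\calR_{{\rm kin},\mathbf a}$. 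In the admissible regimes \eqref{eq:admi-case} every dissipation term is nonnegative, as noted before the statement. So I may discard the surplus $\eta\calD_{{\rm drag},\mathbf a}$ and $\eta\calD_{{\rm damp},\mathbf a}$, keeping only coefficient one on the damping term.

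The remainder is handled pointwise in time by Lemma \ref{lem:kin-rem}:
\[
\eta|\calR_{{\rm kin},\mathbf a}|\le\frac{\eta}{2}\calD_{{\rm BD},\mathbf a}+\eta C_\e(1+\calE_{{\rm free},\mathbf a}(0))\calD_{{\rm drag},\mathbf a}.
\]
The first term is absorbed into $\eta\calD_{{\rm BD},\mathbf a}$ and leaves $\frac{\eta}{2}\calD_{{\rm BD},\mathbf a}$. For the second term, I choose
\[
\eta\le\frac{1}{2C_\e(1+N_0)}.
\]
Since $\calE_{{\rm free},\mathbf a}(0)\le N_0$, this gives $\eta C_\e(1+\calE_{{\rm free},\mathbf a}(0))\le\frac12$, so the free-energy drag dissipation retains a coefficient of at least $\frac12$. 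This produces exactly \eqref{eq:comb-est}, with $\eta$ depending only on $\e$ and $N_0$ through the constant $C_\e$ of Lemma \ref{lem:kin-rem}. That constant is independent of $\zeta,\tau$.

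The only point needing care is the hypothesis of Lemma \ref{lem:kin-rem}. It uses $\sup_t\intt\rho_{\mathbf a}^\gamma\,\dx\le C(1+\calE_{{\rm free},\mathbf a}(0))$, which holds because the free energy is nonincreasing by Proposition \ref{prop:reg-energy}. The lemma is stated for $d=1$ with $\gamma=\alpha+1$, $0<\alpha\le\frac12$. The two-dimensional case $\alpha=\frac12$, $\gamma=\frac32$ satisfies the same relation $\gamma=\alpha+1$, so the same proof applies verbatim. I would remark this rather than redo it.

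Uniformity then follows immediately. If the initial energies are bounded along a family, then $N_0$ is bounded, so a single $\eta$ works and the bound \eqref{eq:comb-est} holds uniformly on each $[0,T]$. The main (mild) obstacle is just this bookkeeping of coefficients together with the nonnegativity of $\calD_{{\rm vis},\mathbf a}$ and $\calD_{{\rm cap},\mathbf a}$; no new estimate is needed.
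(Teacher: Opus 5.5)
Your proposal is correct and matches the paper's proof. The paper also adds the free energy identity to $\eta$ times \eqref{eq:reg-BD-int}, bounds $\eta\calR_{{\rm kin},\mathbf a}$ by Lemma \ref{lem:kin-rem}, chooses $\eta C_\e(1+N_0)\le\frac12$, and discards the surplus nonnegative drag and damping terms.

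Two small remarks. First, Lemma \ref{lem:kin-rem} assumes only $\gamma=\alpha+1$, $0<\alpha\le\frac12$, with no restriction to $d=1$, so it covers the two-dimensional case directly. Second, the signs of $\calD_{{\rm vis},\mathbf a}$ and $\calD_{{\rm cap},\mathbf a}$ are not needed for the inequality, since they already appear with the same coefficients as in \eqref{eq:comb-est}.
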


\begin{proof}
Choose $\eta>0$ sufficiently small so that $\eta C_\e(1+N_0)\le\frac12$. Adding the free energy inequality and $\eta$ times \eqref{eq:reg-BD-int}, and using Lemma \ref{lem:kin-rem}, we obtain
\[
\begin{aligned}
&\calE_{\eta,\mathbf a}(t) +\int_s^t\calD_{{\rm vis},\mathbf a}(\tau)\,\dd\tau +\frac12\int_s^t\calD_{{\rm drag},\mathbf a}(\tau)\,\dd\tau +\frac{\eta}{2}\int_s^t\calD_{{\rm BD},\mathbf a}(\tau)\,\dd\tau \\
&\quad +\eta\int_s^t\calD_{{\rm rot},\mathbf a}(\tau)\,\dd\tau +\eta\int_s^t\calD_{{\rm cap},\mathbf a}(\tau)\,\dd\tau +\int_s^t\calD_{{\rm damp},\mathbf a}(\tau)\,\dd\tau \\
&\quad \quad \le \calE_{\eta,\mathbf a}(s).
\end{aligned}
\]
This proves \eqref{eq:comb-est}. The uniform version follows immediately if $N_0$ is bounded uniformly along the approximating family.
\end{proof}

\begin{proposition}\label{prop:app-density-conseq}
Assume that one of the admissible cases in \eqref{eq:admi-case} holds. Let $(f_{\mathbf a},\rho_{\mathbf a},u_{\mathbf a})$ be a regularized solution satisfying the combined estimate \eqref{eq:comb-est}, with
$\mathbf a=(\zeta,\tau)$ and $0<\zeta,\tau\le1$. Then the following density consequences hold uniformly with respect to $\zeta$ and $\tau$ on every finite time interval.

If $d=1$ and $0<\alpha\le\frac12$, then there exist constants $\underline\rho,\overline\rho>0$, depending only on $\e$ and the uniform initial energy bound, such that
\[
0<\underline\rho \le \rho_{\mathbf a}(t,x) \le \overline\rho<\infty
\]
for a.e. $(t,x)\in (0,\infty) \times\T$.

If $d=2$ and $\alpha=\frac12$, then, for every finite $p>0$,
\[
\esssup_{ t\ge0} \int_{\T^2} \lt( \rho_{\mathbf a}(t,x)^p+\rho_{\mathbf a}(t,x)^{-p} \rt) \dx \le C_{\e,p},
\]
and
\[
\esssup_{t\ge 0} [\rho_{\mathbf a}(t)^\alpha]_{\calA_2} \le C_{\e}.
\]
\end{proposition}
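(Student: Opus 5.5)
The plan is to transfer the argument of Proposition \ref{prop:veri-w-den-c} to the regularized level. The only new input needed is a uniform-in-$(\zeta,\tau)$ bound on the regularized BD quantity
\[
\intt \rho_{\mathbf a}|\nabla\phi(\rho_{\mathbf a})|^2\,\dx,
\]
built with the \emph{unregularized} potential $\phi$. Once this bound holds uniformly in time, the Morrey and Moser--Trudinger arguments from Section \ref{sec:bd-estimate} apply verbatim, because they use only the mass constraint $\intt\rho_{\mathbf a}\,\dx=1$ and the gradient bound.

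\emph{Step 1: uniform BD control.} By Proposition \ref{prop:comb-est} with uniformly bounded initial energies (Lemma \ref{lem:init-energy-diag}), $\calE_{\eta,\mathbf a}(t)\le\calE_{\eta,\mathbf a}(0)\le C$ for a.e. $t$, with $\eta$ independent of $\zeta,\tau$. This gives uniform bounds on $\intt\rho_{\mathbf a}|w_{\mathbf a}|^2\,\dx$ and $\intt\rho_{\mathbf a}|u_{\mathbf a}|^2\,\dx$. Hence
\[
\intt\rho_{\mathbf a}|\nabla\phi_\zeta(\rho_{\mathbf a})|^2\,\dx\le C.
\]
Since $\phi_\zeta'(\rho)=2\alpha\rho^{\alpha-2}+2\zeta/\rho$ and both summands are positive, we have $|\nabla\phi_\zeta(\rho)|\ge|\nabla\phi(\rho)|$ pointwise. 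Therefore $\intt\rho_{\mathbf a}|\nabla\phi(\rho_{\mathbf a})|^2\,\dx\le C$. By \eqref{eq:def-phi}, this is a uniform bound on $\|\nabla\rho_{\mathbf a}^{\alpha-1/2}\|_{L^2}$ when $\alpha<\frac12$, and on $\|\nabla\log\rho_{\mathbf a}\|_{L^2}$ when $\alpha=\frac12$. These are exactly \eqref{e:unif1}--\eqref{e:unif2}, now uniform in $\mathbf a$ and essentially uniform in $t$.

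Nonnegativity of the dropped dissipation terms is guaranteed in the admissible regimes, as already observed for $\calD_{{\rm vis},\mathbf a}$ and $\calD_{{\rm cap},\mathbf a}$. The only care needed is that the regularized solutions are a priori only weak limits of the Galerkin scheme, where \eqref{eq:comb-est} holds for a.e. time. This is why the conclusion is stated with $\esssup$ and for a.e. $(t,x)$.

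\emph{Step 2: $d=1$.} For $\alpha<\frac12$, set $\beta=\frac12-\alpha$. We repeat Case 1 of Proposition \ref{prop:veri-w-den-c}. The point $x_t^+$ with $\rho_{\mathbf a}\ge1$ together with Morrey's estimate gives the lower bound $\underline\rho$. The minimum point argument combined with the divergence of $\int_0^{1/2}(a+Cy^{1/2})^{-1/\beta}\,\dy$ as $a\to0^+$ (using $\frac1{2\beta}>1$) gives the upper bound $\overline\rho$. For $\alpha=\frac12$, the bound on $\log\rho_{\mathbf a}$ follows from points where $\rho_{\mathbf a}\le1\le\rho_{\mathbf a}$ and Morrey's estimate. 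The constants depend only on $C$ from Step 1.

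\emph{Step 3: $d=2$, $\alpha=\frac12$.} With $\zeta=\log\rho_{\mathbf a}$, the global Moser--Trudinger inequality combined with the mass constraint and Jensen's inequality gives $-C\le\zeta_{\T^2}\le0$ and exponential integrability of $|\zeta-\zeta_{\T^2}|$. This yields the bounds on all positive and negative moments. The scaled local Moser--Trudinger inequality on cubes then gives the uniform bound on $[\rho_{\mathbf a}^{1/2}]_{\calA_2}$.

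I expect the main obstacle to be Step 1: one has to check that the regularized BD energy $\widetilde\calE_{{\rm BD},\mathbf a}$, whose extra terms $\tau\pi_{10}$, $\frac{\tau}{2}|\nabla Y_\zeta|^2$ and $2\tau J_\zeta$ are all nonnegative, still controls the unregularized gradient uniformly. It also requires that the a.e.-in-time inequality survives the Galerkin limit by lower semicontinuity. The pointwise comparison $\phi_\zeta'\ge\phi'$ is what makes this work.
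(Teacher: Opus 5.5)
Your proposal is correct and follows the paper's proof, which simply observes that \eqref{eq:comb-est} gives bounds on $\nabla\rho_{\mathbf a}^{\alpha-\frac12}$ (resp.\ $\nabla\log\rho_{\mathbf a}$) in $L^\infty_tL^2_x$, uniform in $(\zeta,\tau)$, and then repeats the Morrey and Moser--Trudinger arguments of Proposition \ref{prop:veri-w-den-c}. Your pointwise comparison $|\nabla\phi_\zeta(\rho)|=\phi_\zeta'(\rho)|\nabla\rho|\ge\phi'(\rho)|\nabla\rho|=|\nabla\phi(\rho)|$ makes explicit a step that the paper only covers by remarking that the regularizing terms are nonnegative.
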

\begin{proof}
The proof is identical to the verification of the weighted density conditions in Proposition \ref{prop:veri-w-den-c}. Indeed, the combined estimate \eqref{eq:comb-est} gives the same singular BD controls as in the a priori estimate, namely
\[
\nabla\rho_{\mathbf a}^{\alpha-\frac12} \in L^\infty(0,T;L^2(\T^d)) \quad\text{if }\alpha\neq\frac12,
\]
and
\[
\nabla\log\rho_{\mathbf a} \in L^\infty(0,T;L^2(\T^d)) \quad\text{if }\alpha=\frac12.
\]
The additional $\zeta$- and $\tau$-regularizing terms are nonnegative in the energy and entropy estimates and therefore do not affect the argument. The one-dimensional pointwise bounds and the two-dimensional finite moment and $\calA_2$ bounds then follow exactly as in Proposition \ref{prop:veri-w-den-c}.
\end{proof}

%
%
%
%
%
%
%
%
%
%
%

\subsection{An effective-velocity compactness criterion} 

We provide an effective-velocity compactness criterion which will be used in the auxiliary limit $N\to\infty$, in the limit $\tau\to0$, and in the limit $\zeta\to0$. The statement is a form of the BD compactness mechanism for density-dependent viscosities, in the spirit of \cite{BVY22}.  Its main role is to combine the density compactness with the weak compactness of the velocity gradients. The almost everywhere convergence of the velocities, which is needed to identify the convective term, will be verified separately from the corresponding approximate momentum equations.

\begin{lemma}\label{lem:BD-convec-comp}
Let $d=1,2$. Assume either $d=1$ with $0<\alpha\le\frac12$, or $d=2$ with $\alpha=\frac12$. Let $\zeta_n\ge0$ and $\zeta_n\to\zeta\ge0$, and set
\[
\nu_n(r):=r^\alpha+\zeta_n r, \quad \lambda_n(r):=2(\alpha-1)r^\alpha, \quad \phi_n'(r):=\frac{2\nu_n'(r)}{r}.
\]
Let $(\rho_n,u_n)$ be a sequence satisfying
\[
\rho_n\ge0, \quad \pa_t\rho_n+\nabla\cdot(\rho_nu_n)=0 \quad\text{in }\calD'((0,T)\times\T^d),
\]
and assume that the following bounds hold uniformly in $n$:
\bq\label{eq:BD-compact-hyp-energy}
 \esssup_{0\le t \le T} \intt \lt[ \rho_n^\gamma + \rho_n|u_n|^2 + \rho_n|u_n+\nabla\phi_n(\rho_n)|^2 \rt]\dx +
\int_0^T\intt \lt[ |\nabla\rho_n^\alpha|^2 + \nu_n(\rho_n)|\nabla u_n|^2 \rt]\dx\dt \le C.
\eq
Assume moreover that the density consequences stated in Proposition \ref{prop:app-density-conseq} hold uniformly along the sequence. More precisely, if $d=1$, then $\rho_n$ is bounded away from zero and infinity
uniformly in $n$; if $d=2$ and $\alpha=\frac12$, then, for every finite $p>0$,
\[
\sup_n\esssup_{0\le t\le T} \int_{\T^2}\lt(\rho_n^p+\rho_n^{-p}\rt) \dx<\infty.
\]

Then, up to a subsequence,
\[
\rho_n\to\rho \quad\text{strongly in } L^p((0,T)\times\T^d) \quad\text{for every finite $p\ge1$ and almost everywhere.}
\]
Moreover, there exists a velocity $u$ such that
\[
\rho_nu_n\rightharpoonup\rho u,  \quad\text{in }\calD'((0,T)\times\T^d)
\]
and
\[
u_n\rightharpoonup u \quad\text{weakly in }
\begin{cases}
L^2(0,T;H^1(\T)),&d=1,\\[1mm]
L^2(0,T;W^{1,q}(\T^2)),&d=2,\quad 1<q<2.
\end{cases}
\]
Furthermore,
\[
\rho_n^{\frac{\alpha}{2}}\nabla u_n \rightharpoonup \rho^{\frac{\alpha}{2}}\nabla u \quad\text{weakly in } L^2((0,T)\times\T^d).
\]
Finally,
\[
\mathbb S_n(\rho_n,u_n) \rightharpoonup \mathbb S(\rho,u) \quad\text{in }\calD'((0,T)\times\T^d),
\]
where
\[
\mathbb S_n(\rho_n,u_n) := 2\nu_n(\rho_n)D u_n + \lambda_n(\rho_n)(\nabla\cdot u_n)\mathbb I
\]
and
\[
\mathbb S(\rho,u) := 2(\rho^\alpha+\zeta\rho)D u + 2(\alpha-1)\rho^\alpha(\nabla\cdot u)\mathbb I.
\]
\end{lemma}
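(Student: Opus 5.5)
We will proceed in three stages: strong compactness of the density, weak compactness of the velocity gradients, and identification of the nonlinear limits through the uniform density bounds.

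\emph{Density compactness.} From \eqref{eq:BD-compact-hyp-energy}, $\nabla\rho_n^\alpha$ is bounded in $L^2((0,T)\times\T^d)$, and $\rho_n^\alpha$ is bounded in $L^\infty(0,T;L^{\gamma/\alpha})$. Hence $\rho_n^\alpha$ is bounded in $L^2(0,T;H^1)$. For time regularity we use the continuity equation. In $d=1$ the density is bounded above and below, so $\rho_n u_n$ is bounded in $L^\infty(0,T;L^2)$ and $\pa_t\rho_n$ is bounded in $L^\infty(0,T;H^{-1})$. In $d=2$ the moment bounds give $\rho_nu_n=\sqrt{\rho_n}\,\sqrt{\rho_n}u_n$ bounded in $L^\infty(0,T;L^{q})$ for every $q<2$, so $\pa_t\rho_n$ is bounded in $L^\infty(0,T;W^{-1,q})$.

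The Aubin--Lions lemma, applied to $\rho_n$ (or to $\rho_n^\alpha$ via the chain rule), gives strong convergence of $\rho_n$ in $L^1$ and a.e. along a subsequence. The uniform $L^\infty_tL^p_x$ bounds for all finite $p$ then upgrade this to strong convergence in every $L^p$ by Vitali's theorem. In $d=1$ we also pass the pointwise bounds to $\rho$; in $d=2$ we pass the negative moments to $\rho$ by Fatou's lemma, so $\rho>0$ a.e.

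\emph{Velocity compactness.} We aim for uniform bounds on $\nabla u_n$ exactly as in \eqref{eq:wv-coe-all}, using $\nu_n(\rho_n)\ge\rho_n^\alpha$. For $q<2$, H\"older's inequality gives
\[
\|\nabla u_n\|_{L^q}^2\le\Big(\intt\rho_n^\alpha|\nabla u_n|^2\,\dx\Big)\Big(\intt\rho_n^{-\frac{\alpha q}{2-q}}\,\dx\Big)^{\frac{2-q}{q}},
\]
and the last factor is uniformly bounded. In $d=1$ we simply get $\nabla u_n$ bounded in $L^2$. The mean of $u_n$ is controlled by
\[
\Big|\intt u_n\,\dx\Big|\le\intt\rho_n^{-1/2}\sqrt{\rho_n}|u_n|\,\dx\le C,
\]
using the negative moment bound, so $u_n$ is bounded in $L^2(0,T;W^{1,q})$ and has a weakly convergent subsequence with limit $u$.

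To identify $\rho_nu_n\rightharpoonup\rho u$, we combine the strong convergence of $\rho_n$ in every $L^p$ with the weak convergence of $u_n$ in $L^2(0,T;L^{r})$ for some large $r$ given by Sobolev embedding. The same strong-times-weak argument handles $\rho_n^{\alpha/2}\nabla u_n$: it is bounded in $L^2$, and $\rho_n^{\alpha/2}\to\rho^{\alpha/2}$ strongly in every $L^p$ while $\nabla u_n\rightharpoonup\nabla u$ in $L^2L^q$, so its weak limit is $\rho^{\alpha/2}\nabla u$.

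\emph{Stress tensor.} The stress is a product $\nu_n(\rho_n)Du_n$ or $\rho_n^\alpha\nabla\cdot u_n$, that is, strongly converging coefficients times weakly converging gradients. We write $\rho_n^\alpha\nabla u_n=\rho_n^{\alpha/2}\cdot\rho_n^{\alpha/2}\nabla u_n$, which is a product of a strongly convergent factor in $L^4$ and a weakly convergent factor in $L^2$. The extra term $\zeta_n\rho_nDu_n$ is handled in the same way, since $\zeta_n\rho_n\to\zeta\rho$ strongly and $\nabla u_n$ converges weakly in $L^2L^q$ with the complementary integrability available. This yields $\mathbb S_n(\rho_n,u_n)\rightharpoonup\mathbb S(\rho,u)$ in $\calD'$.

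\emph{Main obstacle.} We expect the hardest step to be the $d=2$ time-derivative bound. There we must check that the exponents close with only the $\calA_2$ and moment information and without pointwise bounds. The first thing to try is to work with $\sqrt{\rho_n}$, whose gradient is controlled by $\nabla\log\rho_n$ and the moment bounds, and to apply Aubin--Lions to it.
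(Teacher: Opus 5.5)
Your proposal is correct and follows essentially the paper's route (Aubin--Lions for the density, H\"older with the negative moments for $\nabla u_n$, and strong--weak products for $\rho_nu_n$, $\rho_n^{\alpha/2}\nabla u_n$ and $\mathbb S_n$). The variations are minor. You take $u$ directly as the weak limit of $u_n$ instead of recovering it from the momentum limit via Radon--Nikodym. Also, if you apply Aubin--Lions to $\rho_n$ itself, you should state the spatial bound $\nabla\rho_n=\alpha^{-1}\rho_n^{1-\alpha}\nabla\rho_n^\alpha\in L^2(0,T;L^s)$, $s<2$, which the density consequences provide; the paper instead transfers compactness from $\rho_n^\alpha$ through $z\mapsto z^{1/\alpha}$ and Vitali.

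The obstacle you anticipate in your last paragraph is not there. Your first paragraph already closes the two-dimensional time-derivative bound; indeed $\rho_n\in L^\infty(0,T;L^\gamma)$ alone gives $\rho_nu_n$ bounded in $L^\infty(0,T;L^{2\gamma/(\gamma+1)})$, which is what the paper uses. No $\calA_2$ information is needed, nor is it assumed in this lemma.
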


\begin{proof}
We first prove the compactness of the density. By the uniform bounds \eqref{eq:BD-compact-hyp-energy}, $\rho_n^\alpha$ is bounded in $L^2(0,T;H^1(\T^d))$. Since $d\le2$, we have the compact embedding $H^1(\T^d)\Subset L^q(\T^d)$ for every finite $q$. Hence $\rho_n^\alpha$ is spatially compact in $L^q(\T^d)$ for every finite $q$. Since the map $z\mapsto z^{1/\alpha}$ is continuous on $[0,\infty)$, this implies compactness of $\rho_n$ in measure. Together with the uniform $L^\gamma$-bound on $\rho_n$, Vitali's theorem gives spatial
compactness of $\rho_n$ in $L^1(\T^d)$.

We next obtain time compactness from the continuity equation. Since $\sqrt{\rho_n}u_n$ is bounded in $L^\infty(0,T;L^2(\T^d))$ and $\sqrt{\rho_n}$ is bounded in $L^\infty(0,T;L^{2\gamma}(\T^d))$, we have
\bq\label{eq:mom-bdd0}
\rho_nu_n=\sqrt{\rho_n}\,\sqrt{\rho_n}u_n \quad\text{bounded in}\quad L^\infty(0,T;L^{\frac{2\gamma}{\gamma+1}}(\T^d)).
\eq
Hence
\[
\pa_t\rho_n=-\nabla\cdot(\rho_nu_n) \quad\text{is bounded in}\quad L^\infty(0,T;W^{-1,\frac{2\gamma}{\gamma+1}}(\T^d)).
\]
Combining the spatial compactness above with the time compactness and using the Aubin--Lions compactness lemma, we obtain, up to a subsequence,
\[
\rho_n\to\rho \quad\text{strongly in }L^1((0,T)\times\T^d) \quad\text{and almost everywhere}.
\]
Moreover, the density consequences assumed in the statement improve the strong convergence to
\[
\rho_n\to\rho \quad\text{strongly in }L^p((0,T)\times\T^d) \quad\text{for every finite }p\ge1.
\]
This follows by dominated convergence in $d=1$ and by Vitali's theorem in the two-dimensional borderline case. In the latter case, the uniform negative moment bounds also imply, by Fatou's lemma, that $\rho>0$ almost everywhere. In one space dimension, the uniform pointwise lower bound passes to the limit, so that $\rho$ is also bounded away from zero.

We now record the convergence of the viscosity coefficients. Since $\nu_n(r)=r^\alpha+\zeta_n r$ and $\zeta_n\to\zeta$, the preceding strong convergence of $\rho_n$ gives, for every finite $p\ge1$
\[
\nu_n(\rho_n)\to\nu(\rho):=\rho^\alpha+\zeta\rho \quad\text{strongly in }L^p((0,T)\times\T^d).
\]

We notice from \eqref{eq:mom-bdd0} that, after extracting a subsequence, there exists $m\in L^\infty(0,T;L^{\frac{2\gamma}{\gamma+1}}(\T^d))$ such that
\[
\rho_nu_n\rightharpoonup m \quad\text{weakly-* in } L^\infty(0,T;L^{\frac{2\gamma}{\gamma+1}}(\T^d)),
\]
and thus also in $\calD'((0,T)\times\T^d)$.

We identify this momentum limit. By the lower semicontinuity of the kinetic energy under the strong convergence $\rho_n\to\rho$, we have
\[
\int_0^T\intt \frac{|m|^2}{\rho}\,\dx\dt \le \liminf_{n\to\infty} \int_0^T\intt \rho_n|u_n|^2\,\dx\dt <\infty,
\]
where $\frac{|m|^2}{\rho}$ is understood in the relaxed sense, namely it is $+\infty$ on the set $\{\rho=0,\ m\neq0\}$. In particular, $m=0$ a.e. on $\{\rho=0\}$. Thus, $m$ is absolutely continuous with respect to the measure $\rho\,\dx\dt$, and by the Radon--Nikodym theorem there exists a velocity $u$ such that
\[
m=\rho u, \quad \sqrt{\rho}\,u\in L^2((0,T)\times\T^d).
\]
Equivalently, we may define $u=\frac{m}\rho$ on $\{\rho>0\}$ and $u=0$ on $\{\rho=0\}$. With this definition,
\[
\rho_nu_n\rightharpoonup\rho u \quad\text{in }\calD'((0,T)\times\T^d).
\]

We next identify the weak limit of the velocity gradients. In one space dimension, the uniform pointwise bounds on $\rho_n$ imply that $\nu_n(\rho_n)$ is bounded from below and above uniformly in $n$. Hence, the kinetic-energy and weighted-gradient estimates give
\[
\{ u_n\}_n \text{ bounded in } L^\infty(0,T;L^2(\T)) \cap  L^2(0,T;H^1(\T)).
\]
Up to a subsequence,
\[
u_n\rightharpoonup \widetilde u \quad\text{weakly in }L^2(0,T;H^1(\T)).
\]
Since $\rho_n\to\rho$ strongly in every finite $L^p$ and $\rho_nu_n\rightharpoonup\rho u$ in distributions, we obtain
\[
\rho\,\widetilde u=\rho u.
\]
The limiting density is bounded away from zero; hence $\widetilde u=u$ a.e. Consequently,
\[
u_n\rightharpoonup u \quad\text{weakly in }L^2(0,T;H^1(\T)).
\]

We now consider the two-dimensional borderline case $d=2$, $\alpha=\frac12$. Let $1<q<2$.  By the kinetic-energy estimate
and the inverse-density bounds,
\[
\|u_n(t)\|_{L^q(\T^2)}^2 \le \lt( \int_{\T^2} \rho_n|u_n|^2 \,\dx \rt) \lt( \int_{\T^2} \rho_n^{-\frac{q}{2-q}} \,\dx \rt)^{\frac{2-q}{q}}.
\]
Moreover, since $\nu_n(\rho_n)\ge\rho_n^{\frac12}$, the weighted gradient bound gives
\[
\|\nabla u_n(t)\|_{L^q(\T^2)}^2 \le  \lt( \int_{\T^2} \nu_n(\rho_n)|\nabla u_n|^2 \,\dx \rt) \lt( \int_{\T^2} \rho_n^{-\frac{q}{2(2-q)}} \,\dx \rt)^{\frac{2-q}{q}}.
\]
It follows that
\[
\{ u_n \}_n \text{ bounded in }L^\infty(0,T;L^q(\T^2))  \cap L^2(0,T;W^{1,q}(\T^2))
\]
for every $1<q<2$. Thus, after extraction, $u_n\rightharpoonup\widetilde u$ weakly in $L^2(0,T;W^{1,q}(\T^2))$. Since $\rho_n\to\rho$ strongly in every finite $L^p((0,T)\times\T^2)$ and $\rho_nu_n\rightharpoonup\rho u$ in distributions, we obtain 
\[
\rho \widetilde u=\rho u. 
\]
As $\rho>0$ a.e., we infer that $\widetilde u=u$ a.e. Hence,
\[
u_n\rightharpoonup u \quad\text{weakly in }L^2(0,T;W^{1,q}(\T^2))
\]
for every $1<q<2$. 

We now identify the viscous stress. In one space dimension,
\[
\nabla u_n \rightharpoonup \nabla u \quad\text{weakly in } L^2((0,T)\times\T),
\]
while
\[
\nu_n(\rho_n) \to \nu(\rho) \quad\text{strongly in }   L^2((0,T)\times\T).
\]
The same strong convergence holds for $\rho_n^\alpha$. Thus,
\[
\nu_n(\rho_n)\nabla u_n \rightharpoonup \nu(\rho)\nabla u \quad \text{and} \quad  \rho_n^\alpha\nabla\cdot u_n
\rightharpoonup \rho^\alpha\nabla\cdot u \quad\text{in }\calD'((0,T)\times\T).
\]

In two space dimensions, we choose $q=\frac32$. Then,
\[
\nabla u_n \rightharpoonup \nabla u \quad\text{weakly in } L^{\frac32}((0,T)\times\T^2).
\]
On the other hand,
\[
\nu_n(\rho_n) \to \nu(\rho) \quad \text{and} \quad \sqrt{\rho_n}
\to
\sqrt{\rho} \quad\text{strongly in } L^3((0,T)\times\T^2).
\]
It follows that
\[
\nu_n(\rho_n)D u_n
\rightharpoonup
\nu(\rho)D u \quad \text{and} \quad \sqrt{\rho_n}\,\nabla\cdot u_n
\rightharpoonup
\sqrt{\rho}\,\nabla\cdot u
\quad\text{in }\calD'((0,T)\times\T^2).
\]
Consequently, in both admissible cases,
\[
\mathbb S_n(\rho_n,u_n) \rightharpoonup \mathbb S(\rho,u) \quad\text{in }\calD'((0,T)\times\T^d),
\]
where
\[
\mathbb S(\rho,u) = 2(\rho^\alpha+\zeta\rho)D u + 2(\alpha-1)\rho^\alpha(\nabla\cdot u)\mathbb I.
\]

Finally, since $\nu_n(\rho_n)\ge\rho_n^\alpha$,  the sequence $Z_n:=\rho_n^{\frac{\alpha}{2}}\nabla u_n$ is bounded in $L^2((0,T)\times\T^d)$. Let $Z$ denote its weak limit.  In one space dimension, the strong convergence of $\rho_n^{\frac\alpha2}$ in $L^2$ and the weak convergence of
$\nabla u_n$ in $L^2$ give
\[
Z=\rho^{\frac{\alpha}{2}}\nabla u \quad\text{in }\calD'((0,T)\times\T).
\]
In two space dimensions, using $q=\frac32$, we have
\[
\nabla u_n \rightharpoonup \nabla u \quad\text{weakly in } L^{\frac32}((0,T)\times\T^2),
\]
while
\[
\rho_n^{\frac14} \to \rho^{\frac14} \quad\text{strongly in } L^3((0,T)\times\T^2).
\]
This yields
\[
Z=\rho^{\frac14}\nabla u \quad\text{in }\calD'((0,T)\times\T^2).
\]
Hence,
\[
\rho_n^{\frac{\alpha}{2}}\nabla u_n \rightharpoonup \rho^{\frac{\alpha}{2}}\nabla u \quad\text{weakly in } L^2((0,T)\times\T^d).
\]
This completes the proof.
\end{proof}

\begin{remark}\label{rem:BD-convective-comp}
Suppose, in addition to the assumptions of Lemma \ref{lem:BD-convec-comp}, that
\[
u_n\to u \quad\text{a.e. in } (0,T)\times\T^d.
\]
Then
\[
\rho_nu_n\otimes u_n \to \rho u\otimes u \quad\text{strongly in } L^1((0,T)\times\T^d).
\]

Indeed, in one space dimension, the uniform upper and lower bounds on $\rho_n$, together with the kinetic-energy and weighted-gradient estimates, give
\[
u_n \quad\text{bounded in } L^\infty(0,T;L^2(\T)) \cap L^2(0,T;H^1(\T)).
\]
Hence
\[
u_n \quad\text{is bounded in }  L^4((0,T)\times\T),
\]
and consequently
\[
\rho_nu_n\otimes u_n \quad\text{is bounded in }  L^2((0,T)\times\T).
\]

In the two-dimensional case, taking $q=\frac32$ in the proof of Lemma \ref{lem:BD-convec-comp}, we obtain
\[
u_n \quad\text{bounded in }  L^\infty(0,T;L^{\frac32}(\T^2)) \cap L^2(0,T;W^{1,\frac32}(\T^2)).
\]
Since
\[
W^{1,\frac32}(\T^2)\hookrightarrow L^6(\T^2),
\]
interpolation yields
\[
u_n \quad\text{bounded in } L^3((0,T)\times\T^2).
\]
Combining this with the uniform $L^4$ bound on $\rho_n$, we obtain
\[
\rho_nu_n\otimes u_n \quad\text{bounded in }  L^{\frac{12}{11}}((0,T)\times\T^2).
\]

Thus, in either admissible case, the convective tensors are uniformly
integrable. Since
\[
\rho_nu_n\otimes u_n
\to
\rho u\otimes u
\quad\text{almost everywhere},
\]
Vitali's theorem gives the asserted strong $L^1$ convergence. In
particular,
\[
\rho_nu_n\otimes u_n
\rightharpoonup
\rho u\otimes u
\quad\text{in }\calD'((0,T)\times\T^d).
\]
\end{remark}

%
%
%
%
%
%
%
%
%
%
%
 
\subsection{Auxiliary augmented approximation}\label{ssec:aux-aug-app}

We now describe the auxiliary approximation used to construct solutions to the regularized system \eqref{eq:reg-sys}. Throughout this subsection, the parameters $\zeta,\tau>0$ are fixed. We also fix once and for all a number $\kappa\in(0,\frac12)$.  It only determines the augmented velocity
\[
w=u+\kappa\nabla\phi_\zeta(\rho).
\]
The only independent auxiliary index is the Galerkin dimension $N\in\N$. We also fix a positive sequence $\epsilon_N\to0$, which multiplies a purely auxiliary
hyperregularization term. The auxiliary limit means $N\to\infty$.

The construction follows the augmented-velocity philosophy of \cite{BVY22}. We introduce the BD drift velocity
\[
\mathfrak v_N:=\nabla\phi_\zeta(\rho_N), \quad \phi_\zeta'(\rho)=\frac{2\nu_\zeta'(\rho)}{\rho},
\]
and define the physical velocity by $u_N:=w_N-\kappa\mathfrak v_N$. Thus, the identity $\rho_N\mathfrak v_N=2\nabla\nu_\zeta(\rho_N)$ is imposed exactly at the approximate level. In particular, no projection commutator is generated in the BD drift.

Let $X_N\subset C^\infty(\T^d;\R^d)$ be a finite-dimensional Galerkin space and let $P_N$ be the corresponding projection. We choose $X_N$ and $P_N$ so that $P_N$ is stable in $H^m(\T^d)$ for every fixed $m\in\N$, namely
\[
\|P_N\psi\|_{H^m(\T^d)} \le C_m\|\psi\|_{H^m(\T^d)}
\]
with a constant $C_m$ independent of $N$. For instance, one may take $X_N$ to be the space of Fourier modes with frequencies $|\xi|\le N$ and $P_N$ the
Fourier projection. We use a mixed Galerkin--parabolic approximation: only the augmented velocity is projected onto $X_N$, and we seek
\[
w_N(t,\cdot)\in X_N.
\]
For each smooth $w_N$, the density equation below is strictly parabolic and determines a smooth positive density $\rho_N$. We then define the BD drift velocity directly from this approximate density by $\mathfrak v_N=\nabla\phi_\zeta(\rho_N)$. Thus $\mathfrak v_N$ is not an independent Galerkin unknown, but a smooth nonlinear function of $\rho_N$. This is sufficient for the present regularized problem, since the Galerkin formulation of the $w_N$-equation is imposed only against test functions in $X_N$. This should be viewed as a simplified implementation of the augmented-velocity method of \cite{BVY22}, adapted to the present regularized system. This choice also keeps the BD structural identity $\rho_N\mathfrak v_N=2\nabla\nu_\zeta(\rho_N)$ exact at the approximate level. Consequently, the $\kappa$-entropy calculation below can be carried out without introducing additional projection remainders associated with the drift variable.

For a given smooth $w_N$, the density $\rho_N$ is determined by
\bq\label{eq:aux-rho}
\pa_t\rho_N+\nabla\cdot(\rho_N w_N)=2\kappa\Delta\nu_\zeta(\rho_N).
\eq
Equivalently, since $\rho_N\mathfrak v_N=2\nabla\nu_\zeta(\rho_N)$ and $u_N=w_N-\kappa\mathfrak v_N$, this equation can be written in conservative form as
\[
\pa_t\rho_N+\nabla\cdot(\rho_Nu_N)=0.
\]
Since $\nu_\zeta'(\rho)=\alpha\rho^{\alpha-1}+\zeta\ge\zeta>0$, the density equation is strictly parabolic. Hence, smooth strictly positive initial densities remain strictly positive, and the total mass is conserved.

The kinetic equation is solved with the physical velocity $u_N$:
\bq\label{eq:aux-kinetic}
\pa_t f_N+v\cdot\nabla_x f_N=(\rho_N)_\e\nabla_v\cdot\lt[(v-u_{N,\e}^\e)f_N\rt],
\quad
u_{N,\e}^\e:=\frac{(\rho_Nu_N)_\e}{(\rho_N)_\e}.
\eq
Since $(\rho_N)_\e\ge c_\e:=\min_{\T^d}\theta_\e>0$, this is a linear kinetic equation with smooth coefficients at the auxiliary level. We set
\[
F_N:=\intr(v-u_{N,\e}^\e)f_N\,\dv .
\]
The Galerkin equation for $w_N$ is written in augmented variables. For every $\psi\in X_N$, we impose
\begin{align}\label{eq:aux-w}
\begin{aligned}
&\frac{\dd}{\dt}\intt\rho_Nw_N\cdot\psi\,\dx -\intt\rho_Nu_N\otimes w_N:\nabla\psi\,\dx -\intt p_\tau(\rho_N)\nabla\cdot\psi\,\dx \\
&\quad +2(1-\kappa)\intt\nu_\zeta(\rho_N)D w_N:\nabla\psi\,\dx +2\kappa\intt\nu_\zeta(\rho_N)A w_N:\nabla\psi\,\dx \\
&\quad +(1-\kappa)\intt\lambda_\zeta(\rho_N)\nabla\cdot w_N\,\nabla\cdot\psi\,\dx -2\kappa(1-\kappa)\intt\nu_\zeta(\rho_N)\nabla\mathfrak v_N:\nabla\psi\,\dx \\
&\quad  -\kappa(1-\kappa)\intt\lambda_\zeta(\rho_N)\nabla\cdot\mathfrak v_N\,\nabla\cdot\psi\,\dx +\epsilon_N\intt\lt[ \Delta^s w_N\cdot\Delta^s\psi + (1+|\nabla w_N|^2)\nabla w_N:\nabla\psi \rt]\dx \\
&\quad \quad =\intt\rho_N(F_N)_\e\cdot\psi\,\dx +\tau\intt\mathfrak K_\zeta(\rho_N)\cdot\psi\,\dx -\tau\intt u_N\cdot\psi\,\dx .
\end{aligned}
\end{align}
Here $s\in\N$ is fixed so large that $H^{2s}(\T^d)\hookrightarrow C^2(\T^d)$. The terms multiplied by $\epsilon_N$ are purely auxiliary and vanish in the Galerkin limit.

The following estimate is the main reason for introducing the augmented formulation. Its derivation, including the treatment of the kinetic coupling remainder and its absorption, is given in Appendix \ref{app:aug}.

\begin{lemma}\label{lem:aux-kappa-entropy}
Let $(f_N,\rho_N,w_N)$ be a smooth solution of \eqref{eq:aux-rho}--\eqref{eq:aux-w}, and let
\[
u_N=w_N-\kappa\mathfrak v_N, \quad \mathfrak v_N=\nabla\phi_\zeta(\rho_N).
\]
Then, for every $T>0$, one has
\begin{align}\label{eq:aux-kappa-energy}
\begin{aligned}
&\sup_{0\le t\le T}\Bigg\{ \frac12\inttr |v|^2f_N\,\dx\dv +\intt\lt[ \frac12\rho_N|w_N|^2 +\frac{\kappa(1-\kappa)}2\rho_N|\mathfrak v_N|^2 +\pi(\rho_N) +\tau \pi_{10}(\rho_N) \rt]\dx \\
&\hspace{25mm} +\frac{\tau}{2}\intt|\nabla Y_\zeta(\rho_N)|^2\,\dx +2\tau\kappa\intt J_\zeta(\rho_N)\,\dx \Bigg\} \\
&\quad +\int_0^T\Bigg[ 2(1-\kappa)\intt\nu_\zeta(\rho_N)|D w_N-\kappa\nabla\mathfrak v_N|^2\,\dx +(1-\kappa)\intt\lambda_\zeta(\rho_N)|\nabla\cdot w_N-\kappa\nabla\cdot\mathfrak v_N|^2\,\dx \\
&\hspace{25mm} +2\kappa\intt\nu_\zeta(\rho_N)|A w_N|^2\,\dx +\kappa\intt\nabla p_\tau(\rho_N)\cdot\mathfrak v_N\,\dx \\
&\hspace{25mm} +\frac12\calD_{{\rm drag},N}(t) +\calD_{{\rm damp},N}(t) +\calD_{{\rm cap},N}^{\kappa}(t) +\epsilon_N\calG_N(t) \Bigg]\dt \\
&\quad \quad \le C_{\e,\zeta,\tau,T}.
\end{aligned}
\end{align}
Here
\[
\calD_{{\rm drag},N}:=\inttr(\rho_N)_\e |v-u_{N,\e}^\e|^2f_N\,\dx\dv, \quad \calD_{{\rm damp},N}:=\tau\intt |u_N|^2\,\dx,
\]
\[
\calD_{{\rm cap},N}^{\kappa} := 4\tau\kappa\intt\lt[ 2\nu_\zeta(\rho_N)|\nabla^2s_\zeta(\rho_N)|^2 +\lambda_\zeta(\rho_N)|\Delta s_\zeta(\rho_N)|^2 \rt]\dx,
\]
and
\[
\calG_N(t):=\intt\lt[ |\Delta^s w_N|^2 + (1+|\nabla w_N|^2)|\nabla w_N|^2 \rt]\dx .
\]
\end{lemma}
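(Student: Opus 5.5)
The plan is to reproduce, at the Galerkin level, the combination ``free energy $+\,\kappa\times$BD correction'' from Propositions \ref{prop:reg-energy} and \ref{prop:reg-BD}, written in the augmented variables. I will only use test functions that are admissible. The Galerkin identity \eqref{eq:aux-w} is tested with $\psi=w_N\in X_N$, which is legitimate. The drift $\mathfrak v_N=\nabla\phi_\zeta(\rho_N)$ is never projected, so its evolution can be computed pointwise from the parabolic density equation \eqref{eq:aux-rho}.

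\textbf{Step 1: test the momentum equation with $w_N$.} Testing \eqref{eq:aux-w} with $\psi=w_N$ and using the conservative form $\pa_t\rho_N+\nabla\cdot(\rho_Nu_N)=0$, the inertial terms give $\frac{\dd}{\dt}\frac12\intt\rho_N|w_N|^2\,\dx$. The $\epsilon_N$-terms give exactly $\epsilon_N\calG_N$. The right-hand side terms are split according to $w_N=u_N+\kappa\mathfrak v_N$:
\begin{itemize}
\item Pressure: $-\intt p_\tau(\rho_N)\nabla\cdot u_N\,\dx$ equals $\frac{\dd}{\dt}\intt[\pi+\tau\pi_{10}](\rho_N)\,\dx$ by \eqref{eq:h-ide}. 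The remaining piece $\kappa\intt\nabla p_\tau\cdot\mathfrak v_N\,\dx$ is the BD dissipation.
\item Capillarity: the $u_N$-part gives $-\frac{\dd}{\dt}\frac\tau2\intt|\nabla Y_\zeta|^2\,\dx$ as in Proposition \ref{prop:reg-energy}. The $\kappa\mathfrak v_N$-part gives $-\calD^\kappa_{{\rm cap},N}$ via the Bohm identity \eqref{eq:Bohm}.
\item Damping: this gives $-\calD_{{\rm damp},N}-2\tau\kappa\frac{\dd}{\dt}\intt J_\zeta(\rho_N)\,\dx$ by \eqref{eq:linear-damp}, which uses only the continuity equation.
\end{itemize}

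\textbf{Step 2: the equation for $\rho_N\mathfrak v_N$ and completing the squares.} Since $\rho_N\mathfrak v_N=2\nabla\nu_\zeta(\rho_N)$ holds exactly, \eqref{eq:BD-grad-cons} holds verbatim for $(\rho_N,u_N)$. I multiply it by $\kappa(1-\kappa)\mathfrak v_N$ and integrate. This produces $\frac{\dd}{\dt}\frac{\kappa(1-\kappa)}2\intt\rho_N|\mathfrak v_N|^2\,\dx$ together with viscous terms of the form $\nu_\zeta\nabla\mathfrak v_N:\nabla u_N$ and $\lambda_\zeta\nabla\cdot\mathfrak v_N\,\nabla\cdot u_N$, after integrating by parts using the BD relation.

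Adding these to the viscous terms of Step 1, I write $u_N=w_N-\kappa\mathfrak v_N$ and use that $\nabla\mathfrak v_N$ is symmetric, so $A w_N$ is orthogonal to it. The aim is to check that all cross terms assemble into
\[
2(1-\kappa)\nu_\zeta|Dw_N-\kappa\nabla\mathfrak v_N|^2+(1-\kappa)\lambda_\zeta|\nabla\cdot w_N-\kappa\nabla\cdot\mathfrak v_N|^2+2\kappa\nu_\zeta|Aw_N|^2 .
\]
I expect this bookkeeping to be the main obstacle: the coefficients $-2\kappa(1-\kappa)$ in \eqref{eq:aux-w} were chosen precisely so that it closes, and a sign or factor error breaks it. I would verify it first for smooth $(\rho,w)$ pointwise in $d=1$, then in general $d$. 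Nonnegativity of the resulting quadratic forms uses $\alpha\ge1-\frac1d$ as in Section \ref{sec:bd-estimate}.

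\textbf{Step 3: kinetic coupling and closure.} Testing \eqref{eq:aux-kinetic} with $|v|^2/2$ and using the symmetry of $\theta_\e$, the $u_N$-part of $\intt\rho_N(F_N)_\e\cdot w_N\,\dx$ combines with the kinetic energy to give $-\calD_{{\rm drag},N}$, exactly as in Proposition \ref{prop:reg-energy}. The remainder $\kappa\intt\rho_N\mathfrak v_N\cdot(F_N)_\e\,\dx$ is treated as in Lemma \ref{lem:kin-rem}: Young's inequality bounds it by $\frac\kappa2\intt\nabla p_\tau\cdot\mathfrak v_N\,\dx+C\kappa\intt Q_{\zeta,\tau}|(F_N)_\e|^2\,\dx$.

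I do not absorb the last term into $\frac12\calD_{{\rm drag},N}$, since $\kappa$ is fixed. Instead I use $|F_N|^2\le2\rho_{f_N}\intr(|v|^2+|u^\e_{N,\e}|^2)f_N\,\dv$ and $\|u^\e_{N,\e}\|_{L^\infty}\le C_\e\|\rho_Nu_N\|_{L^1}$. Together with $\intt Q_{\zeta,\tau}(\rho_N)\,\dx\lesssim 1+\intt\rho_N^\gamma\,\dx$, this bounds the term by $C_{\e,\zeta,\tau}(1+\text{energy})^2$.

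Combined with the $L^\infty_t$ bound on the free energy, which follows from the analogue of \eqref{eq:reg-energy} obtained by the same testing, Gronwall's inequality on $[0,T]$ then yields \eqref{eq:aux-kappa-energy} with a constant $C_{\e,\zeta,\tau,T}$. The factor $\frac12$ in front of the drag dissipation gives room for a crude split if needed.
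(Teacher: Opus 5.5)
Your Steps 1 and 2 are exactly the paper's computation: you test \eqref{eq:aux-w} with $w_N$, add $\kappa(1-\kappa)$ times the drift identity obtained from the continuity equation, and complete the squares using the symmetry of $\nabla\mathfrak v_N$. That bookkeeping does close. The gap is in Step 3.

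After your Young splitting against $\frac\kappa2\intt\nabla p_\tau(\rho_N)\cdot\mathfrak v_N\,\dx$, you bound the leftover $C\kappa\intt Q_{\zeta,\tau}(\rho_N)|(F_N)_\e|^2\,\dx$ by $C(1+\text{energy})^2$. At the Galerkin level the only energy you control is the augmented one in braces in \eqref{eq:aux-kappa-energy}. A differential inequality $y'\le C(1+y)^2$ gives a bound only up to a time of order $(C(1+y(0)))^{-1}$, not on every $[0,T]$. A bound on every $[0,T]$ is what the lemma asserts, and it is what its use as a continuation criterion requires.

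Your proposed rescue, an a priori $L^\infty_t$ bound on the physical free energy from an analogue of \eqref{eq:reg-energy}, is not available. That identity comes from testing the momentum equation with $u_N=w_N-\kappa\mathfrak v_N$, and $\mathfrak v_N=\nabla\phi_\zeta(\rho_N)\notin X_N$. This is exactly the inadmissible test function you said you would avoid. Testing with $w_N$ and using the drift identity only controls linear combinations of $\frac12\intt\rho_N|w_N|^2\,\dx$ and $\intt\rho_N|\mathfrak v_N|^2\,\dx$. The cross term $\intt\rho_Nw_N\cdot\mathfrak v_N\,\dx$, needed to isolate $\frac12\intt\rho_N|u_N|^2\,\dx$, is inaccessible.

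The paper closes the estimate by splitting the remainder the other way: it puts $\mathfrak v_N$ on the energy side rather than the BD-dissipation side. By Cauchy--Schwarz,
$|\calR_{{\rm kin},N}^{\kappa}|\le\kappa\bigl(\intt\rho_N|\mathfrak v_N|^2\,\dx\bigr)^{1/2}\bigl(\intt\rho_N|(F_N)_\e|^2\,\dx\bigr)^{1/2}$.
Set $E_{f_N}:=\intr|v-u_{N,\e}^\e|^2f_N\,\dv$. From $|(F_N)_\e|^2\le(\rho_{f_N})_\e(E_{f_N})_\e$, the symmetry of $\theta_\e$ and the unit kinetic mass, one gets $\intt\rho_N|(F_N)_\e|^2\,\dx\le\|\theta_\e\|_{L^\infty}\calD_{{\rm drag},N}$.

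Hence $|\calR_{{\rm kin},N}^{\kappa}|\le\frac12\calD_{{\rm drag},N}+C_\e\kappa^2\intt\rho_N|\mathfrak v_N|^2\,\dx$. The last term is linear in the augmented energy, which contains $\frac{\kappa(1-\kappa)}2\intt\rho_N|\mathfrak v_N|^2\,\dx$, so Gronwall closes on any $[0,T]$. Your objection that nothing can be absorbed into $\frac12\calD_{{\rm drag},N}$ because $\kappa$ is fixed applies only to your splitting. Here the large constant multiplies an energy term, not a dissipation.

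Your own route can also be repaired. For $\gamma=\alpha+1$ one has $Q_{\zeta,\tau}(\rho)\le C(1+\zeta\rho^{1-\alpha})$ with $1-\alpha<1$. So $\intt Q_{\zeta,\tau}(\rho_N)\,\dx\le C$ by mass conservation alone, and your bound becomes linear in the augmented energy.
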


We next state the existence of the auxiliary Galerkin solutions.

\begin{proposition} 
For every fixed $\e,\zeta,\tau>0$, every fixed $\kappa\in(0,\frac12)$, and every $N\in\N$, the auxiliary system \eqref{eq:aux-rho}--\eqref{eq:aux-w} admits a smooth solution on $[0,T]$. Moreover, $f_N\ge0$, $\rho_N>0$, the masses are conserved, and the auxiliary estimate \eqref{eq:aux-kappa-energy} holds.
\end{proposition}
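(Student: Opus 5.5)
The plan is a standard local-existence-plus-a-priori-bound scheme: Schauder fixed point on a short interval for the finite-dimensional unknown $w_N$, then continuation to $[0,T]$ using the estimate of Lemma \ref{lem:aux-kappa-entropy}. Fix a short time $T_*>0$ and a ball $\mathcal B$ in $C([0,T_*];X_N)$; since $X_N$ is finite dimensional, all norms on it are equivalent, so any $\bar w\in\mathcal B$ is smooth in $x$ with all derivatives bounded by $R$.

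For given $\bar w\in\mathcal B$, we solve the three equations in turn.

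First, the quasilinear parabolic density equation \eqref{eq:aux-rho} with velocity $\bar w$. Since $\nu_\zeta'\ge\zeta$, the equation is uniformly parabolic as long as $\rho$ stays in a compact subset of $(0,\infty)$. The maximum principle applied to $\pa_t\rho+\bar w\cdot\nabla\rho+\rho\nabla\cdot\bar w=2\kappa\Delta\nu_\zeta(\rho)$ gives
\[
\min\rho_0\, e^{-tR}\le\rho\le\max\rho_0\, e^{tR}.
\]
Standard quasilinear parabolic theory then yields a unique smooth positive solution $\rho$, with $C^k$ bounds depending only on $R,T_*,\zeta$ and the smooth data; mass is conserved by the divergence form.

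Second, set $\mathfrak v=\nabla\phi_\zeta(\rho)$ and $u=\bar w-\kappa\mathfrak v$, both smooth. Since $(\rho)_\e\ge c_\e$, the linear kinetic equation \eqref{eq:aux-kinetic} has smooth, bounded, $v$-linear coefficients. We solve it by characteristics, which gives $f\ge0$, conservation of mass, and propagation of moments. This yields $F$ smooth in $x$ with bounds depending on $\int|v|^2 f_0$.

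Third, solve the Galerkin equation \eqref{eq:aux-w} for $w(t)\in X_N$ with coefficients frozen at $(\rho,\mathfrak v,f)$. This is a linear ODE in $X_N$, with the single exception of the $\epsilon_N$ $p$-Laplacian term, which is locally Lipschitz on $X_N$. The mass matrix $M_\rho(t)_{ij}=\int\rho\,\psi_i\cdot\psi_j$ is uniformly positive definite because $\rho$ is bounded below. Hence the Cauchy--Lipschitz theorem gives a unique $w\in C^1([0,T_*];X_N)$, which lies in $\mathcal B$ for $T_*$ small.

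The map $\bar w\mapsto w$ is continuous, by stability of the parabolic and transport problems in $\bar w$. It is also compact, because $\partial_t w$ is bounded and the Arzel\`a--Ascoli theorem applies in finite dimensions. Schauder's theorem therefore gives a fixed point, i.e.\ a local smooth solution with $f_N\ge0$, $\rho_N>0$ and conserved masses.

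For global extension to $[0,T]$, the solution can be extended as long as $\|w_N\|_{X_N}$ stays bounded and $\rho_N$ stays bounded away from $0$ and $\infty$. The key step is that Lemma \ref{lem:aux-kappa-entropy}, valid for any smooth solution, gives the bound \eqref{eq:aux-kappa-energy} with a constant independent of the existence time. In particular, $\epsilon_N\int_0^T\calG_N\,dt\le C$, and combined with norm equivalence on $X_N$ this bounds $\int_0^T\|w_N\|_{C^2}^2\,dt$. It also bounds $\int\rho_N|w_N|^2$; however, converting this into a bound on $\|w_N\|_{X_N}$ requires a lower bound on $\rho_N$, which is exactly what we must control. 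We close the loop as follows. With $\int_0^T\|\nabla\cdot w_N\|_{L^\infty}dt$ finite, the maximum-principle bound
\[
\frac{d}{dt}\max\rho_N\le\max\rho_N\,\|\nabla\cdot w_N\|_{L^\infty},
\]
and its analogue for $\min\rho_N$, keep $\rho_N$ in a compact subset of $(0,\infty)$ on $[0,T]$. Then $\int\rho_N|w_N|^2$ controls $\|w_N\|_{X_N}$ pointwise in time. This rules out blow-up, so the local solution extends to all of $[0,T]$ and the estimate holds there.

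The main obstacle is this continuation argument, specifically making the maximum-principle bounds on $\rho_N$ quantitative uniformly up to the putative maximal time. This works only because the hyperregularization controls $\nabla w_N$ in $L^2_tL^\infty_x$, independently of lower bounds on $\rho_N$; I would verify carefully that the constant in \eqref{eq:aux-kappa-energy} does not itself depend on the lower bound of $\rho_N$.
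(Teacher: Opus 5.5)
Your proposal follows the same route as the paper: a Galerkin--parabolic Schauder fixed point for local existence, then continuation to $[0,T]$ using the bound \eqref{eq:aux-kappa-energy} of Lemma \ref{lem:aux-kappa-entropy}, whose constant depends only on $\e,\kappa,T$ and the initial energy and not on a lower bound for $\rho_N$. You also make explicit the continuation step that the paper only asserts: $\epsilon_N\int_0^T\calG_N\,\dt$ controls $\int_0^T\|\nabla w_N\|_{L^\infty}^2\,\dt$ (only the gradient, not the mean of $w_N$, but the maximum principle needs only that), which keeps $\rho_N$ in a compact subset of $(0,\infty)$ and so lets $\int\rho_N|w_N|^2$ bound $w_N$ in $X_N$.
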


\begin{proof}
The proof follows a standard Galerkin--parabolic fixed-point argument. For a given smooth $w_N\in C([0,T];X_N)$, the equation \eqref{eq:aux-rho} is strictly parabolic because $\nu_\zeta'(\rho)\ge\zeta>0$. Hence it admits a smooth positive solution on a short time interval, and the total mass is conserved. Once $\rho_N$ is known, the drift $\mathfrak v_N=\nabla\phi_\zeta(\rho_N)$ and the physical velocity $u_N=w_N-\kappa\mathfrak v_N$ are smooth. The kinetic equation \eqref{eq:aux-kinetic} is then a linear transport equation with smooth coefficients and with $(\rho_N)_\e\ge c_\e>0$. It preserves nonnegativity and the total mass of $f_N$.

For fixed $(\rho_N,f_N)$, the equation \eqref{eq:aux-w} is a finite-dimensional ODE for the coefficients of $w_N\in X_N$. The hyperregularization term is continuous and coercive in the finite-dimensional topology. Hence, a Schauder fixed-point argument gives a local-in-time solution of the coupled auxiliary system.

The continuation criterion is provided by Lemma \ref{lem:aux-kappa-entropy}. The estimate controls the Galerkin kinetic energy, the augmented fluid energy, the relevant dissipations, and the auxiliary hyperregularization through $\epsilon_N\calG_N$. Since the density remains strictly positive and smooth on finite time intervals and the kinetic equation has smooth coefficients, no finite-time breakdown of the Galerkin solution can occur. Therefore the local solution extends to the whole interval $[0,T]$.
\end{proof}

We now pass to the Galerkin limit $N \to \infty$.

\begin{proposition} 
Fix $\e,\zeta,\tau>0$. There exists a triple $(f_{\mathbf a},\rho_{\mathbf a},u_{\mathbf a})$, $\mathbf a=(\zeta,\tau)$, satisfying the regularized system \eqref{eq:reg-sys} in the sense of distributions. Moreover, after extracting a subsequence as $N\to\infty$, we have
\begin{align*}
\rho_N &\to \rho_{\mathbf a} && \text{strongly in }L^1((0,T)\times\T^d)\text{ and almost everywhere},\\
\rho_Nu_N &\rightharpoonup \rho_{\mathbf a}u_{\mathbf a} && \text{in }\calD'((0,T)\times\T^d),\\
\rho_Nu_N\otimes u_N &\rightharpoonup \rho_{\mathbf a}u_{\mathbf a}\otimes u_{\mathbf a} && \text{in }\calD'((0,T)\times\T^d),\\
f_N &\stackrel{*}{\rightharpoonup} f_{\mathbf a} && \text{weakly-* in }L^\infty(0,T; \calM_+(\T^d\times\R^d)),
\end{align*}
where $\calM_+$ denotes the space of nonnegative Radon measures.  The limiting triple satisfies the free energy inequality, the modified BD entropy inequality, and the combined estimate of Proposition \ref{prop:comb-est}.
\end{proposition}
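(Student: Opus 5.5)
The plan is to pass to the limit $N\to\infty$ in the auxiliary system \eqref{eq:aux-rho}--\eqref{eq:aux-w} using the $N$-uniform bounds of Lemma \ref{lem:aux-kappa-entropy}. Since $\zeta,\tau,\e$ are fixed, the estimate \eqref{eq:aux-kappa-energy} controls the following quantities uniformly in $N$:
\begin{itemize}
\item $\sqrt{\rho_N}w_N$ and $\sqrt{\rho_N}\mathfrak v_N=2\sqrt{\rho_N}\nabla\phi_\zeta(\rho_N)$ in $L^\infty_tL^2_x$, hence $\sqrt{\rho_N}u_N$ as well;
\item $\rho_N$ in $L^\infty_tL^{10}_x$, from the artificial pressure;
\item $\nabla Y_\zeta(\rho_N)$ in $L^\infty_tL^2_x$;
\item $\sqrt\tau u_N$ in $L^2_{t,x}$;
\item the capillarity dissipation, which gives $\sqrt{\nu_\zeta(\rho_N)}\nabla^2 s_\zeta(\rho_N)$ in $L^2_{t,x}$;
\item $\epsilon_N\int_0^T\calG_N\,\dt\le C$.
\end{itemize}

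\textbf{Step 1 (density).} Since $\nu_\zeta'\ge\zeta$, the bound on $\sqrt{\rho}\nabla\phi_\zeta(\rho)$ together with the capillarity terms yields spatial $H^1$-type control of a power of $\rho_N$ (for instance $\nabla\sqrt{\rho_N}\in L^\infty L^2$, since $\sqrt\rho\,\zeta\nabla\rho/\rho$ is controlled). The continuity equation $\pa_t\rho_N=-\nabla\cdot(\rho_Nu_N)$ bounds $\pa_t\rho_N$ in $L^\infty W^{-1,1}$. Aubin--Lions then gives $\rho_N\to\rho_{\mathbf a}$ strongly in $L^1$ and a.e., and the $L^{10}$ bound upgrades this to strong convergence in $L^p$, $p<10$.

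\textbf{Step 2 (velocity).} Here I would follow the mechanism of Lemma \ref{lem:BD-convec-comp} and Remark \ref{rem:BD-convective-comp}. In the admissible low-dimensional regimes, Proposition \ref{prop:app-density-conseq} (applied to the $\kappa$-entropy, which carries the same singular BD control up to the factor $\kappa(1-\kappa)$) gives pointwise bounds on $\rho_N$ in $d=1$, or all positive and negative moments in $d=2$. These bounds yield $u_N$ bounded in $L^2W^{1,q}$. To identify $\rho_Nu_N\otimes u_N$ I need a.e. convergence of $u_N$, and I would obtain it from time compactness of $\rho_Nw_N$ via the Galerkin equation. The damping gives $u_N\in L^2_{t,x}$, and the viscous, pressure, capillarity and drag terms are bounded in a negative Sobolev norm. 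The hyperregularization term is $O(\epsilon_N^{1/2})$ in a dual norm, by Cauchy--Schwarz with $\epsilon_N\int\calG_N\le C$. The standard Lions-type argument for $\rho_N u_N$ then gives $\sqrt{\rho_N}u_N\to\sqrt{\rho_{\mathbf a}}u_{\mathbf a}$ strongly in $L^2$, and hence convergence of the convective term. The projection $P_N$ is handled by testing with $P_N\psi$ and using its $H^m$-stability. I expect this step to be the main obstacle, specifically the passage to the limit in the capillarity term $\tau\mathfrak K_\zeta(\rho_N)$ and the drift terms $\nu_\zeta\nabla\mathfrak v_N$. For these I would use the Bohm identity \eqref{eq:Bohm} to rewrite them in divergence form with $\sqrt{\nu_\zeta}\nabla^2 s_\zeta$ weakly convergent and $\sqrt{\nu_\zeta(\rho_N)}$ strongly convergent.

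\textbf{Step 3 (kinetic part and coupling).} The velocity moments $\int|v|^2f_N$ are uniformly bounded, so $f_N\stackrel{*}{\rightharpoonup} f_{\mathbf a}$ in $L^\infty_t\calM_+$ with tightness in $v$. Because $(\rho_N)_\e\ge c_\e$ and mollification is smoothing, $(\rho_N)_\e\to(\rho_{\mathbf a})_\e$ and $(\rho_Nu_N)_\e\to(\rho_{\mathbf a}u_{\mathbf a})_\e$ converge uniformly in $x$ and strongly in $L^2_t$. Hence $u^\e_{N,\e}$ converges strongly in $L^2_tC^1_x$, and the product with the weakly converging measure $f_N$ passes to the limit in the kinetic weak formulation. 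Time equicontinuity of $f_N$ in a negative norm comes from the equation and lets the nonlinear force $\rho_N(F_N)_\e$ converge as well.

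\textbf{Step 4 (limit system and inequalities).} Setting $u_{\mathbf a}=\lim(w_N-\kappa\mathfrak v_N)$ recovers \eqref{eq:reg-sys} in the sense of distributions. The free energy inequality, the modified BD inequality and \eqref{eq:comb-est} follow by weak lower semicontinuity of the convex energies and dissipations, the $\epsilon_N$ terms being discarded. The identity relating the $\kappa$-entropy to the physical BD entropy is recovered in the limit, as in \cite{BVY22}.
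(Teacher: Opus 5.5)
Your proposal follows the paper's architecture closely: $N$-uniform bounds from the $\kappa$-entropy of Lemma~\ref{lem:aux-kappa-entropy}, Aubin--Lions for $\rho_N$, the low-dimensional density consequences feeding Lemma~\ref{lem:BD-convec-comp}, weak-$*$ compactness of $f_N$ with velocity tightness, strong convergence of $u^\e_{N,\e}$ from the mollification and $(\rho_N)_\e\ge c_\e$, and lower semicontinuity at the end.

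The real difference is how you identify $\rho_Nu_N\otimes u_N$. The paper uses $\zeta>0$ quantitatively. From $\nu_\zeta\ge\zeta\rho$, and from $\sqrt\rho\nabla\phi_\zeta(\rho)=4\nu_\zeta'(\rho)\nabla\sqrt\rho$ with $\nu_\zeta'\ge\zeta$, it bounds $\nabla(\rho_Nu_N)$ in $L^2(0,T;L^1)$. Aubin--Lions then gives \emph{strong} $L^1$ convergence of the momentum itself. Next, $u_N=m_N/\rho_N\to u_{\mathbf a}$ a.e.\ since $\rho_{\mathbf a}>0$ a.e., and Vitali with the uniform integrability of Remark~\ref{rem:BD-convective-comp} concludes.

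You use instead the Lions--Feireisl pairing. The momentum is compact only in a negative norm, but paired with $u_N\rightharpoonup u_{\mathbf a}$ in $L^2(0,T;W^{1,q})$ it gives $\iint\rho_N|u_N|^2\varphi\to\iint\rho_{\mathbf a}|u_{\mathbf a}|^2\varphi$. Hence $\sqrt{\rho_N}u_N$ converges strongly in $L^2$. This works in the admissible regimes: for $d=2$, $\rho_Nu_N$ is bounded in $L^\infty L^r$ for every $r<2$, and $W^{1,3/2}(\T^2)\Subset L^{r'}$ once $r>\frac65$.

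Your route uses only $\zeta$-independent bounds, so the same argument would also serve in the later $\zeta\to0$ limit, where the paper switches to the $\Phi(\rho)u$ mechanism of \cite{BVY22}. The paper's route avoids the pairing step and gives a.e.\ convergence of $u_N$ directly.

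A few points should be tightened.

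\begin{itemize}
\item \textbf{Projection tail.} Testing with $P_N\psi$ controls only $\pa_tP_N(\rho_Nw_N)$. You still need $(I-P_N)(\rho_Nw_N)\to0$ in some $H^{-s}$. The paper gets this from the $L^\infty(0,T;L^1)$ bound on the momentum and the Fourier tail $\sum_{|\xi|>N}(1+|\xi|^2)^{-s}$.
\item \textbf{Capillarity term.} The paper does not treat this term explicitly at the Galerkin stage. In $d=2$, $\alpha=\frac12$, one has $\lambda_\zeta<0$. So $\calD^{\kappa}_{{\rm cap},N}$ directly controls only $\sqrt{\nu_\zeta}(\nabla^2s_\zeta)^0$ and $\sqrt{\zeta\rho}\,\Delta s_\zeta$, not $\sqrt{\nu_\zeta}\nabla^2s_\zeta$. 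The divergence-form splitting should therefore use $2\nu_\zeta(\nabla^2s_\zeta)^0+a_\zeta\Delta s_\zeta\,\mathbb I$, as in Section~\ref{ssec:tau-limit}.
\item \textbf{Step 4.} The only balance available uniformly in $N$ is the $\kappa$-entropy identity \eqref{eq:app-pre-entropy}. It is exactly $(1-\kappa)$ times the free-energy balance plus $\kappa$ times the modified BD balance. This is because neither $u_N$ nor $\nabla\phi_\zeta(\rho_N)$ lies in $X_N$, so neither is an admissible Galerkin test function. Lower semicontinuity therefore transfers only this combination to the limit. Getting the free-energy and BD inequalities separately needs an argument on the limiting regularized solution; the paper indicates redoing the computation of Proposition~\ref{prop:reg-BD} by approximation. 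Your appeal to \cite{BVY22} should be replaced by that step. The paper is equally terse here, so this is a shared weak point rather than a defect specific to your plan.
\end{itemize}
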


\begin{proof}
We first derive compactness estimates which are uniform with respect to $N$. By Lemma \ref{lem:aux-kappa-entropy}, together with the free energy estimate, we have
\[
\sup_{0\le t\le T}\lt[\inttr |v|^2f_N\,\dx\dv+\intt \rho_N|u_N|^2\,\dx+\intt p_\tau(\rho_N)\,\dx+\tau\intt|\nabla Y_\zeta(\rho_N)|^2\,\dx\rt]\le C_{\e,\zeta,\tau,T}.
\]
Moreover, the continuity equation
\[
\pa_t\rho_N+\nabla\cdot(\rho_Nu_N)=0
\]
gives a uniform bound on $\pa_t\rho_N$ in a negative Sobolev space. Since $\tau>0$ and $\zeta>0$ are fixed, the capillarity bound gives spatial compactness of the density. Thus, by the Aubin--Lions compactness lemma, after extracting a subsequence,
\[
\rho_N\to\rho_{\mathbf a}\quad\text{strongly in }L^1((0,T)\times\T^d)\text{ and almost everywhere}.
\]
The conservation of the kinetic mass and the kinetic energy bound yield
\[
\sup_N\esssup_{0\le t\le T} \inttr (1+|v|^2)f_N\,\dx\dv \le C_{\e,\zeta,\tau,T}.
\]
In particular, $\{f_N\}_N$ is bounded in $L^\infty(0,T;\calM_+(\T^d\times\R^d))$. Hence, by weak-* compactness, there exist a subsequence, not relabeled, and
\[
f_{\mathbf a}\in L^\infty(0,T;\calM_+(\T^d\times\R^d))
\]
such that
\[
f_N\stackrel{*}{\rightharpoonup}f_{\mathbf a} \quad\text{weakly-* in } L^\infty(0,T;\calM_+(\T^d\times\R^d)).
\]
Moreover, the uniform second velocity moment gives tightness in the velocity variable: for every $R>0$,
\[
\sup_N\int_0^T\iint_{\T^d\times\{|v|>R\}} f_N\,\dx\dv\dt \le \frac{C_{\e,\zeta,\tau,T}}{R^2}.
\]
Combining this tightness with the weak-* convergence, we preserve the kinetic
mass in the limit:
\[
\inttr f_{\mathbf a}\,\dx\dv=1 \quad\text{for a.e. }t\in(0,T).
\]
Furthermore, by lower semicontinuity with compact velocity cutoffs,
\[
\esssup_{0\le t\le T} \inttr (1+|v|^2)f_{\mathbf a}\,\dx\dv \le C_{\e,\zeta,\tau,T}.
\]
Indeed, this follows by testing the weak-* convergence with $(1+|v|^2)\chi_R(v)$, where $\chi_R\in C_c^\infty(\R^d)$, $0\le\chi_R\le1$, $\chi_R\uparrow1$, and then letting $R\to\infty$.
Consequently,
\[
f_{\mathbf a}\in L^\infty(0,T;\calP_2(\T^d\times\R^d)).
\]
 
We next observe that the hyperregularization vanishes in the limit. Let
$\psi\in C_c^\infty([0,T)\times\T^d;\R^d)$ and use $P_N\psi$ as a Galerkin test
function. By the $H^m$-stability of $P_N$ and the embedding
$H^{2s}(\T^d)\hookrightarrow W^{1,\infty}(\T^d)$, we have
\[
\|\Delta^s P_N\psi\|_{L^2} + \|\nabla P_N\psi\|_{L^\infty} \le C\|\psi\|_{H^{2s}}.
\]
Since
\[
\epsilon_N\int_0^T\calG_N(t)\,\dt\le C_{\e,\zeta,\tau,T},
\]
we estimate
\begin{align*}
&\lt|\epsilon_N\int_0^T\intt \Delta^s w_N\cdot\Delta^s P_N\psi\,\dx\dt\rt|  +\lt|\epsilon_N\int_0^T\intt
(1+|\nabla w_N|^2)\nabla w_N:\nabla P_N\psi\,\dx\dt\rt| \\
&\quad\le C\epsilon_N^{\frac12} \lt(\epsilon_N\int_0^T\intt|\Delta^s w_N|^2\,\dx\dt\rt)^{\frac12} \|\psi\|_{L^2(0,T;H^{2s})}\cr
&\quad \quad + C\epsilon_N^{\frac12} \lt(\epsilon_N\int_0^T\intt|\nabla w_N|^2\,\dx\dt\rt)^{\frac12} \|\psi\|_{L^2(0,T;H^{2s})} \\
&\quad \quad + C\epsilon_N^{\frac14} \lt(\epsilon_N\int_0^T\intt|\nabla w_N|^4\,\dx\dt\rt)^{3/4} \|\psi\|_{L^4(0,T;H^{2s})} \cr
&\quad \to 0.
\end{align*}
Thus, all hyperregularization terms disappear as $N\to\infty$.

We provide the physical BD compactness needed to identify the convective term. Since $\kappa\in(0,\frac12)$ is fixed, the auxiliary entropy directly controls the physical effective velocity. Indeed, $u_N+\nabla\phi_\zeta(\rho_N)=w_N+(1-\kappa)\mathfrak v_N$, and thus
\[
\intt\rho_N\lt|u_N+\nabla\phi_\zeta(\rho_N)\rt|^2\,\dx \le C_\kappa\lt[\intt\rho_N|w_N|^2\,\dx+\kappa(1-\kappa)\intt\rho_N|\mathfrak v_N|^2\,\dx\rt].
\]
Moreover,
\[
D u_N=D w_N-\kappa\nabla\mathfrak v_N,\quad A u_N=A w_N,\quad \nabla\cdot u_N=\nabla\cdot w_N-\kappa\nabla\cdot\mathfrak v_N.
\]
Hence, Lemma \ref{lem:aux-kappa-entropy} gives
\begin{align}\label{eq:aux-physical-BD-bound}
\begin{aligned}
&\sup_{0\le t\le T}\intt\rho_N\lt|u_N+\nabla\phi_\zeta(\rho_N)\rt|^2\,\dx +\int_0^T\intt\nu_\zeta(\rho_N)|\nabla u_N|^2\,\dx\dt \\
&\quad +\int_0^T\intt\nabla p_\tau(\rho_N)\cdot\nabla\phi_\zeta(\rho_N)\,\dx\dt \le C_{\e,\zeta,\tau,T}.
\end{aligned}
\end{align}

We now apply Lemma \ref{lem:BD-convec-comp} with $\zeta_n=\zeta$ fixed. The uniform bounds required in the lemma follow from the energy estimate and \eqref{eq:aux-physical-BD-bound}. The density consequences required in the lemma follow from the singular BD control in \eqref{eq:aux-physical-BD-bound}, by the same argument as in Proposition \ref{prop:app-density-conseq}. Hence Lemma \ref{lem:BD-convec-comp} yields
\[
\rho_N\to\rho_{\mathbf a} \quad\text{strongly in } L^p((0,T)\times\T^d) \quad\text{for every finite }p\ge1,
\]
and
\[
\rho_Nu_N \rightharpoonup \rho_{\mathbf a}u_{\mathbf a} \quad\text{in }\calD'((0,T)\times\T^d).
\]
The same lemma also identifies the viscous stress:
\[
\mathbb S_\zeta(\rho_N,u_N)\rightharpoonup\mathbb S_\zeta(\rho_{\mathbf a},u_{\mathbf a}) \quad\text{in }\calD'((0,T)\times\T^d).
\]

It remains to identify the convective term. Set $m_N:=\rho_Nu_N$. Since $\zeta>0$ is fixed, we have
\[
\nu_\zeta(\rho_N)\ge\zeta\rho_N, \quad \nu_\zeta'(\rho_N)\ge\zeta.
\]
Using 
\[ 
\nabla m_N = \rho_N\nabla u_N + 2\sqrt{\rho_N}u_N\otimes\nabla\sqrt{\rho_N}, 
\] 
we estimate the two terms separately. First, by the conservation of mass and the weighted-gradient estimate, 
\begin{align*} 
\|\rho_N\nabla u_N\|_{L^2(0,T;L^1(\T^d))} &\le \|\sqrt{\rho_N}\|_{L^\infty(0,T;L^2(\T^d))} \|\sqrt{\rho_N}\nabla u_N\|_{L^2((0,T)\times\T^d)} \\ 
&\le C_\zeta \|\sqrt{\nu_\zeta(\rho_N)}\nabla u_N\|_{L^2((0,T)\times\T^d)} \\ 
&\le C_{\e,\zeta,\tau,T}. 
\end{align*} 
Here we used $\nu_\zeta(\rho_N)\ge\zeta\rho_N$. Next, since $\sqrt{\rho_N}\nabla\phi_\zeta(\rho_N) = 4\nu_\zeta'(\rho_N)\nabla\sqrt{\rho_N}$, the lower bound $\nu_\zeta'(\rho_N)\ge\zeta$, together with the kinetic-energy and physical effective-velocity estimates, gives 
\begin{align*} 
4\zeta \|\nabla\sqrt{\rho_N}\|_{L^\infty(0,T;L^2(\T^d))} &\le \|\sqrt{\rho_N}\nabla\phi_\zeta(\rho_N)\|_{L^\infty(0,T;L^2(\T^d))} \\ 
&\le \|\sqrt{\rho_N} \lt(u_N+\nabla\phi_\zeta(\rho_N)\rt)\|_{L^\infty(0,T;L^2(\T^d))}  + \|\sqrt{\rho_N}u_N\|_{L^\infty(0,T;L^2(\T^d))} \\ &\le C_{\e,\zeta,\tau,T}. 
\end{align*}
Hence,
\[ 
\sqrt{\rho_N}u_N\otimes\nabla\sqrt{\rho_N} \quad\text{is bounded in } L^\infty(0,T;L^1(\T^d)), 
\] 
and thus also in $L^2(0,T;L^1(\T^d))$. Consequently, 
\[ 
\nabla m_N \quad\text{is bounded in }  L^2(0,T;L^1(\T^d)). 
\]
On the other hand, rewriting the smooth auxiliary system in the physical variables and testing the physical momentum equation against $P_N\psi$, we obtain, for some sufficiently large integer $\ell$,
\[
\pa_t P_Nm_N \quad\text{bounded in }  L^1(0,T;W^{-\ell,1}(\T^d)).
\]
Indeed, the convective, pressure, viscous, capillarity, damping, and averaged-force terms are uniformly bounded in a sufficiently weak negative Sobolev space. The hyperregularization terms are admissible in the same estimate and converge to zero, as shown above.

We now remove the projection from the time-compactness estimate. We take $P_N$ to be the Fourier projection onto the frequencies $|\xi|\le N$. Since $m_N$ is bounded in $L^\infty(0,T;L^1(\T^d))$, for every sufficiently large $s>\frac d2$,
\[
\|(I-P_N)m_N(t)\|_{H^{-s}}^2 = \sum_{|\xi|>N} (1+|\xi|^2)^{-s} |\widehat{m_N}(t,\xi)|^2 \le \|m_N(t)\|_{L^1}^2 \sum_{|\xi|>N} (1+|\xi|^2)^{-s}.
\]
Consequently,
\[
\|(I-P_N)m_N\|_{L^\infty(0,T;H^{-s})} \to0.
\]

Choosing $s>\ell+\frac d2$, the Sobolev embedding $H^s(\T^d)\hookrightarrow W^{\ell,\infty}(\T^d)$ yields, by duality, $W^{-\ell,1}(\T^d)\hookrightarrow H^{-s}(\T^d)$. Hence, for every $h>0$,
\[
\int_0^{T-h} \|m_N(t+h)-m_N(t)\|_{H^{-s}} \,\dt \le h \|\pa_tP_Nm_N\|_{L^1(0,T;H^{-s})} + 2T \|(I-P_N)m_N\|_{L^\infty(0,T;H^{-s})}.
\]
It follows that
\[
\lim_{h\to0} \limsup_{N\to\infty} \int_0^{T-h} \|m_N(t+h)-m_N(t)\|_{H^{-s}} \,\dt =0.
\]

On the other hand, the preceding spatial estimate gives
\[
m_N \quad\text{bounded in } L^2(0,T;W^{1,1}(\T^d)).
\]
Since $W^{1,1}(\T^d) \Subset L^1(\T^d) \hookrightarrow H^{-s}(\T^d)$, the Aubin--Lions compactness criterion yields, up to a subsequence,
\[
m_N\to m \quad\text{strongly in } L^1((0,T)\times\T^d).
\]
The distributional convergence of the momentum identifies $m=\rho_{\mathbf a}u_{\mathbf a}$. Since $\rho_N\to\rho_{\mathbf a}$ a.e.  and $\rho_{\mathbf a}>0$ a.e., we conclude that
\[
u_N\to u_{\mathbf a} \quad\text{almost everywhere in } (0,T)\times\T^d.
\]
Thus, by Remark \ref{rem:BD-convective-comp},
\[
\rho_Nu_N\otimes u_N \to \rho_{\mathbf a}u_{\mathbf a}\otimes u_{\mathbf a} \quad\text{strongly in } L^1((0,T)\times\T^d).
\]
In particular,
\[
\rho_Nu_N\otimes u_N \rightharpoonup \rho_{\mathbf a}u_{\mathbf a}\otimes u_{\mathbf a} \quad\text{in }\calD'((0,T)\times\T^d).
\]

It remains to identify the kinetic--fluid coupling. The strong convergence of $\rho_N$ implies, for every finite $q\ge1$ and every $k\ge0$,
\[
(\rho_N)_\e\to(\rho_{\mathbf a})_\e\quad\text{strongly in }L^q(0,T;C^k(\T^d)).
\]
The weak formulation of the momentum equation gives compactness in time for the mollified momenta, and hence
\[
(\rho_Nu_N)_\e\to(\rho_{\mathbf a}u_{\mathbf a})_\e\quad\text{strongly in }L^q(0,T;C^k(\T^d)).
\]
Since $(\rho_N)_\e\ge c_\e>0$, it follows that
\[
u_{N,\e}^\e=\frac{(\rho_Nu_N)_\e}{(\rho_N)_\e}\to\frac{(\rho_{\mathbf a}u_{\mathbf a})_\e}{(\rho_{\mathbf a})_\e}=:u_{\mathbf a,\e}^\e
\quad\text{strongly in }L^q(0,T;C^k(\T^d)).
\]
 Using this convergence, the weak-* convergence of $f_N$ in $L^\infty(0,T;\calM_+(\T^d\times\R^d))$, and the velocity tightness from the kinetic energy, we identify the kinetic--fluid force.  Let $\psi\in C_c^\infty((0,T)\times\T^d;\R^d)$. Then
\[
\int_0^T\intt \psi\cdot F_N\,\dx\dt = \int_0^T\inttr \psi(t,x)\cdot\lt(v-u_{N,\e}^\e(t,x)\rt)f_N\,\dx\dv\dt .
\]
Let $\chi_R\in C_c^\infty(\R^d)$ be such that ${\bf 1}_{B(0,R)}\le\chi_R\le{\bf 1}_{B(0,2R)}$. We decompose
\begin{align*}
\int_0^T\inttr \psi\cdot\lt(v-u_{N,\e}^\e\rt)f_N\,\dx\dv\dt &= \int_0^T\inttr \chi_R(v)\psi\cdot\lt(v-u_{N,\e}^\e\rt)f_N\,\dx\dv\dt \\
&\quad + \int_0^T\inttr (1-\chi_R(v))\psi\cdot\lt(v-u_{N,\e}^\e\rt)f_N\,\dx\dv\dt .
\end{align*}
For fixed $R>0$,  
\[
\chi_R(v)\psi(t,x)\cdot\lt(v-u_{N,\e}^\e(t,x)\rt) \to \chi_R(v)\psi(t,x)\cdot\lt(v-u_{\mathbf a,\e}^\e(t,x)\rt) \quad \text{strongly in } L^1(0,T;C_0(\T^d\times\R^d))
\]
since
\[
u_{N,\e}^\e\to u_{\mathbf a,\e}^\e \quad\text{strongly in }L^q(0,T;C^k(\T^d))
\]
for every finite $q\ge1$ and every $k\ge0$. Thus, by the weak-* convergence
of $f_N$ as nonnegative measures,
\[
\int_0^T\inttr
\chi_R\psi\cdot\lt(v-u_{N,\e}^\e\rt)f_N\,\dx\dv\dt
\to
\int_0^T\inttr
\chi_R\psi\cdot\lt(v-u_{\mathbf a,\e}^\e\rt)f_{\mathbf a}\,\dx\dv\dt .
\]

It remains to control the velocity tail. By the uniform second moment bound,
\[
\int_0^T\iint_{\T^d\times\{|v|>R\}} |v|f_N\,\dx\dv\dt + \lt\| \iint_{\T^d\times\{|v|>R\}} f_N\,\dx\dv \rt\|_{L^2(0,T)} \le \frac{C_{\e,\zeta,\tau,T}}{R}.
\]
Using also the uniform bound of $u_{N,\e}^\e$ in
$L^2(0,T;L^\infty(\T^d))$, we obtain
\begin{align*}
&\lt| \int_0^T\inttr (1-\chi_R)\psi\cdot\lt(v-u_{N,\e}^\e\rt)f_N\,\dx\dv\dt\rt| \\
&\quad\le \|\psi\|_{L^\infty} \int_0^T\iint_{\T^d\times\{|v|>R\}} |v|f_N\,\dx\dv\dt   +\|\psi\|_{L^\infty} \|u_{N,\e}^\e\|_{L^2(0,T;L^\infty)} \lt\| \iint_{\T^d\times\{|v|>R\}} f_N\,\dx\dv \rt\|_{L^2(0,T)} \\
&\quad\le \frac{C_{\e,\zeta,\tau,T,\psi}}{R}.
\end{align*}
Consequently,
\[
\limsup_{N\to\infty} \lt| \int_0^T\inttr (1-\chi_R)\psi\cdot\lt(v-u_{N,\e}^\e\rt)f_N\,\dx\dv\dt \rt| \le \frac{C_{\e,\zeta,\tau,T,\psi}}{R}.
\]
The same tail estimate holds for the limiting integrand, using the second-moment bound for $f_{\mathbf a}$ and the bound $u_{\mathbf a,\e}^\e\in L^2(0,T;L^\infty(\T^d))$. Letting first $N\to\infty$ and then $R\to\infty$, we obtain
\[
F_N\rightharpoonup F_{\mathbf a}:=\intr (v-u_{\mathbf a,\e}^\e)f_{\mathbf a}\,\dv \quad\text{in }\calD'((0,T)\times\T^d).
\]
Here the integral defining $F_{\mathbf a}$ is understood in the measure sense, according to the density-like notation convention introduced above.
 
Moreover, the drag estimate gives
\[
\|(F_N)_\e\|_{L^2(0,T;W^{k,\infty}(\T^d))}\le C_{\e,k},
\]
and thus
\[
(F_N)_\e\rightharpoonup(F_{\mathbf a})_\e\quad\text{weakly in }L^2(0,T;W^{k,\infty}(\T^d)).
\]
Combining this weak convergence with the strong convergence of $\rho_N$, we get
\[
\rho_N(F_N)_\e\rightharpoonup\rho_{\mathbf a}(F_{\mathbf a})_\e
\quad\text{in }\calD'((0,T)\times\T^d).
\]

We can now pass to the limit in the weak formulation of the auxiliary system. The continuity equation passes to the limiting continuity equation. The kinetic equation passes to the regularized kinetic equation with the limiting averaged velocity $u_{\mathbf a,\e}^\e$. In the momentum equation, the convective term, the stress term, the coupling term, and the vanishing hyperregularization term have been identified above. Since the augmented formulation is equivalent to the physical formulation through $w_N=u_N+\kappa\nabla\phi_\zeta(\rho_N)$, the limit satisfies the regularized momentum equation in \eqref{eq:reg-sys}. Thus $(f_{\mathbf a},\rho_{\mathbf a},u_{\mathbf a})$ satisfies the regularized system \eqref{eq:reg-sys} in the sense of distributions.

Finally, the free energy estimate follows by the usual lower semicontinuity argument. The auxiliary $\kappa$-entropy provides the compactness needed to identify the limiting regularized system. Once the limiting triple has been identified as a solution of \eqref{eq:reg-sys}, the physical BD estimate is obtained from the BD calculation of Proposition \ref{prop:reg-BD} by the standard approximation/lower-semicontinuity argument. Consequently, the regularized solution satisfies the free energy inequality, the modified BD entropy inequality, and the combined estimate of Proposition \ref{prop:comb-est}.
\end{proof}

\begin{remark} 
For each fixed $\kappa\in(0,\frac12)$, the augmented formulation is equivalent, at the smooth level, to the physical regularized system written in the variables $(\rho,u)$. Thus, after the Galerkin limit is taken, the limiting triple satisfies the exact $(\zeta,\tau)$-regularized system. The parameters $\zeta$ and $\tau$ remain present at this stage and are removed only later.
\end{remark}

%
%
%
%
%
%
%
%
%
%
%

\subsection{Passage to the limit  $\tau\to0$ at fixed $\zeta$}\label{ssec:tau-limit}

We now pass to the limit in the approximation terms governed by $\tau$, namely the artificial pressure, capillarity, and linear damping terms, while keeping $\zeta>0$ fixed. Let $\tau_n\to0$, and denote the corresponding regularized solutions by $(f_n,\rho_n,u_n)$. For simplicity, we write
\[
u_{n,\e}^\e:=\frac{(\rho_nu_n)_\e}{(\rho_n)_\e}, \quad F_n:=\intr (v-u_{n,\e}^\e)f_n\,\dv.
\]
The regularized system reads
\begin{align}\label{eq:tau-sys}
\begin{aligned}
&\pa_t f_n+v\cdot\nabla_x f_n = (\rho_n)_\e\nabla_v\cdot\lt[(v-u_{n,\e}^\e)f_n\rt], \\
&\pa_t\rho_n+\nabla\cdot(\rho_nu_n)=0, \\
&\pa_t(\rho_nu_n)+\nabla\cdot(\rho_nu_n\otimes u_n) +\nabla(\rho_n^\gamma+\tau_n\rho_n^{10})   = \nabla\cdot\mathbb S_\zeta(\rho_n,u_n) +\rho_n(F_n)_\e +\tau_n\mathfrak K_\zeta(\rho_n) -\tau_nu_n.
\end{aligned}
\end{align}

By Proposition \ref{prop:comb-est}, for every $T>0$,
\begin{align}\label{eq:tau-unif}
\begin{aligned}
& \esssup_{0\le t \le T} \calE_{\eta,n}(t) +\int_0^T\lt[ \calD_{{\rm vis},n}(t) +\calD_{{\rm drag},n}(t) +\calD_{{\rm BD},n}(t) \rt]\dt \\
&\quad +\int_0^T\lt[ \calD_{{\rm rot},n}(t) +\calD_{{\rm cap},n}(t) +\calD_{{\rm damp},n}(t) \rt]\dt \le C_\e,
\end{aligned}
\end{align}
where the constant is independent of $n$, $\tau_n$, and $\zeta$. In estimates where the fixed parameter $\zeta>0$ is used quantitatively, the constants may depend on $\zeta$.

We first show that all terms multiplied by $\tau_n$ vanish in the limit.

\medskip
\noindent
\textit{Linear damping.}
Since
\[
\calD_{{\rm damp},n}(t)=\tau_n\intt |u_n|^2\,\dx,
\]
we have
\[
\|\tau_nu_n\|_{L^2((0,T)\times\T^d)}^2 = \tau_n\lt[\tau_n\int_0^T\intt |u_n|^2\,\dx\dt\rt] \le C_\e\tau_n.
\]
Hence
\bq\label{eq:tau-u}
\tau_nu_n\to0 \quad \text{strongly in }L^2((0,T)\times\T^d).
\eq

\medskip

\noindent
\textit{Artificial pressure.}
The BD pressure dissipation satisfies
\[
\calD_{{\rm BD},n}(t) = 2\intt \lt(\gamma\rho_n^{\gamma-1}+10\tau_n\rho_n^9\rt) \lt(\alpha\rho_n^{\alpha-2}+\frac{\zeta}{\rho_n}\rt) |\nabla\rho_n|^2\,\dx.
\]
In particular, since $\zeta>0$ is fixed,
\[
\tau_n\int_0^T\intt |\nabla\rho_n^5|^2\,\dx\dt \le C_{\e,\zeta}.
\]
The approximate energy bound also gives
\[
\tau_n \esssup_{0\le t \le T}  \intt\rho_n^{10}\,\dx\le C_\e.
\]
Thus
\[
\sqrt{\tau_n}\,\rho_n^5 \quad\text{is bounded in }L^2(0,T;H^1(\T^d)).
\]
Since $d\le2$, the Sobolev embedding $H^1(\T^d)\hookrightarrow L^6(\T^d)$ yields
\bq\label{eq:tau-rho30}
\tau_n\int_0^T\|\rho_n(t)\|_{L^{30}(\T^d)}^{10}\,\dt \le C_{\e,\zeta,T}.
\eq

Let $\vartheta\in(0,1)$ be defined by
\[
\frac1{10}=\frac{\vartheta}{\gamma}+\frac{1-\vartheta}{30}, \quad \text{equivalently}, \quad  \vartheta=\frac{2\gamma}{30-\gamma}.
\]
Interpolating between $L^\gamma(\T^d)$ and $L^{30}(\T^d)$, using the uniform $L^\infty(0,T;L^\gamma)$-bound and \eqref{eq:tau-rho30}, we obtain
\[
\tau_n\int_0^T\|\rho_n(t)\|_{L^{10}(\T^d)}^{10}\,\dt \le C\tau_n\int_0^T\|\rho_n(t)\|_{L^{30}(\T^d)}^{10(1-\vartheta)}\,\dt \le C_{\e,\zeta,T}\tau_n^\vartheta.
\]
Consequently,
\bq\label{eq:tau-pressure}
\tau_n\rho_n^{10}\to0 \quad \text{strongly in }L^1((0,T)\times\T^d).
\eq
In particular,
\[
\tau_n\nabla\rho_n^{10}\to0 \quad \text{in }\calD'((0,T)\times\T^d).
\]

\medskip

\noindent
\textit{Capillarity.}
Define
\[
B_{\zeta,n}:= 2\nu_\zeta(\rho_n)\nabla^2s_\zeta(\rho_n) +\lambda_\zeta(\rho_n)\Delta s_\zeta(\rho_n)\mathbb I.
\]
By the generalized Bohm identity \eqref{eq:Bohm}, we note $\mathfrak K_\zeta(\rho_n)=2\nabla\cdot B_{\zeta,n}$. We claim that
\[
\tau_n B_{\zeta,n}\to0 \quad \text{strongly in }L^1((0,T)\times\T^d).
\]
When $d=1$, this follows directly from $2\nu_\zeta(\rho)+\lambda_\zeta(\rho)=2\rho\nu_\zeta'(\rho)=2\alpha\rho^\alpha+2\zeta\rho\ge0$ and the capillarity dissipation. When $d\ge2$, we write
\[
\nabla^2s_\zeta(\rho_n) = (\nabla^2s_\zeta(\rho_n))^0 +\frac1d\Delta s_\zeta(\rho_n)\mathbb I.
\]
Then
\[
B_{\zeta,n} = 2\nu_\zeta(\rho_n)(\nabla^2s_\zeta(\rho_n))^0 + a_\zeta(\rho_n)\Delta s_\zeta(\rho_n)\mathbb I,
\]
where
\[ 
a_\zeta(\rho):=\frac2d\nu_\zeta(\rho)+\lambda_\zeta(\rho) = 2\lt(\alpha-1+\frac1d\rt)\rho^\alpha+\frac{2\zeta}{d}\rho.
\]
Under the condition $\alpha\ge1-\frac1d$, we have $a_\zeta(\rho)\ge0$. Moreover,
\[
2\nu_\zeta(\rho_n)|\nabla^2s_\zeta(\rho_n)|^2 +\lambda_\zeta(\rho_n)|\Delta s_\zeta(\rho_n)|^2 = 2\nu_\zeta(\rho_n)|(\nabla^2s_\zeta(\rho_n))^0|^2 +a_\zeta(\rho_n)|\Delta s_\zeta(\rho_n)|^2.
\]
Since $\zeta>0$ is fixed and $0<\zeta\le1$ may be assumed, the energy bound gives
\[
\sup_n \esssup_{0\le t \le T} \intt[\nu_\zeta(\rho_n)+a_\zeta(\rho_n)]\,\dx \le C_{\e,\zeta,T}.
\]
The Cauchy--Schwarz inequality and the capillarity estimate in \eqref{eq:tau-unif} yield
\[
\tau_n\int_0^T\intt |B_{\zeta,n}|\,\dx\dt \le C_{\e,\zeta,T}\sqrt{\tau_n}.
\]
Thus
\bq\label{eq:tau-cap}
\tau_n\mathfrak K_\zeta(\rho_n)\to0 \quad \text{strongly in }L^1(0,T;W^{-1,1}(\T^d)).
\eq

We now pass to the limit in \eqref{eq:tau-sys}. We give the details of the compactness and of the passage to the limit in the weak formulations. The argument follows the BD-compatible compactness mechanism for density-dependent viscosities, but here $\zeta>0$ is fixed and the artificial terms have already been shown to vanish.

By the uniform energy and BD estimates,
\[
\sup_n  \esssup_{0\le t \le T} \intt[\rho_n^\gamma+\rho_n|u_n|^2]\,\dx
+\sup_n\int_0^T\intt|\nabla\rho_n^\alpha|^2\,\dx\dt
\le C_{\e,\zeta,T}.
\]
Moreover,
\[
\sup_n \esssup_{0\le t \le T} 
\intt\rho_n|u_n+\nabla\phi_\zeta(\rho_n)|^2\,\dx
+
\sup_n\int_0^T\intt\nu_\zeta(\rho_n)|\nabla u_n|^2\,\dx\dt
\le C_{\e,\zeta,T}.
\]
The density consequences required in Lemma \ref{lem:BD-convec-comp} are provided by Proposition \ref{prop:app-density-conseq}, uniformly with respect to $\tau_n$. Thus Lemma \ref{lem:BD-convec-comp}, applied first to the density compactness part, yields, up to a subsequence,
\bq\label{eq:tau-rho-conv}
\rho_n \to \rho_\zeta \quad  \text{strongly in }L^1((0,T)\times\T^d) \text{ and almost everywhere}.
\eq

We now prove the strong convergence of the pressure. The almost everywhere convergence of $\rho_n$ gives
\[
\rho_n^\gamma\to\rho_\zeta^\gamma \quad \text{almost everywhere in }(0,T)\times\T^d.
\]
It remains to prove uniform integrability. We show that there exists $\delta>0$, independent of $n$, such that
\[
\sup_n\int_0^T\intt\rho_n^{\gamma+\delta}\,\dx\dt \le C_{\e,\zeta,T}.
\]
Set $z_n:=\rho_n^\alpha$. Then $z_n$ is bounded in  $L^\infty(0,T;L^{\gamma/\alpha}(\T^d))$ and  $\nabla z_n$ is bounded in $L^2((0,T)\times\T^d)$. Choose $\delta>0$ sufficiently small and set
\[
p_0:=\frac{\gamma}{\alpha}, \quad p_\delta:=\frac{\gamma+\delta}{\alpha}.
\]
Since $d\le2$, the Gagliardo--Nirenberg inequality gives, for some $\vartheta_\delta\in(0,1)$,
\[
\|z_n\|_{L^{p_\delta}(\T^d)} \le C\|\nabla z_n\|_{L^2(\T^d)}^{\vartheta_\delta} \|z_n\|_{L^{p_0}(\T^d)}^{1-\vartheta_\delta} +C\|z_n\|_{L^{p_0}(\T^d)}.
\]
Taking $\delta>0$ small enough so that $\vartheta_\delta p_\delta\le2$, raising the preceding inequality to the power $p_\delta$, and integrating in time, we obtain
\[
\sup_n\int_0^T\|z_n(t)\|_{L^{p_\delta}(\T^d)}^{p_\delta}\,\dt \le C_{\e,\zeta,T}.
\]
Equivalently,
\[
\sup_n\int_0^T\intt\rho_n^{\gamma+\delta}\,\dx\dt \le C_{\e,\zeta,T}.
\]
Thus $\{\rho_n^\gamma\}_n$ is uniformly integrable in $L^1((0,T)\times\T^d)$. By Vitali's convergence theorem,
\bq\label{eq:tau-pressure-conv}
\rho_n^\gamma \to \rho_\zeta^\gamma \quad  \text{strongly in }L^1((0,T)\times\T^d).
\eq

We next consider the momentum. From the energy estimate,
\[
\sup_n \esssup_{0\le t \le T} \intt \rho_n|u_n|^2\,\dx \le C_{\e,\zeta,T}.
\]
Thus, after extracting a subsequence,
\[
\sqrt{\rho_n}u_n\rightharpoonup U \quad \text{weakly in }L^2((0,T)\times\T^d).
\]
Since $\rho_n\to\rho_\zeta$ strongly in $L^1((0,T)\times\T^d)$, we have
\[
\sqrt{\rho_n}\to\sqrt{\rho_\zeta} \quad \text{strongly in }L^2((0,T)\times\T^d).
\]
We define
\[
u_\zeta :=
\begin{cases}
\dfrac{U}{\sqrt{\rho_\zeta}}, & \rho_\zeta>0,\\[1mm]
0, & \rho_\zeta=0.
\end{cases}
\]
Then $U=\sqrt{\rho_\zeta}u_\zeta$ a.e. and $\sqrt{\rho_\zeta}u_\zeta\in L^2((0,T)\times\T^d)$. Moreover, for every test function $\Psi\in C_c^\infty((0,T)\times\T^d;\R^d)$,
\[
\int_0^T\intt \rho_nu_n\cdot\Psi\,\dx\dt = \int_0^T\intt (\sqrt{\rho_n}u_n)\cdot(\sqrt{\rho_n}\Psi)\,\dx\dt.
\]
Since
\[
\sqrt{\rho_n}\Psi\to\sqrt{\rho_\zeta}\Psi \quad \text{strongly in }L^2((0,T)\times\T^d),
\]
we obtain
\[
\int_0^T\intt \rho_nu_n\cdot\Psi\,\dx\dt \to \int_0^T\intt U\cdot(\sqrt{\rho_\zeta}\Psi)\,\dx\dt = \int_0^T\intt \rho_\zeta u_\zeta\cdot\Psi\,\dx\dt.
\]
Hence
\bq\label{eq:tau-mom-conv}
\rho_nu_n \rightharpoonup \rho_\zeta u_\zeta \quad \text{in }\calD'((0,T)\times\T^d).
\eq

The remaining conclusions of Lemma \ref{lem:BD-convec-comp}, applied with $\zeta_n=\zeta$ fixed, give
\[
\nu_\zeta(\rho_n)\to\nu_\zeta(\rho_\zeta), \quad \lambda_\zeta(\rho_n)\to\lambda_\zeta(\rho_\zeta) \quad \text{strongly in } L^p((0,T)\times\T^d).
\]
Moreover,

\bq\label{eq:tau-stress-conv}
\mathbb S_\zeta(\rho_n,u_n) \rightharpoonup \mathbb S_\zeta(\rho_\zeta,u_\zeta) \quad\text{in }\calD'((0,T)\times\T^d).
\eq

 It remains to identify the convective term. Simiarly as before, we first observe that 
 \[
\nabla m_n
\quad\text{is bounded in }
L^2(0,T;L^1(\T^d)).
\]

On the other hand, the momentum equation
\eqref{eq:tau-sys}, together with the energy, BD, drag, and force
estimates and \eqref{eq:tau-pressure}--\eqref{eq:tau-u}, gives, for
some sufficiently large integer $M$,
\[
\pa_t m_n
\quad\text{bounded in }
L^1(0,T;W^{-M,1}(\T^d)).
\]
Indeed, the convective and pressure terms are bounded in
$L^\infty(0,T;W^{-1,1}(\T^d))$, the stress and averaged-force terms
are bounded in suitable integrable negative Sobolev spaces, and the
artificial terms are controlled by
\eqref{eq:tau-pressure}, \eqref{eq:tau-cap}, and \eqref{eq:tau-u}.

The Aubin--Lions compactness lemma therefore yields, up to a
subsequence,
\[
m_n\to\rho_\zeta u_\zeta
\quad\text{strongly in }
L^1((0,T)\times\T^d)
\]
and almost everywhere. Here the strong limit is identified by
\eqref{eq:tau-mom-conv}. Since
\[
\rho_n\to\rho_\zeta
\quad\text{almost everywhere}
\]
and $\rho_\zeta>0$ almost everywhere, we conclude that
\[
u_n\to u_\zeta
\quad\text{almost everywhere in }
(0,T)\times\T^d.
\]
Remark \ref{rem:BD-convective-comp} now gives
\bq\label{eq:tau-convective-conv}
\rho_nu_n\otimes u_n
\to
\rho_\zeta u_\zeta\otimes u_\zeta
\quad\text{strongly in }
L^1((0,T)\times\T^d).
\eq
In particular, the convergence also holds in
$\calD'((0,T)\times\T^d)$.

We now provide the compactness of the kinetic component. The conservation of
the kinetic mass and the kinetic energy bound give
\[
\sup_n\esssup_{0\le t\le T} \inttr (1+|v|^2)f_n\,\dx\dv \le C_{\e,T}.
\]
Using the density-like notation for measure-valued kinetic limits introduced in the Galerkin limit, we infer, up to a subsequence,
\bq\label{eq:tau-f-conv}
f_n\stackrel{*}{\rightharpoonup}f_\zeta \quad\text{weakly-* in }L^\infty(0,T;\calM_+(\T^d\times\R^d)).
\eq
Moreover, the uniform second velocity moment gives the velocity tightness
\[
\sup_n\int_0^T\int_{\T^d\times\{|v|>R\}} f_n\,\dx\dv\dt \le \frac{C_{\e,T}}{R^2},
\]
and the limiting kinetic component has mass one and finite second velocity
moment:
\[
\inttr f_\zeta\,\dx\dv=1 \quad\text{for a.e. }t\in(0,T), \quad \esssup_{0\le t\le T} \inttr (1+|v|^2)f_\zeta\,\dx\dv \le C_{\e,T}.
\]
As in the Galerkin limit, these facts follow from the weak-* convergence, the uniform velocity tightness, and the lower semicontinuity argument with compact velocity cutoffs.

Consequently, the velocity moments pass to the limit by the same truncation argument:
\[
\rho_{f_n}:=\intr f_n\,\dv \rightharpoonup \rho_{f_\zeta}:=\intr f_\zeta\,\dv \quad\text{in }\calD'((0,T)\times\T^d),
\]
and
\bq\label{eqLtau-f-m}
\intr v f_n\,\dv \rightharpoonup \intr v f_\zeta\,\dv \quad\text{in }\calD'((0,T)\times\T^d).
\eq
Indeed, the only additional point for the momentum is the tail estimate
\[
\int_0^T\iint_{\T^d\times\{|v|>R\}} |v|f_n\,\dx\dv\dt \le \frac1R\int_0^T\inttr |v|^2f_n\,\dx\dv\dt.
\]
 
We next verify the stability of the mollified kinetic--fluid coupling. The strong convergence of $\rho_n$ and the conservation of mass imply that, for every fixed $k\ge0$ and every finite $q\ge1$,
\bq\label{eq:tau-rhoeps-conv}
(\rho_n)_\e\to(\rho_\zeta)_\e \quad \text{strongly in }L^q(0,T;C^k(\T^d)).
\eq

We also need the strong convergence of the mollified momentum. Let $m_n:=\rho_nu_n$. We claim that, for every fixed $k\ge0$,
\bq\label{eq:tau-momeps-conv}
(m_n)_\e\to(\rho_\zeta u_\zeta)_\e \quad \text{strongly in }L^q(0,T;C^k(\T^d))
\eq
for every finite $q\ge1$. To see this, we use the momentum equation. For a test function $\psi\in C^\infty(\T^d;\R^d)$, the weak form of the $n$-th momentum equation gives
\[
\begin{aligned}
\lal \pa_t m_n,\psi\ral &= \intt \rho_nu_n\otimes u_n:\nabla\psi\,\dx +\intt(\rho_n^\gamma+\tau_n\rho_n^{10})\nabla\cdot\psi\,\dx -\intt \mathbb S_\zeta(\rho_n,u_n):\nabla\psi\,\dx \\
&\quad +\intt \rho_n(F_n)_\e\cdot\psi\,\dx  +\tau_n\intt \mathfrak K_\zeta(\rho_n)\cdot\psi\,\dx -\tau_n\intt u_n\cdot\psi\,\dx.
\end{aligned}
\]
The terms on the right-hand side are bounded uniformly in suitable negative spaces. The convective and pressure terms are bounded in $L^\infty(0,T;W^{-1,1}(\T^d))$. The stress term is bounded in $L^1(0,T;W^{-1,1}(\T^d))$, since
\[
\int_0^T\intt|\mathbb S_\zeta(\rho_n,u_n)|\,\dx\dt \le C\lt(\int_0^T\intt\nu_\zeta(\rho_n)|\nabla u_n|^2\,\dx\dt\rt)^{\frac12} \lt(\int_0^T\intt\nu_\zeta(\rho_n)\,\dx\dt\rt)^{\frac12} \le C_{\e,\zeta,T}.
\]
The force term is bounded in $L^2(0,T;W^{-k,1}(\T^d))$ for every $k\ge0$, as shown below in \eqref{eq:tau-force-bound}. The capillarity and damping terms are bounded in negative spaces by \eqref{eq:tau-cap} and \eqref{eq:tau-u}. Hence, $\pa_t m_n$ is bounded in $L^1(0,T;W^{-M,1}(\T^d))$ for some sufficiently large $M$. After convolution in space, this implies $\pa_t(m_n)_\e$ is bounded in $L^1(0,T;C^k(\T^d))$ for every fixed $k\ge0$. On the other hand, $m_n$ is bounded in $L^\infty(0,T;L^{\frac{2\gamma}{\gamma+1}}(\T^d))$, and therefore $(m_n)_\e$ is bounded in $L^\infty(0,T;C^k(\T^d))$ for every $k$. By the Aubin--Lions lemma after convolution, $(m_n)_\e$ is compact in $L^q(0,T;C^k(\T^d))$ for every finite $q\ge1$. Since $m_n\rightharpoonup\rho_\zeta u_\zeta$ in distributions, the strong limit is necessarily $(\rho_\zeta u_\zeta)_\e$, which proves \eqref{eq:tau-momeps-conv}.

From \eqref{eq:tau-rhoeps-conv}, \eqref{eq:tau-momeps-conv}, and the strict lower bound $(\rho_n)_\e\ge c_\e>0$, we infer
\[
u_{n,\e}^\e = \frac{(\rho_nu_n)_\e}{(\rho_n)_\e} \to \frac{(\rho_\zeta u_\zeta)_\e}{(\rho_\zeta)_\e} =:u_{\zeta,\e}^\e \quad \text{strongly in } L^q(0,T;C^k(\T^d))
\]
for every fixed $k\ge0$ and every finite $q\ge1$.

We now identify the weak limit of the force. Note that
\[
\intt |F_n|\,\dx \le \lt(\intt\rho_{f_n}\,\dx\rt)^{\frac12} \lt(\inttr |v-u_{n,\e}^\e|^2f_n\,\dx\dv\rt)^{\frac12} \le C_\e \calD_{{\rm drag},n}(t)^{\frac12},
\]
and thus, by convolution,
\bq\label{eq:tau-force-bound}
\|(F_n)_\e\|_{L^2(0,T;W^{k,\infty}(\T^d))} \le C_{\e,k}
\eq
for every fixed $k\ge0$. Since
\[
F_n=\intr v f_n\,\dv-u_{n,\e}^\e\intr f_n\,\dv,
\]
the convergence of the first term follows from \eqref{eqLtau-f-m}. For the second term, the strong convergence of $u_{n,\e}^\e$ and the weak convergence of $\rho_{f_n}$ imply $u_{n,\e}^\e\rho_{f_n} \rightharpoonup u_{\zeta,\e}^\e\rho_{f_\zeta}$ in  $\calD'((0,T)\times\T^d)$. Hence,
\[
F_n\rightharpoonup F_\zeta \quad \text{in }\calD'((0,T)\times\T^d), \quad \text{where } F_\zeta:=\intr (v-u_{\zeta,\e}^\e)f_\zeta\,\dv.
\]
Together with \eqref{eq:tau-force-bound}, this gives, for every $k\ge0$, $(F_n)_\e\rightharpoonup(F_\zeta)_\e$ weakly in $L^2(0,T;W^{k,\infty}(\T^d))$. Since $\rho_n\to\rho_\zeta$ strongly in $L^2(0,T;L^1(\T^d))$, we conclude that
\bq\label{eq:tau-force-conv}
\rho_n(F_n)_\e \rightharpoonup \rho_\zeta(F_\zeta)_\e \quad \text{in }\calD'((0,T)\times\T^d).
\eq

We now pass to the limit in the weak formulations. Let $\varphi\in C_c^\infty([0,T)\times\T^d\times\R^d)$. The kinetic equation for $f_n$ reads
\[
\begin{aligned}
&\int_0^T\inttr
f_n(\pa_t\varphi+v\cdot\nabla_x\varphi)\,\dx\dv\dt + \inttr f_{0,n}\varphi(0)\,\dx\dv \\
&\quad = \int_0^T\inttr (\rho_n)_\e(v-u_{n,\e}^\e)f_n\cdot\nabla_v\varphi\,\dx\dv\dt.
\end{aligned}
\]
Since $\varphi$ is compactly supported in $v$, the weak-* convergence \eqref{eq:tau-f-conv}, the strong convergence of $(\rho_n)_\e$, and the strong convergence of $u_{n,\e}^\e$ allow us to pass to the limit in every term. Using also $f_{0,n}\to f_0$ in $W_2$, we obtain the weak formulation of
\[
\pa_t f_\zeta+v\cdot\nabla_x f_\zeta = (\rho_\zeta)_\e\nabla_v\cdot\lt[(v-u_{\zeta,\e}^\e)f_\zeta\rt].
\]

The continuity equation is treated similarly. For every $\psi\in C_c^\infty([0,T)\times\T^d)$,
\[
\int_0^T\intt \lt[\rho_n\pa_t\psi+\rho_nu_n\cdot\nabla\psi\rt]\dx\dt + \intt\rho_{0,n}\psi(0)\,\dx=0.
\]
Using \eqref{eq:tau-rho-conv}, \eqref{eq:tau-mom-conv}, and $\rho_{0,n}\to\rho_0$ in $L^\gamma$, we obtain
\[
\int_0^T\intt \lt[\rho_\zeta\pa_t\psi+\rho_\zeta u_\zeta\cdot\nabla\psi\rt]\dx\dt + \intt\rho_0\psi(0)\,\dx=0.
\]

It remains to pass to the momentum equation. For every $\Psi\in C_c^\infty([0,T)\times\T^d;\R^d)$, the weak formulation of the $n$-th momentum equation is
\begin{align}
\label{eq:tau-momentum-weak}
\begin{aligned}
&\int_0^T\intt \lt[ \rho_nu_n\cdot\pa_t\Psi +\rho_nu_n\otimes u_n:\nabla\Psi +\rho_n^\gamma\nabla\cdot\Psi \rt]\dx\dt +\int_0^T\intt\tau_n\rho_n^{10}\nabla\cdot\Psi\,\dx\dt \\
&\quad  -\int_0^T\intt\mathbb S_\zeta(\rho_n,u_n):\nabla\Psi\,\dx\dt  +\int_0^T\intt\rho_n(F_n)_\e\cdot\Psi\,\dx\dt +\int_0^T\lt\lal\tau_n\mathfrak K_\zeta(\rho_n),\Psi\rt\ral\dt \\ 
&\quad -\int_0^T\intt\tau_nu_n\cdot\Psi\,\dx\dt +\intt \rho_{0,n}u_{0,n}\cdot\Psi(0)\,\dx=0.
\end{aligned}
\end{align}
The first three principal terms converge by \eqref{eq:tau-mom-conv}, \eqref{eq:tau-convective-conv}, and \eqref{eq:tau-pressure-conv}. The stress term converges by \eqref{eq:tau-stress-conv}, and the force term converges by \eqref{eq:tau-force-conv}. The initial momentum term converges by the approximation of the initial data.

We now check that the artificial terms vanish. The artificial pressure has already been estimated in \eqref{eq:tau-pressure}; in weak form,
\[
\lt| \int_0^T\intt\tau_n\rho_n^{10}\nabla\cdot\Psi\,\dx\dt \rt| \le \|\nabla\cdot\Psi\|_{L^\infty} \int_0^T\intt\tau_n\rho_n^{10}\,\dx\dt \to0.
\]
The capillarity term vanishes by \eqref{eq:tau-cap}:
\[
\lt| \int_0^T\lt\lal\tau_n\mathfrak K_\zeta(\rho_n),\Psi\rt\ral\dt \rt| \le \|\Psi\|_{L^\infty(0,T;W^{1,\infty})} \|\tau_n\mathfrak K_\zeta(\rho_n)\|_{L^1(0,T;W^{-1,1})} \to0.
\]
Finally, the damping term vanishes by \eqref{eq:tau-u}:
\[
\lt| \int_0^T\intt\tau_nu_n\cdot\Psi\,\dx\dt \rt| \le \|\Psi\|_{L^2((0,T)\times\T^d)} \|\tau_nu_n\|_{L^2((0,T)\times\T^d)} \to 0.
\]
Passing to the limit in \eqref{eq:tau-momentum-weak}, we obtain
\begin{align*}
&\int_0^T\intt \lt[ \rho_\zeta u_\zeta\cdot\pa_t\Psi +\rho_\zeta u_\zeta\otimes u_\zeta:\nabla\Psi +\rho_\zeta^\gamma\nabla\cdot\Psi \rt]\dx\dt \\
&\quad -\int_0^T\intt \mathbb S_\zeta(\rho_\zeta,u_\zeta):\nabla\Psi\,\dx\dt +\int_0^T\intt \rho_\zeta(F_\zeta)_\e\cdot\Psi\,\dx\dt +\intt\rho_0u_0\cdot\Psi(0)\,\dx=0.
\end{align*}

Hence, $(f_\zeta,\rho_\zeta,u_\zeta)$ satisfies
\begin{align}\label{eq:zeta-sys}
\begin{aligned}
&\pa_t f_\zeta+v\cdot\nabla_x f_\zeta = (\rho_\zeta)_\e\nabla_v\cdot\lt[(v-u_{\zeta,\e}^\e)f_\zeta\rt], \\
&\pa_t\rho_\zeta+\nabla\cdot(\rho_\zeta u_\zeta)=0, \\
&\pa_t(\rho_\zeta u_\zeta) +\nabla\cdot(\rho_\zeta u_\zeta\otimes u_\zeta) +\nabla\rho_\zeta^\gamma = \nabla\cdot\mathbb S_\zeta(\rho_\zeta,u_\zeta) +\rho_\zeta(F_\zeta)_\e,
\end{aligned}
\end{align}
where
\[
u_{\zeta,\e}^\e = \frac{(\rho_\zeta u_\zeta)_\e}{(\rho_\zeta)_\e}.
\]

Finally, we pass to the limit in the integrated form of the free energy inequality and the combined estimate. The kinetic energy, the fluid kinetic energy, the pressure potential, and the BD energy are lower semicontinuous under the convergences above. The dissipations are also lower semicontinuous, since they are convex in the weakly convergent quantities. Hence, for almost every  $t\in[0,T]$,
\begin{align}\label{eq:zeta-combined}
\begin{aligned}
&\calE_{{\rm free},\zeta}(t) +\eta\calE_{{\rm BD},\zeta}(t) +\int_{0}^t \lt[ \calD_{{\rm vis},\zeta}(\tau) +\frac12\calD_{{\rm drag},\zeta}(\tau) +\frac{\eta}{2}\calD_{{\rm BD},\zeta}(\tau) +\eta\calD_{{\rm rot},\zeta}(\tau) \rt]\dd\tau \\
&\quad \le \calE_{{\rm free},\zeta}(0) +\eta\calE_{{\rm BD},\zeta}(0),
\end{aligned}
\end{align}
where
\[
\calE_{{\rm BD},\zeta}(t) := \frac12\inttr |v|^2f_\zeta\,\dx\dv + \intt\lt[ \frac12\rho_\zeta|u_\zeta+\nabla\phi_\zeta(\rho_\zeta)|^2 +\pi(\rho_\zeta) \rt]\dx.
\]
The corresponding uniform bound is independent of $\zeta$.

%
%
%
%
%
%
%
%
%
%
%

\subsection{Uniform estimates and compactness for the limit $\zeta\to0$}\label{ssec:zeta-est}

We now derive estimates that are uniform with respect to the viscosity regularization parameter. Let $\zeta_n\to0$ and denote the corresponding solutions to \eqref{eq:zeta-sys} by $(f_n,\rho_n,u_n)$. For simplicity, we write
\[
\nu_n(\rho):=\rho^\alpha+\zeta_n\rho, \quad \lambda_n(\rho):=2(\alpha-1)\rho^\alpha,
\]
and
\[
u_{n,\e}^\e:=\frac{(\rho_nu_n)_\e}{(\rho_n)_\e}, \quad F_n:=\intr (v-u_{n,\e}^\e)f_n\,\dv.
\]
The system reads
\begin{align}\label{eq:zeta-sys-n}
\begin{aligned}
&\pa_t f_n+v\cdot\nabla_x f_n =(\rho_n)_\e\nabla_v\cdot\lt[(v-u_{n,\e}^\e)f_n\rt],\\
&\pa_t\rho_n+\nabla\cdot(\rho_nu_n)=0,\\
&\pa_t(\rho_nu_n)+\nabla\cdot(\rho_nu_n\otimes u_n)+\nabla\rho_n^\gamma =\nabla\cdot\mathbb S_{\zeta_n}(\rho_n,u_n)+\rho_n(F_n)_\e.
\end{aligned}
\end{align}
Here
\[
\mathbb S_{\zeta_n}(\rho_n,u_n) = 2\nu_n(\rho_n)D u_n+\lambda_n(\rho_n)(\nabla\cdot u_n)\mathbb I.
\]

By \eqref{eq:zeta-combined}, we have, uniformly with respect to $n$,
\[
 \esssup_{0\le t \le T} \lt[ \calE_{{\rm free},n}(t) +\eta\calE_{{\rm BD},n}(t) \rt]   +\int_0^T \lt[ \calD_{{\rm vis},n}(t) +\calD_{{\rm drag},n}(t) +\eta\calD_{{\rm BD},n}(t) +\eta\calD_{{\rm rot},n}(t) \rt]\dt \le C_\e.
\]
In particular,
\bq\label{eq:zeta-basic-bounds}
\sup_n \esssup_{0\le t \le T} \intt\lt[\rho_n^\gamma+\rho_n|u_n|^2\rt]\dx + \sup_n\int_0^T\intt|\nabla\rho_n^\alpha|^2\,\dx\dt \le C_\e,
\eq
and
\[
\sup_n  \esssup_{0\le t \le T}  \intt\rho_n|u_n+\nabla\phi_{\zeta_n}(\rho_n)|^2\,\dx + \sup_n\int_0^T\intt\nu_n(\rho_n)|\nabla u_n|^2\,\dx\dt \le C_\e.
\]
Moreover, using the fluid kinetic part of the free energy together with the effective part of the BD energy, we also have
\bq\label{eq:zeta-effective-gradient}
\sup_n  \esssup_{0\le t \le T} \intt\rho_n|\nabla\phi_{\zeta_n}(\rho_n)|^2\,\dx \le C_\e.
\eq
Indeed,
\[
\rho_n|\nabla\phi_{\zeta_n}(\rho_n)|^2 \le 2\rho_n|u_n+\nabla\phi_{\zeta_n}(\rho_n)|^2 + 2\rho_n|u_n|^2.
\]

The estimates \eqref{eq:zeta-basic-bounds} and \eqref{eq:zeta-effective-gradient} are uniform in $\zeta_n$. Thus, the weighted density consequences verified in Proposition \ref{prop:veri-w-den-c} apply uniformly to the sequence $(\rho_n)_n$. More precisely, in one space dimension, for $0<\alpha\le\frac12$, we have
\bq\label{eq:zeta-1d-density-bounds}
0<\underline\rho\le\rho_n(t,x)\le\overline\rho<\infty
\eq
for almost every $(t,x)\in(0,T)\times\T$, with constants independent of $n$. In the two-dimensional borderline case $d=2$, $\alpha=\frac12$, we have, for every finite $p>0$,
\bq\label{eq:zeta-density-moments-2d}
\sup_n\esssup_{0\le t\le T} \int_{\T^2}\lt(\rho_n(t,x)^p+\rho_n(t,x)^{-p}\rt)\dx \le C_{\e,p},
\eq
and
\bq\label{eq:zeta-A2-bound}
\sup_n\esssup_{0\le t\le T} [\rho_n(t)^\alpha]_{\calA_2} \le C_\e.
\eq
Here, in the case $d=2$, $\alpha=\frac12$, this is the uniform $\calA_2$ bound for the weight $\sqrt{\rho_n}$.

We next provide the velocity-gradient consequences. In one space dimension, the pointwise density bounds \eqref{eq:zeta-1d-density-bounds}, together with the viscous dissipation, give that
\bq\label{eq:zeta-unweighted-gradient-1d}
u_n \text{ is bounded in } L^2(0,T;H^1(\T)).
\eq
In the two-dimensional borderline case, the $\calA_2$ estimate \eqref{eq:zeta-A2-bound} and the weighted Korn inequality for $\calA_2$ weights yield
\[
\int_{\T^2}\sqrt{\rho_n}|\nabla u_n|^2\,\dx \le C_\e \int_{\T^2}\sqrt{\rho_n}|D^0u_n|^2\,\dx
\]
for almost every $t\in(0,T)$. Since the viscous dissipation controls the right-hand side, we obtain
\bq\label{eq:zeta-weighted-velocity-gradient}
\int_0^T\int_{\T^2} \sqrt{\rho_n}|\nabla u_n|^2\,\dx\dt \le C_\e.
\eq
Thus, in every admissible case,
\bq\label{eq:zeta-weighted-gradient-all}
\int_0^T\intt\rho_n^\alpha|\nabla u_n|^2\,\dx\dt \le C_\e.
\eq
Indeed, in one space dimension this follows from \eqref{eq:zeta-1d-density-bounds} and \eqref{eq:zeta-unweighted-gradient-1d}, while in the two-dimensional borderline case it is exactly \eqref{eq:zeta-weighted-velocity-gradient}.

In two space dimensions, \eqref{eq:zeta-density-moments-2d} and \eqref{eq:zeta-weighted-velocity-gradient} also imply, for every $1<q<2$, $\nabla u_n$ is bounded in $L^2(0,T;L^q(\T^2))$. Indeed, H\"older's inequality gives
\[
\int_{\T^2}|\nabla u_n|^q\,\dx \le \lt( \int_{\T^2}\rho_n^{\frac12}|\nabla u_n|^2\,\dx \rt)^{q/2} \lt( \int_{\T^2}\rho_n^{-\frac{q}{2(2-q)}}\,\dx \rt)^{(2-q)/2}.
\]
The last factor is uniformly bounded by \eqref{eq:zeta-density-moments-2d}.

We now provide the compactness consequences. The density compactness part of Lemma \ref{lem:BD-convec-comp}, applied with $\nu_n(\rho)=\rho^\alpha+\zeta_n\rho$, $\phi_n=\phi_{\zeta_n}$, gives, up to a subsequence, $\rho_n \to \rho$ strongly in $L^1((0,T)\times\T^d)$ and almost everywhere. The uniform density consequences above improve this convergence. In one space dimension, the pointwise bounds \eqref{eq:zeta-1d-density-bounds} and dominated convergence give $\rho_n\to\rho$ strongly in $L^q((0,T)\times\T)$ for every finite $q$. In the two-dimensional borderline case, the moment estimates \eqref{eq:zeta-density-moments-2d} imply uniform integrability of $\rho_n^q$ for every finite $q$. Hence, by Vitali's theorem, $\rho_n\to\rho$ strongly in $L^q((0,T)\times\T^2)$ for every finite $q$. In particular,
\bq\label{eq:zeta-pressure-conv}
\rho_n^\gamma \to \rho^\gamma \quad \text{strongly in }L^1((0,T)\times\T^d).
\eq

We next provide the corresponding coefficient convergence. Since $\nu_n(\rho_n)=\rho_n^\alpha+\zeta_n\rho_n$, the strong convergence of $\rho_n$ in every finite $L^p$ gives $\rho_n^\alpha\to\rho^\alpha$ strongly in $L^p((0,T)\times\T^d)$ for every finite $p\ge1$.   Moreover, by the uniform density moment
bounds,
\[
\|\zeta_n\rho_n\|_{L^p((0,T)\times\T^d)} \le \zeta_n \|\rho_n\|_{L^p((0,T)\times\T^d)} \to0.
\] 
Hence, $\nu_n(\rho_n) \to \rho^\alpha$ strongly in $L^p((0,T)\times\T^d)$. Similarly, $\lambda_n(\rho_n) = 2(\alpha-1)\rho_n^\alpha \to 2(\alpha-1)\rho^\alpha$ strongly in $L^p((0,T)\times\T^d)$ for every finite $p\ge1$.

Thus, applying the full conclusion of Lemma \ref{lem:BD-convec-comp}, we obtain a velocity $u$ such that
\[
\rho_nu_n \rightharpoonup \rho u, \quad \mathbb S_{\zeta_n}(\rho_n,u_n) \rightharpoonup \mathbb S(\rho,u) \quad 
\text{in }\calD'((0,T)\times\T^d),
\]
where $\mathbb S(\rho,u) = 2\rho^\alpha D u + 2(\alpha-1)\rho^\alpha(\nabla\cdot u)\mathbb I$.

It remains to identify the convective term. We use the weighted-velocity compactness argument of \cite[Lemmas 2.3--2.4]{BVY22}. Let $\Phi\in C^\infty((0,\infty))$ be strictly positive and satisfy
\[
\Phi(r)+|\Phi'(r)| \le
\begin{cases}
Ce^{-\frac 1r},&0<r\le1,\\
Ce^{-r},&r\ge2.
\end{cases}
\]
Since $\nu_n(r)\ge r^\alpha$ and $\nu_n'(r)\ge\alpha r^{\alpha-1}$, we have
\[
\sup_n \lt\| \frac{\Phi}{\sqrt{\nu_n}} \rt\|_{L^\infty(0,\infty)} + \sup_n \lt\| \frac{\Phi'}{\nu_n'} \rt\|_{L^\infty(0,\infty)} <\infty.
\]
The energy and BD estimates therefore provide the spatial bounds required in the proof of \cite[Lemma 2.3]{BVY22} for $\Phi(\rho_n)u_n$.

The additional averaged-force term is also admissible. Indeed, the drag estimate and the spatial convolution give, for every fixed $k\ge0$,
\[
\|(F_n)_\e\|_{L^2(0,T;W^{k,\infty}(\T^d))} \le C_{\e,k}.
\]
Since $\Phi$ is bounded,
\[
\Phi(\rho_n)(F_n)_\e \quad\text{is bounded in } L^1((0,T)\times\T^d).
\]
Consequently, the weighted-velocity compactness argument gives, after extracting a subsequence,
\[
\Phi(\rho_n)u_n \to \Phi(\rho)u \quad\text{strongly in } L^1((0,T)\times\T^d)
\]
and almost everywhere.

The density estimates imply
\[
\rho>0 \quad\text{almost everywhere}.
\]
Since $\rho_n\to\rho$ almost everywhere and $\Phi(\rho)>0$, we conclude that
\[
u_n\to u \quad\text{almost everywhere in } (0,T)\times\T^d.
\]
Remark \ref{rem:BD-convective-comp} thus gives
\[
\rho_nu_n\otimes u_n \to \rho u\otimes u \quad\text{strongly in } L^1((0,T)\times\T^d).
\]

Altogether, we have
\begin{align}\label{eq:zeta-conv}
\begin{aligned}
\rho_nu_n &\rightharpoonup \rho u &&\text{in }\calD'((0,T)\times\T^d), \\
\rho_nu_n\otimes u_n &\to \rho u\otimes u &&\text{strongly in }L^1((0,T)\times\T^d), \\
\mathbb S_{\zeta_n}(\rho_n,u_n) &\rightharpoonup \mathbb S(\rho,u) &&\text{in }\calD'((0,T)\times\T^d).
\end{aligned}
\end{align}

Finally, we note that the purely artificial part of the viscosity vanishes strongly. From the viscous dissipation,
\[
\zeta_n\int_0^T\intt\rho_n|\nabla u_n|^2\,\dx\dt
\le C_\e.
\]
Using the conservation of mass, we get
\[
\lt\|\zeta_n\rho_n\nabla u_n\rt\|_{L^1((0,T)\times\T^d)} \le \lt( \zeta_n\int_0^T\intt\rho_n|\nabla u_n|^2\,\dx\dt \rt)^{\frac12} \lt( \zeta_n\int_0^T\intt\rho_n\,\dx\dt \rt)^{\frac12} \le C_{\e,T}\sqrt{\zeta_n} \to0.
\]
This is consistent with the stress convergence in \eqref{eq:zeta-conv} and will be used below when passing to the weak formulation.

%
%
%
%
%
%
%
%
%
%
%

\subsection{Passage to the limit in the weak formulations}\label{ssec:zeta-limit}

We now pass to the limit in the kinetic--fluid coupling and in the weak formulations of \eqref{eq:zeta-sys-n}. Throughout this subsection, we use the convergences obtained in Section \ref{ssec:zeta-est}.  

We first treat the mollified density. Since convolution is continuous from $L^1(\T^d)$ to $C^k(\T^d)$ for every fixed $k\ge0$, we have $\|(\rho_n-\rho)_\e\|_{C^k(\T^d)} \le \|\nabla^k\theta_\e\|_{L^\infty(\T^d)} \|\rho_n-\rho\|_{L^1(\T^d)}$. Consequently, for every fixed $k\ge0$ and every finite $q\ge1$,
\bq\label{eq:zeta-rhoeps-conv}
(\rho_n)_\e\to\rho_\e \quad \text{strongly in }L^q(0,T;C^k(\T^d)).
\eq
Moreover, since the total mass is normalized,
\[
(\rho_n)_\e(t,x) = \intt\theta_\e(x-y)\rho_n(t,y)\,\dy \ge c_\e\intt\rho_n(t,y)\,\dy = c_\e>0,
\]
where $c_\e>0$ depends only on the mollifier and on $\e$. The same lower bound holds for $\rho_\e$.

We next prove the strong convergence of the mollified momentum. Set $m_n:=\rho_nu_n$. The momentum equation in \eqref{eq:zeta-sys-n} gives, for every $\psi\in C^\infty(\T^d;\R^d)$,
\[
\lt\lal\pa_t m_n,\psi\rt\ral = \intt \rho_nu_n\otimes u_n:\nabla\psi\,\dx + \intt \rho_n^\gamma\nabla\cdot\psi\,\dx - \intt \mathbb S_{\zeta_n}(\rho_n,u_n):\nabla\psi\,\dx + \intt \rho_n(F_n)_\e\cdot\psi\,\dx.
\]
The first term is uniformly bounded in $L^\infty(0,T;W^{-1,1}(\T^d))$, due to
\[
\sup_n  \esssup_{0 \le t \le T} \intt\rho_n|u_n|^2\,\dx\le C_\e.
\]
The pressure term is uniformly bounded in $L^\infty(0,T;W^{-1,1}(\T^d))$, since
\[
\sup_n \esssup_{0 \le t \le T} \intt\rho_n^\gamma\,\dx\le C_\e.
\]
The stress term is uniformly bounded in $L^1(0,T;W^{-1,1}(\T^d))$. Indeed, using the viscous estimate and the uniform bound
\[
\sup_n  \esssup_{0 \le t \le T} \intt\nu_n(\rho_n)\,\dx\le C_\e,
\]
we have
\[
\int_0^T\intt|\mathbb S_{\zeta_n}(\rho_n,u_n)|\,\dx\dt \le C_\e.
\]
Finally, the force term is bounded in a negative norm by the drag estimate, as shown in \eqref{eq:tau-force-bound}. Hence, for some sufficiently large $M$, $\pa_t m_n$ is bounded in $L^1(0,T;W^{-M,1}(\T^d))$. After convolution in space, this implies that, for every fixed $k\ge0$, $\pa_t(m_n)_\e$ is bounded in $L^1(0,T;C^k(\T^d))$. On the other hand, $m_n$ is bounded in $L^\infty(0,T;L^{\frac{2\gamma}{\gamma+1}}(\T^d))$. Thus $(m_n)_\e$ is bounded in $L^\infty(0,T;C^k(\T^d))$. By the Aubin--Lions compactness lemma after convolution, and by the distributional convergence $m_n=\rho_nu_n\rightharpoonup\rho u$, we obtain, for every fixed $k\ge0$ and every finite $q\ge1$,
\bq\label{eq:zeta-momeps-conv}
(\rho_nu_n)_\e\to(\rho u)_\e \quad \text{strongly in }L^q(0,T;C^k(\T^d)).
\eq
Combining \eqref{eq:zeta-rhoeps-conv} and \eqref{eq:zeta-momeps-conv}, and using the lower bound $(\rho_n)_\e\ge c_\e>0$, we infer
\bq\label{eq:zeta-ueps-conv}
u_{n,\e}^\e = \frac{(\rho_nu_n)_\e}{(\rho_n)_\e} \to \frac{(\rho u)_\e}{\rho_\e} =:u_\e^\e \quad \text{strongly in } L^q(0,T;C^k(\T^d))
\eq
for every fixed $k\ge0$ and every finite $q\ge1$.

We now consider the kinetic component. The conservation of the kinetic mass and the uniform kinetic energy bound give
\[
\sup_n\esssup_{0\le t\le T} \inttr (1+|v|^2)f_n\,\dx\dv \le C_{\e,T}.
\]
Hence, using the density-like notation for measure-valued kinetic limits introduced above, we infer, up to a subsequence, that
\bq\label{eq:zeta-f-conv}
f_n\stackrel{*}{\rightharpoonup} f \quad\text{weakly-* in }L^\infty(0,T;\calM_+(\T^d\times\R^d)).
\eq
Moreover, the uniform second velocity moment gives velocity tightness:
\[
\sup_n\int_0^T\iint_{\T^d\times\{|v|>R\}}f_n\,\dx\dv\dt \le \frac{C_{\e,T}}{R^2}.
\]
As in the previous limiting steps, this tightness implies that the limiting kinetic component has mass one and finite second velocity moment, and that the velocity moments pass to the limit by truncation:
\[
\intr f_n\,\dv \rightharpoonup \intr f\,\dv, \quad \intr v f_n\,\dv \rightharpoonup \intr v f\,\dv \quad\text{in }\calD'((0,T)\times\T^d).
\]
 
Let $\varphi\in C_c^\infty([0,T)\times\T^d\times\R^d)$. The weak formulation of the kinetic equation is
\begin{align*}
&\int_0^T\inttr f_n(\pa_t\varphi+v\cdot\nabla_x\varphi)\,\dx\dv\dt + \inttr f_{0,n}\varphi(0)\,\dx\dv \\
&\quad = \int_0^T\inttr (\rho_n)_\e(v-u_{n,\e}^\e)f_n\cdot\nabla_v\varphi\,\dx\dv\dt.
\end{align*}
Then, by using the same arguments as in Section \ref{ssec:tau-limit}, we obtain that the limiting $f$ satisfies
\bq\label{eq:limit-kinetic}
\pa_t f+v\cdot\nabla_x f = \rho_\e\nabla_v\cdot\lt[(v-u_\e^\e)f\rt], \quad u_\e^\e=\frac{(\rho u)_\e}{\rho_\e}
\eq
in the sense of distributions.

It remains to identify the force term. Again, we observe that 
\[
\intt |F_n|\,\dx = \intt \lt|\intr(v-u_{n,\e}^\e)f_n\,\dv \rt| \dx \le C_\e\calD_{{\rm drag},n}(t)^{\frac12}.
\]
Consequently, for every fixed $k\ge0$, $\|(F_n)_\e\|_{L^2(0,T;W^{k,\infty}(\T^d))} \le C_{\e,k}$.   Using \eqref{eq:zeta-f-conv}, the velocity tightness, and the strong convergence \eqref{eq:zeta-ueps-conv}, the same truncation argument as above identifies the weak limit:
\[
F_n\rightharpoonup F:=\intr(v-u_\e^\e)f\,\dv \quad\text{in }\calD'((0,T)\times\T^d).
\]
Together with the uniform bound on $(F_n)_\e$, this gives, for every fixed $k\ge0$,
\[ 
(F_n)_\e \rightharpoonup F_\e \quad\text{weakly in }L^2(0,T;W^{k,\infty}(\T^d)).
\]

We also need the product with the fluid density. Since the approximate and limiting densities have unit mass, $\|\rho_n(t)-\rho(t)\|_{L^1(\T^d)}\le2$ for a.e. $t\in(0,T)$. Thus, the strong convergence in $L^1((0,T)\times\T^d)$ implies $\rho_n\to\rho$ strongly in $L^2(0,T;L^1(\T^d))$. Combining this with the weak convergence of $(F_n)_\e$, we get
\bq\label{eq:zeta-force-conv}
\rho_n(F_n)_\e \rightharpoonup \rho F_\e \quad \text{in }\calD'((0,T)\times\T^d).
\eq

We now pass to the limit in the fluid weak formulations. The continuity equation follows immediately from $\rho_n\to\rho$ strongly in $L^1((0,T)\times\T^d)$ and $\rho_nu_n\rightharpoonup\rho u$ in $\calD'((0,T)\times\T^d)$. Thus,
\[
\pa_t\rho+\nabla\cdot(\rho u)=0 \quad \text{in }\calD'((0,T)\times\T^d).
\]

For the momentum equation, let $\Psi\in C_c^\infty([0,T)\times\T^d;\R^d)$. The weak formulation of the $n$-th momentum equation is
\begin{align*}
&\int_0^T\intt \lt[ \rho_nu_n\cdot\pa_t\Psi + \rho_nu_n\otimes u_n:\nabla\Psi + \rho_n^\gamma\nabla\cdot\Psi \rt]\dx\dt \\
&\quad - \int_0^T\intt \mathbb S_{\zeta_n}(\rho_n,u_n):\nabla\Psi\,\dx\dt + \int_0^T\intt \rho_n(F_n)_\e\cdot\Psi\,\dx\dt + \intt\rho_{0,n}u_{0,n}\cdot\Psi(0)\,\dx = 0.
\end{align*}
Passing to the limit in the above by \eqref{eq:zeta-pressure-conv}, \eqref{eq:zeta-conv}, and \eqref{eq:zeta-force-conv}, and using the convergence of the initial momentum, we obtain
\begin{align}\label{eq:weak-momentum-final}
\begin{aligned}
&\int_0^T\intt \lt[ \rho u\cdot\pa_t\Psi + \rho u\otimes u:\nabla\Psi + \rho^\gamma\nabla\cdot\Psi \rt]\dx\dt \\
&\quad - \int_0^T\intt \mathbb S(\rho,u):\nabla\Psi\,\dx\dt + \int_0^T\intt \rho F_\e\cdot\Psi\,\dx\dt + \intt\rho_0u_0\cdot\Psi(0)\,\dx = 0.
\end{aligned}
\end{align}
Consequently, the limiting triple $(f,\rho,u)$ satisfies the kinetic equation, the continuity equation, and the momentum equation in the sense of distributions.
 
%
%
%
%
%
%
%
%
%
%
%

\subsection{Completion of the construction of weak solutions}

It remains to pass to the limit in the energy inequalities and to verify that the limiting triple $(f,\rho,u)$ obtained above is a finite-energy BD entropy weak solution in the sense of Definition \ref{def:weak-sol}.

We first pass to the limit in the free energy inequality. For the approximate solutions, we have, for almost every $s\in[0,T]$ and every $t\in[s,T]$,
\[
\calE_{{\rm free},n}(t) + \int_s^t\lt[ \calD_{{\rm vis},n}(\tau) + \calD_{{\rm drag},n}(\tau) \rt]\dd\tau \le \calE_{{\rm free},n}(s).
\]
The convergences established in Sections \ref{ssec:zeta-est} and \ref{ssec:zeta-limit} imply
\[
f_n\stackrel{*}{\rightharpoonup} f \quad\text{weakly-* in }L^\infty(0,T;\calM_+(\T^d\times\R^d)),
\]
with a uniform second velocity moment, and
\[
\rho_n\to\rho \quad\text{strongly in }L^1((0,T)\times\T^d) \quad\text{and almost everywhere}.
\]
Moreover,
\[
\rho_nu_n\rightharpoonup\rho u \quad\text{in }\calD'((0,T)\times\T^d).
\]
By weak lower semicontinuity of nonnegative measures with second moments, we have
\[
\frac12\inttr |v|^2f(t)\,\dx\dv \le \liminf_{n\to\infty} \frac12\inttr |v|^2f_n(t)\,\dx\dv
\]
for almost every $t\in[0,T]$. Similarly, the fluid kinetic energy is lower semicontinuous:
\[
\frac12\intt \rho|u|^2\,\dx \le \liminf_{n\to\infty} \frac12\intt \rho_n|u_n|^2\,\dx.
\]
The pressure potential term converges strongly, since $\rho_n^\gamma\to\rho^\gamma$ strongly in $L^1((0,T)\times\T^d)$. Thus,
\[
\calE_{\rm free}(t) \le \liminf_{n\to\infty}\calE_{{\rm free},n}(t) \quad \text{for a.e. } t,
\]
where
\[
\calE_{\rm free}(t) = \frac12\inttr |v|^2f\,\dx\dv + \intt\lt[ \frac12\rho|u|^2+\pi(\rho) \rt]\dx.
\]

The viscous dissipation is lower semicontinuous as well. Indeed, by by Lemma \ref{lem:BD-convec-comp}, up to a subsequence,
\[
\rho_n^{\frac{\alpha}{2}}\nabla u_n \rightharpoonup \rho^{\frac{\alpha}{2}}\nabla u \quad \text{weakly in }L^2((0,T)\times\T^d).
\]

In one space dimension, we have
\[
\calD_{{\rm vis},n}(t) = 2\int_{\T} \lt( \alpha\rho_n^\alpha+\zeta_n\rho_n \rt) |\pa_xu_n|^2\,\dx.
\]
Since the term involving $\zeta_n$ is nonnegative, weak lower semicontinuity gives
\[
2\alpha \int_s^t\int_{\T} \rho^\alpha|\pa_xu|^2\,\dx\dd\tau \le \liminf_{n\to\infty} \int_s^t \calD_{{\rm vis},n}(\tau)\,\dd\tau.
\]

In the two-dimensional case, where $\alpha=\frac12$, we get
\[
\calD_{{\rm vis},n}(t) = 2\int_{\T^2} \sqrt{\rho_n}|D^0u_n|^2\,\dx + 2\zeta_n \int_{\T^2} \rho_n|D u_n|^2\,\dx.
\]
Moreover,
\[
\rho_n^{\frac14}D^0u_n \rightharpoonup \rho^{\frac14}D^0u \quad\text{weakly in } L^2((0,T)\times\T^2).
\]
Since the second term is nonnegative, it follows that
\[
2
\int_s^t\int_{\T^2} \sqrt{\rho}|D^0u|^2\,\dx\dd\tau \le \liminf_{n\to\infty} \int_s^t \calD_{{\rm vis},n}(\tau)\,\dd\tau.
\]

Consequently, in either admissible case,
\[
\rho^{\frac{\alpha}{2}}\nabla u \in L^2((0,T)\times\T^d),
\]
and
\[
\int_s^t\calD_{\rm vis}(\tau)\,\dd\tau \le \liminf_{n\to\infty} \int_s^t\calD_{{\rm vis},n}(\tau)\,\dd\tau,
\]
where
\[
\calD_{\rm vis}(t) = \intt \lt[ 2\rho^\alpha|D u|^2 + 2(\alpha-1)\rho^\alpha |\nabla\cdot u|^2 \rt]\dx.
\]

\begin{lemma}\label{lem:time-slice-f-energy}
After extracting a subsequence, there exists a set $\calS\subset(0,T)$ of full measure such that
\[
\calE_{{\rm free},n}(s) \to \calE_{\rm free}(s) \quad\text{for every }s\in\calS.
\]
\end{lemma}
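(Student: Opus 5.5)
We need to prove strong convergence of $\calE_{{\rm free},n}(s)$ for a.e. $s$ along a subsequence. The free energy has three parts: the kinetic energy $\frac12\inttr|v|^2 f_n$, the fluid kinetic energy $\frac12\intt\rho_n|u_n|^2$, and the pressure potential $\intt\pi(\rho_n)$. The plan is to upgrade each time-integrated or weak convergence to strong $L^1(0,T)$ convergence of the corresponding time functions. A further subsequence then converges for a.e. $s$.

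\emph{Pressure part.} By \eqref{eq:zeta-pressure-conv}, $\rho_n^\gamma\to\rho^\gamma$ in $L^1((0,T)\times\T^d)$, and $\rho_n\to\rho$ in $L^1$ with unit mass. Since $\pi(r)$ is an affine combination of $r^\gamma$, $r$ and $1$, we get $\int_0^T|\intt(\pi(\rho_n)-\pi(\rho))\,\dx|\,\dt\to0$.

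\emph{Fluid kinetic energy.} We already have $u_n\to u$ a.e. and $\rho_n\to\rho$ a.e. (Section \ref{ssec:zeta-est}). The aim is to show $\rho_n|u_n|^2$ is uniformly integrable in $(t,x)$, which gives $\rho_n|u_n|^2\to\rho|u|^2$ in $L^1((0,T)\times\T^d)$ by Vitali.
\begin{itemize}
\item In $d=1$, the pointwise density bounds and $u_n\in L^4$ (Remark \ref{rem:BD-convective-comp}) give an $L^2$ bound on $\rho_n|u_n|^2$.
\item In $d=2$, $u_n$ is bounded in $L^3((0,T)\times\T^2)$ and $\rho_n$ is bounded in every $L^p$. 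Hence $\rho_n|u_n|^2$ is bounded in $L^{q}$ for some $q>1$ (e.g. $\frac1q=\frac23+\frac1{12}$).
\end{itemize}
Vitali then yields $\int_0^T|\intt(\rho_n|u_n|^2-\rho|u|^2)\,\dx|\,\dt\to0$.

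\emph{Kinetic part: the main obstacle.} We only have weak-* convergence of measures for $f_n$, and the second moment is not continuous under it. Lower semicontinuity, together with the uniform moment bound, gives only $\liminf$ for a.e. $s$; we need equality. My plan is to use the energy structure rather than compactness of $f_n$. Test the kinetic equation with $\frac{|v|^2}{2}$ (truncated, then removed by the uniform second-moment bound) to get
\[
K_n(t):=\frac12\inttr|v|^2f_n(t)=K_n(0)-\int_0^t\inttr(\rho_n)_\e\,v\cdot(v-u^\e_{n,\e})f_n\,\dx\dv\dd\tau .
\]
Rewrite the integrand as $-\calD_{{\rm drag},n}-\intt(\rho_nu_n)_\e\cdot F_n\,\dx$.

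The cross term converges. Indeed, $(\rho_nu_n)_\e$ converges strongly in $L^q(0,T;C^k)$ by \eqref{eq:zeta-momeps-conv}, and $F_n\rightharpoonup F$ with the uniform $L^2(0,T;L^1)$ bound. The remaining difficulty is $\int_0^t\calD_{{\rm drag},n}$, which is only lower semicontinuous. To get around this I would write
\[
\calD_{{\rm drag},n}=\inttr(\rho_n)_\e|v|^2f_n-2\inttr(\rho_n)_\e\,v\cdot u^\e_{n,\e}f_n+\inttr(\rho_n)_\e|u^\e_{n,\e}|^2f_n .
\]
The last two terms converge by the strong convergence of the smooth coefficients. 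So everything reduces to $\int_0^t\inttr(\rho_n)_\e|v|^2f_n$. Since $(\rho_n)_\e\ge c_\e$, this gives a linear Volterra-type relation, schematically $K_n(t)=K_n(0)-\int_0^t a_n(\tau)K_n(\tau)\,\dd\tau+(\text{convergent terms})$, except that the weight $(\rho_n)_\e$ sits inside the integral.

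To decouple the weight, I would first treat the weighted moment $\inttr \chi(x)|v|^2f_n$ for smooth $\chi$. Its time derivative involves third moments only through $v\cdot\nabla\chi|v|^2$, and these are not controlled. So this route may fail. The fallback is to show directly that the family $\{|v|^2f_n\}$ is tight in $v$, uniformly in $n$ and a.e. in $t$. I would do this by propagating a uniform higher moment $\inttr|v|^{2+\delta}f_n$ on $[0,T]$. The drag $(\rho_n)_\e(v-u^\e_{n,\e})$ is damping, and $u^\e_{n,\e}\in L^2(0,T;L^\infty)$, so a Gronwall estimate gives $\sup_t\inttr|v|^{2+\delta}f_n\le C$, provided the approximate initial data in Lemma \ref{lem:init-app} can be taken with that moment. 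If not, I would use a de la Vallée-Poussin superlinear function of $|v|^2$ adapted to $f_0\in\calP_2$ instead of $|v|^{2+\delta}$.

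With uniform integrability of $|v|^2$, weak-* convergence upgrades to $K_n(t)\to K(t)$. I would obtain this a.e. in $t$ by passing to a subsequence converging in $L^1(0,T)$, testing with time-cutoffs. Combining the three parts and taking a common subsequence gives the full-measure set $\calS$.
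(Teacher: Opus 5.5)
Your treatment of the pressure potential and of the fluid kinetic energy is fine. It matches the paper, whose strong $L^1$ convergence of $\rho_n|u_n|^2$ comes from the convective-tensor convergence of Remark~\ref{rem:BD-convective-comp}, resting on the same Vitali argument.

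For the kinetic tail, propagating a de la Vall\'ee-Poussin moment $\inttr\Psi(|v|^2)f_n$ is a legitimate substitute for the paper's characteristic bound $|V_n(t)|\le|v|+C_{\e,T}$. Write $a_n:=(\rho_n)_\e$ and $b_n:=(\rho_nu_n)_\e$. Then $u^\e_{n,\e}=b_n/a_n$ is bounded in $L^\infty((0,T)\times\T^d)$ uniformly in $n$, not merely in $L^2(0,T;L^\infty)$. Hence
$\frac{\mathrm d}{\mathrm dt}\inttr\Psi(|v|^2)f_n\le 2\inttr\Psi'(|v|^2)\,a_n\bigl(|u^\e_{n,\e}||v|-|v|^2\bigr)f_n\le C$.
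Moreover $\sup_n\inttr\Psi(|v|^2)f_{0,n}<\infty$ follows from $f_{0,n}\to f_0$ in $W_2$. This gives the same uniform-in-$(n,t)$ tightness of $|v|^2f_n$ as the paper.

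The gap is in your last step. Uniform integrability, weak-$*$ convergence in $L^\infty(0,T;\calM_+)$, and testing with time cutoffs give only $\int_0^T\eta(t)K_n(t)\,\mathrm dt\to\int_0^T\eta(t)K(t)\,\mathrm dt$. That is weak convergence of $K_n$ in $L^1(0,T)$. It yields neither strong $L^1(0,T)$ convergence nor an a.e.\ convergent subsequence; think of $K_n(t)=2+\sin(nt)$. Likewise, weak-$*$ convergence in $L^\infty(0,T;\calM_+)$ does not give $f_n(t)\rightharpoonup f(t)$ at any fixed time.

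The device you (and the paper) use for the fluid part does not transfer here. That device is weak convergence plus convergence of norms in the Hilbert space $L^2((0,T)\times\T^d)$, and $f_n$ converges only as measures. What is missing is time compactness.

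The paper gets it from the kinetic equation. For $\varphi\in C^1_c(\T^d\times\R^d)$, $\bigl|\frac{\mathrm d}{\mathrm dt}\inttr\varphi f_n\bigr|\le C_\varphi$ uniformly in $n$ and $t$. Hence $f_n\to f$ in $C([0,T];\calP(\T^d\times\R^d))$ narrowly, and with the uniform tail bound $K_n(t)\to K(t)$ for every $t$.

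In your framework you can close it even more directly. The quantity $\frac{\mathrm d}{\mathrm dt}K_n=-\inttr a_n|v|^2f_n+\inttr b_n\cdot v\,f_n$ is bounded uniformly in $n$ and $t$, so $(K_n)$ is equi-Lipschitz on $[0,T]$. Arzel\`a--Ascoli, together with your weak identification of the limit, then gives $K_n\to K$ uniformly.
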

\begin{proof}
We first consider the fluid kinetic energy. Set
\[
g_n:=\sqrt{\rho_n}u_n, \quad g:=\sqrt{\rho}u.
\]
The energy bound and the strong convergence of the density give
\[
g_n\rightharpoonup g \quad\text{weakly in } L^2((0,T)\times\T^d).
\]
On the other hand, by the convergence of the convective tensors in \eqref{eq:zeta-conv}, for every nonnegative $\eta\in C_c^\infty(0,T)$,
\[
\int_0^T\eta(t)\intt\rho_n|u_n|^2\,\dx\dt \to \int_0^T\eta(t)\intt\rho|u|^2\,\dx\dt.
\]
Consequently,
\[
\sqrt{\eta}\,g_n\to\sqrt{\eta}\,g \quad\text{strongly in } L^2((0,T)\times\T^d).
\]
It follows, after a diagonal extraction, that
\[
\sqrt{\rho_n(s)}u_n(s) \to \sqrt{\rho(s)}u(s) \quad\text{strongly in }L^2(\T^d)
\]
for almost every $s\in(0,T)$.

Moreover, the strong convergence $\rho_n^\gamma\to\rho^\gamma$ in $L^1((0,T)\times\T^d)$ and $\rho_n\to\rho$ in $L^1((0,T)\times\T^d)$ imply
\[
\pi(\rho_n)\to\pi(\rho) \quad\text{strongly in } L^1((0,T)\times\T^d).
\]
Hence, up to a further subsequence,
\[
\intt\pi(\rho_n(s))\,\dx \to \intt\pi(\rho(s))\,\dx
\]
for almost every $s\in(0,T)$.

It remains to treat the kinetic energy. Set
\[
a_n:=(\rho_n)_\e, \quad b_n:=(\rho_nu_n)_\e.
\]
The kinetic equation can be written as
\[
\partial_t f_n + \nabla_{x,v}\cdot \lt[ \bigl(v,-a_nv+b_n\bigr)f_n \rt] =0.
\]
The mass and energy bounds imply
\[
0<c_\e\leq a_n\leq C_\e, \quad \|b_n\|_{L^\infty(0,T;W^{k,\infty}(\T^d))} \leq C_{\e,k}
\]
for every fixed $k\geq0$. Let $(X_n,V_n)$ denote the corresponding characteristic flow. Then
\[
|V_n(t)|\leq |v|+C_{\e,T}.
\]
In particular, for $R>C_{\e,T}$,
\[
|V_n(t;x,v)|>R \quad\Longrightarrow\quad |v|>R-C_{\e,T}.
\]
Since $f_n(t)$ is the push-forward of $f_{0,n}$ under the
characteristic flow, we obtain
\begin{align*}
\iint_{\T^d\times\{|v|>R\}} |v|^2 f_n(t,x,v)\,\dx\dv &\leq 2\iint_{\T^d\times\{|v|>R-C_{\e,T}\}}
|v|^2 f_{0,n}(x,v)\,\dx\dv \\
&\quad + 2C_{\e,T}^2 \iint_{\T^d\times\{|v|>R-C_{\e,T}\}} f_{0,n}(x,v)\,\dx\dv.
\end{align*}
Since $f_{0,n}\to f_0$ in $W_2$, the sequence $\{|v|^2f_{0,n}\}_n$ is uniformly integrable. Consequently,
\bq\label{eq:uniform-kinetic-tail}
\lim_{R\to\infty} \sup_n\sup_{0\leq t\leq T} \iint_{\T^d\times\{|v|>R\}} |v|^2 f_n(t,x,v)\,\dx\dv
=0.
\eq

The kinetic equation also gives time equicontinuity against compactly supported $C^1$ test functions. Indeed, for every $\varphi\in C_c^1(\T^d\times\R^d)$,
\[
\frac{\rd}{\dt}\inttr \varphi(x,v)f_n(t,x,v)\,\dx\dv = \inttr \lt[ v\cdot\nabla_x\varphi + (-a_n v+b_n)\cdot\nabla_v\varphi \rt] f_n\,\dx\dv.
\]
The right-hand side is bounded uniformly in $n$ and $t$. Together
with the uniform moment bounds, this yields, after extracting a
subsequence,
\[
f_n \to f \quad\text{in }  C\lt( [0,T]; \calP(\T^d\times\R^d) \rt)
\]
with respect to the narrow topology.

We now prove the convergence of the second velocity moments. Fix $t\in[0,T]$, and let $\chi_R\in C_c^\infty(\R^d)$ satisfy ${\bf 1}_{B(0,R)}\le\chi_R\le{\bf 1}_{B(0,2R)}$.
Since $v\mapsto\chi_R(v)|v|^2$ is bounded and continuous, the narrow
convergence gives
\[
\inttr \chi_R(v)|v|^2f_n(t,x,v)\,\dx\dv \to \inttr \chi_R(v)|v|^2f(t,x,v)\,\dx\dv.
\]
Moreover, the uniform tail estimate \eqref{eq:uniform-kinetic-tail} is inherited by the limiting measure:
\[
\lim_{R\to\infty} \sup_{0\leq t\leq T} \int_{\T^d\times\{|v|>R\}} |v|^2f(t,x,v)\,\dx\dv =0.
\]
Therefore,
\begin{align*}
& \lt| \inttr |v|^2f_n(t,x,v)\,\dx\dv - \inttr |v|^2f(t,x,v)\,\dx\dv \rt| \\
&\quad\leq \lt| \inttr \chi_R(v)|v|^2 \bigl(f_n(t,x,v)-f(t,x,v)\bigr)\,\dx\dv \rt|
+ \int_{\T^d\times\{|v|>R\}} |v|^2f_n(t,x,v)\,\dx\dv \\
&\quad \quad + \int_{\T^d\times\{|v|>R\}} |v|^2f(t,x,v)\,\dx\dv.
\end{align*}
First letting $n\to\infty$ and then $R\to\infty$, we conclude that
\[
\inttr |v|^2f_n(t,x,v)\,\dx\dv \to \inttr |v|^2f(t,x,v)\,\dx\dv
\]
for every $t\in[0,T]$.
 
Combining the preceding three convergences completes the proof.
\end{proof}

The drag dissipation is also lower semicontinuous. Set
\[
A_n(t,x,v):=(\rho_n)_\e(t,x)|v-u_{n,\e}^\e(t,x)|^2, \quad A(t,x,v):=\rho_\e(t,x)|v-u_\e^\e(t,x)|^2.
\]
Since $(\rho_n)_\e\to\rho_\e$ and
$u_{n,\e}^\e\to u_\e^\e$ strongly in
$L^q(0,T;C^k(\T^d))$ for every finite $q\ge1$ and every fixed $k\ge0$,
we have local convergence of $A_n$ to $A$ in the phase variables, with at most
quadratic growth in $v$.

Let $\chi_R\in C_c^\infty(\R^d)$ be such that ${\bf 1}_{B(0,R)}\le\chi_R\le{\bf 1}_{B(0,2R)}$. For each fixed $R>0$, the functions $\chi_R A_n$ converge strongly to $\chi_R A$ in $L^1(0,T;C_0(\T^d\times\R^d))$. Thus, using the weak-* convergence of $f_n$ and the uniform kinetic mass bound,
\[
\int_s^t\inttr \chi_R A f\,\dx\dv\dd\tau = \lim_{n\to\infty} \int_s^t\inttr \chi_R A_n f_n\,\dx\dv\dd\tau.
\]
Since $0\le\chi_R\le1$, we have
\[
\int_s^t\inttr \chi_R A_n f_n\,\dx\dv\dd\tau \le \int_s^t\calD_{{\rm drag},n}(\tau)\,\dd\tau.
\]
Hence
\[
\int_s^t\inttr \chi_R A f\,\dx\dv\dd\tau \le \liminf_{n\to\infty} \int_s^t\calD_{{\rm drag},n}(\tau)\,\dd\tau.
\]
Letting $R\to\infty$ and using monotone convergence, we obtain
\[
\int_s^t\calD_{\rm drag}(\tau)\,\dd\tau \le \liminf_{n\to\infty} \int_s^t\calD_{{\rm drag},n}(\tau)\,\dd\tau, 
\]
where, in the diffusionless case,
\[
\calD_{\rm drag}(t) = \inttr \rho_\e |v-u_\e^\e|^2 f\,\dx\dv.
\]

Let $\calS_0\subset(0,T)$ be the intersection of the full-measure set provided by Lemma \ref{lem:time-slice-f-energy} with the set of admissible initial times for the approximate free energy inequalities. Then, for every $s\in\calS_0$ and every $t\in[s,T]$, the preceding lower semicontinuity estimates yield
\begin{align*}
\calE_{\rm free}(t) +\int_s^t \lt[ \calD_{\rm vis}(\tau)+\calD_{\rm drag}(\tau) \rt]\dd\tau &\leq \liminf_{n\to\infty} \lt\{ \calE_{{\rm free},n}(t) +\int_s^t \lt[ \calD_{{\rm vis},n}(\tau) +\calD_{{\rm drag},n}(\tau) \rt]\dd\tau \rt\} \\
&\leq \lim_{n\to\infty}\calE_{{\rm free},n}(s) = \calE_{\rm free}(s).
\end{align*}

Thus
\bq\label{eq:limit-energy-two-time}
\calE_{\rm free}(t) + \int_s^t\lt[ \calD_{\rm vis}(\tau) + \calD_{\rm drag}(\tau) \rt]\dd\tau \le \calE_{\rm free}(s)
\eq
for almost every $s\in[0,T]$ and every $t\in[s,T]$.

We next pass to the limit in the BD entropy estimate. Recall that $\phi'(\rho)=2\alpha\rho^{\alpha-2}$. The limiting BD energy is the one introduced in the Introduction. In the present diffusionless case $\sigma=0$, it reads
\[
\calE_{\rm BD}(t) = \frac12\inttr |v|^2f\,\dx\dv + \intt\lt[ \frac12\rho|u+\nabla\phi(\rho)|^2+\pi(\rho) \rt]\dx.
\]

 We next identify the limiting effective velocity. From the almost everywhere convergence of $\rho_n$ and $u_n$, together with the uniform kinetic-energy bound, we have
\[
\sqrt{\rho_n}u_n \rightharpoonup \sqrt{\rho}\,u \quad\text{weakly in } L^2((0,T)\times\T^d).
\]

We now consider the drift part. If $\alpha<\frac12$, then $d=1$,
and
\[
\sqrt{\rho_n}\nabla\phi_{\zeta_n}(\rho_n) = \frac{2\alpha}{\alpha-\frac12} \nabla\rho_n^{\alpha-\frac12} + 4\zeta_n\nabla\sqrt{\rho_n}.
\]
The uniform density bounds and the singular BD estimate imply
\[
\nabla\rho_n^{\alpha-\frac12} \rightharpoonup \nabla\rho^{\alpha-\frac12} \quad\text{weakly in }L^2((0,T)\times\T).
\]
Moreover,
\[
4\zeta_n\nabla\sqrt{\rho_n} = \frac{2\zeta_n}{\alpha} \rho_n^{\frac12-\alpha}\nabla\rho_n^\alpha \to0 \quad\text{strongly in }L^2((0,T)\times\T).
\]
Hence
\[
\sqrt{\rho_n}\nabla\phi_{\zeta_n}(\rho_n) \rightharpoonup \sqrt{\rho}\nabla\phi(\rho) \quad\text{weakly in }L^2((0,T)\times\T).
\]

If $\alpha=\frac12$, then
\[
\sqrt{\rho_n}\nabla\phi_{\zeta_n}(\rho_n) = \nabla\log\rho_n + 4\zeta_n\nabla\sqrt{\rho_n}.
\]
The density consequences imply
\[
\log\rho_n\to\log\rho \quad\text{strongly in every finite }L^p,
\]
while the singular BD estimate gives the weak compactness of $\nabla\log\rho_n$ in $L^2$. Therefore,
\[
\nabla\log\rho_n \rightharpoonup \nabla\log\rho \quad\text{weakly in }L^2.
\]
Furthermore,
\[
4\zeta_n\nabla\sqrt{\rho_n} = 4\zeta_n\nabla\rho_n^{1/2} \to0 \quad\text{strongly in }L^2.
\]
Thus, also in this case,
\[
\sqrt{\rho_n}\nabla\phi_{\zeta_n}(\rho_n) \rightharpoonup \sqrt{\rho}\nabla\phi(\rho) \quad\text{weakly in }L^2.
\]

Consequently,
\[
\sqrt{\rho_n} \lt( u_n+\nabla\phi_{\zeta_n}(\rho_n) \rt) \rightharpoonup \sqrt{\rho} \lt( u+\nabla\phi(\rho) \rt) \quad\text{weakly in } L^2((0,T)\times\T^d).
\]
The weak lower semicontinuity of the $L^2$ norm therefore gives
\[
\calE_{\rm BD}(t) \le \liminf_{n\to\infty} \calE_{{\rm BD},n}(t)
\]
at every time slice at which the corresponding weak convergences hold.

The limiting BD density dissipation is
\[
\calD_{\rm BD}(t) := \intt\nabla\rho^\gamma\cdot\nabla\phi(\rho)\,\dx.
\]
Since $\gamma=\alpha+1$, we have
\[
\calD_{\rm BD}(t) = \frac{2(\alpha+1)}{\alpha}\intt|\nabla\rho^\alpha|^2\,\dx.
\]
This term is lower semicontinuous by the weak convergence of $\nabla\rho_n^\alpha$ in $L^2((0,T)\times\T^d)$. The rotational dissipation is
\[
\calD_{\rm rot}(t) := 2\intt\rho^\alpha|Au|^2\,\dx,
\]
and it is lower semicontinuous by the same weighted weak compactness used for the viscous dissipation.

Passing to the limit in the integrated form of the combined estimate \eqref{eq:zeta-combined}, we obtain
\bq\label{eq:limit-combined-two-time}
\calE_{\rm free}(t) +\eta\calE_{\rm BD}(t) +\int_{0}^t\lt[ \calD_{\rm vis}(\tau) +\frac12\calD_{\rm drag}(\tau) +\frac{\eta}{2}\calD_{\rm BD}(\tau) +\eta\calD_{\rm rot}(\tau) \rt]\dd\tau \le \calE_{\rm free}(0) +\eta\calE_{\rm BD}(0)
\eq
for almost every $t\in[0,T]$.

Consequently,
\[
\rho^\alpha\in L^2(0,T;H^1(\T^d)), \quad \rho^{\frac{\alpha}{2}}\nabla u\in L^2((0,T)\times\T^d),
\quad \text{and} \quad 
\sqrt{\rho}\lt(u+\nabla\phi(\rho)\rt)\in L^\infty(0,T;L^2(\T^d)).
\]
Moreover, the kinetic estimates inherited from the approximation give 
\[
f\in L^\infty(0,T;\calP_2(\T^d\times\R^d)).
\]

We now verify the equations in Definition \ref{def:weak-sol}. The kinetic equation was obtained in \eqref{eq:limit-kinetic}. The continuity equation follows from the strong convergence of the density and the weak convergence of the momentum. The momentum equation was obtained in \eqref{eq:weak-momentum-final}. Hence, $(f,\rho,u)$ satisfies the kinetic equation, the continuity equation, and the momentum equation in the sense of distributions.

The mass constraints are preserved by the limiting procedure. Indeed, the approximate solutions satisfy
\[
\inttr f_n(t,x,v)\,\dx\dv=1, \quad \intt\rho_n(t,x)\,\dx=1
\]
for every $t\ge0$. Passing to the limit gives
\[
\inttr f(t,x,v)\,\dx\dv=1, \quad \intt\rho(t,x)\,\dx=1
\]
for almost every $t\ge0$.

The free energy inequality \eqref{eq:limit-energy-two-time} gives the finite-energy inequality required in Definition \ref{def:weak-sol}. The combined inequality \eqref{eq:limit-combined-two-time} gives the BD entropy condition in the stable form used in Definition \ref{def:weak-sol}. Thus all conditions in Definition \ref{def:weak-sol} are satisfied on $[0,T]$. Since $T>0$ was arbitrary, a diagonal argument yields a global finite-energy BD entropy weak solution on $[0,\infty)$. This proves the existence part of Theorem \ref{thm:weak-exist}  in the case $\sigma=0$.

%
%
%
%
%
%
%
%
%
%
%
\section{Relaxation of entropy weak solutions and the diffusive case}\label{sec:weak-relax-diff}

This section completes the proof of Theorem \ref{thm:weak-exist}. We first prove the exponential relaxation asserted in Theorem \ref{thm:weak-exist} for the diffusionless system $\sigma=0$. We then explain how the construction extends to the diffusive case $\sigma>0$, and  use the additional hypoelliptic regularity to complete the corresponding hypocoercive relaxation argument.

%
%
%
%
%
%
%
%
%
%
%

\subsection{Exponential relaxation of entropy weak solutions}
\label{ssec:weak-relax}

We work throughout this subsection with the diffusionless system $\sigma=0$. The proof is based on the smooth relaxation argument of Section \ref{sec:energy}, combined with the low-dimensional BD density estimates of Section \ref{sec:bd-estimate}. Since the entropy weak solution does not have sufficient temporal regularity to differentiate the compensating functional pointwise in time, the argument is formulated instead in an integrated form.

We first verify that the density assumptions used in Theorem \ref{thm:main1} are available for the limiting weak solution. By the limiting combined BD entropy inequality \eqref{eq:limit-combined-two-time}, we have
\[
\esssup_{t\ge0} \intt \rho|\nabla\phi(\rho)|^2\,\dx <\infty .
\]
Thus, arguing exactly as in Proposition \ref{prop:veri-w-den-c}, we obtain
\[
\esssup_{t\ge0} \intt\lt(\rho(t,x)^p+\rho(t,x)^{-s}\rt)\dx <\infty
\]
for every finite $p,s>0$ in the admissible low-dimensional cases
\[
\begin{cases}
d=1,\quad 0<\alpha\le\frac12,\quad \gamma=\alpha+1,\\[1mm]
d=2,\quad \alpha=\frac12,\quad \gamma=\frac32.
\end{cases}
\]
Moreover,
\[
\esssup_{t\ge0} [\rho(t)^\alpha]_{\calA_2} <\infty .
\]
Indeed, in the case $d=1$, the BD bound gives uniform upper and lower bounds on $\rho$. In the borderline case $d=2$, $\alpha=\frac12$, the estimate
\[
\esssup_{t\ge0} \|\nabla\log\rho(t)\|_{L^2(\T^2)} <\infty
\]
together with the Moser--Trudinger argument used in Proposition \ref{prop:veri-w-den-c} yields the desired $\calA_2$ bound.

We next introduce the modulated energy at the weak-solution level. Since $\sigma=0$ in this subsection, the centered kinetic functional $\calH^{\bar v}$ is simply the kinetic fluctuation energy. Set
\[
\bar v (t):=\inttr v f(t,x,v)\,\dx\dv, \quad \bar m (t):=\intt\rho(t,x)u(t,x)\,\dx.
\]
These quantities are well defined for almost every $t\ge0$ by the finite kinetic and fluid energies. Moreover, the total momentum is conserved:
\[
\bar v (t)+\bar m (t)=\bar v (0)+\bar m (0)
\]
for almost every $t\ge0$. This follows by testing the weak fluid momentum equation with constant vector fields and the kinetic equation with velocity cutoffs $v\chi_R(v)$, and then passing to the limit $R\to\infty$ using the finite second velocity moment.

Define
\[
\calE(t) := \frac12\inttr |v-\bar v (t)|^2 f\,\dx\dv + \frac12\intt \rho|u-\bar m (t)|^2\,\dx + \intt \pi(\rho)\,\dx + \frac14|\bar v (t)-\bar m (t)|^2 .
\]
Then the free energy inequality implies
\bq\label{eq:weak-E0}
\calE(t) + \int_s^t\calD_0(\tau)\,\dd\tau \le \calE(s)
\eq
for almost every $s\ge0$ and every $t\ge s$, where
\[
\calD_0(t):=\calD_{\rm drag}(t)+\calD_{\rm vis}(t).
\]
Indeed, by the conservation of $\bar v +\bar m $, we have the algebraic identity
\[\begin{aligned}
&\frac12\inttr |v-\bar v |^2 f\,\dx\dv + \frac12\intt \rho|u-\bar m |^2\,\dx + \frac14|\bar v -\bar m |^2 \\
&\quad = \frac12\inttr |v|^2 f\,\dx\dv + \frac12\intt \rho|u|^2\,\dx - \frac14|\bar v +\bar m |^2 .
\end{aligned}\]
Thus \eqref{eq:weak-E0} follows from the free energy inequality.

By the density bounds verified above, the coercivity estimate from Section
\ref{ssec:coer-ex-den} applies for almost every time. Hence
\bq\label{eq:weak-coerc}
\calE(t) \le C_\e\lt[ \calD_0(t)+ \intt \pi_\aux(\rho(t)) \,\dx  \rt],
\eq
where $\pi_\aux(r)=(r^\gamma-1)(r-1)$.

It remains to justify the compensating functional estimate in the weak setting. Let $\Phi(t,\cdot)$ be the unique zero-average solution of
\[
-\Delta\Phi=\rho-1, \quad \intt\Phi\,\dx=0,
\]
and define
\[
\calX(t):= \intt \rho\bigl(u-\bar m (t)\bigr)\cdot\nabla\Phi\,\dx .
\]
This quantity is well defined for almost every $t\ge0$, and the same estimate as in the smooth case gives
\bq\label{eq:weak-X-bound}
|\calX(t)|\le C\calE(t).
\eq

\begin{lemma} 
There exist constants $c_0>0$ and $C_0>0$ such that
\bq\label{eq:weak-comp}
\calX(s)-\calX(t) \ge c_0\int_s^t\intt \pi_\aux(\rho(\tau)) \,\dx \dd\tau - C_0\int_s^t\calD_0(\tau)\,\dd\tau
\eq
for almost every $s\ge0$ and every $t\ge s$.
\end{lemma}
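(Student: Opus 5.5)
The plan is to obtain \eqref{eq:weak-comp} as the time-integrated version of the smooth estimate \eqref{eq:w-comp-est}. Since $\calX$ cannot be differentiated pointwise in time for a weak solution, I would compute $\calX(t)-\calX(s)$ by testing the weak equations with suitable time cut-offs. Concretely, fix Lebesgue points $s<t$ and a cut-off $\chi_k(\tau)$ approximating $\mathbf 1_{[s,t]}$. I would then test the weak momentum equation (Definition \ref{def:weak-sol}(iv)) with $\Psi=\chi_k(\tau)\,(\nabla\Phi)_\delta$, where $(\cdot)_\delta$ denotes a spatial mollification at scale $\delta$ (independent of $\e$) that makes $\Psi$ admissible. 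Subtracting $\bar m\cdot\intt\rho\nabla\Phi\,\dx=0$ handles the centering.

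The time derivative of $\nabla\Phi$ is given by the continuity equation: $-\Delta\pa_t\Phi=-\nabla\cdot(\rho u)$, so $\pa_t\nabla\Phi=-\nabla(-\Delta)^{-1}\nabla\cdot(\rho u)$. This is a zeroth-order Riesz operator applied to $\rho u$. Using the density moment bounds, $\rho u=\rho z+\rho\bar m$ lies in $L^2_{t,x}$ on finite intervals, by the same H\"older bound $\|\rho z\|_{L^2}\le C_M\calD_{\rm vis}^{1/2}$ used for $II$. Hence $\nabla\Phi\in W^{1,1}(s,t;L^2)$, and the product rule for $\intt \rho u\cdot\nabla\Phi_\delta\,\dx$ can be justified by a standard Steklov-average argument.

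After letting $k\to\infty$, the identity for $\calX(s)-\calX(t)$ has the same terms $I_1,\dots,I_5,II$ as in Section \ref{ssec:comp-func}, integrated over $[s,t]$ and still carrying the mollification $\delta$. I would pass $\delta\to0$ term by term.
\begin{itemize}[leftmargin=*]
\item The pressure term gives $\int_s^t\intt(\rho^\gamma-1)(\rho-1)_\delta\,\dx\,\dd\tau\to\int_s^t\intt\pi_\aux(\rho)\,\dx\,\dd\tau$. This uses $\rho\in L^\infty_tL^q_x$ for all $q$.
\item The convective, drag and $II$ terms converge by the same H\"older bounds as in the smooth case, since every factor has a uniform-in-$\delta$ bound.
\item The viscous term needs the weighted Calder\'on--Zygmund bound \eqref{eq:w-cz-g} applied to $\nabla^2\Phi_\delta$. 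I would note that $\sqrt{\rho^\alpha}\,\mathbb S$-type products $\rho^{\alpha/2}\nabla u\in L^2_{t,x}$ and $\rho^{\alpha/2}\nabla^2\Phi\in L^2$ uniformly in $\delta$, the latter because mollification commutes with Riesz transforms. Convergence then follows by dominated convergence, or by weak-strong pairing.
\end{itemize}

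With these limits in hand, the pointwise-in-time estimates from Section \ref{ssec:comp-func} apply for almost every $\tau$, because the density bounds hold almost everywhere. Young's inequality then absorbs each cross term into $\frac18\intt\pi_\aux(\rho)\,\dx$ plus $C\calD_0$. This yields \eqref{eq:weak-comp} with $c_0=1/2$. Here $\calD_0\gtrsim \calD_{\rm vis}+\calI_{vv}^{u_\e^\e}$, since $\sigma=0$ and $\rho_\e$ is bounded above and below.

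The main obstacle I expect is the rigorous justification of the time-derivative identity, in particular the term $\intt\rho z\cdot\pa_t\nabla\Phi\,\dx$. Here $\rho u$ is only $L^2$ in space-time, and the momentum equation supplies $\pa_t(\rho u)$ only in a negative space. The pairing is a duality between $\pa_t(\rho u)\in L^1(W^{-1,1})+\dots$ and $\nabla\Phi$, which is not a priori Lipschitz. My first attempt would be to take $\Psi=(\nabla\Phi_\delta)_\delta$ with double mollification in $x$, together with Steklov averaging in $t$. Both sides are then regular enough, and the commutator terms vanish as $\delta\to0$ by the $L^2_{t,x}$ bounds above. If the stress term resists, I would fall back on the approximating sequence: prove \eqref{eq:weak-comp} for the $\zeta_n$-solutions and pass to the limit using the strong convergences of Section \ref{ssec:zeta-est}, though lower semicontinuity of $\calD_0$ only goes in the favourable direction.
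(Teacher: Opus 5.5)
Your proposal is correct and follows essentially the paper's route. You regularize in time (Steklov averaging and cut-offs), mollify $\nabla\Phi$ in space so it is admissible in the weak momentum equation, remove the regularizations using the uniform density bounds, and then apply the pointwise-in-time estimates of Section~\ref{ssec:comp-func} for a.e.\ $\tau$; your remark that $\rho u\in L^2_{\rm loc}(L^2)$ already gives $\nabla\Phi\in W^{1,2}_{\rm loc}(L^2)$ is just a more explicit version of the paper's Steklov step. One caution, on your fallback only: $\calD_0$ enters \eqref{eq:weak-comp} with a minus sign, so lower semicontinuity works against you there, and along approximations you would have to pass to the limit in the combined inequality for $\calE+\delta\calX$ rather than in \eqref{eq:weak-comp} itself.
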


\begin{proof}
For smooth solutions, \eqref{eq:weak-comp} is exactly the integrated form of the compensating estimate proved in Section \ref{ssec:comp-func}. We justify the same identity for the entropy weak solution by a standard Steklov averaging argument in time, combined with a spatial regularization of the elliptic test function.

For $h>0$, set
\[
g^h(t):=\frac1h\int_t^{t+h}g(\tau)\,\dd\tau,
\]
and let $\Phi^h$ be the zero-average solution of
\[
-\Delta\Phi^h=\rho^h-1 .
\]
After spatial regularization, $\nabla\Phi^h$ is an admissible test function in the Steklov-averaged momentum equation. Combining this identity with the Steklov-averaged continuity equation gives the integrated analogue of the formal relation
\[
-\frac{\dd}{\dt}\calX(t) = \intt \pi_\aux(\rho) \,\dx + \text{lower-order terms}.
\]
The pressure term gives
\[
\int_s^t \intt \pi_\aux(\rho(\tau)) \,\dx \dd\tau .
\]
All remaining terms are estimated as in Section \ref{ssec:comp-func}. The bounds obtained above, the lower bound $\rho_\e\ge c_\e>0$, and the energy inequality ensure that each term is integrable on finite time intervals. More precisely, the lower-order terms are bounded by
\[
\frac{c_0}{2} \int_s^t\intt \pi_\aux(\rho(\tau)) \,\dx \dd\tau + C_0 \int_s^t\calD_0(\tau)\,\dd\tau .
\]
Passing first with the spatial regularization parameter to zero and then letting $h\to0$, we obtain \eqref{eq:weak-comp}.
\end{proof}

We now conclude the proof. Let $\delta>0$ be sufficiently small and define
\[
\calE^\delta(t):=\calE(t)+\delta\calX(t).
\]
By \eqref{eq:weak-X-bound}, we have $\calE^\delta(t)\sim\calE(t)$ for sufficiently small $\delta>0$. Adding \eqref{eq:weak-E0} and $\delta$ times \eqref{eq:weak-comp}, and then choosing $\delta>0$ sufficiently small, we obtain
\[
\calE^\delta(t) + c\int_s^t \lt[ \calD_0(\tau)+\intt \pi_\aux(\rho(\tau)) \,\dx \rt]\dd\tau \le \calE^\delta(s)
\]
for almost every $s\ge0$ and every $t\ge s$. Using the coercivity estimate \eqref{eq:weak-coerc}, we infer that
\[
\calE^\delta(t) + \lambda\int_s^t\calE^\delta(\tau)\,\dd\tau \le \calE^\delta(s)
\]
for some $\lambda>0$. Applying the integral form of Gronwall's inequality gives
\[
\calE(t)\le C e^{-\lambda t}\calE(0)
\]
for almost every $t\ge0$.

Finally, choosing $p>2$ in the density bounds verified above and interpolating between $L^1(\T^d)$ and $L^p(\T^d)$ yields the $L^2$ decay of the density fluctuation. Consequently,
\[
\|v-\bar v (t)\|_{L^2(f(t))} + \|u(t,x)-\bar m (t)\|_{L^2(\rho(t))} + \|\rho(t)-1\|_{L^2} + |\bar v (t)-\bar m (t)| \le C e^{-\frac\lambda2 t}
\]
for almost every $t\ge0$. This proves the relaxation part of Theorem \ref{thm:weak-exist} in the case $\sigma=0$.

%
%
%
%
%
%
%
%
%
%
%

\subsection{Extension to  the  diffusive system $\sigma >0$}\label{ssec:diff-ext}
We have so far carried out the construction and the relaxation argument for the diffusionless system $\sigma=0$. This is the case in which the weakest kinetic compactness naturally gives a measure-valued kinetic component,
\[
f\in L^\infty_{\rm loc}(0,\infty;\calP_2(\T^d\times\R^d)).
\]
We now complete the proof of Theorem \ref{thm:weak-exist} in the diffusive case $\sigma>0$. We first describe the modifications required in the weak-solution construction and then use the additional hypoelliptic regularity to justify the hypocoercive relaxation argument.

When $\sigma>0$, the same approximation scheme can be adapted with only minor changes in the kinetic part. In this case,  the kinetic part of the free energy
is the entropy $H$, and one assumes the corresponding finite-entropy initial regularity
\[
f_0\in L^1_2 \cap L\log L(\T^d\times\R^d).
\]
The kinetic part of the initial-data approximation can be modified so that the smooth approximations are nonnegative, have unit mass, and converge to $f_0$ in weighted $L^1$ together with the entropy. If, in addition, $f_0\in L^2_q(\T^d\times\R^d)$, they may be chosen uniformly bounded in $L^2_q$.

The kinetic entropy estimate then yields
\[
f\in L^\infty_{\rm loc} \lt(0,\infty;L^1_2 \cap L\log L(\T^d\times\R^d)\rt).
\]
The weak formulation of the kinetic equation is then modified by adding the corresponding velocity-diffusion term.

Apart from this modification, the fluid part of the argument is unchanged. Indeed, the free energy estimate and the BD entropy estimate retain the same fluid coercive terms; the kinetic diffusion contributes only the standard
nonnegative entropy dissipation on the kinetic side. The compactness of $\rho$, the identification of the convective term, the passage to the limit in the density-dependent viscosity tensor, and the treatment of the
kinetic--fluid drag term are therefore identical to the diffusionless case, or simpler due to the additional kinetic integrability. Thus the existence result extends to $\sigma>0$ in the natural finite-entropy class, with the
kinetic component belonging to $L^\infty_{\rm loc}(0,\infty;L^1_2\cap L\log L(\T^d\times\R^d))$ instead of $L^\infty_{\rm loc}(0,\infty;\calP_2(\T^d\times\R^d))$.

 The hypocoercivity argument, however, requires more regularity, since it is formulated at the level of Fisher information and higher-order mixed derivatives. This regularity follows from the hypoelliptic smoothing of kinetic Fokker--Planck equations; see \cite{Villani} for the general framework and \cite{Shv-CPAM} for a result closer to the present setting. We recall a regularity result in a form adapted to our setting.

\def \bk {{\bf k}}
\def \bl {{\bf l}}
\def \rmA {{\rm A}}
\def \rmb {{\rm b}}

 For $m,q\in\N$, we define the weighted Sobolev space
\begin{equation}\label{e:Sobdef}
H^{m}_q(\T^d \times \R^d) =  \lt\{ f :  \sum_{ 2|\bk| + |\bl | \leq 2m}   \inttr  \jap{v}^{q - 2|\bk| - | \bl |  } | \pa^{\bk}_{x} \pa_v^{\bl} f |^2 \dv\dx <\infty \rt\}.
\end{equation}
Lower-order derivatives are equipped with stronger velocity weights.
 
Consider the following linear non-homogeneous Fokker-Planck equation
\begin{equation}\label{e:FPgen}
\pa_t f + v \cdot \nabla_x f = \nabla_v \cdot (\rmA \nabla_v f) + \nabla_v \cdot (\rmb f).
\end{equation}
Here, $\rmA = \rmA(x,v,t)\in \R^{d \times d}$ is a given matrix, and $\rmb = \rmb(x,v,t) \in \R^d$ is a field satisfying
\begin{equation}\label{e:Ab1}
\lambda \I \leq \rmA(x,v,t) \leq \Lambda \I,  \quad (x,v,t) \in \T^d \times \R^d \times [0,T)
\end{equation}
and for any multi-indices $\bk,\bl \geq 0$,
\begin{equation}\label{e:Ab2}
\| \pa^{\bk}_{x} \pa^{\bl}_v \rmA\|_{L^\infty} < \infty, \quad   \| \jap{v}^{-1} \pa^{\bk}_{x}  \rmb\|_{L^\infty} + \| \pa^{\bk}_{x} \pa^{\bl + 1}_v \rmb\|_{L^\infty}  <\infty.
\end{equation}

\begin{proposition}[\cite{Shv-CPAM}]\label{p:reg}
Let $f\in L^\infty([0,T); L^1\cap L^2) \cap C([0,T); \calD')$ be a weak solution to \eqref{e:FPgen} satisfying \eqref{e:Ab1} - \eqref{e:Ab2}, and with initial condition in class $f_0 \in L^2_q$, for some $q \geq 0$. Then, $f \in L^\infty([0,T); L^2_q)$, and for any $m \in \N$, there exist  constants $C_{m,T},\kappa>0$ such that 
\begin{equation}\label{e:reg}
\|f(t) \|_{H^m_q} + \|\pa_t f(t)\|_{H^{m-1}_{q-3}}\leq \frac{C_{m,T}}{t^\kappa}, \quad t <T.
\end{equation}
\end{proposition}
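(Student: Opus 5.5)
The statement to prove is Proposition~\ref{p:reg}: instantaneous weighted $H^m_q$ smoothing for the non-homogeneous kinetic Fokker--Planck equation \eqref{e:FPgen}. The plan has two stages. First establish propagation of weighted $L^2$. Then use a Hörmander-type hypoelliptic energy functional with time weights, following the $(A^*A+B)$ structure of Villani, adapted to the variable diffusion $\rmA$ and drift $\rmb$.

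\emph{Weighted $L^2_q$ propagation.} Test \eqref{e:FPgen} with $\jap{v}^q f$. The transport term $v\cdot\nabla_x$ integrates to zero on $\T^d$. The diffusion gives $-\lambda\|\jap{v}^{q/2}\nabla_v f\|^2$, plus commutator terms involving $\nabla_v\jap{v}^q \lesssim \jap{v}^{q-1}$. These commutators are absorbed by Young's inequality using the boundedness of $\rmA$. For the drift, integrate by parts and use $|\rmb|\lesssim\jap{v}$ together with bounded $\nabla_v\rmb$ from \eqref{e:Ab2}. This produces $\int \jap{v}^{q}|\nabla_v\cdot\rmb|f^2$ and $\int \rmb\cdot\nabla_v\jap{v}^q f^2\lesssim\int\jap{v}^q f^2$. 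Grönwall then gives $f\in L^\infty L^2_q$. For a weak solution, justify this by regularizing the data and coefficients and passing to the limit, using uniqueness in the class $L^\infty(L^1\cap L^2)\cap C(\calD')$.

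\emph{Hypoelliptic gain at first order.} Take the functional
\[
\calF(t)=\|f\|_{L^2_q}^2 + a t\|\nabla_v f\|_{L^2_{q-1}}^2 + 2b t^2\lal \nabla_x f,\nabla_v f\ral_{L^2_{q-2}} + c t^3\|\nabla_x f\|_{L^2_{q-2}}^2,
\]
with $1\gg a\gg b\gg c$ and $b^2<ac$. The weights are shifted according to the convention of \eqref{e:Sobdef}: one $x$-derivative costs two units of weight, one $v$-derivative costs one. Differentiating, the commutator $[\nabla_v, v\cdot\nabla_x]=\nabla_x$ produces the good term $-bt^2\|\nabla_x f\|^2$ from the mixed part. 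The remaining terms are controlled by the dissipations $t\|\nabla_v^2 f\|^2$ and $t^3\|\nabla_v\nabla_x f\|^2$. These remainders come from $x$-derivatives falling on $\rmA$ and $\rmb$, from $v$-derivatives of $\rmb$ (bounded by \eqref{e:Ab2}), and from weight commutators; each is lower order and absorbable. This yields $\calF(t)\le C_T$, hence $\|\nabla_x f(t)\|\lesssim t^{-3/2}$ and $\|\nabla_v f(t)\|\lesssim t^{-1/2}$.

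\emph{Iteration and the time derivative.} Apply the same scheme to $\pa_x^{\bk}\pa_v^{\bl}f$ by induction on $2|\bk|+|\bl|$. The equation for $\pa_x^{\bk}\pa_v^{\bl}f$ has the same principal part, and its source terms involve lower-order derivatives with bounded coefficients by \eqref{e:Ab2}. Iterate on successive time intervals $[t/2^{j},t]$ to reach all $m$, with $\kappa$ growing with $m$. The bound on $\pa_t f$ then follows by reading it off \eqref{e:FPgen}. The terms $v\cdot\nabla_x f$ and $\rmb f$ each cost one power of $\jap{v}$, and the second-order $v$-derivatives cost two weight units under the convention. This accounts for the weight loss $q-3$ and the order loss $m-1$.

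\emph{Main obstacle.} The hard step is the mixed-term bookkeeping in the first-order functional. One must verify that the unbounded-in-$v$ drift ($|\rmb|\sim\jap{v}$), with $x$-derivatives of $\rmb$ also growing like $\jap{v}$, together with the $v$-dependent weights, does not generate terms exceeding the dissipation. In particular, $\nabla_x\rmb\cdot\nabla_v f$ tested against $\nabla_x f$ carries an extra $\jap{v}$ factor. I would first try to absorb it with the weight convention, which gives $x$-derivatives two fewer weight units, and by borrowing $\jap{v}$ from the $L^2_q$ dissipation $\|\jap{v}^{q/2}\nabla_v f\|^2$.
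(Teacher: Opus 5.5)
The paper does not prove this proposition; it imports it from \cite{Shv-CPAM}. Your route (weighted $L^2_q$ propagation, then a H\'erau--Villani time-weighted functional with the weight shifts of \eqref{e:Sobdef}, then iteration) is the standard one behind such results. Your weighted $L^2_q$ estimate and first-order functional are sound. The $\pa_x\mathrm{b}\sim\jap{v}$ terms are indeed absorbed by the two-unit loss per $x$-derivative together with the time-integrable dissipation $\|\jap{v}^{q/2}\nabla_v f\|_{L^2}^2$.

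The iteration step, however, is justified incorrectly. It is not true that the equation for $\pa_x^{\mathbf k}\pa_v^{\mathbf l}f$ has only lower-order sources with bounded coefficients. The transport commutator produces $\sum_i l_i\,\pa_x^{\mathbf k+e_i}\pa_v^{\mathbf l-e_i}f$, which is one level \emph{higher} in the count $2|\mathbf k|+|\mathbf l|$. Also, by \eqref{e:Ab2}, $\pa_x^{\mathbf j}\mathrm{b}$ is only $O(\jap{v})$, not bounded. So an induction on increasing $2|\mathbf k|+|\mathbf l|$ does not close as you describe it, and you must order it. Treat all pure $x$-derivatives first: they commute with $v\cdot\nabla_x$, so apply the first-order functional to $\pa_x^{\mathbf k}f$ and absorb the $\pa_x^{\mathbf j}\mathrm{b}$ terms with the same weight shifts. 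Then treat the $v$-derivatives. There the commutator term is either an already controlled $x$-derivative, or it is absorbed through the mixed term, as $\nabla_x f$ was at first order.

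The time-derivative step fails as written. Under \eqref{e:Sobdef} the weight multiplies $|\pa_x^{\mathbf k}\pa_v^{\mathbf l}f|^2$, so a factor $v$ costs two units, not one. $H^m_q$ controls $\pa_x^{\mathbf k+e_i}\pa_v^{\mathbf l}f$ only with weight $\jap{v}^{q-2-2|\mathbf k|-|\mathbf l|}$. Hence the term $v\cdot\nabla_x f$ read off from \eqref{e:FPgen} lies in $H^{m-1}_{q-4}$, not $H^{m-1}_{q-3}$. The drift and diffusion terms are compatible with $q-3$; the transport term is the only obstruction. To reach $q-3$ you would need one more unit of weight on every $x$-derivative. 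Your functional cannot provide it, because the $\pa_x\mathrm{b}\sim\jap{v}$ terms are exactly what force the two-unit loss. Your count $q-3$ is correct only if the weights in \eqref{e:Sobdef} multiply $|\pa f|$ rather than $|\pa f|^2$; in that convention your scheme goes through after rescaling the weights. With the convention as stated, this step proves only the $q-4$ bound.
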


In the present system, the kinetic equation can be written in the form \eqref{e:FPgen} with
\[
\rmA=\sigma\rho_\e \I, \quad \rmb=\rho_\e(v-u_\e^\e).
\]
Since $\rho_\e$ is bounded above and below by positive constants depending on $\e$, the uniform ellipticity condition \eqref{e:Ab1} holds. Moreover, the spatial mollification in the definitions of $\rho_\e$ and $u_\e^\e$ gives the required bounds on the $x$-derivatives of $\rmA$ and $\rmb$, while the dependence of $\rmb$ on $v$ is only linear. Thus the hypotheses of Proposition \ref{p:reg} are satisfied on every finite time interval, with constants depending on $\e$ and on the corresponding energy bounds.

If, in addition, $f_0\in L^2_q$, the standard weighted $L^2$ estimate, first obtained at the approximation level and then passed to the limit, gives
\[
f\in L^\infty(0,T;L^2_q(\T^d\times\R^d))
\]
for every $T>0$. The weak kinetic equation also gives $f\in C([0,T];\calD')$ after choosing the standard representative. Thus, all the hypotheses of Proposition \ref{p:reg} are satisfied.

Consequently, if $f_0\in L^2_q$ with $q\ge d+4$, then Proposition \ref{p:reg} yields, for every $0< t_0 < T < \infty$, the Fisher information and higher-order regularity required to make the hypocoercive estimates of Section \ref{s:hypo} classical on every interval $[t_0, T]$, see also \cite{TV2000} for related estimates controlling Fisher-type quantities by weighted Sobolev norms. Since $T>t_0$ is arbitrary and the constants in the hypocoercive estimate depend only on the uniform BD density bounds, the resulting decay estimate holds on $[t_0,\infty)$.  We emphasize that the fluid variables remain at the entropy weak-solution level. Accordingly, the kinetic Fisher-information estimate is combined with the integrated weak energy and compensating-functional argument of Section \ref{ssec:cons}. The latter argument is unchanged when $\sigma>0$, since the velocity diffusion contributes an additional nonnegative term to the kinetic entropy dissipation. Combining the kinetic Fisher-information estimate with this integrated weak argument and the uniform BD density bounds yields \eqref{eq:cond-rel-pos},with $\|\rho-1\|_{L^1(\T^d)}^2$ replaced by $\|\rho-1\|_{L^2(\T^d)}^2$. The estimates on the initial interval $[0,t_0]$ are absorbed into the constant $C$. This proves the existence and relaxation assertions of Theorem \ref{thm:weak-exist} in the case $\sigma>0$ and completes the proof.

%
%
%
%
%
%
%
%
%
%

\section*{Acknowledgments}
 The work of Y.-P. Choi was supported by NRF grant no. 2022R1A2C1002820 and no. RS-2024-00406821.  	The work of R. Shvydkoy was  supported in part by NSF
	grant  DMS-2405326 and Simons Foundation.

%
%
%
%
%
%
%
%
%
%
%

\appendix
 
%
%
%
%
%
%
%
%
%
%
%

\section{Approximation of the initial data}\label{app:init}

In this appendix, we prove Lemma \ref{lem:init-app} and provide the diagonal compatibility of the approximate initial data with the regularization parameters. Using the BD potential $\phi$ introduced above, define
\[
\mathfrak z(\rho):=\int_1^\rho\sqrt{s}\,\phi'(s)\,\ds, \quad \nabla\mathfrak z(\rho)=\sqrt{\rho}\nabla\phi(\rho).
\]
Equivalently,
\[
\mathfrak z(\rho)=
\begin{cases}
\dfrac{2\alpha}{\alpha-\frac12}\lt(\rho^{\alpha-\frac12}-1\rt), & \alpha\neq\frac12,\\[2mm]
\log\rho, & \alpha=\frac12.
\end{cases}
\]
When $0<\alpha<\frac12$, the range of $\mathfrak z$ is $\lt(-\infty, \frac{2\alpha}{\frac12-\alpha}\rt)$. When $\alpha=\frac12$, the range of $\mathfrak z$ is the whole real line.

\begin{proof}[Proof of Lemma \ref{lem:init-app}]
We divide the construction into three steps.

First, we construct the kinetic approximation. Let $\chi_n\in C_c^\infty(\R^d)$ be such that ${\bf 1}_{B(0,n)} \le \chi_n \le {\bf 1}_{B(0,2n)}$. Set $\widetilde f_{0,n}(x,v):=\chi_n(v)f_0(x,v)$. Then $\widetilde f_{0,n}\to f_0$ in $W_2(\T^d\times\R^d)$ by dominated convergence. Let $\eta_n$ be a standard nonnegative mollifier on $\T^d\times\R^d$, and define $\overline f_{0,n}:=\eta_n*\widetilde f_{0,n}$. Then
\[
\overline f_{0,n}\in C^\infty, \quad \overline f_{0,n}\ge0, \quad \text{and} \quad   \overline f_{0,n}\to f_0
\quad\text{in } W_2(\T^d\times\R^d).
\]
Let
\[
M_n:=\inttr \overline f_{0,n}\,\dx\dv.
\]
Since $M_n\to1$, we may assume $M_n\ge\frac12$. We set $f_{0,n}:=\frac{\overline f_{0,n}}{M_n}$. Then
\[
\inttr f_{0,n}\,\dx\dv=1, \quad f_{0,n}\to f_0 \quad\text{in }W_2(\T^d\times\R^d).
\]

We next construct the density approximation. Set $q_0:=\sqrt{\rho_0}u_0$. By the initial BD assumption, we have
\[
q_0\in L^2(\T^d),
\quad
q_0+\nabla\mathfrak z(\rho_0)\in L^2(\T^d).
\]
In particular, $\nabla\mathfrak z(\rho_0)\in L^2(\T^d)$. We first truncate the density away from $0$ and $\infty$. Define
\[
\rho_0^{[n]}:=\min\lt\{\max\lt\{\rho_0, \frac1n\rt\},n\rt\} 
\]
Then $\frac1n\le \rho_0^{[n]}\le n$ and $\rho_0^{[n]}\to\rho_0$ a.e. and in $L^\gamma(\T^d)$. Indeed,
\[
|\rho_0^{[n]}-\rho_0| \le \frac1n{\bf 1}_{\{\rho_0<\frac1n\}} + \rho_0{\bf 1}_{\{\rho_0>n\}},
\]
and thus
\[
\|\rho_0^{[n]}-\rho_0\|_{L^\gamma(\T^d)}^\gamma
\le
\frac{|\T^d|}{n^\gamma}
+
\int_{\{\rho_0>n\}}\rho_0^\gamma\,\dx
\to0.
\]
 Set $Z_0^{[n]}:=\mathfrak z(\rho_0^{[n]})$. Since $\rho_0^{[n]}\in[\frac1n,n]$, the values of $Z_0^{[n]}$ lie in the compact interval $I_n:=\mathfrak z\lt(\lt[\frac1n,n\rt]\rt)$ contained in the range of $\mathfrak z$. Moreover, $Z_0^{[n]}$ is a bounded Lipschitz truncation of the possibly unbounded function $\mathfrak z(\rho_0)$. At the gradient level, we have $\nabla Z_0^{[n]} = {\bf 1}_{\{\frac1n<\rho_0<n\}}\nabla\mathfrak z(\rho_0)$ a.e. in $\T^d$. Thus, by dominated convergence, $\nabla Z_0^{[n]} \to \nabla\mathfrak z(\rho_0)$ in $L^2(\T^d)$.

We choose a nonnegative mollifier $\eta_{\delta_n}$, with $\delta_n>0$ sufficiently small, and define $Z_{0,n}:=\eta_{\delta_n}*Z_0^{[n]}$. Since $\eta_{\delta_n}$ is nonnegative and $Z_0^{[n]}$ takes values in $I_n$, the function $Z_{0,n}$ also takes values in $I_n$. Hence $\overline\rho_{0,n}:=\mathfrak z^{-1}(Z_{0,n})$ is well defined. Moreover, $\overline\rho_{0,n}\in C^\infty(\T^d)$, $\overline\rho_{0,n}>0$, and $\overline\rho_{0,n}$ is bounded above and away from zero for each fixed $n$.

We choose $\delta_n$ so small that
\[
\|\nabla Z_{0,n}-\nabla Z_0^{[n]}\|_{L^2(\T^d)} \le \frac1n.
\]
Then
\[
\nabla\mathfrak z(\overline\rho_{0,n}) = \nabla Z_{0,n} \to \nabla\mathfrak z(\rho_0) \quad\text{in }L^2(\T^d).
\]
Since $\mathfrak z^{-1}$ is smooth on the compact interval $I_n$, decreasing $\delta_n$ if necessary, we may also ensure
\[
\|\overline\rho_{0,n}-\rho_0^{[n]}\|_{L^\gamma(\T^d)}
\le
\frac1n.
\]
Here we used that $\mathfrak z^{-1}$ is Lipschitz on $I_n$. Since $\rho_0^{[n]}\to\rho_0$ in $L^\gamma(\T^d)$, we obtain $\overline\rho_{0,n}\to\rho_0$ in $L^\gamma(\T^d)$.

We now normalize the mass. Let
\[
a_n:=\intt \overline\rho_{0,n}\,\dx.
\]
Since $\overline\rho_{0,n}\to\rho_0$ in $L^1(\T^d)$ and $\intt\rho_0\,\dx=1$, we have $a_n\to1$. Set $\rho_{0,n}:=\frac{\overline\rho_{0,n}}{a_n}$. Then
\[
\rho_{0,n}\in C^\infty(\T^d), \quad \rho_{0,n}>0, \quad \intt\rho_{0,n}\,\dx=1, \quad \text{and} \quad \rho_{0,n}\to\rho_0 \quad\text{in }L^\gamma(\T^d).
\]

The normalization also preserves the BD convergence. Indeed, if $\alpha\neq\frac12$, then $\nabla\mathfrak z(\rho_{0,n}) = a_n^{-(\alpha-\frac12)} \nabla\mathfrak z(\overline\rho_{0,n})$, while for $\alpha=\frac12$, $\nabla\mathfrak z(\rho_{0,n}) = \nabla\log\rho_{0,n} = \nabla\log\overline\rho_{0,n}$. Since $a_n\to1$, we obtain
\[
\nabla\mathfrak z(\rho_{0,n}) \to \nabla\mathfrak z(\rho_0) \quad\text{in }L^2(\T^d).
\]

Finally, we construct the velocity approximation. Choose $q_{0,n}\in C^\infty(\T^d;\R^d)$ such that $q_{0,n}\to q_0$ in $L^2(\T^d)$. Since $\rho_{0,n}$ is smooth and strictly positive, we may define $u_{0,n}:=\frac{q_{0,n}}{\sqrt{\rho_{0,n}}}$. Then $u_{0,n}\in C^\infty(\T^d;\R^d)$, and
\[
\sqrt{\rho_{0,n}}u_{0,n} = q_{0,n} \to q_0 = \sqrt{\rho_0}u_0 \quad\text{in }L^2(\T^d).
\]
Moreover,
\[
\sqrt{\rho_{0,n}}\lt(u_{0,n}+\nabla\phi(\rho_{0,n})\rt) = q_{0,n} + \nabla\mathfrak z(\rho_{0,n}).
\]
Consequently,
\bq\label{app:BD-part}
\sqrt{\rho_{0,n}}\lt(u_{0,n}+\nabla\phi(\rho_{0,n})\rt) \to q_0+\nabla\mathfrak z(\rho_0) = \sqrt{\rho_0}\lt(u_0+\nabla\phi(\rho_0)\rt) \quad\text{in }L^2(\T^d).
\eq
This completes the proof of Lemma \ref{lem:init-app}.
\end{proof}

We next provide the diagonal compatibility with the regularized initial energies. For $\zeta,\tau>0$, we use the following regularized analogues of the free energy and BD entropy at the initial time:
\[
\calE_{{\rm free},\zeta,\tau}[\rho,u,f] := \frac12\inttr |v|^2f\,\dx\dv +\intt\lt[\frac12\rho|u|^2+\pi(\rho)+\tau \pi_{10}(\rho) +\frac{\tau}{2}|\nabla Y_\zeta(\rho)|^2\rt]\dx,
\]
and
\begin{align*}
\calE_{{\rm BD},\zeta,\tau}[\rho,u,f] &:= \frac12\inttr |v|^2f\,\dx\dv \cr
&\quad + \intt\lt[\frac12\rho|u+\nabla\phi_\zeta(\rho)|^2+\pi(\rho)+\tau \pi_{10}(\rho) +\frac{\tau}{2}|\nabla Y_\zeta(\rho)|^2+2\tau J_\zeta(\rho)\rt]\dx.
\end{align*}

The following lemma is the only place where the approximation parameters are tied to the smooth initial data.
\begin{lemma}\label{lem:init-energy-diag}
Let $(f_{0,n},\rho_{0,n},u_{0,n})$ be the approximate initial data constructed in Lemma \ref{lem:init-app}. Then there exist sequences $\zeta_n\to0$ and $\tau_n\to0$ such that
\[
\limsup_{n\to\infty} \lt[ \calE_{{\rm free},\zeta_n,\tau_n}[\rho_{0,n},u_{0,n},f_{0,n}] + \calE_{{\rm BD},\zeta_n,\tau_n}[\rho_{0,n},u_{0,n},f_{0,n}] \rt] \le \calE_{\rm free}(0)+\calE_{\rm BD}(0).
\]
\end{lemma}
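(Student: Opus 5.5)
The plan is a diagonal argument. For each fixed $n$, the data $(f_{0,n},\rho_{0,n},u_{0,n})$ are smooth and $\rho_{0,n}$ is bounded above and away from zero. Hence every $\zeta,\tau$-dependent term in the regularized energies is finite, and it tends to its unregularized counterpart as $\zeta,\tau\to0$. Combining this with the convergences of Lemma \ref{lem:init-app}, we choose $\zeta_n,\tau_n$ small enough that the regularization error at level $n$ is at most $1/n$.

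First, for fixed $n$, we examine the $\tau$-terms. The quantities $\intt\pi_{10}(\rho_{0,n})\,\dx$, $\intt|\nabla Y_\zeta(\rho_{0,n})|^2\,\dx$ and $\intt J_\zeta(\rho_{0,n})\,\dx$ are bounded uniformly for $\zeta\in(0,1]$. This holds because $\rho_{0,n}\in[c_n,C_n]$ is smooth. On such an interval, $\sqrt{K_\zeta}=2\nu_\zeta'/\sqrt\rho$ and $\nu_\zeta'/\rho^2$ are bounded uniformly in $\zeta\le1$, so $|\nabla Y_\zeta(\rho_{0,n})|\le C_n|\nabla\rho_{0,n}|$ and $|J_\zeta(\rho_{0,n})|\le C_n$. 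Consequently, all $\tau$-terms are $O(\tau C_n)$.

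Second, we treat the $\zeta$-dependence of the BD term. Since $\phi_\zeta'(\rho)=\phi'(\rho)+2\zeta/\rho$, we have $\sqrt{\rho}\nabla\phi_\zeta(\rho)=\sqrt\rho\nabla\phi(\rho)+4\zeta\nabla\sqrt\rho$. It follows that
\[
\Bigl\|\sqrt{\rho_{0,n}}\bigl(u_{0,n}+\nabla\phi_\zeta(\rho_{0,n})\bigr)-\sqrt{\rho_{0,n}}\bigl(u_{0,n}+\nabla\phi(\rho_{0,n})\bigr)\Bigr\|_{L^2}\le4\zeta\|\nabla\sqrt{\rho_{0,n}}\|_{L^2}\le \zeta C_n.
\]
We then choose $\zeta_n,\tau_n\le 1/n$ with $\zeta_nC_n+\tau_nC_n\le1/n$. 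With this choice,
\[
\calE_{{\rm free},\zeta_n,\tau_n}[\cdot]+\calE_{{\rm BD},\zeta_n,\tau_n}[\cdot]\le \calE_{\rm free}[\rho_{0,n},u_{0,n},f_{0,n}]+\calE_{\rm BD}[\rho_{0,n},u_{0,n},f_{0,n}]+o(1).
\]
Here the cross term from expanding the square is controlled by Cauchy--Schwarz, using the boundedness in $L^2$ of the BD part from \eqref{app:BD-part}.

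Third, the unregularized energies at level $n$ converge to $\calE_{\rm free}(0)+\calE_{\rm BD}(0)$. The kinetic second moments converge because $f_{0,n}\to f_0$ in $W_2$. The terms $\sqrt{\rho_{0,n}}u_{0,n}$ and the BD effective part converge in $L^2$ by Lemma \ref{lem:init-app}. For the pressure potential, $\pi(r)$ is bounded by $C(r^\gamma+1)$, and $\rho_{0,n}\to\rho_0$ in $L^\gamma$, so $\intt\pi(\rho_{0,n})\,\dx\to\intt\pi(\rho_0)\,\dx$ by the generalized dominated convergence theorem. Taking the $\limsup$ yields the claim, in fact with equality.

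The main obstacle is the first step. There one must check that the $n$-dependent constants $C_n$ from the capillarity potential $Y_\zeta$ and the damping potential $J_\zeta$ really are uniform in $\zeta\le1$. Near $\rho\to0$, the term $J_\zeta$ involves $\int\nu_\zeta'/z^2$, which blows up like $\rho^{\alpha-2}$. This is harmless only because of the strictly positive lower bound on $\rho_{0,n}$ at fixed $n$. That is exactly why $\tau_n$ must be tied to $n$ rather than chosen independently.
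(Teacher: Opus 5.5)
Your proposal is correct and follows the same diagonal argument as the paper. At fixed $n$, the smooth, strictly positive data make every regularized quantity finite, and the $\zeta$-error in the BD term is $2\zeta\nabla\log\rho_{0,n}$ (your $4\zeta\nabla\sqrt{\rho_{0,n}}$ after multiplying by $\sqrt{\rho_{0,n}}$); $\zeta_n,\tau_n$ are then taken small relative to the $n$-dependent constants, while the physical parts converge by Lemma~\ref{lem:init-app}. Your uniform-in-$\zeta\le1$ bound on the $\tau$-terms lets you choose both parameters at once rather than $\zeta_n$ first and then $\tau_n$, and your explicit Cauchy--Schwarz treatment of the cross term fills in a step the paper leaves implicit.
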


\begin{proof}
The convergence of the physical kinetic energy follows from $f_{0,n}\to f_0$ in $W_2(\T^d\times\R^d)$. The convergence of the fluid kinetic energy follows from $\sqrt{\rho_{0,n}}u_{0,n}\to\sqrt{\rho_0}u_0$ in $L^2(\T^d)$. The convergence of the pressure potential follows from $\rho_{0,n}\to\rho_0$ in $L^\gamma(\T^d)$ and the growth of $\pi$. Finally, \eqref{app:BD-part} gives convergence of the unregularized BD kinetic part.

It remains to choose the   regularization parameters along the approximate initial data. Since each $\rho_{0,n}$ is smooth, strictly positive, and bounded above and below, all regularized initial quantities are finite for fixed $n$. Moreover, $\phi_\zeta'(\rho)=\phi'(\rho)+\frac{2\zeta}{\rho}$. Thus $\nabla\phi_\zeta(\rho_{0,n}) - \nabla\phi(\rho_{0,n}) = 2\zeta\nabla\log\rho_{0,n}$. Since the quantity
\[
\intt\rho_{0,n}|\nabla\log\rho_{0,n}|^2\,\dx
\]
is finite for each fixed $n$, we may choose $\zeta_n\to0$ sufficiently slowly so that
\[
\intt \rho_{0,n} \lt| \nabla\phi_{\zeta_n}(\rho_{0,n}) - \nabla\phi(\rho_{0,n}) \rt|^2\dx \to0.
\]
After $\zeta_n$ has been fixed, we choose $\tau_n\to0$ sufficiently slowly so that
\[
\tau_n\intt \lt[ \rho_{0,n}^{10} + |\nabla Y_{\zeta_n}(\rho_{0,n})|^2 + G_{\zeta_n}(\rho_{0,n}) \rt]\dx \to0.
\]
The additional $\zeta_n$- and $\tau_n$-dependent terms in the regularized energies therefore vanish, while the physical parts converge to the corresponding unregularized initial energies. This proves the claim. The choice is diagonal in the sense that the smooth initial data are fixed first, and the parameters are then chosen sufficiently slowly along this sequence.
\end{proof}
 
%
%
%
%
%
%
%
%
%
%
%

 \section{Auxiliary augmented entropy estimate}\label{app:aug}

In this appendix, we prove the auxiliary $\kappa$-entropy estimate used in Section \ref{ssec:aux-aug-app}. Throughout the appendix, the parameters
$\zeta,\tau>0$ are fixed, and $\kappa\in(0,\frac12)$ is fixed once and for all. The constants below may depend on $\e,\zeta,\tau,T$ and on this fixed $\kappa$, but are independent of $N$.

Let $(f_N,\rho_N,w_N)$ be a smooth solution of the auxiliary system \eqref{eq:aux-rho}--\eqref{eq:aux-w}. Recall that
\[
\mathfrak v_N=\nabla\phi_\zeta(\rho_N), \quad u_N=w_N-\kappa\mathfrak v_N.
\]
Then $\rho_N\mathfrak v_N=2\nabla\nu_\zeta(\rho_N)$, and the auxiliary density equation can be written in the physical form
\bq\label{eq:app-rhon}
\pa_t\rho_N+\nabla\cdot(\rho_Nu_N)=0.
\eq
Moreover,
\[
D u_N=D w_N-\kappa\nabla\mathfrak v_N, \quad A u_N=A w_N, \quad \nabla\cdot u_N=\nabla\cdot w_N-\kappa\nabla\cdot\mathfrak v_N,
\]
since $\nabla\mathfrak v_N=\nabla^2\phi_\zeta(\rho_N)$ is symmetric.

We recall the regularized functions
\[
Y_\zeta'(\rho):=\frac{2\nu_\zeta'(\rho)}{\sqrt{\rho}}, \quad s_\zeta'(\rho):=\frac{\nu_\zeta'(\rho)}{\rho}, \quad J_\zeta''(\rho):=\frac{\nu_\zeta'(\rho)}{\rho^2}.
\]
Then
\[
\nabla Y_\zeta(\rho_N)=\sqrt{\rho_N}\,\mathfrak v_N, \quad \nabla s_\zeta(\rho_N)=\frac12\mathfrak v_N, \quad \rho_N\nabla J_\zeta'(\rho_N)=\frac12\mathfrak v_N.
\]
We also recall the generalized Bohm identity:
\bq\label{eq:app-Bohm}
\mathfrak K_\zeta(\rho) = 2\nabla\cdot\lt[\nu_\zeta(\rho)\nabla^2(2s_\zeta(\rho))\rt] + \nabla\lt[\lambda_\zeta(\rho)\Delta(2s_\zeta(\rho))\rt].
\eq
Since
\[
(2s_\zeta)'(\rho)=\frac{2\nu_\zeta'(\rho)}{\rho}=\phi_\zeta'(\rho),
\]
we have
\[
\nabla(2s_\zeta(\rho_N))=\nabla\phi_\zeta(\rho_N)=\mathfrak v_N.
\]

We begin by testing the augmented Galerkin equation \eqref{eq:aux-w} by $w_N$. Since $w_N\in X_N$, this is an admissible test function. Using \eqref{eq:app-rhon}, the inertial part satisfies
\[
 \intt \pa_t (\rho_Nw_N)\cdot w_N\,\dx - \intt\rho_Nu_N\otimes w_N:\nabla w_N\,\dx = \frac{\dd}{\dt}\frac12\intt\rho_N|w_N|^2\,\dx .
\]
Thus,
\begin{align}\label{eq:app-w-tested}
\begin{aligned}
&\frac{\dd}{\dt}\frac12\intt\rho_N|w_N|^2\,\dx -\intt p_\tau(\rho_N)\nabla\cdot w_N\,\dx +2(1-\kappa)\intt\nu_\zeta(\rho_N)|D w_N|^2\,\dx \\
&\quad +2\kappa\intt\nu_\zeta(\rho_N)|A w_N|^2\,\dx +(1-\kappa)\intt\lambda_\zeta(\rho_N)|\nabla\cdot w_N|^2\,\dx \\
&\quad -2\kappa(1-\kappa)\intt\nu_\zeta(\rho_N) \nabla\mathfrak v_N:\nabla w_N\,\dx   -\kappa(1-\kappa)\intt\lambda_\zeta(\rho_N) \nabla\cdot\mathfrak v_N\,\nabla\cdot w_N\,\dx +\epsilon_N\calG_N(t) \\
&\quad \quad = \intt\rho_N(F_N)_\e\cdot w_N\,\dx +\tau\intt\mathfrak K_\zeta(\rho_N)\cdot w_N\,\dx -\tau\intt u_N\cdot w_N\,\dx ,
\end{aligned}
\end{align}
where
\[
\calG_N(t) = \intt\lt[ |\Delta^s w_N|^2 + (1+|\nabla w_N|^2)|\nabla w_N|^2 \rt]\dx .
\]

We next differentiate the BD drift energy. The standard BD calculation based on the continuity equation gives
\begin{align}\label{eq:app-drift-energy}
\begin{aligned}
&\frac{\dd}{\dt}\frac{\kappa(1-\kappa)}2 \intt\rho_N|\mathfrak v_N|^2\,\dx \\
&\quad -2\kappa(1-\kappa)\intt\nu_\zeta(\rho_N) \nabla\mathfrak v_N:\nabla u_N\,\dx   -\kappa(1-\kappa)\intt\lambda_\zeta(\rho_N) \nabla\cdot\mathfrak v_N\,\nabla\cdot u_N\,\dx =0 .
\end{aligned}
\end{align}
Using $u_N=w_N-\kappa\mathfrak v_N$, the last two terms in
\eqref{eq:app-drift-energy} become
\begin{align}\label{eq:app-drift-expand}
\begin{aligned}
&-2\kappa(1-\kappa)\intt\nu_\zeta(\rho_N) \nabla\mathfrak v_N:\nabla u_N\,\dx   -\kappa(1-\kappa)\intt\lambda_\zeta(\rho_N) \nabla\cdot\mathfrak v_N\,\nabla\cdot u_N\,\dx \\
&= -2\kappa(1-\kappa)\intt\nu_\zeta(\rho_N) \nabla\mathfrak v_N:\nabla w_N\,\dx   +2\kappa^2(1-\kappa)\intt\nu_\zeta(\rho_N) |\nabla\mathfrak v_N|^2\,\dx \\
&\quad -\kappa(1-\kappa)\intt\lambda_\zeta(\rho_N) \nabla\cdot\mathfrak v_N\,\nabla\cdot w_N\,\dx  +\kappa^2(1-\kappa)\intt\lambda_\zeta(\rho_N) |\nabla\cdot\mathfrak v_N|^2\,\dx .
\end{aligned}
\end{align}
Adding \eqref{eq:app-drift-energy} to \eqref{eq:app-w-tested}, and then using
\eqref{eq:app-drift-expand}, the shear viscosity terms are completed as
\begin{align*}
&2(1-\kappa)\intt\nu_\zeta(\rho_N)|D w_N|^2\,\dx -4\kappa(1-\kappa)\intt\nu_\zeta(\rho_N) \nabla\mathfrak v_N:\nabla w_N\,\dx \\
&\quad +2\kappa^2(1-\kappa)\intt\nu_\zeta(\rho_N) |\nabla\mathfrak v_N|^2\,\dx + 2\kappa\intt\nu_\zeta(\rho_N)|A w_N|^2\,\dx \\
&\quad \quad = 2(1-\kappa)\intt\nu_\zeta(\rho_N) |D w_N-\kappa\nabla\mathfrak v_N|^2\,\dx + 2\kappa\intt\nu_\zeta(\rho_N)|A w_N|^2\,\dx .
\end{align*}
Here we used that $\nabla\mathfrak v_N$ is symmetric. Similarly, the bulk viscosity terms satisfy
\begin{align*}
&(1-\kappa)\intt\lambda_\zeta(\rho_N)|\nabla\cdot w_N|^2\,\dx   -2\kappa(1-\kappa)\intt\lambda_\zeta(\rho_N) \nabla\cdot\mathfrak v_N\,\nabla\cdot w_N\,\dx  
+\kappa^2(1-\kappa)\intt\lambda_\zeta(\rho_N) |\nabla\cdot\mathfrak v_N|^2\,\dx \\
&= (1-\kappa)\intt\lambda_\zeta(\rho_N) |\nabla\cdot w_N-\kappa\nabla\cdot\mathfrak v_N|^2\,\dx .
\end{align*}

We now rewrite the remaining terms in \eqref{eq:app-w-tested}. First, since $w_N=u_N+\kappa\mathfrak v_N$, the pressure term satisfies
\begin{align*}
-\intt p_\tau(\rho_N)\nabla\cdot w_N\,\dx &= -\intt p_\tau(\rho_N)\nabla\cdot u_N\,\dx -\kappa\intt p_\tau(\rho_N)\nabla\cdot\mathfrak v_N\,\dx \\
&= \frac{\dd}{\dt}\intt\lt[\pi(\rho_N)+\tau \pi_{10}(\rho_N)\rt]\dx +\kappa\intt\nabla p_\tau(\rho_N)\cdot\mathfrak v_N\,\dx .
\end{align*}
Here $p_\tau(\rho)=p(\rho)+\tau\rho^{10}$, and $\pi+\tau \pi_{10}$ is the corresponding pressure potential.

Next, by \eqref{eq:app-Bohm} and $\nabla(2s_\zeta(\rho_N))=\mathfrak v_N$, we have
\begin{align*}
\intt\mathfrak K_\zeta(\rho_N)\cdot\mathfrak v_N\,\dx &= -2\intt\nu_\zeta(\rho_N) |\nabla^2(2s_\zeta(\rho_N))|^2\,\dx  -\intt\lambda_\zeta(\rho_N) |\Delta(2s_\zeta(\rho_N))|^2\,\dx \\
&= -4\intt\lt[ 2\nu_\zeta(\rho_N)|\nabla^2s_\zeta(\rho_N)|^2 + \lambda_\zeta(\rho_N)|\Delta s_\zeta(\rho_N)|^2 \rt]\dx .
\end{align*}
Moreover, the continuity equation gives
\[
\intt\mathfrak K_\zeta(\rho_N)\cdot u_N\,\dx = -\frac12\frac{\dd}{\dt} \intt|\nabla Y_\zeta(\rho_N)|^2\,\dx .
\]
Thus,
\begin{align*}
\tau\intt\mathfrak K_\zeta(\rho_N)\cdot w_N\,\dx &= \tau\intt\mathfrak K_\zeta(\rho_N)\cdot u_N\,\dx + \tau\kappa\intt\mathfrak K_\zeta(\rho_N)\cdot\mathfrak v_N\,\dx \\
&= -\frac{\tau}{2}\frac{\dd}{\dt} \intt|\nabla Y_\zeta(\rho_N)|^2\,\dx  -4\tau\kappa\intt\lt[ 2\nu_\zeta(\rho_N)|\nabla^2s_\zeta(\rho_N)|^2 + \lambda_\zeta(\rho_N)|\Delta s_\zeta(\rho_N)|^2 \rt]\dx .
\end{align*}
Accordingly, we define
\[
\calD_{{\rm cap},N}^{\kappa} := 4\tau\kappa\intt\lt[ 2\nu_\zeta(\rho_N)|\nabla^2s_\zeta(\rho_N)|^2 + \lambda_\zeta(\rho_N)|\Delta s_\zeta(\rho_N)|^2 \rt]\dx .
\]
For the damping term, using $w_N=u_N+\kappa\mathfrak v_N$ and $\rho_N\nabla J_\zeta'(\rho_N)=\frac{\mathfrak v_N}2$, we get
\begin{align*}
-\tau\intt u_N\cdot w_N\,\dx &= -\tau\intt |u_N|^2\,\dx -\tau\kappa\intt u_N\cdot\mathfrak v_N\,\dx \\
&= -\tau\intt |u_N|^2\,\dx -2\tau\kappa\intt\rho_Nu_N\cdot\nabla J_\zeta'(\rho_N)\,\dx \\
&= -\tau\intt |u_N|^2\,\dx -2\tau\kappa\frac{\dd}{\dt}\intt J_\zeta(\rho_N)\,\dx .
\end{align*}
We also compute the kinetic energy. Multiplying \eqref{eq:aux-kinetic} by $\frac{|v|^2}2$ yields
\begin{align*}
\frac{\dd}{\dt}\frac12\inttr |v|^2f_N\,\dx\dv &= -\inttr(\rho_N)_\e v\cdot(v-u_{N,\e}^\e)f_N\,\dx\dv \\
&= -\inttr(\rho_N)_\e|v-u_{N,\e}^\e|^2f_N\,\dx\dv -\intt(\rho_Nu_N)_\e\cdot F_N\,\dx \\
&= -\calD_{{\rm drag},N} -\intt(\rho_Nu_N)_\e\cdot F_N\,\dx ,
\end{align*}
where
\[
F_N:=\intr(v-u_{N,\e}^\e)f_N\,\dv, \quad u_{N,\e}^\e:=\frac{(\rho_Nu_N)_\e}{(\rho_N)_\e}, \quad \calD_{{\rm drag},N}:= \inttr(\rho_N)_\e|v-u_{N,\e}^\e|^2f_N\,\dx\dv.
\]
 
Combining all of the above estimates, we obtain
\begin{align}\label{eq:app-pre-entropy}
\begin{aligned}
&\frac{\dd}{\dt}\Bigg\{ \frac12\inttr |v|^2f_N\,\dx\dv +\intt\lt[ \frac12\rho_N|w_N|^2 +\frac{\kappa(1-\kappa)}2\rho_N|\mathfrak v_N|^2 +\pi(\rho_N) +\tau \pi_{10}(\rho_N) \rt]\dx \\
&\hspace{28mm} +\frac{\tau}{2}\intt|\nabla Y_\zeta(\rho_N)|^2\,\dx +2\tau\kappa\intt J_\zeta(\rho_N)\,\dx \Bigg\} \\
&\quad + 2(1-\kappa)\intt\nu_\zeta(\rho_N) |D w_N-\kappa\nabla\mathfrak v_N|^2\,\dx   + (1-\kappa)\intt\lambda_\zeta(\rho_N) |\nabla\cdot w_N-\kappa\nabla\cdot\mathfrak v_N|^2\,\dx \\
&\quad + 2\kappa\intt\nu_\zeta(\rho_N)|A w_N|^2\,\dx + \kappa\intt\nabla p_\tau(\rho_N)\cdot\mathfrak v_N\,\dx + \calD_{{\rm drag},N} + \calD_{{\rm damp},N} + \calD_{{\rm cap},N}^{\kappa} + \epsilon_N\calG_N(t) \\
&\quad \quad = \calR_{{\rm kin},N}^{\kappa},
\end{aligned}
\end{align}
where
\[
\calD_{{\rm damp},N}:=\tau\intt |u_N|^2\,\dx,
\]
and the kinetic coupling remainder is
\[
\calR_{{\rm kin},N}^{\kappa} := \intt\rho_N(F_N)_\e\cdot w_N\,\dx - \intt(\rho_Nu_N)_\e\cdot F_N\,\dx = \kappa\intt\rho_N\mathfrak v_N\cdot(F_N)_\e\,\dx .
\]
Here we used the symmetry of the mollifier and the identity
$w_N-u_N=\kappa\mathfrak v_N$.

It remains to estimate the remainder. The mollified drag bound gives
\[
\intt\rho_N|(F_N)_\e|^2\,\dx
\le
C_\e\calD_{{\rm drag},N}.
\]
Thus, by Young's inequality,
\[
|\calR_{{\rm kin},N}^{\kappa}| \le \kappa\lt(\intt\rho_N|\mathfrak v_N|^2\,\dx\rt)^{\frac12} \lt(\intt\rho_N|(F_N)_\e|^2\,\dx\rt)^{\frac12}  \le \frac12 \calD_{{\rm drag},N} + C_\e\kappa^2\intt\rho_N|\mathfrak v_N|^2\,\dx .
\]
We absorb half of the drag dissipation into the left-hand side of \eqref{eq:app-pre-entropy}. Since $\kappa\in(0,\frac12)$ is fixed, the remaining term
\[
C_\e\kappa^2\intt\rho_N|\mathfrak v_N|^2\,\dx
\]
is controlled by the augmented energy
\[
\frac{\kappa(1-\kappa)}2\intt\rho_N|\mathfrak v_N|^2\,\dx .
\]
Applying Gr\"onwall's lemma then gives, for every $T>0$,
\begin{align*}
&\sup_{0\le t\le T}\Bigg\{ \frac12\inttr |v|^2f_N\,\dx\dv +\intt\lt[ \frac12\rho_N|w_N|^2 +\frac{\kappa(1-\kappa)}2\rho_N|\mathfrak v_N|^2 +\pi(\rho_N) +\tau \pi_{10}(\rho_N) \rt]\dx \\
&\hspace{25mm} +\frac{\tau}{2}\intt|\nabla Y_\zeta(\rho_N)|^2\,\dx +2\tau\kappa\intt J_\zeta(\rho_N)\,\dx \Bigg\} \\
&\quad +\int_0^T\Bigg[ 2(1-\kappa)\intt\nu_\zeta(\rho_N)|D w_N-\kappa\nabla\mathfrak v_N|^2\,\dx +(1-\kappa)\intt\lambda_\zeta(\rho_N)|\nabla\cdot w_N-\kappa\nabla\cdot\mathfrak v_N|^2\,\dx \\
&\hspace{25mm} +2\kappa\intt\nu_\zeta(\rho_N)|A w_N|^2\,\dx +\kappa\intt\nabla p_\tau(\rho_N)\cdot\mathfrak v_N\,\dx \\
&\hspace{25mm} +\frac12\calD_{{\rm drag},N}(t) +\calD_{{\rm damp},N}(t) +\calD_{{\rm cap},N}^{\kappa}(t) +\epsilon_N\calG_N(t) \Bigg]\dt \\
&\quad \quad \le C_{\e,\zeta,\tau,T}.
\end{align*}
This proves Lemma \ref{lem:aux-kappa-entropy}.

%
%
%
%
%
%
%
%
%
%
%

%
%
%
%


\end{document}